\documentclass[12pt, a4paper]{amsart}
\usepackage{amsmath}
\usepackage{geometry,amsthm,graphics,tabularx,amssymb,shapepar}
\usepackage[cmyk]{xcolor}
\usepackage{appendix}
\usepackage{amscd}
\usepackage[all]{xypic}
\usepackage{ulem}
\usepackage{bm}
\usepackage{extarrows}

\makeatletter
\newcommand*{\rom}[1]{\expandafter\@slowromancap\romannumeral #1@}
\makeatother

\newcommand{\BC}{{\mathbb {C}}}

\newcommand{\BN}{{\mathbb {N}}}

\newcommand{\BR}{{\mathbb {R}}}

\newcommand{\CI}{{\mathcal {I}}}

\newcommand{\CO}{{\mathcal {O}}}

\newcommand{\RC}{{\mathrm {C}}}
\newcommand{\RD}{{\mathrm {D}}}

\newcommand{\Ri}{{\mathrm{i}}}

\newcommand{\RT}{{\mathrm {T}}}
\newcommand{\RU}{{\mathrm {U}}}

\newcommand{\Ad}{{\mathrm{Ad}}}

\newcommand{\Hom}{{\mathrm{Hom}}}

\newcommand{\Lie}{{\mathrm{Lie}}}

\newcommand{\Spec}{{\mathrm{Spec}}}

\newcommand{\wh}{\widehat}

\newcommand{\ad}{\operatorname{ad}}

\newcommand{\Ev}{\operatorname{Ev}}

\newcommand{\g}{\mathfrak g}

\newcommand{\h}{\mathfrak h}

\newcommand{\q}{\mathfrak q}

\renewcommand{\l}{\mathfrak l}

\newcommand{\m}{\mathfrak m}

\newcommand{\C}{\mathbb{C}}
\newcommand{\R}{\mathbb R}

\newcommand{\M}{\mathbf{M}}

\newcommand{\G}{\mathbf{G}}

\newcommand{\la}{\langle}
\newcommand{\ra}{\rangle}

\newcommand{\be}{\begin {equation}}
\newcommand{\ee}{\end {equation}}
\newcommand{\bee}{\begin {equation*}}
\newcommand{\eee}{\end {equation*}}

\newcommand{\qaq}{\quad\textrm{and}\quad}

\newcommand{\cf}{\textit{cf.}~}

\renewcommand{\mid}{\,:\,}

\theoremstyle{Theorem}

\theoremstyle{Theorem}

\theoremstyle{Theorem}

\theoremstyle{Theorem}

\theoremstyle{Plain}

\theoremstyle{remark}

\theoremstyle{remark}

\theoremstyle{Definition}
\newtheorem{dfn}{Definition}[section]

\newtheorem{cord}[dfn]{Corollary}
\newtheorem{prpd}[dfn]{Proposition}
\newtheorem{thmd}[dfn]{Theorem}
\newtheorem{lemd}[dfn]{Lemma}
\newtheorem{exampled}[dfn]{Example}
\newtheorem{remarkd}[dfn]{Remark}
\numberwithin{equation}{section}
\begin{document}

\title[Formal Lie groups]{Lie pairs and formal Lie groups}

\author[F. Chen]{Fulin Chen}
\address{School of Mathematical Sciences, Xiamen University,
	Xiamen, 361005, China} \email{chenf@xmu.edu.cn}

\author[B. Sun]{Binyong Sun}
\address{Institute for Advanced Study in Mathematics and New Cornerstone Science Laboratory, Zhejiang University,  Hangzhou, 310058, China}
\email{sunbinyong@zju.edu.cn}

\author[C. Wang]{Chuyun Wang}
\address{School of Mathematics and Statistics, Hainan University, Haikou, 570228, China}
\email{chuunw@amss.ac.cn}

\subjclass[2020]{22E15, 22E60} \keywords{Lie pairs, formal Lie groups, topological Hopf algebras}

\begin{abstract}
In a previous paper, we introduce and study formal manifolds, which generalize smooth manifolds. 
In this paper, we establish the basic theory of formal Lie groups, which are group objects in the category of formal manifolds.
In particular,  extending  the classical
formal Lie theory theorem, we prove that the category of formal Lie groups is equivalent to the category of Lie pairs.

\end{abstract}

\maketitle

\tableofcontents

\section{Introduction}
The formal Lie theory theorem establishes an equivalence between the
category of finite-dimensional complex Lie algebras and that of finite-dimensional complex formal group laws.
In this paper, we will establish the basic theory of formal Lie groups, and generalize the formal Lie theory theorem to the setting of Lie pairs and formal Lie groups (see Theorem \ref{thm:eqFP}).

\subsection{Lie pairs}
We begin by introducing the concept of Lie pairs.
\begin{dfn}\label{df:liepair}
	A \textbf{Lie pair} is a pair $(\mathfrak{q},L)$ consisting of a finite-dimensional complex Lie algebra $\mathfrak{q}$ and a real Lie group $L$, together with
	a   representation
	\[
	\Ad:\ L\curvearrowright \mathfrak{q}, \qquad (g,\eta )\mapsto \Ad_g \eta
	\]
	of $L$ on $\mathfrak{q}$ as Lie algebra automorphisms, as well as an injective Lie algebra homomorphism
	\[
	\iota:\ \mathfrak{l}\rightarrow \mathfrak{q} \quad (\text{$\mathfrak{l}$ is the complexified Lie algebra of $L$}),
	\]
	subject to the following conditions:
	\begin{itemize}
		\item
		$\iota$ is $L$-equivariant, where $\mathfrak{l}$ carries the adjoint representation of $L$, and $\mathfrak{q}$ carries the representation $\Ad: L\curvearrowright\mathfrak{q}$; and
		\item
		the differential $\ad: \mathfrak{l}\curvearrowright \mathfrak{q}$ of $\Ad: L\curvearrowright \mathfrak{q}$ equals the action
		\[
		\mathfrak{l}\curvearrowright \mathfrak{q},\qquad (\tau,\eta)\mapsto [\iota(\tau),\eta].
		\]
	\end{itemize}
\end{dfn}

Throughout this paper, all Lie groups are assumed to be Hausdorff (and thus they are paracompact). They may or may not have countably many connected components.
Under the notation and assumptions of Definition \ref{df:liepair}, we refer to the quadruple $(\mathfrak{q}, L, \Ad, \iota)$, or simply the pair $(\mathfrak{q}, L)$ when $\Ad$ and $\iota$ are understood, as a Lie pair.
The notion of Lie pairs unifies finite-dimensional complex Lie algebras and real Lie groups.

Note that every finite-dimensional complex Lie algebra $\mathfrak{q}$ determines a Lie pair $(\mathfrak{q},\{e\})$, while every real Lie group $L$ determines a Lie pair $(\mathfrak{l},L)$. Here and henceforth, the notation $\{e\}$ denotes the trivial group, and we use the corresponding lowercase Gothic letter to denote the complexified Lie algebra of a real Lie group.

All Lie pairs, together with their homomorphisms as defined below, form a category.

\begin{dfn}\label{df:morLiepair} Let $(\mathfrak{q}_1,L_1)$ and $(\mathfrak{q}_2,L_2)$ be two Lie pairs.
	A homomorphism
    \[(d\varphi,\underline \varphi): \ (\mathfrak{q}_1,L_1)\rightarrow (\mathfrak{q}_2,L_2) \] of Lie pairs is a Lie algebra homomorphism $d\varphi: \mathfrak{q}_1\rightarrow \mathfrak{q}_2$ as well as a Lie group homomorphism $\underline \varphi : L_1\rightarrow L_2$
       such that the diagrams
	\be \label{eq:comofhomofLiepairs}
	\begin{CD}
		L_1\times \mathfrak{q}_1 @> \mathrm{Ad} >>\mathfrak{q}_1 \\
		@V \underline\varphi\times d\varphi VV           @V  V d\varphi V\\
		L_2\times \mathfrak{q}_2@> \mathrm{Ad} >> \mathfrak{q}_2,
	\end{CD}
	\qquad\qquad\text{and}\qquad\qquad\qquad\quad 
	\begin{CD}
		\mathfrak{l}_1 @> \iota >>\mathfrak{q}_1 \\
		@V \text{differential of }\underline \varphi VV           @VV d\varphi V\\
		\mathfrak{l}_2 @> \iota   >> \mathfrak{q}_2
	\end{CD}
	\ee
	commute. 
\end{dfn}

In the literature, representations of Lie pairs $(\mathfrak{q},L)$ have primarily been studied under the assumption that $L$ is compact (see \cite{KV}). However, for applications to the theory of automorphic forms, it is desirable to develop a theory of smooth representations for general Lie pairs $(\mathfrak{q},L)$, where $L$ is not necessarily compact. In a complementary direction, we have laid the foundations of a theory of formal manifolds in \cite{CSW1, CSW2, CSW4}, generalizing the classical theory of smooth manifolds. Within this framework, a formal Lie group is defined as a group object in the category of formal manifolds. Since general Lie pairs correspond to formal Lie groups (see Theorem \ref{thm:eqFP}) and their smooth representations are often realized on various function spaces over formal manifolds, a thorough understanding of formal Lie groups is essential for the further development of the theory of smooth representations of Lie pairs.

\subsection{Formal manifolds and formal Lie groups}
We first recall some relevant notions concerning formal manifolds from \cite{CSW1}.
Let $N$ be a smooth manifold and let $k\in \BN:=\{0,1,2,\dots\}$. For every open subset $U\subset N$, write $\RC^\infty(U)$ for the $\BC$-algebra of complex-valued smooth functions on $U$, and write 
	\[
	\CO_N^{(k)}(U):=\RC^\infty(U)[[y_1, y_2, \dots, y_k]]
	\]
 for the $\BC$-algebra of formal power series with coefficients in $\RC^\infty(U)$. With the obvious restriction maps, we have a sheaf 
 \[\CO_N^{(k)}:\ U\mapsto \CO_N^{(k)}(U)\] of $\BC$-algebras. This gives a locally ringed space 
 \be\label{eq:N(k)} N^{(k)}:=(N, \CO_N^{(k)})\ee  over $\mathrm{Spec}(\BC)$. 

\begin{dfn}\label{def:formalmanifold}
	A \textbf{formal manifold} is a locally ringed space $(M, \CO_M)$ over $\mathrm{Spec}(\BC)$ such that
	\begin{itemize}
		\item the topological space $M$ is paracompact and Hausdorff; and
		\item for every $a\in M$, there is an open neighborhood $U$ of $a$ in $M$ and $n,k\in \BN$ such that $(U, \CO_M|_U)$ is isomorphic to $(\R^n)^{(k)}$ as locally ringed spaces over $\mathrm{Spec}(\BC)$.
	\end{itemize}
\end{dfn}

By abuse of notation, we will often not distinguish a formal manifold $(M, \CO_M)$ from its underlying topological space $M$, and call $\CO_M$ the structure sheaf of it. For each $a\in M$, the uniquely determined natural numbers $n$ and $k$ in Definition \ref{def:formalmanifold} are
called the dimension and the degree of $M$ at $a$,   to be denoted by
\be\label{eq:degaM} \dim_a M \qaq \deg_a M \ee respectively. An element in $\CO_M(M)$
is called a formal function on $M$.
\begin{dfn}\label{def:infinitesimalFM}
	A formal manifold is said to be infinitesimal if its underlying topological space has precisely one element.
\end{dfn}

For every formal manifold $M$, let $\mathfrak{m}_{\CO_M}$ denote the ideal of $\CO_M$ defined by
\be\label{eq:defmo}
\mathfrak{m}_{\CO_M}(U):=\{f\in \CO_M(U)\mid f_a\in \mathfrak{m}_{M,a} \textrm{ for all $a\in U$}\},
\ee
where $U$ is an open subset of $M$, $f_a$ is the germ of $f$ at $a$, and $\mathfrak{m}_{M,a}$ is the maximal ideal of the stalk $\CO_{M,a}$.
Form the quotient sheaf $\underline{\CO_M}:=\CO_M/{\mathfrak{m}_{\CO_M}}$ over $M$.
Then \be \label{eq:underM}\underline{M}:=(M,\underline{\CO_M})\ee is a smooth manifold, called the reduction of $M$.

Recall that a morphism from a formal manifold $(M_1,\CO_{M_1})$ to another formal manifold $(M_2,\CO_{M_2})$ is a pair $\varphi=(\overline\varphi, \varphi^*)$, where $\overline\varphi: M_1\rightarrow M_2$ is a continuous map and
\be\label{phin}
\varphi^*:\ \overline\varphi^{-1}\CO_{M_2}\rightarrow \CO_{M_1}
\ee
is a $\BC$-algebra sheaf homomorphism that induces local homomorphisms on the stalks.
For open subsets $U_2$ of $M_2$ and $U_1$ of $M_1$ such that $\overline\varphi(U_1)\subset U_2$,
we write
\[\varphi^*_{U_2,U_1}:\CO_{M_2}(U_2)\rightarrow \CO_{M_1}(U_1)\] for the homomorphism of $\C$-algebras induced by \eqref{phin}.
If there is no confusion, we will denote $\varphi^*_{U_2,U_1}$ by $\varphi^*_{U_2}$ or $\varphi^*$ for simplicity.
We denote by
\[
\varphi_a^*:\ \CO_{M_2,\overline{\varphi}(a)}\rightarrow \CO_{M_1,a}
\]
the induced local homomorphism on the stalks.

As finite products exist in the category of formal manifolds (see \cite[Theorem 1.9]{CSW1}), a formal Lie group is defined as a group object in that category.
The precise definition is given as follows. It is worth noting that Bochner's classical notion of formal Lie groups, as discussed in \cite{B}, corresponds to what are now commonly referred to as formal group laws. In contrast, the concept of formal Lie groups developed in this paper is more general.
\begin{dfn}\label{df:Lirgroup}
	A \textbf{formal Lie group} is a triple $(G,m,\varepsilon)$, where $G$ is a formal manifold,
    \[m: G\times G\rightarrow G \qaq \varepsilon: \mathrm{Spec}(\C)\rightarrow G \]
      are morphisms of formal manifolds, satisfying the following conditions:
	\begin{itemize}
 \item the diagram \be \label{eq:chainofG}
		\begin{CD}
			G\times G\times G @> m \times \mathrm{id}_G >>  G\times G\\
			@V\mathrm{id}_G\times m VV           @V V mV\\
			G\times G @>m>>  G \\
		\end{CD}\qquad (\textrm{$\mathrm{id}$ indicates the identity morphism})
		\ee commutes;
		\item the diagram  
		\be\label{eq:diagramofidentity}
		\xymatrix{
			\Spec(\BC)\times G \ar[dr]_= \ar[r]^{\ \ \ \  \varepsilon\times \mathrm{id}_G}        &G\times G \ar[d]_{m} & G\times \Spec(\BC)\ar[l]_{\mathrm{id}_G\times \varepsilon\ \ \ } \ar[dl]^=\\
			& G 
		}
		\ee
  commutes; and
		
		\item there is a morphism $i: G\rightarrow G$  such that
		the diagram
	    \be \label{eq:comofi}	\begin{CD}
	    	G @> \mathrm{id}_G\times i >>  G\times G@<\ i\times \mathrm{id}_G <<G\\
	    	@V VV           @Vm V V @VVV\\
	    	\Spec(\BC) @> \varepsilon >>  G @<\varepsilon<<	\Spec(\BC) \\
	    \end{CD} \ee
		commutes.
	\end{itemize}
\end{dfn}

When no confusion is possible, we will not distinguish a formal Lie group $(G, m, \varepsilon)$ from $G$.

Under the notation and assumptions of Definition \ref{df:Lirgroup},
the morphism $m: G\times G\rightarrow G$ is called the multiplication morphism, and the morphism $\varepsilon$ is called the unit morphism. Let $e$ denote the image of the unique point in $\Spec (\BC)$ under the morphism $\varepsilon:\mathrm{Spec}(\C)\rightarrow G$, to be called the identity element of $G$.
As in the case of abstract groups, a morphism $i: G\rightarrow G$ satisfying the commutative diagram \eqref{eq:comofi} is unique. The morphism $i$ satisfies $i\circ i=\mathrm{id}_G$, and it is called the inversion morphism. 

The reduction $\underline{G}$ of a formal Lie group $G$ is naturally a Lie group (see Example \ref{exam:subgroup}). 

\begin{dfn}\label{df:homoofLiegroup}
   Let $(G_1,m,\varepsilon)$ and $(G_2,m,\varepsilon)$ be two formal Lie groups. A morphism $\varphi:G_1\rightarrow G_2$ of formal manifolds is called a homomorphism between formal Lie groups if the diagram
   	\be \label{commor}
   \begin{CD}
   	G_1\times G_1 @> \varphi\times \varphi >> G_2\times G_2\\
   	@V m VV           @V V m V\\
   	G_1 @> \varphi >>  G_2
   \end{CD}
   \ee
   commutes.
\end{dfn}
As in the case of abstract groups, every homomorphism of formal Lie groups commutes with the unit morphisms and the inversion morphisms. All formal Lie groups, together with homomorphisms between them, form a category.

\begin{dfn}
	A formal Lie group is said to be infinitesimal if its underlying topological space has precisely one element.
\end{dfn}

Note that all Lie groups are formal Lie groups. On the other hand, the following example shows that every finite-dimensional complex Lie algebra determines an infinitesimal formal Lie group.

\begin{exampled}\label{exam:Lq}
    Let $\mathfrak{q}$ be a finite-dimensional complex Lie algebra. The universal enveloping algebra $\RU(\q)$ of $\q$ is a cocommutative complex Hopf algebra. By using the coalgebra structure of $\mathrm U(\mathfrak{q})$, the linear functional space $(\mathrm U(\mathfrak{q}))'$  is naturally a local $\C$-algebra which is isomorphic to a formal power series algebra. Then  the locally ringed space  $\mathcal{G}_{\mathfrak{q}}:=(\{e\},(\mathrm{U}(\mathfrak{q}))')$ is  an infinitesimal formal manifold. Furthermore, the formal manifold $\mathcal{G}_{\q}$ is an infinitesimal formal Lie group, where the multiplication morphism and the unit morphism are induced by the multiplication and the unit of $\mathrm U(\q)$ respectively.
\end{exampled}
Here and in the following, we will often not distinguish between a sheaf over a singleton and its space of global sections.

\subsection{Hopf formal algebras and Hopf formal coalgebras}\label{sec:tophopf}

In this paper, by an LCS, we mean a locally convex topological vector space over $\mathbb{C}$, which may or may not be Hausdorff. However, a complete LCS is always assumed to be Hausdorff.

For two LCS $E$ and $F$, the algebraic tensor product $E \otimes F$ admits two natural locally convex topologies: the inductive tensor product $E \otimes_{\mathrm{i}} F$ and the projective tensor product $E \otimes_{\pi} F$. These are characterized by universal properties for separately and jointly continuous bilinear maps, respectively, just as the algebraic tensor product $E \otimes F$ is characterized by bilinear maps. We denote the completions of the maximal Hausdorff quotients of these two topological tensor product spaces by
\[
E \widehat{\otimes}_{\mathrm{i}} F \quad\text{and}\quad E \widehat{\otimes}_{\pi} F,
\] 
respectively.

An \textbf{$\widehat{\otimes}_{\mathrm{i}}$-algebra} (resp.\,\textbf{ $\widehat{\otimes}_{\pi}$-algebra}) is a complete LCS $A$ together with a continuous associative multiplication $A \widehat{\otimes}_{\mathrm{i}} A \rightarrow A$ (resp.\,$A \widehat{\otimes}_{\pi} A \rightarrow A$) and a unit $\mathbb{C} \rightarrow A$. Similarly, we have the notions of \textbf{$\widehat{\otimes}_{\mathrm{i}}$-coalgebras},\textbf{ $\widehat{\otimes}_{\pi}$-coalgebras}, \textbf{Hopf $\widehat{\otimes}_{\mathrm{i}}$-algebras}, and \textbf{Hopf  $\widehat{\otimes}_{\pi}$-algebras}. See Appendix \ref{sec:topHopfalg} for 
 details.


Given a formal manifold $(M,\mathcal{O}_M)$, equip $\mathcal{O}_M(M)$ with the smooth topology (see \cite[Definition 4.1]{CSW1}). Then, together with the unit
\begin{equation}\label{eq:unitofOM}
\mathbb{C} \rightarrow \mathcal{O}_M(M),\quad c \mapsto c
\end{equation}
and the continuous multiplication
\begin{equation}\label{eq:mulofOM}
\mathcal{O}_M(M) \widehat{\otimes}_{\pi} \mathcal{O}_M(M) \rightarrow \mathcal{O}_M(M),\quad f_1 \otimes f_2 \mapsto f_1 f_2 \quad (f_1, f_2 \in \mathcal{O}_M(M)),
\end{equation}
the LCS $\mathcal{O}_M(M)$ becomes a $\widehat{\otimes}_{\pi}$-algebra (\cf \cite[Proposition 4.8]{CSW1}).

Recall from \cite[Section 5]{CSW2} the LCS $\RD_c^{-\infty}(M;\mathcal{O}_M)$ of compactly supported formal distributions on $M$. By \cite[Proposition 5.14 and Lemma A.12]{CSW2}, there are  identifications
\be \label{eq:OM'}
\RD_c^{-\infty}(M;\mathcal{O}_M) = (\mathcal{O}_M(M))' 
\ee
and
\be \label{eq:DMM=DMDM}
\RD_c^{-\infty}(M;\mathcal{O}_M) \widehat{\otimes}_{\mathrm{i}} \RD_c^{-\infty}(M;\mathcal{O}_M) = (\mathcal{O}_M(M) \widehat{\otimes}_{\pi} \mathcal{O}_M(M))'
\ee
of LCS. Then, \be\label{eq:D_ccoal} \text{the LCS $\RD_c^{-\infty}(M;\mathcal{O}_M)$ becomes an  $\widehat{\otimes}_{\mathrm{i}}$-coalgebra:}
\ee the comultiplication is  
the transpose
of \eqref{eq:mulofOM}, and the counit is the transpose
of \eqref{eq:unitofOM}.  Here and henceforth, for any LCS $E$,  $E'$ denotes the space of all continuous linear functionals on $E$,  equipped with the strong topology.


We make the following definitions.

\begin{dfn}\label{def:foralg}

\noindent(a) A  $\widehat{\otimes}_{\pi}$-algebra is called a \textbf{formal algebra} if it is isomorphic to $\mathcal{O}_M(M)$ as  $\widehat{\otimes}_{\pi}$-algebras for some formal manifold $M$ (cf. \cite[Definition 5.6]{CSW1}).
    
\noindent(b) An  $\widehat{\otimes}_{\mathrm{i}}$-coalgebra is called a \textbf{formal coalgebra} if it is isomorphic to $\RD_c^{-\infty}(M;\mathcal{O}_M)$ as  $\widehat{\otimes}_{\mathrm{i}}$-coalgebras for some formal manifold $M$.
\end{dfn}
Together with all continuous algebra (resp.\,coalgebra) homomorphisms, all formal algebras (resp.\,coalgebras) form a category. 
\begin{dfn}\label{def:formalHopfalg}

\noindent(a) A Hopf $\widehat{\otimes}_{\pi}$-algebra is called a \textbf{Hopf formal algebra} if it is a formal algebra as a $\widehat{\otimes}_{\pi}$-algebra.
    
\noindent(b) A Hopf $\widehat{\otimes}_{\mathrm{i}}$-algebra is called a \textbf{Hopf formal coalgebra} if it is a formal coalgebra as an $\widehat{\otimes}_{\mathrm{i}}$-coalgebra.
\end{dfn}

Together with all continuous Hopf algebra homomorphisms, all Hopf formal algebras form a category, and all Hopf formal coalgebras form a category.

\subsection{Formal Lie theory theorem}
The key result of this paper may be summarized as follows.
\begin{thmd}\label{thm:eqFP}
	The following categories are equivalent to each other:
	\begin{enumerate}
		\item the category of Lie pairs;
		\item the category of Hopf formal coalgebras;
		\item the opposite category of the category of Hopf formal algebras;
		\item the category of formal Lie groups.
	\end{enumerate}
\end{thmd}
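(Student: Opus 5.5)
The plan is to build the four equivalences in a cycle, using results from \cite{CSW1,CSW2} on formal manifolds as black boxes.

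\emph{(4) and (3).} The anti-equivalence between formal manifolds and formal algebras, $M\mapsto \CO_M(M)$ with the smooth topology (\cite[Definition 5.6]{CSW1} and the results following it), is the starting point. This functor is fully faithful. It sends finite products of formal manifolds to completed projective tensor products: $\CO_{M\times N}(M\times N)=\CO_M(M)\widehat\otimes_\pi \CO_N(N)$, which I expect to be available in \cite{CSW1}. The product side of this is \cite[Theorem 1.9]{CSW1}. Granting this, a group object in formal manifolds corresponds exactly to a cogroup object in formal algebras with respect to $\widehat\otimes_\pi$. Such a cogroup object is precisely a Hopf formal algebra: $m^*$ is the comultiplication, $\varepsilon^*$ the counit, and $i^*$ the antipode. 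Homomorphisms match as well, so (4) is equivalent to (3).

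\emph{(3) and (2).} Here I would use strong duality $A\mapsto A'$. By \eqref{eq:OM'} and \eqref{eq:DMM=DMDM}, dualizing turns the $\widehat\otimes_\pi$-structure maps of $\CO_M(M)$ into $\widehat\otimes_{\mathrm i}$-structure maps on $\RD_c^{-\infty}(M;\CO_M)$. Two further inputs are needed:
\begin{itemize}
\item reflexivity, $\CO_M(M)=(\RD_c^{-\infty}(M;\CO_M))'$, so that the process can be reversed;
\item the analogous identification for triple tensor products, so that coassociativity dualizes.
\end{itemize}
Transposition of continuous linear maps is a bijection under reflexivity, so morphisms correspond, and this gives the anti-equivalence (3) $\simeq$ (2)$^{\mathrm{op}}$, which is the same as (3)$^{\mathrm{op}}\simeq$ (2). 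I expect reflexivity to follow from the nuclear/Fréchet-type structure of $\CO_M(M)$ established in \cite{CSW2}.

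\emph{(1) and (4).} This step carries the real content, and I would construct functors in both directions.

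\textbf{From formal Lie groups to Lie pairs.} Given $G$, take $L=\underline G$, the reduction. Take $\q$ to be the space of primitive elements in the stalk dual $(\CO_{G,e})'_{\text{fin}}$, i.e.\ left-invariant formal vector fields, with bracket given by the commutator. Then:
\begin{itemize}
\item $\Ad$ comes from conjugation by points of $L$, which act on $G$ by the morphisms $x\mapsto g x g^{-1}$ fixing $e$;
\item $\iota$ comes from restricting distributions supported at $e$ on $\underline G$ to $G$;
\item the compatibility conditions of Definition \ref{df:liepair} follow by differentiating the conjugation action.
\end{itemize}
Injectivity of $\iota$ uses that $\underline G\hookrightarrow G$ is a closed embedding.

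\textbf{From Lie pairs to formal Lie groups.} Given $(\q,L)$, form the Hopf formal coalgebra
\[
\RD_c^{-\infty}(L)\otimes_{\RU(\l)}\RU(\q).
\]
Its multiplication is twisted by $\Ad$, in the style of a smash product modulo the identification of $\RU(\l)$ in both factors. To see that this is a formal coalgebra, choose a complement $\q=\iota(\l)\oplus\mathfrak v$ and use PBW. This gives a coalgebra isomorphism with $\RD_c^{-\infty}(L)\widehat\otimes_{\mathrm i}\mathrm S(\mathfrak v)$, which is exactly the formal distribution space of $\underline L^{(k)}$-type with $k=\dim\mathfrak v$. Equivalently, geometrically $G=L\times_{\mathcal G_{\l}}\mathcal G_{\q}$ locally, with $\mathcal G_\q$ as in Example \ref{exam:Lq}.

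\textbf{Quasi-inverse property.} One composite is immediate from the construction. For the other, I would show that any formal Lie group $G$ is determined by $\underline G$ together with its formal neighborhood $\mathcal G$ at $e$. By the classical formal Lie theorem, $\mathcal G\cong\mathcal G_\q$. Left translation by points of $\underline G$ then identifies the global structure sheaf, since a formal manifold with a transitive group of automorphisms is locally $\underline G\times\mathcal G_{\mathfrak v}$.

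\emph{Main obstacle.} The hardest step is proving that every formal Lie group is globally of this induced form, in particular that the stalk structure at $e$ propagates to a trivialization $\CO_G\cong\CO_{\underline G}\widehat\otimes(\text{power series in }\mathfrak v^*)$. My first attempt would be to construct this trivialization using a left-invariant complement via an exponential-type map. I would then check, on the coalgebra side, that the natural map
\[
\RD_c^{-\infty}(L)\otimes_{\RU(\l)}\RU(\q)\to\RD_c^{-\infty}(G;\CO_G)
\]
is a topological isomorphism by verifying it near $e$ and translating.
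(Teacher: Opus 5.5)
Your treatment of (4)$\leftrightarrow$(3)$\leftrightarrow$(2) is the paper's Proposition \ref{prop:equitoHopf}: \cite[Theorem 5.11]{CSW1} plus $\CO_{G\times G}(G\times G)=\CO_G(G)\wh\otimes_\pi\CO_G(G)$, then strong duality via reflexivity, which is \cite[Proposition 4.8]{CSW1}. (Item (3) is already the opposite category, so duality gives (3)$\simeq$(2) directly; your ``op'' labels there are off by one.) Your functor $G\mapsto(\g,\underline G)$ is Proposition \ref{prop:GtoLiepair}.

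The real divergence is the inverse functor. The paper builds it geometrically. It takes the formal action $L\curvearrowright\mathcal G_\q$ of Proposition \ref{prop:Lltimesq}, forms the semidirect product $L\ltimes\mathcal G_\q$, and embeds an antidiagonal copy $H_\l$ of $\mathcal G_\l$, which is closed, regular and normal. The group is then the quotient $(L\ltimes\mathcal G_\q)/\mathcal G_\l$ supplied by Theorem \ref{thm:quo}, and its Lie pair is read off from $d\pi_e$ and $\underline\pi=\mathrm{id}_L$ (Proposition \ref{prop:LLq/Llasgroup}).

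You instead write down the Hecke-type algebra $\RD_c^{-\infty}(L)\otimes_{\RU(\l)}\RU(\q)$. This is the distribution-side counterpart of that quotient; dually it is a Koszul-type sheaf $\Hom_{\RU(\l)}(\RU(\q),\RC^\infty)$. It does work:
\begin{itemize}
\item In the smash product, the elements $T\tau\otimes u-T\otimes\iota(\tau)u$ span a two-sided ideal precisely because of the two axioms of Definition \ref{df:liepair}. The element $\tau-\iota(\tau)$ commutes with $\RU(\q)$ because the $\l$-action is $[\iota(\tau),\cdot\,]$. Conjugation by $\delta_g$ sends it to $\Ad_g\tau-\iota(\Ad_g\tau)$ because $\iota$ is equivariant.
\item The same elements also span a coideal.
\item Your PBW argument then identifies the quotient coalgebra with that of $L^{(\dim\mathfrak v)}$.
\end{itemize}
This bypasses Sections 3--7 entirely. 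The cost is that you must check separate continuity of the twisted product yourself. You must also extract $\underline G$ (group-like elements) and $\g$ (primitives supported at $e$) from the coalgebra for the composite you call immediate. The paper gets these from general results on quotients, which are of independent use.

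The step you flag as the main obstacle is the paper's Lemma \ref{lem:G}, and your sketched justification does not yet prove it. The claim ``a formal manifold with a transitive group of automorphisms is locally $\underline G\times\mathcal G_{\mathfrak v}$'' holds for every formal manifold by Definition \ref{def:formalmanifold}, so it says nothing about your comparison map. Nor does an isomorphism of formal neighbourhoods at $e$ alone yield a local isomorphism.

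The missing tool is the inverse function theorem for formal manifolds \cite[Theorem 1.3]{CSW4}, which needs both the differential and the reduced differential to be bijective. Concretely:
\begin{itemize}
\item Your natural map $\Psi$ is a continuous Hopf homomorphism. By the equivalence (2)$\leftrightarrow$(4) it is therefore ${}^t(\psi')^*$ for a homomorphism $\psi'$ of formal Lie groups.
\item One checks that $\underline{\psi'}=\mathrm{id}_{\underline G}$ and $d\psi'_e=\mathrm{id}_\g$, so $\psi'$ is a local isomorphism at $e$.
\item Left $\underline G$-equivariance moves this to every point, and bijectivity on points finishes.
\end{itemize}
Topological bijectivity of $\Psi$ then comes for free. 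No separate global trivialization $\CO_G\cong\CO_{\underline G}\wh\otimes\BC[[\mathfrak v^*]]$ is needed. The paper proves that trivialization separately as Proposition \ref{prop:GtounderGdegG}, by the same argument applied to $\underline G\times S_E\to G$, which is the precise version of your exponential-type map.
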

For every formal Lie group $G $, the space $\CO_G(G)$ is a Hopf formal algebra (see Proposition \ref{prop:formalliegrouptohopfalg}). By taking the continuous dual, the space $\RD_c^{-\infty}(G;\CO_G)=(\CO_G(G))'$ is a Hopf formal coalgebra (see Proposition \ref{prop:formalliegrouptohopfalg}). Finally, together with the adjoint representation $\mathrm{Ad}:\underline G \curvearrowright \mathfrak{g}$ and the canonical Lie algebra embedding $\underline{\mathfrak{g}}\rightarrow \mathfrak{g}$, the pair $(\mathfrak{g},\underline{G})$ is a Lie pair (see Proposition \ref{prop:GtoLiepair}), where $\mathfrak{g}$ is the complex Lie algebra of $G$ (see Definition \ref{def:Liealg}), and $\underline{\mathfrak{g}}$ is the complexified  Lie algebra of $\underline{G}$. These establish the equivalence between the categories in Theorem \ref{thm:eqFP} (see Section \ref{sec:proofofmain}).

Observe that the category of Lie groups forms a full subcategory of the category of formal Lie groups. When restricted to this subcategory, Theorem \ref{thm:eqFP} recovers a well-known characterization: a real Lie group $L$ is determined by certain topological Hopf algebras—namely, the space of compactly supported distributions on $L$ or the space of smooth functions on $L$.

Similarly, the category of finite-dimensional complex Lie algebras embeds as a full subcategory of the category of formal Lie groups. Restricting to this subcategory, Theorem \ref{thm:eqFP} specializes to a version of the formal Lie theory theorem, which can be stated as follows (see \cite[(14.2.3)]{Ha} and \cite[Theorem 5.18]{MM}).

\begin{thmd}\label{thm:eqFPpre}
The following categories are equivalent to each other:
\begin{enumerate}
	\item the category of finite-dimensional complex Lie algebras;
	\item the category of Hopf algebras over $\mathbb{C}$ that, as associative $\mathbb{C}$-algebras, are generated by finitely many primitive elements;
	\item the opposite category of the category of Hopf $\wh\otimes_{\pi}$-algebras that, as $\wh\otimes_{\pi}$-algebras, are isomorphic to   formal power series algebras;
	\item the category of infinitesimal formal Lie groups.
\end{enumerate}
\end{thmd}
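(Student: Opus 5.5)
The plan is to obtain Theorem \ref{thm:eqFPpre} by restricting the equivalences of Theorem \ref{thm:eqFP} to suitable full subcategories, and checking that each restricted subcategory is exactly the one listed. First, a Lie pair $(\q,L)$ is of the form $(\q,\{e\})$ precisely when $L$ is the trivial group. The full subcategory of such Lie pairs is isomorphic to the category of finite-dimensional complex Lie algebras: a homomorphism $(\q_1,\{e\})\to(\q_2,\{e\})$ is just a Lie algebra homomorphism, since the conditions in Definition \ref{df:morLiepair} become vacuous.

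Next, I will identify the image of this subcategory in each of the other three categories. Under the equivalence, the Lie pair attached to a formal Lie group $G$ is $(\g,\underline G)$. Hence $L$ is trivial if and only if $\underline G$ is a point, that is, if and only if $G$ is infinitesimal; this yields (4). Conversely, for $L=\{e\}$ the corresponding formal Lie group should be $\mathcal G_\q$ from Example \ref{exam:Lq}. For an infinitesimal formal manifold $M$ of degree $k$, the formal algebra $\CO_M(M)$ is $\BC[[y_1,\dots,y_k]]$ with its usual topology. Conversely, a $\wh\otimes_\pi$-algebra isomorphic to a formal power series algebra is $\CO_M(M)$ for the one-point formal manifold $M=(\BR^0)^{(k)}$. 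This identifies the full subcategory of Hopf formal algebras whose underlying formal manifold is a point with (3).

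For (2), I dualize. By \eqref{eq:OM'}, $\RD_c^{-\infty}(M;\CO_M)=(\BC[[y]])'$, which is the space of polynomial-type functionals $\bigoplus_\alpha \BC\,\partial^\alpha|_0$ carrying the finest locally convex topology. On such spaces the inductive completed tensor product coincides with the algebraic one. Consequently Hopf formal coalgebras over a point are the same as ordinary Hopf algebras whose underlying coalgebra is isomorphic to $\mathrm{Sym}(V)'$-dual-type, i.e.\ to the coalgebra of $\BC[y_1,\dots,y_k]$ with the shuffle coproduct. Moreover, every linear map between such spaces is continuous, so the topological structure imposes no extra conditions on morphisms.

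The main obstacle is matching the last condition with ``generated by finitely many primitive elements.'' In one direction, the Hopf algebra attached to $\q$ is $\RU(\q)$, which is generated by the primitive elements $\q$. By PBW, as a coalgebra $\RU(\q)\cong \mathrm{S}(\q)$, which is the dual of the formal power series algebra on $\q^*$; so it lies in (2). In the other direction, take a Hopf algebra $H$ generated by a finite-dimensional space of primitives $P_0$. The space $P(H)$ of all primitive elements is a Lie algebra, and $H$ is generated by the Lie subalgebra $\q$ generated by $P_0$. The natural map $\RU(\q)\to H$ is then a surjective Hopf homomorphism, and it is injective because it is injective on primitives (Heyneman–Sweedler / Milnor–Moore over characteristic $0$). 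It remains to show that $\q$ is finite-dimensional. For this I use that $H$ is a formal coalgebra of some degree $k$, so its primitive space has dimension $k$, and $\q\subset P(H)$ forces $\dim\q\le k$. Conversely, in the formal coalgebra case the primitives of $\mathrm{S}(V)$ are $V$, and Milnor–Moore gives $H\cong\RU(P(H))$, which is generated by primitives. With both directions established, all four subcategories coincide with the restrictions of the equivalences of Theorem \ref{thm:eqFP}, and the theorem follows.
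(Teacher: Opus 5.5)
Your strategy inverts the logical order of the paper. The paper does not derive Theorem \ref{thm:eqFPpre} from Theorem \ref{thm:eqFP}. It quotes it as the classical formal Lie theory theorem (\cite[(14.2.3)]{Ha}, \cite[Theorem 5.18]{MM}), realized by the functors $\q\mapsto\RU(\q)$ and $A\mapsto\mathrm{Prim}(A)$, strong duality, and $C\mapsto(\{e\},C)$, $G\mapsto\CO_G(G)$. It then feeds this theorem into the proof of Theorem \ref{thm:eqFP}. Three results there are obtained from it: Proposition \ref{prop:Liealg}(a), that $\RU(\g)\to\mathrm{Dist}_e(G)$ is an isomorphism; Remark \ref{rmk:Ge=Lg}, that $G_{\{e\}}=\mathcal G_{\g}$, which is needed to define the homomorphism $\psi$ of Lemma \ref{lem:G}; and the existence of $\wh{d\varphi}$ in Section \ref{sec:proofofmain}. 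So your first step assumes precisely what is to be shown. That step reads off the equivalence between (1) and (4) from the Lie pair equivalence, matching trivial $L$ with infinitesimal $G$ via $(\q,\{e\})\mapsto\mathcal G_\q$. The restrictions giving (3)$\leftrightarrow$(4) and (2)$\leftrightarrow$(3) are not circular, since Proposition \ref{prop:equitoHopf} rests only on \cite[Theorem 5.11]{CSW1} and reflexivity. The repair is therefore to drop the Lie pair detour and link (1) to (2) directly, which is what your final paragraph attempts.

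That paragraph has its own gap. Take an object $H$ of (2) and aim to show it is a Hopf formal coalgebra. You correctly obtain $H\cong\RU(\q)$, where $\q$ is the Lie algebra generated by the finitely many primitive generators. But you then bound $\dim\q$ by using that $H$ is a formal coalgebra of some degree $k$, which is the conclusion of this direction. No other argument can supply this bound, because the hypothesis as literally phrased is too weak. Consider the tensor algebra $\bigoplus_{n\geq 0}V^{\otimes n}$ on $V=\BC^2$ with $V$ primitive. It is generated by two primitive elements $x,y$. Yet its space of primitive elements is the free Lie algebra on $V$, which contains the nonzero elements $\ad_x^n(y)$ for all $n$ and so is infinite-dimensional. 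Hence this Hopf algebra is not $\RU(\q)$ for any finite-dimensional $\q$, its coalgebra is not a symmetric coalgebra in finitely many variables, and $A\mapsto\mathrm{Prim}(A)$ does not send it into (1). Category (2) must be read as the primitively generated Hopf algebras with $\dim\mathrm{Prim}(H)<\infty$, equivalently those generated by a finite-dimensional Lie subalgebra of primitive elements. This is what Milnor--Moore yields when restricted to finite-dimensional Lie algebras. Under that reading your argument closes: $H=\RU(\mathrm{Prim}(H))$, and PBW identifies $H$ with $\mathrm{S}(\mathrm{Prim}(H))$ as a coalgebra, whose strong dual is $\BC[[y_1,\dots,y_k]]$. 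Full faithfulness of $\RU$ then follows from $\mathrm{Prim}(\RU(\q))=\q$.
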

See \eqref{eq:Prim} for the definition of primitive elements in a complex Hopf algebra. Here  formal power series algebras carry the usual term-wise convergence topologies. 
It is clear that the category of those  Hopf $\wh\otimes_{\pi}$-algebras of (3) is equivalent to the category of finite-dimensional formal group laws over $\BC$ defined in \cite[p.~xvii]{Ha}. 

In Theorem \ref{thm:eqFPpre}, the equivalence of (1) and (2) is given by the functors \[\q\mapsto\RU(\q) \  \text{($\q$ is an object of (1))},\qaq A\mapsto \mathrm{Prim}(A)\ \text{($A$ is an object of (2))},\]where $\mathrm{Prim}(A)$ is the set of primitive elements of $A$ (see \eqref{eq:Prim}). The equivalence of (2) and (3) is given by the functors \[A\mapsto A' \quad \text{($A$ is an object of (2))},\qaq C\mapsto C'\quad \text{($C$ is an object of (3))},\] where $A$ is equipped with the finest locally convex topology, and $C'$ is an object of (2) as an abstract Hopf algebra. The equivalence of (3) and (4) is given by the functors \[C\mapsto (\{e\},C)\ \text{($C$ is an object of (3))},\qaq G\mapsto \CO_G(G) \ \text{($G$ is an object of (4))}. \]



The organization of the paper is as follows. Section 2 recalls tangent vectors on formal manifolds and introduces the complex Lie algebras associated with formal Lie groups. We then move on to Section 3, which discusses formal Lie subgroups and their basic properties. In Section 4, we study formal actions of formal Lie groups on formal manifolds, with an emphasis on properties related to stabilizer formal Lie subgroups and fixed points. 
Section 5 introduces isotropy representations in the context of formal Lie groups and constructs a functor from the category of formal Lie groups to that of Lie pairs. Section 6 examines semidirect products and normal formal Lie subgroups. Section 7 is devoted to quotients arising from formal actions, particularly quotients of formal Lie groups.
Section 8 constructs a formal Lie group associated to a Lie pair. 
Section 9 presents the proof of Theorem \ref{thm:eqFP}. Finally, Appendix A provides the basic definitions concerning topological tensor products and topological Hopf algebras used throughout the paper.

\section{Complex Lie algebras of formal Lie groups}
\subsection{Complex tangent spaces}
Let $(M,\CO_M)$ be a formal manifold and $a\in M$. 
Recall from \cite[Section 2.3]{CSW1} that the formal stalk at $a$ is the local $\BC$-algebra
\be\label{eq:dualofpower} \wh\CO_{M,a}:=\varprojlim_{r\in \BN} \CO_{M,a}/\m_{M,a}^r,\ee where $\m_{M,a}$ is the maximal ideal of the stalk  $\CO_{M,a}$. Equip $\wh\CO_{M,a}$ with the projective limit topology. Then  \be \label{eq:whO=series}\wh\CO_{M,a}\cong\BC[[x_1,x_2,\dots,x_n,y_1,y_2,\dots,y_k]] \quad (n:=\dim_a M, k:=\deg_a M)\ee 
as $\wh\otimes_{\pi}$-algebras (see \cite[Proposition 2.6]{CSW1}), where the formal power series algebra carries the usual term-wise convergence topology.

Recall from \cite[Section 5.3]{CSW2} the space $\mathrm{Dist}_a(M)$ of formal distributions on $M$ supported at $a$, which is equipped with the subspace topology of $\RD^{-\infty}_c(M;\CO_M)$. By \cite[Lemma 5.26 and Lemma A.12]{CSW2},
there are identifications  \be\label{eq:Dist'} \mathrm{Dist}_a(M)=(\wh\CO_{M,a})'\ee  and  \be\label{eq:DistDist'} \mathrm{Dist}_a(M)\wh\otimes_{\mathrm{i}}\mathrm{Dist}_a(M)=(\wh\CO_{M,a}\wh\otimes_\pi\wh\CO_{M,a})'\ee of LCS. 
Then the space $\mathrm{Dist}_a(M)$ becomes an
$\wh\otimes_{\mathrm{i}}$-coalgebra: the comultiplication is the transpose of
the multiplication of $\wh\CO_{M,a}$, and the
counit is the transpose of the unit of $\wh\CO_{M,a}$. Furthermore, by
\eqref{eq:whO=series},
\be\label{eq:Disttopoly}
\mathrm{Dist}_a(M)\cong
\C[x_1^*,x_2^*,\dots,x_n^*,y_1^*,y_2^*,\dots,y_k^*]
\ee
as $\wh\otimes_{\mathrm{i}}$-coalgebras, where $\C[x_1^*,x_2^*,\dots,x_n^*,y_1^*,y^*_2,\dots,y_k^*]$ is equipped with 
the finest locally convex topology (see \cite[Example B.3]{CSW2}). 


We make the following definition (see \cite[Definition 2.1 and Lemma 2.5]{CSW4}). 
\begin{dfn}\label{def:tangent}
    A \textbf{tangent vector}  of $M$ at $a$ is a linear functional
$\eta :\wh\CO_{M,a}\rightarrow \BC$ such that 
\be\label{eq:tangent} \eta(f_1f_2)=f_1(a)\cdot \la \eta,f_2\ra+\la\eta,f_1\ra\cdot f_2(a)\quad \text{for all }f_1,f_2\in \wh\CO_{M,a}.\ee
\end{dfn}
 Here, for every $f\in \wh\CO_{M,a}$, $f(a)$ denotes its image  under 
the quotient map  
\[\delta_a:\ \wh\CO_{M,a}\rightarrow \wh\CO_{M,a}/\wh\m_{M,a}=\C\quad(\text{$\wh\m_{M,a}$ is the maximal ideal of $\wh\CO_{M,a}$}).\] 
Write $\mathrm{T}_a(M)$ for the space of all tangent vectors of $M$ at $a$, to be called the complex tangent space of $M$ at $a$. 
Recall from  \cite[Lemma 2.2]{CSW4} that $\RT_a(M)$ is finite-dimensional, and we endow it with the Euclidean topology.
As usual, for each $\eta\in \mathrm{T}_a(M)$,  we also view it as a linear functional on $\CO_M(M)$ via the composition
\[
\CO_M(M)\rightarrow \CO_{M,a}\rightarrow \wh\CO_{M,a}\xrightarrow{\eta}\BC.\]
It is clear that  \[ \delta_a\in \mathrm{Dist}_a(M)\qaq\mathrm{T}_a(M)\subset\mathrm{Dist}_a(M).\]  

Let $\varphi=(\overline{\varphi},\varphi^*): (M,\CO_M)\rightarrow (M', \CO_{M'}) $ be a morphism between formal manifolds. Since  the composite homomorphism \[\CO_M(M)\rightarrow \CO_{M,a}\rightarrow \wh{\CO}_{M,a}\] is surjective (see \cite[Corollary 2.12]{CSW1}), there exists a unique  map 
\be \label{eq:moronformalstalk}
\varphi_a^*:\ \wh\CO_{M',\overline\varphi(a)}\rightarrow \wh\CO_{M,a}
\ee 
such that the diagram
\be \label{eq:commformalstalk} 
\begin{CD}
    \CO_{M'}(M')@> \varphi^*>> \CO_M(M) \\ @VVV @VVV \\ \wh{\CO}_{M',\overline\varphi(a)} @>\varphi^*_a>> \wh\CO_{M,a} 
\end{CD}
\ee
commutes. Moreover, the map \eqref{eq:moronformalstalk} is a continuous local homomorphism. 
By taking the transpose of 
 \eqref{eq:moronformalstalk}, we obtain  a continuous coalgebra homomorphism
\be\label{eq:{}t}
{}^{t}\varphi_a^*:\ \mathrm{Dist}_a(M)\rightarrow \mathrm{Dist}_{\overline\varphi(a)}(M'),
\ee
as well as  a linear map
\be\label{eq:diffonT}
d\varphi_a\,:=\,{}^{t}\varphi_a^*|_{T_a(M)}:\ \mathrm{T}_a(M)\rightarrow \mathrm{T}_{\overline\varphi(a)}(M'). 
\ee The map $d\varphi_a$ is called the differential of $\varphi$ at $a$.
It is obvious that
\be\label{eq:chainruleofdvarphi}   {}^{t}(\varphi'\circ\varphi)_a^* = {}^{t}(\varphi')_{\overline{\varphi}(a)}^* \circ {}^{t}\varphi_a^*\qaq d(\varphi'\circ \varphi)_a=d\varphi'_{\overline{\varphi}(a)}\circ d\varphi_a,\ee 
where $\varphi': M'\rightarrow M_1$ is another morphism of formal manifolds.

In what follows, we present some basic results on complex tangent spaces and differentials that will be used throughout the paper. 
\begin{lemd}\label{lem:T3}
    Let $M_1$, $M_2$ be two formal manifolds, and let $a_1\in M_1$, $a_2\in M_2$.
Then there are identifications
\begin{eqnarray*}
    \mathrm{Dist}_{(a_1,a_2)}(M_1\times M_2)&=&\mathrm{Dist}_{a_1}(M_1)\wh\otimes_{\Ri} \mathrm{Dist}_{a_2}(M_2)\\ &=&
\mathrm{Dist}_{a_1}(M_1)\otimes_{\Ri} \mathrm{Dist}_{a_2}(M_2)
\end{eqnarray*}
of LCS. Under the composite identification, we have that
$\delta_{(a_1,a_2)}=\delta_{a_1}\otimes\delta_{a_2}$ and 
\begin{eqnarray}
\label{eq:T=T+T}\mathrm{T}_{(a_1,a_2)} (M_1\times M_2)&=&(\mathrm{T}_{a_1}(M_1)\otimes\delta_{a_2})\oplus (\delta_{a_1}\otimes\mathrm{T}_{a_2}(M_2))\\
\nonumber  &= &\mathrm{T}_{a_1}(M_1)\oplus \mathrm{T}_{a_2}(M_2)
\end{eqnarray} as complex vector spaces. 
\end{lemd}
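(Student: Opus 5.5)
The plan is to reduce everything to formal stalks and then use the explicit polynomial descriptions \eqref{eq:whO=series} and \eqref{eq:Disttopoly}. Put $n_j:=\dim_{a_j}M_j$ and $k_j:=\deg_{a_j}M_j$. The first step is to identify the formal stalk of the product,
\[
\wh\CO_{M_1\times M_2,(a_1,a_2)}\cong \wh\CO_{M_1,a_1}\wh\otimes_\pi \wh\CO_{M_2,a_2}.
\]
By the local structure of products (\cite[Theorem 1.9]{CSW1}), near $(a_1,a_2)$ the product $M_1\times M_2$ is locally isomorphic to $(\R^{n_1+n_2})^{(k_1+k_2)}$. Hence, by \eqref{eq:whO=series}, its formal stalk is the power series algebra in the $n_1+n_2+k_1+k_2$ variables $x,y$ coming from the two factors. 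The map $f_1\otimes f_2\mapsto p_1^*f_1\cdot p_2^*f_2$, where the $p_j$ are the projections, then sends monomials to monomials. On the other hand, $\BC[[x]]\wh\otimes_\pi\BC[[x']]=\BC[[x,x']]$, because both sides are countable products of copies of $\BC$ and $\wh\otimes_\pi$ commutes with such products. So the map is a topological isomorphism. The step I expect to need the most care is checking that the product chart really sends the pulled-back coordinates of the factors to the coordinates of the product chart. This should follow from the construction of products in \cite{CSW1}.

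Next, I take strong duals. Using \eqref{eq:Dist'} and \eqref{eq:DistDist'}, I obtain
\[
\mathrm{Dist}_{(a_1,a_2)}(M_1\times M_2)=(\wh\CO_{M_1,a_1}\wh\otimes_\pi\wh\CO_{M_2,a_2})'=\mathrm{Dist}_{a_1}(M_1)\wh\otimes_{\Ri}\mathrm{Dist}_{a_2}(M_2).
\]
For the second identification I use \eqref{eq:Disttopoly}: each factor is a polynomial space with the finest locally convex topology, that is, a countable-dimensional space with its finest topology. The inductive tensor product of two spaces carrying finest locally convex topologies again carries the finest locally convex topology, since every bilinear map on such spaces is separately continuous. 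A countable-dimensional space with the finest locally convex topology is already Hausdorff and complete, so completion changes nothing. Hence $\wh\otimes_{\Ri}=\otimes_{\Ri}$. Under these identifications the pairing is $\la u_1\otimes u_2, f_1\otimes f_2\ra=\la u_1,f_1\ra\la u_2,f_2\ra$. Evaluating on $f_1\otimes f_2$ and using density of the algebraic tensor product, I conclude $\delta_{(a_1,a_2)}=\delta_{a_1}\otimes\delta_{a_2}$.

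For the tangent spaces, I note that a tangent vector is a linear functional $\eta$ satisfying \eqref{eq:tangent}. Equivalently, $\eta$ vanishes on $\BC\cdot 1$ and on $\wh\m^2$, so tangent vectors at a point are identified with $(\wh\m/\wh\m^2)^*$, the continuous dual of the degree-one part. For the product, $\wh\m/\wh\m^2$ is spanned by the classes of $x_i,y_j$ from $M_1$ together with those from $M_2$. The dual basis elements $x_i^*\otimes 1^*$ correspond to $x_i^*\otimes\delta_{a_2}$, because $\delta_{a_2}=1^*$ in the monomial dual basis. Checking dimensions, or equivalently checking the derivation property on the products $f_1\otimes f_2$ directly, then gives \eqref{eq:T=T+T}. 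The final equality in \eqref{eq:T=T+T} comes from the maps $\eta_1\mapsto\eta_1\otimes\delta_{a_2}$ and $\eta_2\mapsto\delta_{a_1}\otimes\eta_2$, which are injective because $\delta_{a_j}\neq0$; their images meet only in $0$, so they give an isomorphism onto this direct sum.
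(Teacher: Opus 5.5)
Your proposal is correct and follows essentially the paper's route: the formal-stalk identity $\wh\CO_{M_1\times M_2,(a_1,a_2)}=\wh\CO_{M_1,a_1}\wh\otimes_\pi\wh\CO_{M_2,a_2}$, dualization via \eqref{eq:Dist'} and \eqref{eq:DistDist'}, and dropping the completion via \eqref{eq:Disttopoly}. The only differences are that the paper quotes the stalk identity directly from \cite[Theorem 1.9]{CSW1} rather than re-deriving it through charts, and cites \cite[Lemma 2.2]{CSW4} for \eqref{eq:T=T+T} where you compute it by hand with dual monomial bases.
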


\begin{proof}
Recall from \cite[Theorem 1.9]{CSW1} that  \be\label{eq:O3} \wh\CO_{M_1\times M_2,(a_1,a_2)}=\wh\CO_{M_1,a_1}\wh\otimes_{\pi}\wh\CO_{M_2,a_2}\ee as LCS. Taking the continuous duals and using \eqref{eq:Dist'} and \eqref{eq:DistDist'}, we obtain that 
\begin{eqnarray*}
&&\mathrm{Dist}_{(a_1,a_2)}(M_1\times M_2)=(\wh\CO_{M_1\times M_2,(a_1,a_2)})'\\ &=&(\wh\CO_{M_1,a_1}\wh\otimes_{\pi}\wh\CO_{M_2,a_2})'
=\mathrm{Dist}_{a_1}(M_1)\wh\otimes_{\Ri} \mathrm{Dist}_{a_2}(M_2)
\end{eqnarray*} as LCS, which proves the first identification. The second one follows from \eqref{eq:Disttopoly}. 

Under the composite identification, the equality
\(\delta_{(a_1,a_2)}=\delta_{a_1}\otimes \delta_{a_2}\)
is immediate, while the assertion \eqref{eq:T=T+T} is straightforward by using \cite[Lemma 2.2]{CSW4}. \end{proof}

Given two morphisms $\varphi_1: M\rightarrow M_1$ and $\varphi_2: M\rightarrow M_2$ of formal manifolds, when no confusion is possible, we still use  
   \be\label{eq:varphi1times2} \varphi_1\times \varphi_2:\ M\rightarrow M_1\times M_2\ee to denote the composition
   \[
   M\xrightarrow{\Delta_{M}}M\times M\xrightarrow{\varphi_1\times\varphi_2}M_1\times M_2. \]
   Here and henceforth, $\Delta_M$ denotes the diagonal morphism of $M$. 
  The following lemma is routine to verify.  
\begin{lemd}\label{lem:difftimes}Let $\varphi_1=(\overline{\varphi_1},\varphi^*_1): M\rightarrow M_1$ and $\varphi_2=(\overline{\varphi_2},\varphi^*_2):M\rightarrow M_2$ be two morphisms of formal manifolds. 
	Then the differential $d(\varphi_1\times\varphi_2)_a$ of the morphism \[\varphi_1\times \varphi_2 :\ M\rightarrow M_1\times M_2\]  at $a\in M$ is given by 
\[
\begin{aligned}
\RT_a(M) &\longrightarrow \mathrm{T}_{(\overline{\varphi_1}(a),\overline{\varphi_2}(a))} (M_1\times M_2)=(\mathrm{T}_{\overline{\varphi_1}(a)}(M_1)\otimes\delta_{\overline{\varphi_2}(a)})\oplus (\delta_{\overline{\varphi_1}(a)}\otimes\mathrm{T}_{\overline{\varphi_2}(a)}(M_2)),\\ 
\eta &\longmapsto d(\varphi_1)_a(\eta)\otimes \delta_{\overline{\varphi_2}(a)}
        + \delta_{\overline{\varphi_1}(a)}\otimes d(\varphi_2)_a(\eta).
\end{aligned}
\] 
\end{lemd}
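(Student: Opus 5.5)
By the chain rule \eqref{eq:chainruleofdvarphi}, we have $d(\varphi_1\times\varphi_2)_a=d(\varphi_1\times\varphi_2)_{(a,a)}\circ d(\Delta_M)_a$, where $\varphi_1\times\varphi_2$ on the right denotes the genuine product morphism $M\times M\to M_1\times M_2$. The plan is to compute these two differentials separately, using the tensor-product description of distributions from Lemma \ref{lem:T3}. Composing the two results should give the displayed formula.

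\textbf{Step 1: the product morphism.} First I show that, under the identification $\mathrm{Dist}_{(a_1,a_2)}(M_1\times M_2)=\mathrm{Dist}_{a_1}(M_1)\otimes\mathrm{Dist}_{a_2}(M_2)$ of Lemma \ref{lem:T3},
\[
{}^t(\psi_1\times\psi_2)^*_{(a_1,a_2)}={}^t(\psi_1)^*_{a_1}\otimes{}^t(\psi_2)^*_{a_2}
\]
for arbitrary morphisms $\psi_i:N_i\to M_i$. By \eqref{eq:O3}, the formal stalk of a product is the completed projective tensor product of the formal stalks. The pullback $(\psi_1\times\psi_2)^*$ on formal stalks is determined by its values on the pullbacks of formal functions along the two projections. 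On such elements it acts as $(\psi_1)^*\otimes(\psi_2)^*$, since the product morphism commutes with the projections. By continuity and density of the algebraic tensor product, it therefore equals $(\psi_1)^*_{a_1}\wh\otimes(\psi_2)^*_{a_2}$. Transposing and pairing against elementary tensors then gives the claimed identity.

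\textbf{Step 2: the diagonal.} Next I show that for $\eta\in\RT_a(M)$,
\[
d(\Delta_M)_a(\eta)=\eta\otimes\delta_a+\delta_a\otimes\eta .
\]
By \eqref{eq:O3}, $\Delta_M^*$ on formal stalks is the continuous extension of the multiplication $f_1\otimes f_2\mapsto f_1f_2$. Pairing $\eta$ with $f_1f_2$ and applying \eqref{eq:tangent} gives $\la\eta,f_1\ra f_2(a)+f_1(a)\la\eta,f_2\ra$. This is exactly the pairing of $\eta\otimes\delta_a+\delta_a\otimes\eta$ with $f_1\otimes f_2$. Both sides are continuous, and elementary tensors span a dense subspace, so they agree.

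\textbf{Step 3: combine.} Applying ${}^t(\varphi_1)^*_a\otimes{}^t(\varphi_2)^*_a$ from Step 1 to the element from Step 2 gives
\[
d(\varphi_1)_a(\eta)\otimes{}^t(\varphi_2)^*_a(\delta_a)+{}^t(\varphi_1)^*_a(\delta_a)\otimes d(\varphi_2)_a(\eta).
\]
Since $\varphi^*_a$ is a local homomorphism, it is compatible with evaluation at the point, so ${}^t\varphi^*_a(\delta_a)=\delta_{\overline\varphi(a)}$, which yields the formula.

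The only step needing real care is Step 1, namely justifying that the formal-stalk pullback of a product morphism is the completed tensor product of the two pullbacks. I expect this to follow from the universal property of products in \cite[Theorem 1.9]{CSW1} together with \eqref{eq:O3}, plus a density argument.
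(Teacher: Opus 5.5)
Your proof is correct. The paper gives no argument for this lemma; it only says it is routine to verify. The natural comparison is therefore with the most direct verification.

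In the direct route, $\mathrm{pr}_i\circ(\varphi_1\times\varphi_2)=\varphi_i$ and pullbacks on formal stalks are multiplicative. Hence $(\varphi_1\times\varphi_2)^*_a(f_1\otimes f_2)=(\varphi_1)^*_a(f_1)\cdot(\varphi_2)^*_a(f_2)$ for $f_i$ in the formal stalk of $M_i$ at $\overline{\varphi_i}(a)$. A single application of the Leibniz rule \eqref{eq:tangent} then shows that $\eta\circ(\varphi_1\times\varphi_2)^*_a$ and $d(\varphi_1)_a(\eta)\otimes\delta_{\overline{\varphi_2}(a)}+\delta_{\overline{\varphi_1}(a)}\otimes d(\varphi_2)_a(\eta)$ agree on elementary tensors. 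Density of the algebraic tensor product, via \eqref{eq:O3}, finishes the proof.

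Your factorization through $\Delta_M$ and the chain rule runs this multiplicativity-plus-density argument twice instead of once. In exchange, it isolates two reusable facts:
\begin{itemize}
\item under Lemma \ref{lem:T3}, the transpose ${}^t(\psi_1\times\psi_2)^*$ of a product morphism is ${}^t(\psi_1)^*\otimes{}^t(\psi_2)^*$;
\item $d(\Delta_M)_a(\eta)=\eta\otimes\delta_a+\delta_a\otimes\eta$.
\end{itemize}
The first of these is the kind of computation the paper uses tacitly elsewhere, for instance in the proof of Lemma \ref{lem:tauonmuf}.

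Two inputs enter your argument without comment: continuity of the pullbacks on formal stalks, and the inclusion $\RT_a(M)\subset\mathrm{Dist}_a(M)=(\wh\CO_{M,a})'$, which your density argument in Step 2 needs. Both are recorded in the paper, so there is no gap.
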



\subsection{Complex Lie algebras of formal Lie groups} 
Let $G$ be a formal Lie group. 
Then the morphisms $m$, $\varepsilon$, and $i$ as in Definition \ref{df:Lirgroup} induce respectively the  continuous algebra homomorphisms
    \be\label{eq:co-hopfalg}
\begin{aligned}
	m^*_{(e,e)}:\wh\CO_{G,e}&\longrightarrow \wh\CO_{G\times G,(e,e)}
	=\wh\CO_{G,e}\wh{\otimes}_{\pi}\wh\CO_{G,e} \quad(\text{see \eqref{eq:O3}}),\\
	\varepsilon^*_e:\wh\CO_{G,e}&\longrightarrow \BC,
	\qquad\text{and}\qquad
	i^*_e:\wh\CO_{G,e}\longrightarrow \wh\CO_{G,e}.
\end{aligned}
\ee

A direct application of the commutative diagram \eqref{eq:commformalstalk} together with those in Definitions \ref{df:Lirgroup} and \ref{df:homoofLiegroup}
yields the following result. 

\begin{lemd}\label{lem:OeHopf}

\noindent(a)
The $\wh\otimes_{\pi}$-algebra $\wh\CO_{G,e}$, together with the comultiplication $m^*_{(e,e)}$, the counit $\varepsilon^*_e$ and the antipode $i_e^*$, 
forms a Hopf $\wh\otimes_{\pi}$-algebra. 

 \noindent(b)  Let $\varphi: G_1\rightarrow G_2$ be a homomorphism of formal Lie groups.
    Then the continuous algebra homomorphism $\varphi_e^*: \wh\CO_{G_2,e}\rightarrow \wh\CO_{G_1,e}$ is a homomorphism between Hopf $\wh\otimes_{\pi}$-algebras.
\end{lemd}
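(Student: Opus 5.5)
The plan is to transport each commutative diagram of Definitions \ref{df:Lirgroup} and \ref{df:homoofLiegroup} to the level of formal stalks. The tool is the functoriality $(\varphi'\circ\varphi)^*_a=\varphi^*_a\circ(\varphi')^*_{\overline\varphi(a)}$ of the maps \eqref{eq:moronformalstalk}. This follows from uniqueness in \eqref{eq:commformalstalk}, because $\CO_M(M)\rightarrow\wh\CO_{M,a}$ is surjective by \cite[Corollary 2.12]{CSW1}. We also need to know how product morphisms act on formal stalks: under \eqref{eq:O3}, $(\varphi_1\times\varphi_2)^*_{(a_1,a_2)}=(\varphi_1)^*_{a_1}\wh\otimes_\pi(\varphi_2)^*_{a_2}$. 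To verify this, compare both sides on the dense image of $\CO_{M_1'}(M_1')\otimes\CO_{M_2'}(M_2')$ in $\wh\CO_{M_1'\times M_2',(b_1,b_2)}$. On a pure tensor $f_1\otimes f_2$, the pullback of $f_1\otimes f_2$, viewed as the product of the pullbacks of $f_1$ and $f_2$ along the projections, is the product of $\varphi_1^*f_1$ and $\varphi_2^*f_2$ pulled back along the projections. Continuity of both sides then finishes the comparison. In the same way, the diagonal morphism induces the multiplication $\wh\CO_{M,a}\wh\otimes_\pi\wh\CO_{M,a}\rightarrow\wh\CO_{M,a}$, and the structure morphism $M\rightarrow\Spec(\BC)$ induces the unit $\BC\rightarrow\wh\CO_{M,a}$.

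For (a), note that all points involved are $e$, or $(e,e)$, $(e,e,e)$, because $\overline m(e,e)=e$ and $\overline i(e)=e$. These follow on underlying sets from \eqref{eq:diagramofidentity} and \eqref{eq:comofi}. I then apply the stalk functor at $(e,e,e)$ to \eqref{eq:chainofG}, using the product formula. This gives
\[(m^*_{(e,e)}\wh\otimes_\pi\mathrm{id})\circ m^*_{(e,e)}=(\mathrm{id}\wh\otimes_\pi m^*_{(e,e)})\circ m^*_{(e,e)},\]
which is coassociativity. Diagram \eqref{eq:diagramofidentity}, taken at $e$, gives the counit axioms $(\varepsilon^*_e\wh\otimes_\pi\mathrm{id})\circ m^*_{(e,e)}=\mathrm{id}=(\mathrm{id}\wh\otimes_\pi\varepsilon^*_e)\circ m^*_{(e,e)}$; here we identify $\wh\CO_{\Spec(\BC)\times G,e}=\BC\wh\otimes_\pi\wh\CO_{G,e}=\wh\CO_{G,e}$. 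For the antipode, write $\mathrm{id}_G\times i$ as $(\mathrm{id}\times i)\circ\Delta_G$ as in \eqref{eq:varphi1times2}. Diagram \eqref{eq:comofi} at $e$ then yields
\[\mu\circ(\mathrm{id}\wh\otimes_\pi i^*_e)\circ m^*_{(e,e)}=\eta\circ\varepsilon^*_e,\]
where $\mu$ and $\eta$ are the multiplication and unit of $\wh\CO_{G,e}$; the mirror identity comes from the other half of the diagram. The maps $m^*_{(e,e)}$, $\varepsilon^*_e$ and $i^*_e$ are continuous algebra homomorphisms, since each map \eqref{eq:moronformalstalk} is a continuous local homomorphism. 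Hence the bialgebra compatibility axioms hold automatically, and $\wh\CO_{G,e}$ is a Hopf $\wh\otimes_\pi$-algebra in the sense of the Appendix.

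For (b), a homomorphism satisfies $\overline\varphi(e)=e$, because homomorphisms commute with unit morphisms. Applying the stalk functor to \eqref{commor} at $(e,e)$ gives $m^*_{(e,e)}\circ\varphi^*_e=(\varphi^*_e\wh\otimes_\pi\varphi^*_e)\circ m^*_{(e,e)}$. The relations $\varphi\circ\varepsilon=\varepsilon$ and $\varphi\circ i=i\circ\varphi$ give compatibility with the counit and the antipode. Since $\varphi^*_e$ is already a continuous algebra homomorphism, it is a homomorphism of Hopf $\wh\otimes_\pi$-algebras.

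I expect the main obstacle to be justifying the product formula for $(\varphi_1\times\varphi_2)^*$ on formal stalks, and matching it with the identification \eqref{eq:O3}. Once that density argument is in place, everything else is formal diagram chasing.
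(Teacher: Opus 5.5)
Your proposal is correct and follows the same route as the paper. The paper simply states the lemma as a direct consequence of \eqref{eq:commformalstalk} together with the diagrams in Definitions \ref{df:Lirgroup} and \ref{df:homoofLiegroup}. You supply the details the paper leaves implicit: functoriality of the formal-stalk maps via surjectivity, and the density argument identifying $(\varphi_1\times\varphi_2)^*_{(a_1,a_2)}$ with $(\varphi_1)^*_{a_1}\wh\otimes_\pi(\varphi_2)^*_{a_2}$ under \eqref{eq:O3}.
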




The following proposition is a direct consequence of Lemma \ref{lem:OeHopf}.
\begin{prpd}\label{prop:DistasHopf} Let $G$ be a formal Lie group.
 
  \noindent(a)  Together with the multiplication  \be\label{eq:tm} {}^{t} m_{(e,e)}^*:\ \mathrm{Dist}_e(G)\wh\otimes_{\Ri}\mathrm{Dist}_e(G)=\mathrm{Dist}_{e}(G)\otimes_{\Ri} \mathrm{Dist}_{e}(G)\rightarrow \mathrm{Dist}_e(G),\ee the unit  \be\label{eq:1todeltae} {}^{t}\varepsilon_e^*:\ 
    \BC\rightarrow \mathrm{Dist}_e(G),\quad c\mapsto c\cdot \delta_e, 
    \ee and the antipode  ${}^{t} i_e^*: \mathrm{Dist}_e(G)\rightarrow \mathrm{Dist}_e(G)$, the $\wh\otimes_\mathrm{i}$-coalgebra $\mathrm{Dist}_e(G)$ becomes a Hopf $\wh\otimes_\mathrm{i}$-algebra. 

    \noindent(b)  Let $\varphi: G_1\rightarrow G_2$ be a homomorphism between formal Lie groups. Then the continuous coalgebra homomorphism \[{}^t\varphi^*_e: \ \mathrm{Dist}_e(G_1)\rightarrow \mathrm{Dist}_{e}(G_2) \] is a homomorphism between Hopf $\wh\otimes_\mathrm{i}$-algebras.
\end{prpd}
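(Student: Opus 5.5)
The plan is to dualize Lemma \ref{lem:OeHopf} term by term, using the identifications \eqref{eq:Dist'} and \eqref{eq:DistDist'}. These identify $\mathrm{Dist}_e(G)$ with $(\wh\CO_{G,e})'$ and $\mathrm{Dist}_e(G)\wh\otimes_{\Ri}\mathrm{Dist}_e(G)$ with $(\wh\CO_{G,e}\wh\otimes_\pi\wh\CO_{G,e})'$. Under the latter, the transpose of a continuous map out of a completed projective tensor product becomes a continuous map out of the completed inductive tensor product of the duals.

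\textbf{Step 1: the maps.} Because $m^*_{(e,e)}$, $\varepsilon^*_e$ and $i^*_e$ are continuous linear maps, their transposes are continuous for the strong topologies. Via \eqref{eq:DistDist'} and Lemma \ref{lem:T3}, they give the maps \eqref{eq:tm}, \eqref{eq:1todeltae} and ${}^ti^*_e$. Since $\varepsilon^*_e=\delta_e$ as a functional on $\wh\CO_{G,e}$, the transpose of $\varepsilon^*_e$ sends $c\mapsto c\,\delta_e$.

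\textbf{Step 2: the axioms.} Every Hopf axiom of Lemma \ref{lem:OeHopf}(a) is an equality of continuous linear maps between completed $\wh\otimes_\pi$-powers of $\wh\CO_{G,e}$. These are coassociativity, counit, multiplicativity of the comultiplication and counit, and the antipode identity. Transposing each equality gives the corresponding axiom for $\mathrm{Dist}_e(G)$: associativity, unit, compatibility with the coalgebra structure, and the antipode identity. The coalgebra structure of $\mathrm{Dist}_e(G)$ is by definition the transpose of the algebra structure of $\wh\CO_{G,e}$, so the compatibility axiom is exactly the dual of the compatibility in $\wh\CO_{G,e}$.

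The step needing care is that transposition respects tensor products of maps. One needs ${}^t(f\wh\otimes_\pi g)={}^tf\wh\otimes_{\Ri}{}^tg$ under \eqref{eq:DistDist'}, and the analogous identification for triple tensor products. I would check this on elementary tensors $u\otimes v$ with $u,v\in\mathrm{Dist}_e(G)$ paired against $f_1\otimes f_2$. The identification is determined by $\la u\otimes v,f_1\otimes f_2\ra=\la u,f_1\ra\la v,f_2\ra$, and elementary tensors span a dense subspace. In fact, by Lemma \ref{lem:T3} and \eqref{eq:Disttopoly}, $\mathrm{Dist}_e(G)^{\otimes r}$ is already complete with the finest locally convex topology, so no completion issues arise. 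The triple-product case follows similarly, using \eqref{eq:O3} for $G\times G\times G$. I expect this bookkeeping to be the main (though mild) obstacle.

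\textbf{Step 3: part (b).} By Lemma \ref{lem:OeHopf}(b), $\varphi_e^*$ intertwines the comultiplications, counits and antipodes. Transposing these identities, again using compatibility of transposes with tensor products, shows that ${}^t\varphi^*_e$ is multiplicative, unital and commutes with the antipodes. It is a coalgebra homomorphism because $\varphi_e^*$ is an algebra homomorphism, as already noted in \eqref{eq:{}t}. Continuity holds because transposes of continuous maps are continuous for strong topologies.
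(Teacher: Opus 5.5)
Your proposal is correct and follows the paper's route. The paper states the proposition as a direct consequence of Lemma \ref{lem:OeHopf}, i.e.\ by transposing the Hopf $\wh\otimes_\pi$-algebra structure of $\wh\CO_{G,e}$ through the identifications \eqref{eq:Dist'} and \eqref{eq:DistDist'}, exactly as you do; your bookkeeping (transposes respecting tensor products of maps, and $\mathrm{Dist}_e(G)^{\otimes r}$ already being complete by Lemma \ref{lem:T3}) just makes explicit what the paper leaves implicit.
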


For a complex Hopf algebra $A$ with the comultiplication $\Delta$,  we write $\mathrm{Prim}(A)$ for the set of primitive elements of $A$, namely,
\be \label{eq:Prim}
\mathrm{Prim}(A):=\{\eta\in A:\ \Delta(\eta)=\eta\otimes 1+1\otimes \eta\}. 
\ee It is well-known that $\mathrm{Prim}(A)$ forms a Lie algebra under the commutator bracket. 

By \eqref{eq:tm}, $\mathrm{Dist}_e(G)$ is an abstract  Hopf algebra over $\BC$ (forgetting the topology on $\mathrm{Dist}_e(G)$). Then, \be \label{eq:T=prim}\mathrm{T}_e(G)=\mathrm{Prim}(\mathrm{Dist}_e(G))\ee forms a finite-dimensional complex Lie algebra with the Lie bracket
     \[[\eta_1,\eta_2]:=\eta_1\eta_2-\eta_2\eta_1\quad (\eta_1,\eta_2\in \mathrm{T}_e(G)).\] 
Here and in the following, for $\eta_1,\eta_2\in \mathrm{Dist}_e(G)$, set 
\be \eta_1\eta_2\,:=\, {}^tm^*_{(e,e)}(\eta_1\otimes\eta_2)\in \mathrm{Dist}_e(G). \ee


 We make the following definition.
\begin{dfn}\label{def:Liealg}
    The Lie algebra  $\mathrm{T}_e(G)$ is called the \textbf{complex Lie algebra} of the formal Lie group $G$.
\end{dfn}     
Here and in the following, 
the complex Lie algebra of the formal Lie group $G$ is denoted by $\Lie_{\BC}(G)$, or simply by $\g$ if there is no confusion.  
When $G$ is a Lie group,  $\Lie_{\BC}(G)$ is just the complexified Lie algebra of $G$. In this case, we also write $\Lie(G)$ for the real Lie algebra of $G$.  

By using Proposition \ref{prop:DistasHopf}, Lemma \ref{lem:difftimes} and the commutative diagram \eqref{eq:comofi}, we obtain the following result.
\begin{lemd}
	\label{lem:inverse}
	(a) The differential $dm_{(e,e)}$ of the multiplication morphism $m:G\times G\rightarrow G $ at $(e,e)\in G\times G$ coincides with the map  \[\g\oplus \g\rightarrow \g ,\quad (\eta_1,\eta_2)\mapsto\eta_1+\eta_2.\]
    
	\noindent (b) The differential $di_e$ of the inversion morphism $i:G\rightarrow G$ at $e\in G$ is \[ \g\rightarrow \g,\quad \eta\mapsto -\eta.\]  
\end{lemd}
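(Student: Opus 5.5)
For (a), the plan is to compute $dm_{(e,e)}$ directly from the definitions, using the Hopf algebra structure on $\mathrm{Dist}_e(G)$ from Proposition \ref{prop:DistasHopf}. By \eqref{eq:T=T+T}, an element of $\RT_{(e,e)}(G\times G)=\g\oplus\g$ corresponding to $(\eta_1,\eta_2)$ is
\[
\eta_1\otimes\delta_e+\delta_e\otimes\eta_2\in \mathrm{Dist}_e(G)\otimes_{\Ri}\mathrm{Dist}_e(G)=\mathrm{Dist}_{(e,e)}(G\times G).
\]
By \eqref{eq:diffonT}, $dm_{(e,e)}$ is the restriction of ${}^tm^*_{(e,e)}$, which is the multiplication \eqref{eq:tm}. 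Hence the image is $\eta_1\delta_e+\delta_e\eta_2$. By \eqref{eq:1todeltae}, $\delta_e$ is the unit of the algebra $\mathrm{Dist}_e(G)$, so this equals $\eta_1+\eta_2$. The only thing to check is that the identification in Lemma \ref{lem:T3} is the one compatible with \eqref{eq:O3} used to define ${}^tm^*_{(e,e)}$ as a multiplication on $\mathrm{Dist}_e(G)\otimes_{\Ri}\mathrm{Dist}_e(G)$. This holds because both come from transposing the same identification \eqref{eq:O3}.

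For (b), I will differentiate the left half of diagram \eqref{eq:comofi} at $e$. The composite $G\xrightarrow{\mathrm{id}_G\times i}G\times G\xrightarrow{m}G$ factors through $\Spec(\BC)$. Since $\RT(\Spec(\BC))=0$, the chain rule \eqref{eq:chainruleofdvarphi} shows that its differential at $e$ is zero. On the other hand, $\mathrm{id}_G\times i$ means $(\mathrm{id}_G\times i)\circ\Delta_G$ in the sense of \eqref{eq:varphi1times2}, and $\overline i(e)=e$. So Lemma \ref{lem:difftimes} gives
\[
d(\mathrm{id}_G\times i)_e(\eta)=\eta\otimes\delta_e+\delta_e\otimes di_e(\eta),
\]
which corresponds to $(\eta,di_e(\eta))\in\g\oplus\g$. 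Applying part (a) and the chain rule, we get $0=\eta+di_e(\eta)$, that is, $di_e(\eta)=-\eta$.

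The main point requiring care is the fact used in (b) that $\overline i(e)=e$. This follows from the underlying diagram of topological spaces, since $\overline m(e,\overline i(e))=e$ and $\overline m(e,x)=x$ by the unit axiom \eqref{eq:diagramofidentity}. Beyond that, everything is bookkeeping with the identifications already set up in Lemma \ref{lem:T3}.
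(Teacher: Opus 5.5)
Your proposal is correct and follows the paper's route: the paper obtains (a) from the Hopf structure on $\mathrm{Dist}_e(G)$ in Proposition~\ref{prop:DistasHopf}, where $\delta_e$ is the unit, and (b) from Lemma~\ref{lem:difftimes} applied to the commutative diagram \eqref{eq:comofi}, exactly as you do. Your explicit check that $\overline{i}(e)=e$, which the paper leaves implicit, is a useful addition.
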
	

As usual, $\RU(\g)$ is a Hopf algebra. Equip $\RU(\g)$ with the canonical topology, namely, the finest locally convex topology.  Then $\RU(\g)$ becomes a Hopf $\wh\otimes_\mathrm{i}$-algebra.
\begin{prpd}\label{prop:Liealg} Let $G$ be a formal Lie group.

      \noindent (a) The algebra homomorphism \be \label{eq:Utodist}\RU(\g)\rightarrow  \mathrm{Dist}_e(G)\ee induced by the inclusion $\g\rightarrow \mathrm{Dist}_e(G)$
      is an isomorphism between Hopf $\wh\otimes_\mathrm{i}$-algebras. 
      \noindent (b) Let $\varphi: G_1\rightarrow G_2$ be a homomorphism between formal Lie groups. Then 
      the differential
      \[d\varphi_e: \ \Lie_{\BC}(G_1) \rightarrow \Lie_{\BC}(G_2) \]  is a homomorphism between Lie algebras.
\end{prpd}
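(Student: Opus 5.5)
Part (b) is formal, so I will handle it first. By Proposition \ref{prop:DistasHopf}(b), ${}^t\varphi_e^*:\mathrm{Dist}_e(G_1)\rightarrow \mathrm{Dist}_e(G_2)$ is a homomorphism of Hopf $\wh\otimes_\mathrm{i}$-algebras. It is in particular a coalgebra map, so it carries primitive elements to primitive elements, i.e.\ it maps $\mathrm{Prim}(\mathrm{Dist}_e(G_1))$ into $\mathrm{Prim}(\mathrm{Dist}_e(G_2))$. It is also multiplicative, so it preserves commutators. By \eqref{eq:T=prim} and \eqref{eq:diffonT}, its restriction to primitives is exactly $d\varphi_e$. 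Hence $d\varphi_e$ is a Lie algebra homomorphism.

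For (a), the inclusion $\g=\mathrm{Prim}(\mathrm{Dist}_e(G))\hookrightarrow \mathrm{Dist}_e(G)$ is a Lie algebra map into the commutator Lie algebra. By the universal property it therefore induces an algebra homomorphism $\Phi:\RU(\g)\rightarrow\mathrm{Dist}_e(G)$. I will proceed in four steps.

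First, $\Phi$ is a Hopf algebra map. The maps $\Delta\circ\Phi$ and $(\Phi\otimes\Phi)\circ\Delta$ are both algebra homomorphisms $\RU(\g)\rightarrow \mathrm{Dist}_e(G)\otimes\mathrm{Dist}_e(G)$, and they agree on the generators $\g$ because those are primitive. They therefore agree everywhere. The counit is handled the same way, and compatibility with the antipode is automatic for bialgebra maps between Hopf algebras.

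Second, $\Phi$ is a bijection; this is the main obstacle. By \eqref{eq:Disttopoly}, $\mathrm{Dist}_e(G)$ is isomorphic as a coalgebra to the divided-power/symmetric coalgebra $\C[x_1^*,\dots,y_k^*]$, which over $\C$ is the symmetric coalgebra $S(V)$ with $V=\mathrm{T}_e(G)$ of dimension $n+k$. Its primitive part is $V$, consistent with \eqref{eq:T=prim}. This coalgebra is connected, and its coradical filtration is the degree filtration. I would invoke the Milnor--Moore theorem (cf.\ \cite[Theorem 5.18]{MM}), which says a connected cocommutative Hopf algebra over a field of characteristic zero is the enveloping algebra of its primitives; this gives bijectivity directly.

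If I instead wanted a self-contained argument, I would compare associated graded objects along the coradical filtration. For injectivity, I would use that a coalgebra map out of the connected coalgebra $\RU(\g)$ that is injective on primitives is injective (Heyneman--Radford). Here $\Phi|_\g$ is the identity onto $\g$, and $\mathrm{Prim}(\RU(\g))=\g$ in characteristic zero. For surjectivity, I would use dimension counting on filtered pieces. By PBW and the symmetrization map, the $r$-th filtered piece $\RU(\g)_{\le r}$ has the same dimension as the space of polynomials of degree $\le r$ in $n+k$ variables, which is the $r$-th coradical piece of $\mathrm{Dist}_e(G)$. Since $\Phi$ respects coradical filtrations and is injective, it is bijective on each piece.

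Third, $\Phi$ is a homeomorphism. Both spaces carry the finest locally convex topology: $\RU(\g)$ by definition, and $\mathrm{Dist}_e(G)$ via \eqref{eq:Disttopoly}. Every linear bijection between such spaces is therefore a homeomorphism.

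Fourth, the Hopf $\wh\otimes_\mathrm{i}$-structures match. The structure on $\mathrm{Dist}_e(G)$ uses $\mathrm{Dist}_e(G)\wh\otimes_\mathrm{i}\mathrm{Dist}_e(G)=\mathrm{Dist}_e(G)\otimes_\mathrm{i}\mathrm{Dist}_e(G)$ from Lemma \ref{lem:T3}, so it reduces to the algebraic one. Hence $\Phi$ is an isomorphism of Hopf $\wh\otimes_\mathrm{i}$-algebras.
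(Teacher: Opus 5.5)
Your proposal is correct and follows the paper's proof: part (b) is deduced from Proposition~\ref{prop:DistasHopf}(b) exactly as in the paper, and part (a) is obtained by invoking the formal Lie theory theorem / Milnor--Moore \cite[Theorem 5.18]{MM}, using that $\mathrm{Dist}_e(G)$ is a connected cocommutative Hopf algebra whose coalgebra structure is given by \eqref{eq:Disttopoly}. Your extra steps spell out details the paper leaves implicit: the Hopf compatibility checked on generators, the alternative Heyneman--Radford/PBW dimension count, and the fact that a linear bijection between spaces with the finest locally convex topology is a homeomorphism.
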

\begin{proof}
     By Lemma \ref{lem:OeHopf} and \eqref{eq:whO=series}, the assertion (a) follows from the formal Lie theory theorem (see \cite[(14.2.3)]{Ha} and \cite[Theorem 5.18]{MM} or Theorem \ref{thm:eqFPpre}). 
     The assertion (b) is clear by using Proposition \ref{prop:DistasHopf}. 
\end{proof}

\begin{exampled}\label{exam:LieofLq}
 For every finite-dimensional complex Lie algebra $\q$, there is an identification 
    $\Lie_{\BC}(\mathcal{G}_{\q})=\q $
    between Lie algebras (see Example \ref{exam:Lq}).     
\end{exampled}
 



\section{Formal Lie  subgroups}
 \subsection{Formal Lie groups as functors}\label{sec:asfunc}
As in the case of algebraic groups (see  \cite[1.4]{Mil}),   a formal Lie group can be regarded as a functor from the opposite category $\mathcal{F}\mathrm{M}^{\mathrm{op}}$ of the category $\mathcal{F}\mathrm{M}$ of formal manifolds to the category $\mathcal{G}\mathrm{rp}$ of groups.

For every  formal manifold $N$, we have a functor 
\be\label{eq:N(M)} h_N:\  \mathcal{F}\mathrm{M}^{\mathrm{op}} \rightarrow \mathcal{S}\mathrm{et},\quad M\mapsto N(M)\,:=\,\mathrm{Mor}(M,N),\ee
where $\mathcal{S}\mathrm{et}$ is  the category of sets, and $\mathrm{Mor}(M,N)$ denotes the set of all morphisms $M\rightarrow N$ between formal manifolds.  
For every morphism $\varphi:N_1\rightarrow N_2$ of formal manifolds and every formal manifold $M$, we write $\varphi_M$ for the map
\be\label{eq:varphiM}
\varphi_M:\ N_1(M)\rightarrow N_2(M),\quad \psi\mapsto \varphi\circ \psi.
\ee
Then, for each formal Lie group $G$ and each formal manifold $M$, the set $G(M)$, together with the maps $m_M, \varepsilon_M$ and $i_M$, forms a group. 
Accordingly, $h_G$ can be viewed as a functor $\mathcal{F}\mathrm{M}^{\mathrm{op}}\rightarrow \mathcal{G}\mathrm{rp}$.
The following lemma follows immediately from the Yoneda lemma (see \cite[p.~61]{Sa}), and will be used frequently throughout the paper.
    \begin{lemd} \label{prop:asfunctors}
\noindent  (a)
The functor \[ \mathcal{F}\mathrm{M}\rightarrow \mathcal{F}\mathrm{un}(\mathcal{F}\mathrm{M}^{\mathrm{op}},\mathcal{S}\mathrm{et}),\quad N \mapsto  h_N \] is fully faithful, where $\mathcal{F}\mathrm{un}(\mathcal{F}\mathrm{M}^{\mathrm{op}},\mathcal{S}\mathrm{et})$ denotes the category of functors from $\mathcal{F}\mathrm{M}^{\mathrm{op}}$ to $\mathcal{S}\mathrm{et}$.

\noindent (b) The functor \[ \mathcal{F}\mathrm{LG}\rightarrow \mathcal{F}\mathrm{un}(\mathcal{F}\mathrm{M}^{\mathrm{op}},\mathcal{G}\mathrm{rp}),\quad G \mapsto  h_G\] 
is fully faithful, where $\mathcal{F}\mathrm{LG}$ is the category of formal Lie groups, and $\mathcal{F}\mathrm{un}(\mathcal{F}\mathrm{M}^{\mathrm{op}},\mathcal{G}\mathrm{rp})$ denotes the category of functors from $\mathcal{F}\mathrm{M}^{\mathrm{op}}$ to $\mathcal{G}\mathrm{rp}$. 

\noindent (c) For every formal manifold $N$ and every functor $F: \mathcal{F}\mathrm{M}^{\mathrm{op}}\rightarrow \mathcal{G}\mathrm{rp}$ such that the functor $h_N:\mathcal{F}\mathrm{M}^{\mathrm{op}}\rightarrow \mathcal S\mathrm{et}$ equals the composition 
\[
\mathcal{F}\mathrm{M}^{\mathrm{op}}\xrightarrow{F} \mathcal{G}\mathrm{rp}\xrightarrow{\textrm{the forgetful functor}} \mathcal S\mathrm{et},
\]
there is a unique formal Lie group structure on the formal manifold $N$ such that $h_N=F$ as functors from $\mathcal{F}\mathrm{M}^{\mathrm{op}}$ to  $\mathcal{G}\mathrm{rp}$.

\end{lemd}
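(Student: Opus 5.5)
The plan is to deduce all three assertions from the Yoneda lemma for the category $\mathcal{F}\mathrm{M}$, using the fact, recalled from \cite[Theorem 1.9]{CSW1}, that finite products exist in $\mathcal{F}\mathrm{M}$. This gives natural bijections $(N_1\times N_2)(M)=N_1(M)\times N_2(M)$, and $\Spec(\BC)$ is a terminal object, so $\Spec(\BC)(M)$ is a singleton.

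For (a), I will apply the Yoneda lemma directly to the locally small category $\mathcal{F}\mathrm{M}$. It says that natural transformations $h_{N_1}\to h_{N_2}$ correspond bijectively to $\mathrm{Mor}(N_1,N_2)$ via $\varphi\mapsto \varphi_{(\cdot)}$ as in \eqref{eq:varphiM}. The inverse sends a transformation $\theta$ to $\theta_{N_1}(\mathrm{id}_{N_1})$. This is exactly full faithfulness of $N\mapsto h_N$.

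For (b), faithfulness follows from (a), since a homomorphism of formal Lie groups is in particular a morphism of formal manifolds. For fullness, take a natural transformation $\theta: h_{G_1}\to h_{G_2}$ of group-valued functors. By (a) it equals $\varphi_{(\cdot)}$ for a unique morphism $\varphi: G_1\to G_2$. Then I evaluate the homomorphism property at $M=G_1\times G_1$ on the two projections $p_1,p_2\in G_1(G_1\times G_1)$. Their product in $G_1(G_1\times G_1)$ is $m\circ(p_1\times p_2)=m$. So $\varphi\circ m=\varphi_{G_1\times G_1}(p_1\cdot p_2)=(\varphi\circ p_1)\cdot(\varphi\circ p_2)=m\circ(\varphi\times\varphi)$, which is diagram \eqref{commor}. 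Conversely, one must check that a homomorphism induces group homomorphisms $G_1(M)\to G_2(M)$. This is immediate: the product of $\psi_1$ and $\psi_2$ in $G(M)$ is $m\circ(\psi_1\times\psi_2)$ with the notation \eqref{eq:varphi1times2}, so the functor is well defined.

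For (c), the group laws on $F(M)=N(M)$ are natural in $M$, so multiplication, unit and inversion give natural transformations $h_{N\times N}=h_N\times h_N\to h_N$, $h_{\Spec(\BC)}\to h_N$ and $h_N\to h_N$. By (a) these come from unique morphisms $m$, $\varepsilon$ and $i$. Associativity, unit and inverse identities hold on all $M$-points. Hence diagrams \eqref{eq:chainofG}, \eqref{eq:diagramofidentity} and \eqref{eq:comofi} commute after applying $h$, and therefore commute by faithfulness in (a). By construction the resulting group structure on $h_N$ is $F$, and uniqueness follows from the uniqueness in (a).

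The only point requiring care is the identification of products of representable functors with representable functors of products, i.e.\ that $\times$ in $\mathcal{F}\mathrm{M}$ is a categorical product. This is supplied by \cite{CSW1}.
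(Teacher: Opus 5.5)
Your proposal is correct and follows the same route as the paper. The paper simply says the lemma follows immediately from the Yoneda lemma; you supply the standard details: products of representable functors are representable because $\mathcal{F}\mathrm{M}$ has finite products, $\Spec(\BC)$ is terminal, evaluating at the projections $p_1,p_2\in G_1(G_1\times G_1)$ yields \eqref{commor}, and faithfulness from (a) transfers the group axioms and the uniqueness in (c).
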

\subsection{Formal submanifolds}
Here we recall some notions concerning formal submanifolds from \cite{CSW4}.
\begin{dfn}\label{def:immsim}
 \noindent (a) An \textbf{immersed formal submanifold} of a formal manifold  $(M,\CO_{M})$ is a pair  $(S,(j^{-1}\CO_M)/\CI)$ consisting of
\begin{itemize} 

\item a subset $S$ of $M$, equipped with a topology such that the inclusion map $j:S\to M$ is continuous; and

\item a quotient sheaf $(j^{-1}\CO_M)/\CI$ over  $S$ such that all stalks of $(j^{-1}\CO_M)/\CI$ are local rings, where $\CI$ is an ideal  of the sheaf $j^{-1}\CO_M$ of $\BC$-algebras over $S$,  
\end{itemize} subject to the following condition: the locally ringed space  $(S,(j^{-1}\CO_M)/\CI)$ over $\mathrm{Spec}(\C)$ is a formal manifold. 

\noindent (b) An immersed formal submanifold $(S,(j^{-1}\CO_M)/\CI)$ of $M$ is called an \textbf{embedded formal submanifold} if the topology of $S$ agrees with the subspace topology of $M$.


\noindent (c) An embedded  formal submanifold $(S,(j^{-1}\CO_M)/\CI)$ of $M$ is called a \textbf{closed formal submanifold} if $S$ is closed in $M$. 



\end{dfn} 
For each immersed formal submanifold $(S,(j^{-1}\CO_M)/\CI)$ of $(M,\CO_M)$,  there is a canonical morphism \be\label{eq:mapiota} \iota=(\overline\iota,\iota^*):\ (S,(j^{-1}\CO_M)/\CI)\rightarrow (M,\CO_M)\ee of formal manifolds, where $\overline{\iota}=j$ is the inclusion map, and for every open subset $U$ of $M$, $\iota^*_{U}$ is the composition map \be \label{eq:iotaU}\iota^*_{U}:\ \CO_M(U)\rightarrow (j^{-1}\CO_M)(j^{-1}(U))\rightarrow ((j^{-1}\CO_M)/\CI)(j^{-1}(U)).\ee 
We call \eqref{eq:mapiota} the inclusion morphism.

By abuse of notation, we will often not distinguish an immersed formal submanifold $(S,(j^{-1}\CO_M)/\CI)$  of $(M,\CO_M)$  
with its underlying topological space $S$, and call $\CI$ its defining ideal.


Let $\mathrm{Imm}(M)$ denote the family of all pairs $(M',\varphi)$, where $M'$ is a formal manifold and $\varphi$ is an injective immersion from $M'$ to $M$. 
As usual,  a morphism $\varphi=(\overline\varphi, \varphi^*): M'\rightarrow M$ is called an immersion if the differential $d\varphi_{b}$ (see \eqref{eq:diffonT}) is injective for every $b\in M'$, and $\varphi$ is said to be injective 
if  $\overline\varphi$ is injective.

By \eqref{eq:iotaU}, it is clear that for each immersed formal submanifold $S$ of $M$, 
the pair $(S,\iota)$ is in $\mathrm{Imm}(M)$. 
Conversely, for each pair $(M',\varphi)\in \mathrm{Imm}(M)$, there exists a unique immersed formal submanifold $S$ of $M$ and a unique isomorphism $\varphi': M'\rightarrow S$
of formal manifolds such that the diagram 
\be \label{eq:M'toS}\xymatrix{&S\ar[d]^{\iota}\\M'\ar[ru]^{\varphi'}\ar[r]^{\varphi}&M}\ee commutes (see \cite[Proposition 4.10]{CSW4}).


For each $(M',\varphi)\in\mathrm{Imm}(M)$, 
recall from \cite[Proposition 4.9]{CSW4} that $\varphi$ is a monomorphism in the category of formal manifolds. It follows that, for each formal manifold $N$ and each immersed formal submanifold $S$ of $M$, 
\be\label{inj00}
\textrm{the map $\iota_N: S(N)\rightarrow M(N)$ is injective.}
\ee

The following lemma is straightforward by using charts of infinitesimal formal manifolds (see \cite[p.~2]{CSW4}).
\begin{lemd}\label{prop:subLs} Assume that $M$ is  an infinitesimal formal manifold whose underlying set is $\{a\}$.
    Let $E$ be a $\BC$-linear subspace of $\RT_a(M)$. Then there exists an immersed formal submanifold $S_E$ of $M$ with underlying set $\{a\}$ such that $E$ equals 
    the image of the differential $d\iota_a: \mathrm{T}_a(S_E)\rightarrow \RT_a(M)$.
\end{lemd}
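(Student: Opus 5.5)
We work in a chart of $M$ at $a$. Since $M$ is infinitesimal with underlying set $\{a\}$, the structure sheaf is simply the algebra $\CO_M(M)=\CO_{M,a}$. Comparing with Definition \ref{def:formalmanifold}, we have $\dim_a M=0$, so $\CO_M(M)\cong \BC[[y_1,\dots,y_k]]$ with $k=\deg_a M$; this is the same as $\wh\CO_{M,a}$. Under \eqref{eq:Disttopoly}, the tangent space $\RT_a(M)$ is the span of the dual vectors $y_1^*,\dots,y_k^*$. Concretely, a derivation $\eta$ at $a$ is determined by the values $\eta(y_j)$, because $\eta$ kills $\wh\m_{M,a}^2$ by \eqref{eq:tangent} and is continuous. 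Hence
\[\RT_a(M)\cong (\wh\m_{M,a}/\wh\m_{M,a}^2)^*.\]

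Given $E\subset \RT_a(M)$ of dimension $r$, let $E^\perp\subset \wh\m_{M,a}/\wh\m_{M,a}^2$ be its annihilator, which has dimension $k-r$. We make a linear change of the coordinates $y_1,\dots,y_k$, which is an automorphism of $\BC[[y_1,\dots,y_k]]$. After this change we may assume that the classes of $y_{r+1},\dots,y_k$ span $E^\perp$, so that $E$ is spanned by $y_1^*,\dots,y_r^*$.

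Now take $\CI$ to be the ideal generated by $y_{r+1},\dots,y_k$, and set
\[S_E:=(\{a\},\CO_M(M)/\CI)\cong(\{a\},\BC[[y_1,\dots,y_r]]).\]
The quotient is a local ring, and it is isomorphic to $(\R^0)^{(r)}$. So $S_E$ is a formal manifold, and it is an immersed formal submanifold in the sense of Definition \ref{def:immsim}, with $j$ the identity on $\{a\}$.

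It remains to compute the image of $d\iota_a$. The map $\iota^*_a$ is the quotient map
\[\BC[[y_1,\dots,y_k]]\to \BC[[y_1,\dots,y_r]],\]
which sends $y_j\mapsto y_j$ for $j\le r$ and $y_j\mapsto 0$ for $j>r$. Its transpose sends the tangent vector $y_j^*$ of $S_E$ to $y_j^*$ in $\RT_a(M)$. Therefore $d\iota_a$ is injective with image $\mathrm{span}(y_1^*,\dots,y_r^*)=E$.

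The only mildly delicate point is the initial identification of $\RT_a(M)$ with the dual of $\wh\m/\wh\m^2$. This requires checking that every tangent vector vanishes on $\wh\m^2$, which is immediate from \eqref{eq:tangent}. One might also worry that a tangent vector is determined on a power series by its linear terms, but this holds automatically because $f-f(a)-\sum_j (\partial_j f)(a)\,y_j\in\wh\m^2$. With these points settled, everything else is routine.
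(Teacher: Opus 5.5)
Your proposal is correct and follows the paper's approach. The paper only remarks that the lemma is straightforward from charts of infinitesimal formal manifolds; you carry this out by writing $\CO_M(M)\cong\BC[[y_1,\dots,y_k]]$, choosing linear coordinates adapted to $E$, and quotienting by the ideal generated by $y_{r+1},\dots,y_k$. Your appeal to continuity of $\eta$ is unnecessary, since Definition \ref{def:tangent} does not assume it. It does no harm, because, as you note at the end, the Leibniz rule alone kills $\wh\m_{M,a}^2$, and every $f$ agrees with its affine part modulo $\wh\m_{M,a}^2$.
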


\subsection{Formal Lie subgroups}
Let $G$ be a formal Lie group. 
\begin{dfn}\label{def:subgroup} An immersed formal submanifold  $H$ of $G$ is called a \textbf{formal Lie subgroup} if
there exist morphisms $m_H: H\times H\rightarrow H$, $\varepsilon_H: \Spec(\BC)\rightarrow H$, and $i_H: H\rightarrow H$ such that the three diagrams 
\be\label{eq:diaofsubgroup} \begin{CD}
    H\times H @> m_H >>H \\ @V\iota\times \iota VV @VV\iota V \\G\times G @>m>> G,
\end{CD}\   \quad\quad \begin{CD}
     \Spec(\BC) @>\varepsilon_{H} >> H \\ @V= VV @V V \iota V \\\Spec(\BC) @>\varepsilon>> G,
\end{CD}\ \ 
\qaq \ \  \begin{CD}
     H @> i_H >>H \\ @V\iota VV @V V \iota V \\G @>i>> G
\end{CD}\ee
commute. A formal Lie subgroup $H$ of $G$ is said to be closed if it is a closed formal submanifold of  $G$.
\end{dfn}

It follows from \eqref{inj00} that the morphisms $m_H$, $\varepsilon_H$ and $i_H$ satisfying the commutative diagrams in \eqref{eq:diaofsubgroup}
are unique.
Together with the multiplication morphism $m_H$ and the unit morphism $\varepsilon_H$, the formal manifold $H$ becomes a formal Lie group. Moreover, the inclusion morphism $\iota: H\rightarrow G$ is a homomorphism between formal Lie groups. 
By \cite[Theorem 3.21]{War}, the formal Lie subgroup $H$ is closed as soon as it is an embedded formal submanifold of $G$. 
Additionally, by Proposition \ref{prop:Liealg},
 the differential $d\iota_e: \h\rightarrow \g$ of  $\iota: H\rightarrow G$ at $e$
 is an injective Lie algebra homomorphism. In view of this, we 
 will often identify $\h$ with the subalgebra $d\iota_e(\h)$ of $\g$ without further explanation. 

For every immersed formal submanifold $S$ of $G$,  it follows from Lemma \ref{prop:asfunctors} that $S$ is a formal Lie subgroup of $G$ if and only if for each formal manifold $N$, the image of the injective map 
$\iota_N: S(N)\rightarrow G(N)$ is a subgroup of $G(N)$ (see \eqref{inj00}).

Let $\mathrm{Imh}(G)$ denote the family of all pairs $(G',\varphi)$ such that $G'$ is a formal Lie group and $\varphi$ is a homomorphism between formal Lie groups that is an injective immersion as a morphism between formal manifolds. It is clear that for each formal Lie subgroup $H$ of $G$, $(H,\iota)$ is in $\mathrm{Imh}(G)$. Conversely, for each pair $(G',\varphi)$ in $\mathrm{Imh}(G)$, 
there exists a unique formal Lie subgroup $H$ of $G$ and a unique isomorphism $\varphi': G'\rightarrow H$
of formal Lie groups such that the diagram 
\be \label{eq:StoG}\xymatrix{&H\ar[d]^{\iota}\\G'\ar[ru]^{\varphi'}\ar[r]^{\varphi}&G}\ee commutes (\cf \cite[Proposition 4.10]{CSW4}).




\subsection{Examples of formal Lie subgroups}

Let $M$ be a formal manifold, and let $a\in M$.
Since the natural map \[\CO_{M,a}/\cap_{r\in \BN} \m_{M,a}^r\rightarrow \wh{\CO}_{M,a}\] is an isomorphism of $\BC$-algebras (see \cite[Proposition 2.6]{CSW1}), the locally ringed space \be  \label{eq:M_a}M_{\{a\}}:=(\{a\},\CO_{M,a}/\cap_{r\in \BN} \m_{M,a}^r)\ee is a closed formal submanifold of $M$. 
On the other hand, it is clear that the reduction \[ \underline{M}:=(M,\underline{\CO_M})\] is a closed formal submanifold of $M$. 

Let $\varphi: M\rightarrow M'$ be a morphism of formal manifolds. 
Then there are unique morphisms 
\be\label{eq:reductionvarphi}
\underline \varphi:\ \underline{M}\rightarrow \underline{M'}\qquad\text{and}\qquad \varphi_{a}:\ M_{\{a\}}\rightarrow M'_{\{\overline{\varphi}(a)\}}\ee   such that the diagrams
\be \label{eq:underlinecommdiag}
\begin{CD}
	M @>  \varphi >> M'\\
	@A\iota AA          @AA\iota A\\
	\underline{M}@> \underline \varphi >>  \underline{M'} \\
\end{CD} \qquad  \text{and}\qquad \begin{CD}
	M @>  \varphi >> M'\\
	@A \iota AA          @AA \iota A\\
	M_{\{a\}}@> \varphi_{a} >> M'_{\{\overline{\varphi}(a)\}} \\
\end{CD}
\ee
commute, respectively. Moreover, the morphism $\underline \varphi$ is a smooth map between smooth manifolds.

Let  $M_1$ and $M_2$ be two formal manifolds, and let $a_1\in M_1$, $a_2\in M_2$.
By \cite[Theorem 1.9]{CSW1}, 
there are identifications 
\be \label{eq:idpro}
\underline{M_1\times M_2}=\underline{M_1}\times \underline{M_2}
\quad\text{and}\quad
(M_1\times M_2)_{\{(a_1,a_2)\}}=M_{1,\{a_1\}}\times M_{2,\{a_2\}}
\ee
as formal manifolds.

Let $G$ be a formal Lie group. 
Using \eqref{eq:underlinecommdiag} and \eqref{eq:idpro}, we have the following examples of formal Lie subgroups.
\begin{exampled}\label{exam:subgroup} Together with the multiplication morphism  $\underline m$ and the unit morphism $\underline \varepsilon$, the reduction $\underline G$ becomes a Lie group, and it is a closed formal Lie subgroup of $G$. Similarly, together with the multiplication morphism $m_{(e,e)}$ and the unit morphism $\varepsilon_e$, the formal submanifold $G_{\{e\}}$ becomes an infinitesimal formal Lie group, and it is also a closed formal Lie subgroup of $G$. 
\end{exampled} 

 \begin{remarkd}\label{rmk:Ge=Lg}
     By  the formal Lie theory theorem (see \cite[(14.2.3)]{Ha} and \cite[Theorem 5.18]{MM} or Theorem \ref{thm:eqFPpre}), there is an identification  
     $G_{\{e\}}=\mathcal{G}_{\g}$ of infinitesimal formal Lie groups (see Example \ref{exam:Lq}). 
 \end{remarkd}

\section{Formal actions}
\subsection{Formal actions on formal manifolds}
Let $G$ be a formal Lie group and let $M$ be a formal manifold.
\begin{dfn}\label{def:formalaction}
	A (left) \textbf{formal action} of $G$ on  $M$ is
	 a morphism $\mu:G\times M \rightarrow M $ of formal manifolds such that the diagrams  
    \be \label{eq:action}\begin{CD}
    G\times G \times M @>{m\times \mathrm{id}_M}>>G\times M\\@V{\mathrm{id}_G\times \mu } VV @ VV{\mu}V\\G\times M @>{\mu}>>M
\end{CD} \ee and \be \label{eq:diamunit} \xymatrix{\mathrm{Spec}(\BC)\times M\ar[rd]_{=} \ar[r]^-{\varepsilon\times \mathrm{id}_M}&G\times M\ar[d]^\mu \\&M
} \ee are commutative. When such a formal action is given, we also say that $G$ acts formally on $M$.
\end{dfn}

We write the formal action in Definition \ref{def:formalaction} as $\mu:G\curvearrowright M$, or simply $G\curvearrowright M$ when $\mu$ is clear from the context. 


 For every formal action $\mu:G\curvearrowright M$, its reduction \[
 \underline \mu:\ \underline{G}\times \underline{M}=\underline{G\times M}\rightarrow \underline{M}\quad \text{(see \eqref{eq:idpro})}\] is a smooth action of $ \underline G$ on  $\underline M$. For every $g\in G$, the composition  \be \label{eq:defofmug}\mu_g:\ M=\mathrm{Spec}(\BC)\times M\xrightarrow{\varsigma_g\times \mathrm{id}_M} G\times M \xrightarrow \mu M\ee  
is an isomorphism of formal manifolds.
Here, for a formal manifold $N$ and $b\in N$,  
\be \label{eq:varsigma}\varsigma_b=(\overline{\varsigma_b},\varsigma_b^*):\ \Spec(\BC)\rightarrow N \ee denotes the unique morphism such that $\overline{\varsigma_b}$ maps the unique point in $\Spec(\BC)$ to $b$. 
 

The following lemma is clear by using Lemma \ref{prop:asfunctors}.
\begin{lemd}\label{lem:groupasfunctors}
      A morphism $\mu: G\times M\rightarrow M$ is a formal action if and only if 
      for every formal manifold $N$, the map \[\mu_N:\ (G\times M)(N)=G(N)\times M(N) \rightarrow M(N)\] is a group action of $G(N)$ on $M(N)$. Here $\mu_N$ is as in \eqref{eq:varphiM}. 
\end{lemd}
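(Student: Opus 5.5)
The plan is to apply the Yoneda principle, Lemma \ref{prop:asfunctors}(a), to translate the two diagrams \eqref{eq:action} and \eqref{eq:diamunit} into identities between natural transformations of functors $\mathcal{F}\mathrm{M}^{\mathrm{op}}\rightarrow\mathcal{S}\mathrm{et}$, and then to evaluate these on each formal manifold $N$. Concretely, I first record that $h$ preserves finite products: by the universal property of products in the category of formal manifolds (\cite[Theorem 1.9]{CSW1}), $(G\times M)(N)=G(N)\times M(N)$, and likewise for triple products. Moreover, under this identification, $(\varphi\times\psi)_N=\varphi_N\times\psi_N$ and $(\mathrm{id})_N=\mathrm{id}$. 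Also $\Spec(\BC)(N)$ is a singleton, since $\Spec(\BC)$ is terminal, and under this identification $\varepsilon_N$ picks out the identity element of the group $G(N)$, by the definition of the group structure on $G(N)$ via $m_N,\varepsilon_N,i_N$.

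For the forward direction, suppose $\mu$ is a formal action. I will apply $h$ to \eqref{eq:action} and \eqref{eq:diamunit}; since $h$ is a functor, the resulting diagrams of sets commute. Evaluated at $N$, \eqref{eq:action} becomes $\mu_N(m_N(g_1,g_2),x)=\mu_N(g_1,\mu_N(g_2,x))$ for $g_1,g_2\in G(N)$ and $x\in M(N)$. Likewise \eqref{eq:diamunit} becomes $\mu_N(1_{G(N)},x)=x$. These are exactly the axioms for $\mu_N$ to be a group action of $G(N)$ on $M(N)$.

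For the converse, assume each $\mu_N$ is a group action. The same computation shows that the two composites in \eqref{eq:action}, namely $\mu\circ(m\times\mathrm{id}_M)$ and $\mu\circ(\mathrm{id}_G\times\mu)$, induce equal maps $(G\times G\times M)(N)\rightarrow M(N)$ for every $N$. Here I need that these maps are precisely the components at $N$ of the natural transformations $h_{\mu\circ(m\times\mathrm{id})}$ and $h_{\mu\circ(\mathrm{id}\times\mu)}$, which follows from functoriality together with the product compatibility recorded above. Faithfulness in Lemma \ref{prop:asfunctors}(a) then gives equality of the two morphisms; alternatively, one can simply take $N=G\times G\times M$ and evaluate on the identity morphism. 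The same argument applied to $\mu\circ(\varepsilon\times\mathrm{id}_M)$ and the canonical isomorphism $\Spec(\BC)\times M\rightarrow M$ yields \eqref{eq:diamunit}.

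The only step needing care is the bookkeeping of the identification $(A\times B)(N)=A(N)\times B(N)$ and its compatibility with $\varphi\times\psi$ and with the unit. This is formal from the universal property of the product, so I expect no real obstacle.
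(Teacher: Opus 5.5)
Your proposal is correct and takes essentially the same approach as the paper, which simply declares the lemma clear from the Yoneda-type Lemma \ref{prop:asfunctors}. You spell out that argument in full: compatibility of $h$ with finite products and with the terminal object $\Spec(\BC)$, and faithfulness (or evaluation at $\mathrm{id}_{G\times G\times M}$) for the converse.
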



\begin{exampled}\label{exm:lrtranslations}
The formal Lie group $G$ acts formally on itself by left or right multiplication. As usual, 
the formal action given by right multiplication is the composition
\[G\times G\xrightarrow{\nu}G\times G\xrightarrow{\mathrm{id}_G\times i} G\times G\xrightarrow m G.\] 
\end{exampled}
Here and in the following, for any formal manifolds $M_1$ and $M_2$, let
\be\label{eq:nudef}
\nu:\ M_1\times M_2 \rightarrow M_2\times M_1
\ee
denote the switching isomorphism.

\begin{dfn} Let $\mu_1: G\curvearrowright M_1$ and $\mu_2: G\curvearrowright M_2$ be two formal actions of $G$ on formal manifolds $M_1$ and $M_2$.
	A morphism  $\varphi: M_1\rightarrow M_2$ of formal manifolds is said to be  $G$-\textbf{equivariant} if the diagram 
	 \be\label{eq:G-equidef} \begin{CD}G\times M_1 @>\mathrm{id}_G\times \varphi>> G\times M_2\\@V{\mu_1}VV @VV{\mu_2}V\\M_1 @>\varphi>> M_2\end{CD}\ee commutes.
\end{dfn}


Let $H$ be another formal Lie group. 
\begin{dfn}
	  A formal action of $G$ on $H$ as automorphisms is a formal action $\mu: G\curvearrowright H$ such that the following diagram commutes:
   \be \label{eq:H-grp} \xymatrix@R-0.5pc@C+3pc{G\times H\times H\ar[r]^-{\Delta_G\times\mathrm{id}_{H}\times \mathrm{id}_{H}}\ar[dd]_{\mathrm{id}_{G}\times m}&G\times G\times H\times H \ar[r]^{\mathrm{id}_G\times \nu \times \mathrm{id}_{H}}&G\times H\times G\times H\ar[d]^{\mu \times \mu } \\&&H\times H\ar[d]^{m}\\
   G\times H \ar[rr]^{\mu}&&H.} \ee 
When such a formal action is given, we also say that $G$ acts formally on $H$ as automorphisms.
\end{dfn}

For every formal action $\mu:G\curvearrowright H$ as automorphisms, $\underline \mu: \underline G\curvearrowright \underline H$ is a smooth action as automorphisms.

The following result follows immediately from Lemmas \ref{prop:asfunctors}  and  \ref{lem:groupasfunctors}.
\begin{lemd}\label{lem:groupsactionasauto}
    A formal action $\mu: G\curvearrowright H$ is a formal action as automorphisms if and only if for every formal manifold $N$, the group action \[\mu_N:\ G(N)\curvearrowright H(N)\] is an action as group automorphisms.  Here $\mu_N$ is as in \eqref{eq:varphiM}.
\end{lemd}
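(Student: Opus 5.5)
The plan is to use the Yoneda principle in Lemma \ref{prop:asfunctors}, exactly as in the proof of Lemma \ref{lem:groupasfunctors}. Both conditions are statements about morphisms into $H$, so they can be tested on $N$-points. We translate the diagram \eqref{eq:H-grp} into an identity between maps of sets $(G\times H\times H)(N)\rightarrow H(N)$, natural in $N$.

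First, for every formal manifold $N$ we identify $(G\times H\times H)(N)=G(N)\times H(N)\times H(N)$ using the universal property of products. Under this identification, the composite $\varphi_N$ attached to each morphism in \eqref{eq:H-grp} acts as follows:
\begin{itemize}
\item $\Delta_G\times\mathrm{id}_H\times\mathrm{id}_H$ sends $(g,h_1,h_2)$ to $(g,g,h_1,h_2)$;
\item $\mathrm{id}_G\times\nu\times\mathrm{id}_H$ sends $(g,g,h_1,h_2)$ to $(g,h_1,g,h_2)$;
\item $\mu\times\mu$ sends $(g,h_1,g,h_2)$ to $(\mu_N(g,h_1),\mu_N(g,h_2))$;
\item $m_N$ sends this pair to $\mu_N(g,h_1)\,\mu_N(g,h_2)$.
\end{itemize}
Here the products are taken in the group $H(N)$. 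The other path gives $\mu_N(g,h_1h_2)$. All of these descriptions are routine checks from the definitions, namely that $\varphi_N$ is post-composition and that the diagonal and switching morphisms are characterized by their components.

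Second, by Lemma \ref{prop:asfunctors}(a), the functor $N\mapsto h_N$ is faithful. So two morphisms $G\times H\times H\rightarrow H$ coincide if and only if the induced maps on $N$-points coincide for all $N$. One can even take just $N=G\times H\times H$ and the identity point. Hence \eqref{eq:H-grp} commutes if and only if $\mu_N(g,h_1h_2)=\mu_N(g,h_1)\mu_N(g,h_2)$ holds for all $N$ and all $g\in G(N)$ and $h_1,h_2\in H(N)$.

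Finally, we already know that $\mu_N$ is a group action by Lemma \ref{lem:groupasfunctors}. So this multiplicativity says precisely that each $g\in G(N)$ acts by a group endomorphism of $H(N)$. Such an endomorphism is bijective because $g^{-1}$ acts as its inverse, so it is an automorphism. The converse direction reads the same equivalence backwards.

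The only mildly delicate point is the bookkeeping in the first step: checking that the composite through $\Delta_G$ and $\nu$ really corresponds to $(g,h_1,h_2)\mapsto(g,h_1,g,h_2)$ on points. This follows because a morphism into a product is determined by its projections, and the projections of the two sides agree.
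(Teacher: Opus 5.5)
Your proposal is correct and follows the paper's route. The paper derives this lemma immediately from the Yoneda lemma (Lemma \ref{prop:asfunctors}) together with Lemma \ref{lem:groupasfunctors}, and your pointwise translation of \eqref{eq:H-grp} into $\mu_N(g,h_1h_2)=\mu_N(g,h_1)\mu_N(g,h_2)$ is exactly the unpacking of that argument.
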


\begin{exampled}\label{exam:congudef}
   The formal Lie group $G$ acts formally on itself by inner automorphisms via
\be \label{eq:Phi}
\Phi:\ G\times G \xrightarrow{(\mathrm{id}_G\times i) \times \mathrm{id}_G} G\times G \times G
\xrightarrow{\mathrm{id}_G\times \nu} G\times G\times G\xrightarrow {m\times \mathrm{id}_G} G\times G\xrightarrow{m} G.
\ee  
\end{exampled}
\subsection{Stabilizer formal Lie subgroups}
Let $\mu :G\curvearrowright M$ be a formal action, and let $a\in M$.
Form   the composition 
 \[\mu^a:\ G=G\times \Spec(\BC)\xrightarrow{\mathrm{id}_G\times \varsigma_a} G\times M\xrightarrow{\mu} M.\] 
\begin{dfn}
A \textbf{stabilizer formal Lie subgroup} at $a$ (with respect to  the formal action $\mu: G\curvearrowright M$) is a formal Lie subgroup $H$ of $G$ such that the diagram 
\be\label{eq:diamofGa} \begin{CD}
    H @>>> \Spec(\BC) \\ @V\iota VV @VV \varsigma_a V \\ G @>\mu^a>> M
\end{CD}\ee is a Cartesian diagram in the category of formal manifolds.
\end{dfn}
By the universal property of the Cartesian diagram, a stabilizer formal Lie subgroup at \(a\), if it exists, is unique. Accordingly, whenever it exists, we denote it by \(G_a\) and refer to it as the stabilizer formal Lie subgroup at \(a\).
In such case, the reduction $\underline{G_a}$ of the stabilizer formal Lie subgroup $G_a$ is the stabilizer Lie subgroup at $a$ with respect to  the smooth action $\underline{\mu}: \underline G\curvearrowright \underline M$.



Recall from \cite[Definition 1.5]{CSW4} the notion of standardizability near a point $b\in M_1$  for a morphism \(\varphi:M_1\to M_2\) of formal manifolds.
\begin{prpd}\label{prop:stabl}

Let $\mu :G\curvearrowright M$ be a formal action, and let $a\in M$.

\noindent (a) For every $g\in G$, the morphism $\mu^a$ is standardizable near $g$.

\noindent (b) There exists a unique closed formal Lie subgroup of $G$ such that it is the stabilizer formal Lie subgroup at $a$.    
\end{prpd}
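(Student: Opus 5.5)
Part (a) rests on equivariance: $\mu^a$ has constant rank in the right sense, and translating by group elements reduces standardizability near any $g$ to standardizability near $e$.

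\emph{Equivariance.} The morphism $\mu^a: G\to M$ is $G$-equivariant, with $G$ acting on itself by left multiplication and on $M$ by $\mu$. This follows from the associativity diagram \eqref{eq:action}, or functorially from Lemma \ref{lem:groupasfunctors}: for every formal manifold $N$ we have $\mu^a_N(g\cdot h)=g\cdot \mu^a_N(h)$ in $M(N)$.

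\emph{Reduction to $e$.} For $g\in G$, left translation $L_g:=m\circ(\varsigma_g\times\mathrm{id}_G)$ is an automorphism of the formal manifold $G$ sending $e$ to $g$. Moreover $\mu_g\circ\mu^a=\mu^a\circ L_g$, and $\mu_g$ is an isomorphism of $M$ (see \eqref{eq:defofmug}). Standardizability near a point is invariant under composing with isomorphisms on both sides (this should follow directly from \cite[Definition 1.5]{CSW4}). So it suffices to prove standardizability of $\mu^a$ near $e$.

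\emph{Standardizability near $e$.} The plan is to use a criterion from \cite{CSW4} stating that a morphism is standardizable near $b$ if it has "locally constant rank" near $b$ in the formal sense, meaning that both the reduced smooth map and the formal stalk maps have constant rank behaviour. I would establish this from equivariance as follows.
\begin{itemize}
\item The reduction $\underline{\mu^a}:\underline G\to\underline M$ is the orbit map of the smooth action $\underline\mu$, hence has constant rank.
\item On the formal stalks, by Lemma \ref{lem:T3} and Proposition \ref{prop:Liealg}, ${}^t(\mu^a)^*_g$ identifies with $\mathrm{Dist}_g(G)=\delta_g\cdot\RU(\g)\to \mathrm{Dist}_{\overline{\mu^a}(g)}(M)$. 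Its kernel is the translate $\delta_g\cdot(\RU(\g)\mathfrak k)$, where $\mathfrak k:=\ker d\mu^a_e$.
\item Equivariance under the infinitesimal group $G_{\{e\}}$ (Remark \ref{rmk:Ge=Lg}) shows that $\mathfrak k$ is a subalgebra of $\g$, and that the kernel at $e$ equals $\RU(\g)\mathfrak k$.
\end{itemize}
This uniformity of the kernel across points is what I expect to feed into the standardizability criterion. I regard this step as the main obstacle: it requires matching the precise definition in \cite{CSW4}. The first approach I would try is to build local charts. Near $e$, choose coordinates on $G$ adapted to a splitting $\g=\mathfrak k\oplus\mathfrak c$, using an exponential-type chart from $G_{\{e\}}$ combined with a smooth slice for $\underline G$. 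Then check that $\mu^a$ becomes a projection followed by an immersion.

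For (b), the standardizability from (a) implies, by the corresponding result of \cite{CSW4}, that the fibre product $G\times_M\Spec(\BC)$ of $\mu^a$ and $\varsigma_a$ exists. It is represented by a closed formal submanifold $H$ of $G$ with underlying set $\overline{\mu^a}^{-1}(a)$, and the square \eqref{eq:diamofGa} is Cartesian. It remains to show that $H$ is a formal Lie subgroup. By the functorial criterion after \eqref{inj00}, it suffices that for each $N$ the image of $H(N)$ in $G(N)$ is a subgroup. By the Cartesian property this image is
\[
\{\psi\in G(N)\mid \mu_N(\psi,a_N)=a_N\},
\]
where $a_N$ is the constant morphism at $a$. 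This set is the stabilizer of $a_N$ under the group action $G(N)\curvearrowright M(N)$ of Lemma \ref{lem:groupasfunctors}, hence a subgroup. Closedness holds because $H$ is a closed formal submanifold. Uniqueness follows from the uniqueness of Cartesian diagrams noted before the proposition.
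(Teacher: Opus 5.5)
Your part (b) is fine. It is the paper's appeal to \cite[Proposition 1.12]{CSW4}, and your functor-of-points check, that the image of $H(N)$ in $G(N)$ is the stabilizer of the constant point $a_N$, correctly supplies the subgroup property. Your reduction of (a) from $g$ to $e$ by translation is also fine. The gap is in part (a), at exactly the step you flag as the main obstacle, and that step is the real content of (a).

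Equivariance and translation only reduce the problem to showing that the \emph{infinitesimal} morphism $(\mu^a)_e\colon G_{\{e\}}\to M_{\{a\}}$ is standardizable; this is Lemma \ref{lem:Gestand} in the paper. The criterion \cite[Theorem 1.7]{CSW4} requires constant rank on a neighbourhood \emph{together with} standardizability of every infinitesimal morphism $\varphi_{b'}\colon M_{1,\{b'\}}\to M_{2,\{\overline\varphi(b')\}}$. ``Constant rank behaviour'' of the stalk maps, as you phrase it, is not enough. A morphism of one-point formal manifolds trivially has constant rank, yet the one with pullback $\BC[[x]]\to\BC[[y]]$, $x\mapsto y^2$, is not standardizable. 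So constant rank of $\underline{\mu^a}$ and of $d(\mu^a)_g$, which you do obtain, is only the easy half. For the other half you merely assert $\ker{}^t(\mu^a)^*_e=\RU(\g)\mathfrak k$. Equivariance gives only the inclusion $\RU(\g)\mathfrak k\subset\ker$, which is the dual form of Lemma \ref{lem:tauonmuf}. You then leave open how any such statement produces a normal form.

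The paper closes this step as follows.
\begin{itemize}
\item By the infinitesimal constant rank theorem \cite[Theorem 3.6]{CSW4}, choose coordinates with $(\mu^a)^*_e(x_i)=y_i$ for $i\le r$ and $(\mu^a)^*_e(x_{r+i})=h_i$.
\item Using \cite[Lemma 3.4]{CSW1} and Lemma \ref{lem:deronOe}, arrange $D_{\tau_j}=\partial_{y_j}$ for a basis $(\tau_j)_{r<j\le k}$ of $\mathfrak k$.
\item Then $D_\tau\circ(\mu^a)^*_e=0$ forces each $h_i\in\BC[[y_1,\dots,y_r]]$, and replacing $x_{r+i}$ by $x_{r+i}-h_i(x_1,\dots,x_r)$ gives the standard form.
\end{itemize}

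Your dual route can also be completed, but it needs two arguments you do not give.
\begin{itemize}
\item The reverse inclusion. For instance, $\RU(\g)/\RU(\g)\mathfrak k\cong S(\g/\mathfrak k)$ as coalgebras by PBW. A coalgebra map out of a connected coalgebra that is injective on $\BC\oplus\mathrm{Prim}$ is injective (Heyneman--Radford), and here injectivity on $\BC\oplus\mathrm{Prim}$ holds by the definition of $\mathfrak k$.
\item The passage to coordinates. You need that $\wh\CO_{M,a}/\ker(\mu^a)^*_e$ is then a power series ring in $r$ variables, topologically generated by the images of $x_1,\dots,x_r$. This forces the image of $(\mu^a)^*_e$ to be $\BC[[y_1,\dots,y_r]]$.
\end{itemize}
As written, the proposal identifies the right object but does not prove (a).
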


The following result is straightforward by Proposition \ref{prop:stabl}.
\begin{cord}\label{cor:kervarG}
    Let $\varphi: G_1\rightarrow G_2$ be a homomorphism between formal Lie groups. Then there exists a unique closed  formal Lie subgroup $\ker(\varphi)$ such that the diagram  
    \be \begin{CD}
    \ker(\varphi) @>>> \Spec(\BC) \\ @V\iota VV @VV \varepsilon V \\ G_1@>\varphi>> G_2
\end{CD}\ee is a Cartesian diagram in the category of formal manifolds.
    
\end{cord}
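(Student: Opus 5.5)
The plan is to realize $\ker(\varphi)$ as a stabilizer formal Lie subgroup and then invoke Proposition \ref{prop:stabl}(b). Consider the formal action
\[
\mu:\ G_1\times G_2\xrightarrow{\varphi\times \mathrm{id}_{G_2}} G_2\times G_2\xrightarrow{m} G_2
\]
of $G_1$ on the formal manifold $G_2$, namely left multiplication through $\varphi$. By Lemma \ref{lem:groupasfunctors}, $\mu$ is a formal action precisely when, for every formal manifold $N$, the map $\mu_N$ is a group action of $G_1(N)$ on $G_2(N)$. This holds because $\varphi_N:G_1(N)\to G_2(N)$ is a group homomorphism; that in turn follows from \eqref{commor} and the compatibility of homomorphisms with unit morphisms. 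Left multiplication of $G_2(N)$ on itself is a group action, and pulling it back along $\varphi_N$ gives the required action.

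Next, take the point $a=e\in G_2$ and compute the orbit morphism $\mu^e: G_1\to G_2$. It is the composition
\[
G_1=G_1\times\Spec(\BC)\xrightarrow{\varphi\times \varepsilon} G_2\times G_2\xrightarrow{m}G_2 .
\]
By the right half of the unit diagram \eqref{eq:diagramofidentity}, this equals $\varphi$. Also, $\varsigma_e=\varepsilon$ as morphisms $\Spec(\BC)\to G_2$. This holds because a morphism from $\Spec(\BC)$ is determined by its image point together with the local homomorphism to $\BC$, and on stalks the latter is unique, since it must be the evaluation at the point and factor through the residue field. Consequently, the Cartesian diagram \eqref{eq:diamofGa} defining the stabilizer formal Lie subgroup at $e$ for $\mu$ is literally the diagram in the statement of the corollary.

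Proposition \ref{prop:stabl}(b) now supplies a closed formal Lie subgroup $(G_1)_e$ of $G_1$ making that square Cartesian, and we set $\ker(\varphi):=(G_1)_e$. For uniqueness, suppose $H$ and $H'$ are two formal Lie subgroups both making the square Cartesian. The universal property gives a unique isomorphism $H\cong H'$ compatible with the inclusion morphisms $\iota$. By the uniqueness in \eqref{eq:M'toS} (equivalently, since $\iota$ is a monomorphism, see \eqref{inj00}), $H$ and $H'$ are then the same immersed formal submanifold of $G_1$.

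I do not expect a real obstacle. The only points requiring care are verifying $\mu^e=\varphi$ and $\varsigma_e=\varepsilon$, so that the two Cartesian squares coincide, and checking that $\mu$ is a formal action. Both reduce to functor-of-points bookkeeping via Lemma \ref{prop:asfunctors}.
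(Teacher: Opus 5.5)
Your proposal is correct and follows the paper's route. The paper only remarks that the corollary is straightforward from Proposition \ref{prop:stabl}, and you fill in that remark: the formal action $m\circ(\varphi\times \mathrm{id}_{G_2})$ of $G_1$ on $G_2$ at the point $e$, with $\varsigma_e=\varepsilon$ and $\mu^e=\varphi$, makes the stabilizer square coincide with the kernel square, and uniqueness follows from the uniqueness in \eqref{eq:M'toS}.
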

When $\varphi :G_1\rightarrow G_2$ is a homomorphism between Lie groups, $\ker(\varphi)$ is a closed Lie subgroup which is precisely the kernel of $\varphi$.  

The remainder of this subsection is devoted to proving Proposition \ref{prop:stabl}. We first fix some notation that will be used frequently throughout this paper.

Let $E_1$ and $E_2$ be two complete LCS. For  $\eta_1\in E_1'$ and $\eta_2\in E'_2$, let 
\be\label{eq:ra12} \la\eta_1, u\ra_1\qaq\la\eta_2,u\ra_2\ee denote the image of $u\in E_1\wh\otimes_{\pi} E_2$ under the continuous maps
\[E_1\wh\otimes_{\pi} E_2\xrightarrow{\eta_1\otimes \mathrm{id}_{E_2}}E_2\qaq E_1\wh\otimes_{\pi} E_2\xrightarrow{\mathrm{id}_{E_1}\otimes\eta_2 }E_1,\] respectively. 
Let $E_3$ be another complete LCS.  For every $\eta_3\in E_3'$, let
\be\label{eq:ra123} \la\eta_1\otimes\eta_2,v\ra_{1,2}\qaq\la\eta_2\otimes\eta_3,v\ra_{2,3} \ee denote 
the image of $v\in E_1\wh\otimes_{\pi} E_2\wh\otimes_{\pi} E_3$ under the continuous maps
\[E_1\wh\otimes_{\pi} E_2\wh\otimes_{\pi} E_3\xrightarrow{\eta_1\otimes\eta_2\otimes \mathrm{id}_{E_3}}E_3 \qaq E_1\wh\otimes_{\pi} E_2\wh\otimes_{\pi} E_3\xrightarrow{\mathrm{id}_{E_1}\otimes\eta_2\otimes\eta_3}E_1,\]
respectively. 
For every $\eta\in \g$, define a 
  $\BC$-linear map \[ D_\eta:\ \wh\CO_{G,e}\rightarrow \wh{\CO}_{G,e}, \quad f\mapsto \la\eta, m^*_{(e,e)}(f) \ra_2.\]
  

As usual, for a commutative unital $\BC$-algebra $A$, an element $D\in \Hom_{\BC} (A,A)$ is called a derivation if
\[D(b_1b_2)=D(b_1)b_2+b_1D(b_2)\quad \text{for all $b_1,b_2\in A$.}\] 


\begin{lemd}\label{lem:deronOe} For every $\eta\in \g$, $D_\eta$ is a derivation. 
\end{lemd}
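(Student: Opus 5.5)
The plan is to combine two facts: $m^*_{(e,e)}$ is an algebra homomorphism, and $\eta$ obeys the Leibniz rule \eqref{eq:tangent}. Fix $\eta\in\g=\RT_e(G)$ and write $\Delta:=m^*_{(e,e)}:\wh\CO_{G,e}\rightarrow \wh\CO_{G,e}\wh\otimes_\pi\wh\CO_{G,e}$. By Lemma \ref{lem:OeHopf}(a), $\Delta$ is a continuous algebra homomorphism, so $\Delta(f_1f_2)=\Delta(f_1)\Delta(f_2)$. Moreover, $\eta$ is continuous on $\wh\CO_{G,e}$, since $\RT_e(G)\subset \mathrm{Dist}_e(G)=(\wh\CO_{G,e})'$ by \eqref{eq:Dist'}. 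Hence the slant map $u\mapsto \la\eta,u\ra_2$ of \eqref{eq:ra12} is a well-defined continuous linear map $\wh\CO_{G,e}\wh\otimes_\pi\wh\CO_{G,e}\rightarrow\wh\CO_{G,e}$.

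The key step is a Leibniz rule for the slant map. I will show that for all $u,v\in \wh\CO_{G,e}\wh\otimes_\pi\wh\CO_{G,e}$,
\[
\la\eta,uv\ra_2=\la\eta,u\ra_2\,\la\delta_e,v\ra_2+\la\delta_e,u\ra_2\,\la\eta,v\ra_2 .
\]
First check it on elementary tensors $u=a_1\otimes b_1$ and $v=a_2\otimes b_2$. Then $uv=a_1a_2\otimes b_1b_2$, so the left side equals $a_1a_2\,\eta(b_1b_2)$. By \eqref{eq:tangent} this is $a_1a_2\bigl(\eta(b_1)b_2(e)+b_1(e)\eta(b_2)\bigr)$, which is exactly the right side. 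Both sides are bilinear in $(u,v)$, so the identity holds on the algebraic tensor product. Both sides are also separately continuous in $u$ and in $v$, because multiplication on the completed tensor product algebra $\wh\CO_{G,e}\wh\otimes_\pi\wh\CO_{G,e}\cong\wh\CO_{G\times G,(e,e)}$ is continuous (see \eqref{eq:O3}) and the slant maps are continuous. Since the algebraic tensor product is dense in its completion, the identity extends to all $u,v$. This density and continuity argument is the only point that needs care, and I expect it to be routine.

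Next, apply the identity with $u=\Delta(f_1)$ and $v=\Delta(f_2)$. This gives
\[
D_\eta(f_1f_2)=D_\eta(f_1)\,\la\delta_e,\Delta(f_2)\ra_2+\la\delta_e,\Delta(f_1)\ra_2\,D_\eta(f_2).
\]
It remains to show $\la\delta_e,\Delta(f)\ra_2=f$. Here $\delta_e=\varepsilon_e^*$ is the counit, and the identity diagram \eqref{eq:diagramofidentity}, transported to formal stalks via \eqref{eq:commformalstalk}, gives the counit axiom $(\mathrm{id}\otimes\varepsilon_e^*)\circ\Delta=\mathrm{id}$, which is part of the Hopf structure in Lemma \ref{lem:OeHopf}(a). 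Substituting this yields $D_\eta(f_1f_2)=D_\eta(f_1)f_2+f_1D_\eta(f_2)$, so $D_\eta$ is a derivation.
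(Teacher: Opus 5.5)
Your argument is correct and supplies exactly the verification the paper omits; the paper only says the check is easy. You combine the Leibniz rule \eqref{eq:tangent} for $\eta$ (transported to the slant map $u\mapsto\la\eta,u\ra_2$ by density and separate continuity), the multiplicativity of $m^*_{(e,e)}$, and the counit identity $(\mathrm{id}\otimes\varepsilon_e^*)\circ m^*_{(e,e)}=\mathrm{id}$ with $\varepsilon_e^*=\delta_e$, which is what the omitted verification consists of.
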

\begin{proof}
This is easy to verify and we omit the details. 
\end{proof}
\begin{lemd}\label{lem:tauonmuf}
   Let $\tau\in\g$ with $d(\mu^a)_e(\tau)=0$. 
    Then \be\label{eq:Dtauonmuf} D_{\tau}\big((\mu^a)^*_e(f)\big)=0\ee for all $f\in \wh\CO_{M,a}$.
\end{lemd}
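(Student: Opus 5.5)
The plan is to exploit the equivariance of the orbit map $\mu^a$ with respect to left multiplication, together with the fact that a tangent vector killed by $d(\mu^a)_e$ annihilates every pullback. The equivariance we need is
\[
\mu^a\circ m=\mu\circ(\mathrm{id}_G\times\mu^a):\ G\times G\rightarrow M,
\]
which follows from diagram \eqref{eq:action} by restricting the $M$-factor along $\varsigma_a$. We pass to formal stalks at $(e,e)$ using \eqref{eq:commformalstalk} and \eqref{eq:O3}. This gives, for every $f\in\wh\CO_{M,a}$,
\[
m^*_{(e,e)}\big((\mu^a)^*_e(f)\big)=(\mathrm{id}\times\mu^a)^*_{(e,e)}\big(\mu^*_{(e,a)}(f)\big)\in \wh\CO_{G,e}\wh\otimes_\pi\wh\CO_{G,e},
\]
where $(\mathrm{id}\times\mu^a)^*_{(e,e)}=\mathrm{id}\otimes(\mu^a)^*_e$ under the identification $\wh\CO_{G\times M,(e,a)}=\wh\CO_{G,e}\wh\otimes_\pi\wh\CO_{M,a}$.

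Next, apply $\la\tau,\cdot\ra_2$ to both sides. The left side is $D_\tau\big((\mu^a)^*_e(f)\big)$ by definition. On the right side, for a general element $u\in\wh\CO_{G,e}\wh\otimes_\pi\wh\CO_{M,a}$, we use
\[
\la\tau,(\mathrm{id}\otimes(\mu^a)^*_e)(u)\ra_2=\la {}^t(\mu^a)^*_e(\tau),u\ra_2=\la d(\mu^a)_e(\tau),u\ra_2 .
\]
Since $d(\mu^a)_e(\tau)=0$ by hypothesis, this vanishes, which yields \eqref{eq:Dtauonmuf}.

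The first equality holds on elementary tensors $g\otimes h$, where both sides equal $\la\tau,(\mu^a)^*_e(h)\ra g$. It extends to all of $u$ because both maps $\wh\CO_{G,e}\wh\otimes_\pi\wh\CO_{M,a}\to\wh\CO_{G,e}$ are continuous: $(\mu^a)^*_e$ is continuous and $\tau$ is a continuous functional by \eqref{eq:Dist'}. Algebraic tensors are dense in the completed projective tensor product, so the two maps agree everywhere. I expect this density and continuity bookkeeping, together with checking the naturality of \eqref{eq:O3} under products of morphisms (that $(\mathrm{id}\times\mu^a)^*_{(e,e)}$ really is $\mathrm{id}\otimes(\mu^a)^*_e$), to be the only points requiring care. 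The rest is formal diagram chasing.
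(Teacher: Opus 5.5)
Your proof is correct and is essentially the paper's argument. Both proofs apply the action axiom \eqref{eq:action} on formal stalks and then note that pairing with $\tau$ in the second slot factors through ${}^t(\mu^a)^*_e(\tau)=d(\mu^a)_e(\tau)=0$. The only difference is cosmetic: you first restrict the action diagram along $\varsigma_a$ to get $\mu^a\circ m=\mu\circ(\mathrm{id}_G\times\mu^a)$, whereas the paper stays on $G\times G\times M$ and pairs with $\tau\otimes\delta_a$ in the second and third slots.
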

\begin{proof}
    By \eqref{eq:action}, 
    we have that \begin{eqnarray*} D_{\tau}\big((\mu^a)^*_e(f)\big) &=&
        \la\tau, m^*_{(e,e)}(\mu^a)^*_e(f)\ra_2\\ &=& \la\tau\otimes\delta_a, (m\times\mathrm{id}_M)^*_{(e,e,a)}\mu^*_{(e,a)}(f)\ra_{2,3}\\
        &=&\la \tau\otimes\delta_a,  (\mathrm{id}_G\times \mu)^*_{(e,e,a)}\mu^*_{(e,a)}(f)\ra_{2,3}. \end{eqnarray*} The lemma then follows by noting that 
         \[\la\tau\otimes \delta_a, \mu^*_{(e,a)}f\ra=\la\tau, \la \delta_a,\mu^*_{(e,a)}f\ra_2\ra=\la d(\mu^a)_e(\tau), f \ra=0.\]
\end{proof}

\begin{lemd}\label{lem:Gestand}
   The morphism $(\mu^a)_{e}: G_{\{e\}}\rightarrow M_{\{a\}}$ is standardizable near $e\in G_{\{e\}}$.
\end{lemd}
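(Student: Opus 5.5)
The plan is to reduce everything to a statement about a local homomorphism of formal power series algebras, and then to put that homomorphism into normal form. By \eqref{eq:whO=series}, the morphism $(\mu^a)_e: G_{\{e\}}\rightarrow M_{\{a\}}$ is given by the continuous local homomorphism
\[\psi:=(\mu^a)^*_e:\ \wh\CO_{M,a}\cong\BC[[u_1,\dots,u_p]]\rightarrow \wh\CO_{G,e}\cong \BC[[t_1,\dots,t_n]].\]
Recall the meaning of standardizability near $e$ from \cite[Definition 1.5]{CSW4}, in this infinitesimal setting. We must find coordinates $t_1,\dots,t_n$ and $u_1,\dots,u_p$ and an integer $r$ such that $\psi(u_i)=t_i$ for $i\le r$ and $\psi(u_i)=0$ for $i>r$. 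Here $r$ is the rank of $d(\mu^a)_e$.

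Put $\mathfrak k:=\ker d(\mu^a)_e\subset\g$ and choose a complement $\mathfrak p$, so that $\dim\mathfrak p=r$.

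\textbf{Step 1: identify the subalgebra where the image of $\psi$ lives.} By Lemma \ref{lem:tauonmuf}, the image of $\psi$ lies in
\[A:=\{f\in\wh\CO_{G,e}: D_\tau f=0 \text{ for all } \tau\in\mathfrak k\}.\]
By Lemma \ref{lem:deronOe}, $A$ is a closed subalgebra. I will describe $A$ using Proposition \ref{prop:Liealg}, which identifies $\wh\CO_{G,e}=(\RU(\g))'$.

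Unwinding the definition of $D_\tau$ gives $\langle u, D_\tau f\rangle=\langle u\tau, f\rangle$ for $u\in\RU(\g)$. Hence $A$ is the annihilator of the left ideal $\RU(\g)\mathfrak k$. I claim that $\mathfrak k$ is a Lie subalgebra. Indeed, by Lemma \ref{lem:tauonmuf}, every element of $\mathfrak k$ annihilates the image of $\psi$, viewed through $\delta_e\circ D_\tau=\tau$; and $[D_{\tau_1},D_{\tau_2}]=\pm D_{[\tau_1,\tau_2]}$. So $[\tau_1,\tau_2]$ also kills the image of $\psi$, and hence lies in $\mathfrak k$.

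Granting this, PBW gives $\RU(\g)/\RU(\g)\mathfrak k\cong \RU(\g)\otimes_{\RU(\mathfrak k)}\BC\cong S(\mathfrak p)$ as coalgebras, because $\RU(\g)\mathfrak k$ is a coideal. Dualizing, $A\cong S(\mathfrak p)'\cong\BC[[t_1,\dots,t_r]]$. More concretely, I will choose coordinates $t_1,\dots,t_n$ of $\wh\CO_{G,e}$ such that $A=\BC[[t_1,\dots,t_r]]$ and the differentials of $t_1,\dots,t_r$ are dual to a basis of $\mathfrak p$. The obtainability of such ambient coordinates follows from the splitting $\RU(\g)\cong S(\mathfrak p)\otimes\RU(\mathfrak k)$ of coalgebras.

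\textbf{Step 2: normal form.} Now $\psi:\wh\CO_{M,a}\rightarrow A=\BC[[t_1,\dots,t_r]]$. Its transpose on tangent spaces is $d(\mu^a)_e$ restricted to $\mathfrak p$, which is injective. Hence $\psi$ is surjective on cotangent spaces $\wh\m/\wh\m^2$. I choose $u_1,\dots,u_r$ among coordinates of $M$ at $a$ whose images give $t_1,\dots,t_r$ modulo $\wh\m_A^2$, and complete them to a coordinate system $u_1,\dots,u_p$.

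By the formal inverse function theorem, $\psi|_{\BC[[u_1,\dots,u_r]]}$ is an isomorphism onto $A$. So after changing the $t_i$, $i\le r$, I may assume $\psi(u_i)=t_i$ for $i\le r$. For $i>r$, write $\psi(u_i)=g_i(t_1,\dots,t_r)$ and replace $u_i$ by $u_i-g_i(u_1,\dots,u_r)$; this is still a coordinate system and now $\psi(u_i)=0$. This is exactly the standard form required by standardizability.

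\textbf{Main obstacle.} The hardest step is Step 1: identifying the common kernel $A$ with a power series algebra in exactly $r$ variables that is cut out by part of a coordinate system of $\wh\CO_{G,e}$. This is a formal Frobenius theorem for the family $\{D_\tau\}_{\tau\in\mathfrak k}$. I would try first the PBW/coideal duality argument above, because it avoids integrating vector fields. If the compatibility with coordinates is delicate, the fallback is an induction on $\dim\mathfrak k$ that straightens one derivation $D_\tau$ at a time, using $\delta_e\circ D_\tau=\tau\neq0$.
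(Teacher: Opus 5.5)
Your proof is correct, but it is organized differently from the paper's.

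The paper first applies the formal constant rank theorem to get coordinates with $\psi(x_i)=y_i$ for $i\le r$ and $\psi(x_{r+i})=h_i$. It then uses Lemmas \ref{lem:deronOe} and \ref{lem:tauonmuf} to see that the $h_i$ involve only $y_1,\dots,y_r$. Finally it invokes the standardizability criterion [CSW4, Theorem 1.7].

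You reverse the order. You first describe the common kernel $A$ of the $D_\tau$, $\tau\in\mathfrak k$, Hopf-theoretically as the dual of $\RU(\g)\otimes_{\RU(\mathfrak k)}\BC$. Only afterwards do you normalize $\psi$, and you write the standard form down directly instead of going through the criterion.

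Your PBW step is sound, but the reason you give is not enough: $\RU(\g)\mathfrak k$ being a coideal only shows that the quotient is a coalgebra. What does the work is the map $S(\mathfrak p)\otimes\RU(\mathfrak k)\to\RU(\g)$, $s\otimes u\mapsto\mathrm{sym}(s)u$. This map has three properties:
\begin{itemize}
\item it is a coalgebra map, being a composite of coalgebra maps in a bialgebra;
\item it is a filtered isomorphism, since its associated graded is multiplication $S(\mathfrak p)\otimes S(\mathfrak k)\to S(\g)$;
\item it carries $S(\mathfrak p)\otimes\RU(\mathfrak k)^+$ onto $\RU(\g)\mathfrak k$, where $\RU(\mathfrak k)^+$ is the augmentation ideal (this uses that $\mathfrak k$ is a subalgebra, which you proved).
\end{itemize}
Dualizing then exhibits $A$ as $\BC[[t_1,\dots,t_r]]$ inside a coordinate system of $\wh\CO_{G,e}$.

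Your route gives a coordinate-free description of $A$. The paper's ordering gives brevity, because it makes your \emph{main obstacle} disappear. Choose $u_1,\dots,u_r$ so that $t_i:=\psi(u_i)$, $i\le r$, extend to a coordinate system $t_1,\dots,t_n$ of $\wh\CO_{G,e}$; this is possible because $d(\mu^a)_e$ has rank $r$. Then:
\begin{itemize}
\item Lemma \ref{lem:tauonmuf} gives $D_\tau t_i=0$ for $i\le r$, so $D_\tau=\sum_{l>r}D_\tau(t_l)\,\partial_{t_l}$.
\item $\mathfrak k=\{\tau\in\g:\tau(t_1)=\dots=\tau(t_r)=0\}$, so for a basis $\tau_{r+1},\dots,\tau_n$ of $\mathfrak k$ the coefficient matrix $(D_{\tau_j}(t_l))_{j,l>r}$ has invertible constant term $(\tau_j(t_l))$.
\item Hence the $D_{\tau_j}$ span the same $\wh\CO_{G,e}$-module as $\partial_{t_{r+1}},\dots,\partial_{t_n}$, and $A=\BC[[t_1,\dots,t_r]]$ follows at once, with no Frobenius or PBW argument.
\end{itemize}
This weak form is also the right reading of the paper's normalization $D_{\tau_j}=\partial_{y_j}$. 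Since $[D_{\tau_1},D_{\tau_2}]=D_{[\tau_1,\tau_2]}$ and $\delta_e\circ D_\tau=\tau$, that normalization cannot hold literally when $\mathfrak k$ is non-abelian. Your Hopf-algebraic route handles non-commuting $D_\tau$ without needing that reinterpretation.
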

\begin{proof}
  Let $r$ be the rank of the differential of $(\mu^a)_{e}$ at $e$.
    Using the constant rank theorem of formal manifolds (see \cite[Theorem 3.6]{CSW4}), choose a coordinate system $(x_1,x_2,\dots, x_n)$ of $ M_{\{a\}}$ and  a coordinate system $(y_1,y_2,\dots,y_k)$ of $G_{\{e\}}$ such that the morphism $(\mu^a)_{e}$ is given by 
    \[(x_1,x_2,\dots,x_r,x_{r+1},\dots,x_n)\mapsto (y_1,y_2,\dots,y_r,h_{1},\cdots,h_{n-r}),\] 
   where \[h_i=\sum_{j=1}^{r}c_{ij}y_j+\sum_{J\in \BN^{k};|J|\geq 2}c_{iJ}y^J \quad (i=1,2,\dots,n-r)\] 
   for some $c_{ij},c_{iJ}\in \BC$. 
   Here for each $k$-tuple $J=(j_1,j_2,\dots,j_k)\in \BN^k$, we use  the following usual multi-index notations:
\[  |J|=j_1+j_2+\cdots+j_k,\qaq
  y^J=y_1^{j_1}y_2^{j_2}\cdots y_k^{j_k}.
\] By \cite[Lemma 3.4]{CSW1} and Lemma \ref{lem:deronOe}, we may (and do) further assume that 
    \be\label{eq:Dtau=y} D_{\tau_j}=\partial_{y_j}  \quad(r+1\leq j\leq k),\ee where $\{\tau_j\}_{r+1\leq j\leq k}$ is a basis of the kernel of the differential of $(\mu^a)_e$ at $e$. 

By \eqref{eq:Dtauonmuf} and \eqref{eq:Dtau=y}, it follows that $c_{i,J}=0$ whenever
$1\le i\le n-r$ and
$J\in \BN^{k}\setminus (\BN^{r}\times \{0\}^{k-r})$ with $|J|\ge 2$.  Then it is routine to verify that $(\mu^a)_e$ satisfies the criterion in \cite[Theorem 1.7]{CSW4} for a morphism to be standardizable near a point. Hence the lemma follows.
\end{proof}

\noindent\textbf{Proof of Proposition \ref{prop:stabl}:} 
The assertion (b) follows from the assertion (a) and \cite[Proposition 1.12]{CSW4}. 
Therefore, it remains to prove the assertion (a).

By \cite[Theorem 1.7]{CSW4}, a morphism $\varphi: M_1\rightarrow M_2$ of formal manifolds is standardizable near a point $b\in M_1$ if and only if there is an open neighborhood  $U$ of $b$ in  $M_1$, such that $\varphi|_{U}: U\rightarrow M_2$ has constant rank, and that  for each point $b'\in U$, the morphism 
$\varphi_{b'}: M_{1,\{b'\}}\rightarrow M_{2,\{\overline{\varphi}(b')\}}$ is standardizable. Therefore, it suffices to show that 
\be\label{eq:standardizable}
\begin{aligned}
&\mu^a \text{ has constant rank, and for all } g\in G,\\ &\text{the morphism }
(\mu^a)_{g}: G_{\{g\}}\rightarrow M_{\{\overline{\mu}((g,a))\}}
\text{ is standardizable near } g\in G_{\{g\}}.
\end{aligned}
\ee

Note that the morphism $\mu^a: G\rightarrow M $ is $G$-equivariant, where $G$ carries the formal action $m: G\curvearrowright G$ given by left multiplication. 
By Lemma \ref{prop:asfunctors}, for every $g\in G$, the diagram \be\label{eq:muaG-equi} \begin{CD}
    G@>\mu^{a}>> M \\ @Vm_{g}VV @AA\mu_{g^{-1}}A \\ G @>\mu^{a}>> M
\end{CD}\ee commutes. 
By using  \eqref{eq:muaG-equi} and Lemma \ref{lem:Gestand}, it is clear that \eqref{eq:standardizable} holds. This completes the proof. \qed


\subsection{Fixed points}




Let $\mu=(\overline{\mu},\mu^*): G\curvearrowright M$ be a formal action.

\begin{dfn}\label{def:stable}
    An immersed formal submanifold $S$ of $M$ is called $G$-\textbf{stable} (with respect to the formal action $\mu: G\curvearrowright M$) if there exists a formal action $\mu': G\curvearrowright S$ such that 
    the inclusion morphism $\iota: S\rightarrow M$ is $G$-equivariant. 
\end{dfn}
It follows from \eqref{inj00} that the formal action $\mu'$ in Definition \ref{def:stable} 
is unique.
\begin{dfn}
\noindent(a)  A point $a\in M$ is said to be $G$-\textbf{fixed} if the morphism \[\varsigma_a:\ \Spec(\BC)\rightarrow M \quad \text{(see \eqref{eq:varsigma})}\] is $G$-equivariant, where $ \Spec(\BC)$ carries the trivial action of $G$.

\noindent(b) A point $a\in M$ is said to be $\underline G$-\textbf{fixed} if the closed formal submanifold $M_{\{a\}}$ of $M$ is $G$-stable (see Example \ref{exam:subgroup}).
\end{dfn}

\begin{prpd}\label{prop:GonMa}
\noindent (a) A point $a\in M$ is $G$-fixed if and only if the stabilizer formal Lie subgroup at $a$  is $G$.

\noindent (b) A point $a\in M$ is $\underline G$-fixed if and only if $\overline\mu\big((g,a)\big)=a$ for all $g\in G$.
\end{prpd}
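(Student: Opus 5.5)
For part (a), I will unwind the Cartesian-diagram definition of the stabilizer. First, $a$ is $G$-fixed precisely when $\mu\circ(\mathrm{id}_G\times\varsigma_a)$ equals the composite $G\times\Spec(\BC)\to\Spec(\BC)\xrightarrow{\varsigma_a}M$. Since $G\times\Spec(\BC)=G$, this says that $\mu^a$ factors through $\varsigma_a$ via the terminal morphism $G\to\Spec(\BC)$.

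Suppose $a$ is $G$-fixed. I claim the square with $H=G$, $\iota=\mathrm{id}_G$, and top arrow the terminal morphism is Cartesian. Take a formal manifold $N$ with morphisms $\psi:N\to G$ and $N\to\Spec(\BC)$ such that $\mu^a\circ\psi=\varsigma_a\circ(\text{terminal})$. Then $\psi$ itself is the unique factorization, since $\Spec(\BC)$ is terminal. This condition is automatic because $\mu^a\circ\psi$ factors through $\varsigma_a$. By uniqueness of the stabilizer (Proposition \ref{prop:stabl}), $G_a=G$.

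Conversely, if $G_a=G$ as a formal Lie subgroup, its inclusion is an injective immersion that is bijective onto $G$. I expect it to be an isomorphism, and this is the step I will verify carefully. One route is through the uniqueness in \eqref{eq:StoG}, since $(G,\mathrm{id}_G)$ is also in $\mathrm{Imh}(G)$. Then commutativity of \eqref{eq:diamofGa} gives $\mu^a=\varsigma_a\circ(\text{terminal})$, which is exactly the $G$-fixed condition.

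For part (b), the forward direction is the easy one. If $M_{\{a\}}$ is $G$-stable with action $\mu'$, then on underlying sets $\overline\mu(G\times\{a\})=\overline\iota(\overline{\mu'}(G\times\{a\}))\subset\{a\}$.

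For the converse, assume $\overline\mu((g,a))=a$ for all $g$, and construct $\mu':G\times M_{\{a\}}\to M_{\{a\}}$. I will consider the composite
\[\mu\circ(\mathrm{id}_G\times\iota):G\times M_{\{a\}}\to M,\]
and show that it factors through the closed formal submanifold $M_{\{a\}}$. On underlying spaces the image is $\{a\}$. On sheaves, $G\times M_{\{a\}}$ has underlying space $G\times\{a\}\cong G$, so I need $\mu^*$ to kill the ideal $\cap_r\m_{M,a}^r$ at $a$ after pulling back. For a germ $f\in\cap_r\m_{M,a}^r$ and each $g$, I will use that the pullback map on stalks is local. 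This gives $\mu^*f\in\cap_r\m^r$ at the point $(g,a)$ of $G\times M_{\{a\}}$.

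The main obstacle is this step: I need that on $G\times M_{\{a\}}$ (locally $\RC^\infty(U)[[\text{variables}]]$), a section whose germs all lie in $\cap_r\m^r$ is zero. This holds because its Taylor coefficients vanish identically in the formal and transverse directions. I will verify it using the local model of the product (\cite[Theorem 1.9]{CSW1}) and the identification $\CO_{M,a}/\cap_r\m^r\cong\wh\CO_{M,a}$. If this works, the factorization lemma for closed formal submanifolds gives $\mu'$.

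The action axioms for $\mu'$ then follow from those of $\mu$ by the monomorphism property \eqref{inj00}. Thus $M_{\{a\}}$ is $G$-stable.
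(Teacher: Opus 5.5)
Your proposal is correct and follows the paper's route: (a) is the unwinding of the Cartesian square \eqref{eq:diamofGa} with $H=G$ (the step you flag in the converse is moot, since the inclusion morphism of the formal Lie subgroup $G$ of $G$ is $\mathrm{id}_G$ by definition), and (b) is the factorization of $\mu\circ(\mathrm{id}_G\times\iota)$ through $M_{\{a\}}$ followed by the action axioms via the monomorphism property, which the paper obtains by citing \cite[Corollary 4.21]{CSW4} and Lemma \ref{lem:groupasfunctors}. The only difference is that you prove the factorization by hand; your key point---that the pullback of $f\in\cap_r\m_{M,a}^r$ is a section over all of $G\times\{a\}$ whose germs all lie in $\cap_r\m^r$, hence is zero, even though a single such germ need not vanish---is exactly what makes that work.
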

\begin{proof}
     The assertion (a) is clear by \eqref{eq:diamofGa} (replacing $H$ by $G$), while
    the assertion (b) follows from  \cite[Corollary 4.21]{CSW4} and Lemma \ref{lem:groupasfunctors}.
\end{proof}
By Proposition \ref{prop:GonMa}, every $G$-fixed point is $\underline G$-fixed. The following result follows immediately from Lemma \ref{lem:groupasfunctors} and \eqref{eq:underlinecommdiag}.
\begin{prpd}
 Let $\varphi=(\overline{\varphi},\varphi^*): M\rightarrow M'$ be a $G$-equivariant morphism, where $M'$ is another formal manifold carrying a formal action $ G\curvearrowright M'$.  Let $a\in M$ be a $ \underline G$-fixed point. Then $\overline \varphi(a)\in M'$ is also $\underline G$-fixed, and the morphism \[\varphi_a:\  M_{\{a\}}\rightarrow M'_{\{\overline\varphi(a)\}}\] is also 
    $G$-equivariant.
\end{prpd}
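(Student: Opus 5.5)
The plan is to apply the functorial criterion of Lemma \ref{lem:groupasfunctors} together with the reduction diagrams \eqref{eq:underlinecommdiag}. I treat the two assertions in turn: first that $\overline\varphi(a)$ is $\underline G$-fixed, then that $\varphi_a$ is $G$-equivariant.

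For the first assertion, Proposition \ref{prop:GonMa}(b) says that $a$ is $\underline G$-fixed exactly when $\overline{\mu}((g,a))=a$ for all $g\in G$. Equivariance of $\varphi$ means $\mu'\circ(\mathrm{id}_G\times\varphi)=\varphi\circ\mu$. Passing to underlying continuous maps gives
\[\overline{\mu'}\big((g,\overline\varphi(a))\big)=\overline\varphi\big(\overline\mu((g,a))\big)=\overline\varphi(a)\]
for all $g\in G$. Applying Proposition \ref{prop:GonMa}(b) again, now to $M'$, shows that $\overline\varphi(a)$ is $\underline G$-fixed.

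For the second assertion, let $\mu_a:G\curvearrowright M_{\{a\}}$ and $\mu'_{a'}:G\curvearrowright M'_{\{a'\}}$, with $a'=\overline\varphi(a)$, be the formal actions that make the inclusions $\iota$ equivariant; these exist by the definition of $\underline G$-fixed. We must show
\[\varphi_a\circ\mu_a=\mu'_{a'}\circ(\mathrm{id}_G\times\varphi_a)\]
as morphisms $G\times M_{\{a\}}\to M'_{\{a'\}}$. By \eqref{inj00}, or simply because $\iota$ is a monomorphism, it is enough to check this after composing with $\iota:M'_{\{a'\}}\to M'$.

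The left side composed with $\iota$ equals $\varphi\circ\iota\circ\mu_a$ by \eqref{eq:underlinecommdiag}. By equivariance of $\iota$ this is $\varphi\circ\mu\circ(\mathrm{id}_G\times\iota)$, and by equivariance of $\varphi$ it becomes $\mu'\circ(\mathrm{id}_G\times\varphi\iota)$.

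The right side composed with $\iota$ equals $\mu'\circ(\mathrm{id}_G\times\iota)\circ(\mathrm{id}_G\times\varphi_a)$, using equivariance of the inclusion into $M'$. Applying \eqref{eq:underlinecommdiag} once more, this is $\mu'\circ(\mathrm{id}_G\times\varphi\iota)$. The two sides agree. Equivalently, this can be phrased pointwise on $N$-points via Lemma \ref{lem:groupasfunctors}, where $\iota_N$ is injective and everything reduces to group-equivariance of maps of sets.

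No step is a serious obstacle. The only point that needs care is that $M_{\{a\}}$ is a closed formal submanifold, so the monomorphism property \eqref{inj00} applies to $\iota$. This is exactly what allows the equality to be checked after composing with $\iota$.
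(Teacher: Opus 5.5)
Your proof is correct and is essentially the paper's argument. The paper states the result as an immediate consequence of Lemma \ref{lem:groupasfunctors} and \eqref{eq:underlinecommdiag}, and your diagram chase is just its unwound form: Proposition \ref{prop:GonMa}(b) gives the first claim, and cancelling the monomorphism $M'_{\{\overline\varphi(a)\}}\to M'$ gives the second.

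The only slip is cosmetic. The monomorphism you cancel is the inclusion into $M'$, not into $M$. It is a monomorphism because every inclusion of an immersed formal submanifold is an injective immersion, so closedness plays no role.
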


\section{Isotropy representations}
\subsection{Isotropy representations}
Let $\mu:G\curvearrowright M $ be a formal action.

\begin{prpd}\label{prop:gonTwithfixed}
    Let 
    $a\in M$ be a $G$-fixed point. 

\noindent(a)  The map \be\label{eq:dmuonT} d\mu:\ \g\times \RT_a(M)\rightarrow \RT_a(M),\quad (\tau,\eta)\mapsto {}^t\mu^*_{(e,a)}(\tau\otimes \eta) \ee is well-defined, and it is a Lie algebra action.  Here ${}^t\mu^*_{(e,a)}(\tau\otimes\eta)$ is the image of $\tau\otimes\eta$ under the map 
\[\mathrm{Dist}_{e}(G)\otimes_{\Ri}\mathrm{Dist}_a(M)=\mathrm{Dist}_{(e,a)}(G\times M)\xrightarrow{{}^t\mu^*_{(e,a)}}\mathrm{Dist}_a(M) \quad \text{(see Lemma \ref{lem:T3})}.\]

\noindent(b) Let $\varphi=(\overline{\varphi},\varphi^*): M\rightarrow M'$ be a $G$-equivariant morphism, where $M'$ is another formal manifold carrying a formal action $ G\curvearrowright M'$. 
Then $\overline \varphi(a)\in M'$ is also $ G$-fixed, and the differential 
\[d\varphi_a: \ \RT_a(M)\rightarrow \RT_{\overline{\varphi}(a)}(M')\] is 
    $\g$-equivariant.
\end{prpd}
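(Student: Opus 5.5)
The plan is to work entirely with distributions supported at points, using the coalgebra maps ${}^t\mu^*_{(e,a)}$ and the identification of Lemma \ref{lem:T3}. First I unpack what $G$-fixedness of $a$ means at the level of distributions. The equivariance of $\varsigma_a$ says $\mu\circ(\mathrm{id}_G\times\varsigma_a)=\varsigma_a\circ(\text{projection } G\to\Spec(\BC))$. Applying ${}^t(\cdot)^*_{(e,a)}$ and \eqref{eq:chainruleofdvarphi}, this gives ${}^t\mu^*_{(e,a)}(u\otimes\delta_a)=\langle u,1\rangle\,\delta_a$ for all $u\in\mathrm{Dist}_e(G)$. Here $\langle u,1\rangle$ is the counit.

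For well-definedness in (a), I view ${}^t\mu^*_{(e,a)}$ as a coalgebra homomorphism $\mathrm{Dist}_e(G)\otimes\mathrm{Dist}_a(M)\to\mathrm{Dist}_a(M)$, where the source carries the tensor product coalgebra structure. This is the transpose of an algebra map, and \eqref{eq:O3} applies. Take $\tau\in\g$ and $\eta\in\RT_a(M)$. Both are primitive in the sense $\Delta\tau=\tau\otimes\delta_e+\delta_e\otimes\tau$ and $\Delta\eta=\eta\otimes\delta_a+\delta_a\otimes\eta$; this is just \eqref{eq:tangent} read dually. Hence
\[
\Delta(\tau\otimes\eta)=\tau\otimes\eta\otimes\delta_e\otimes\delta_a+\tau\otimes\delta_a\otimes\delta_e\otimes\eta+\delta_e\otimes\eta\otimes\tau\otimes\delta_a+\delta_e\otimes\delta_a\otimes\tau\otimes\eta,
\]
after reordering the factors. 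Applying ${}^t\mu^*_{(e,a)}$ to each side, the cross terms involve ${}^t\mu^*(\tau\otimes\delta_a)=\langle\tau,1\rangle\delta_a=0$. The factor ${}^t\mu^*(\delta_e\otimes\eta)=\eta$ comes from the unit diagram \eqref{eq:diamunit}. So $\xi:={}^t\mu^*(\tau\otimes\eta)$ satisfies $\Delta\xi=\xi\otimes\delta_a+\delta_a\otimes\xi$. Since $\Delta$ is the transpose of multiplication, this is exactly \eqref{eq:tangent}, and therefore $\xi\in\RT_a(M)$.

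For the Lie action property, I first note that the associativity diagram \eqref{eq:action}, transposed at $(e,e,a)$, gives ${}^t\mu^*(u_1u_2\otimes v)={}^t\mu^*(u_1\otimes{}^t\mu^*(u_2\otimes v))$ for $u_i\in\mathrm{Dist}_e(G)$ and $v\in\mathrm{Dist}_a(M)$. Thus $\mathrm{Dist}_a(M)$ is a module over the algebra $\mathrm{Dist}_e(G)$, and by Proposition \ref{prop:Liealg} also over $\RU(\g)$. Restricting to $\g$ gives a Lie algebra representation on $\mathrm{Dist}_a(M)$, and the previous step shows that it preserves $\RT_a(M)$. Linearity in each variable is clear. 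I expect the main obstacle here to be bookkeeping: checking that the identification in Lemma \ref{lem:T3} is compatible with the coalgebra structures and with the transposed associativity. I would handle this by testing against elements of $\wh\CO_{M,a}$ via the pairings \eqref{eq:ra12} and \eqref{eq:ra123}, as in Lemma \ref{lem:tauonmuf}.

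For (b), $G$-fixedness of $\overline\varphi(a)$ follows because $\varphi\circ\varsigma_a=\varsigma_{\overline\varphi(a)}$, and a composite of equivariant morphisms is equivariant. For the equivariance of $d\varphi_a$, I transpose diagram \eqref{eq:G-equidef} at $(e,a)$. Using \eqref{eq:chainruleofdvarphi}, together with the fact that ${}^t(\mathrm{id}_G\times\varphi)^*_{(e,a)}=\mathrm{id}\otimes{}^t\varphi^*_a$ under Lemma \ref{lem:T3}, this gives ${}^t\varphi^*_a({}^t\mu_1^*(\tau\otimes\eta))={}^t\mu_2^*(\tau\otimes d\varphi_a\eta)$, which is the claim.
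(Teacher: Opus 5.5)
Your proposal is correct and follows the same approach as the paper. You use $G$-fixedness to get ${}^t\mu^*_{(e,a)}(\tau\otimes\delta_a)=0$, deduce from the coalgebra-homomorphism property that ${}^t\mu^*_{(e,a)}(\tau\otimes\eta)$ is primitive, obtain the action property from the transposed associativity diagram \eqref{eq:action}, and prove (b) by transposing the equivariance diagram; beyond that, you only spell out details the paper calls easy to verify, namely the explicit four-term coproduct and the $\mathrm{Dist}_e(G)$-module structure on $\mathrm{Dist}_a(M)$.
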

\begin{proof}
   Since the point $a\in M$ is $G$-fixed, we have that 
   \[ {}^t\mu^*_{(e,a)}(\tau\otimes\delta_a)=0\]
for each $\tau\in \g$.    Using this, it is easy to verify that for every $\eta\in \RT_a(M)$, 
    \begin{eqnarray*}
       && \Delta ({}^t\mu^*_{(e,a)}(\tau\otimes\eta)) ={}^t\mu^*_{(e,a)}(\tau\otimes\eta)\otimes\delta_a+\delta_a\otimes{}^t\mu^*_{(e,a)}(\tau\otimes\eta),
    \end{eqnarray*} where $\Delta$ denotes the comultiplication of  
    the $\wh\otimes_\Ri$-coalgebra $\mathrm{Dist}_a(M)$. 
      Thus
    the map \eqref{eq:dmuonT} is well-defined.
 Moreover,  by using 
the commutative diagram \eqref{eq:action}, we obtain  that\be \label{eq:byHmodule}{}^t\mu^*_{(e,a)}((\tau_1\tau _2)\otimes \eta)={}^t\mu^*_{(e,a)}(\tau_1\otimes {}^t\mu^*_{(e,a)}(\tau_2\otimes \eta))\ee  for all  $\tau_1, \tau_2\in\g$ and $\eta\in\RT_a(M)$.  Then the map \eqref{eq:dmuonT} is a Lie algebra action.
 This proves the assertion (a), and the assertion (b) is straightforward. 
\end{proof}

Write $\underline \g$ for the complexified Lie algebra of $\underline G$. For a Lie group, let ``$\exp$" denote the exponential map on its real Lie algebra.





\begin{prpd}\label{prop:GonT}Let 
$a\in M$ be a $\underline G$-fixed point.

\noindent(a) 
    The map \be\label{eq:smoothGontangent} \underline G\times \mathrm{T}_a(M)\rightarrow \mathrm{T}_a(M),\quad (g,\eta)\mapsto d(\mu_{g})_{a}(\eta)\ee  is a smooth representation of the Lie group $\underline G$.
    Furthermore, the differential  of \eqref{eq:smoothGontangent} is given by 
    \be \label{eq:undergonT}\underline \g\curvearrowright \mathrm{T}_a(M), \quad (\tau, \eta)\mapsto {}^t\mu^*_{(e,a)}(\tau\otimes\eta).\ee 
    
 \noindent(b) Let $\varphi=(\overline{\varphi},\varphi^*): M\rightarrow M'$ be a $G$-equivariant morphism, where $M'$ is another formal manifold carrying a formal action $ \mu': G\curvearrowright M'$.  Then the differential 
 \[d\varphi_{a}: \ \RT_a(M)\rightarrow \RT_{\overline{\varphi}(a)}(M')\] is $\underline G$-equivariant.
\end{prpd}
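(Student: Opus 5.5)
Since $a$ is $\underline G$-fixed, Proposition \ref{prop:GonMa}(b) gives $\overline\mu((g,a))=a$ for all $g\in G$. Hence each isomorphism $\mu_g$ of \eqref{eq:defofmug} fixes $a$, and $d(\mu_g)_a$ is a linear automorphism of $\RT_a(M)$. For $g\in \underline G$ the morphism $\mu_g$ depends only on the point $g$, because $\varsigma_g$ factors through $\underline G\subset G$. By \eqref{eq:action} together with Lemma \ref{prop:asfunctors} (evaluating on $\Spec(\BC)$-points) we have $\mu_{g_1g_2}=\mu_{g_1}\circ\mu_{g_2}$ and $\mu_e=\mathrm{id}_M$. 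The chain rule \eqref{eq:chainruleofdvarphi} then shows that \eqref{eq:smoothGontangent} is a group action by linear maps.

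A cleaner way to see all of this is to restrict $\mu$ to $\underline G\times M_{\{a\}}$. Since $M_{\{a\}}$ is $G$-stable (the definition of $\underline G$-fixed), there is a formal action $\mu':G\curvearrowright M_{\{a\}}$. Restricting it along $\underline G\to G$ gives a morphism $\underline G\times M_{\{a\}}\to M_{\{a\}}$, and $\RT_a(M_{\{a\}})=\RT_a(M)$. The representation is then $g\mapsto {}^t(\mu')^*_{(g,a)}(\delta_g\otimes\eta)$.

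For smoothness, I will work with the pullback of formal stalks. The morphism $\underline G\times M_{\{a\}}\to M_{\{a\}}$ yields a continuous homomorphism
\[
\wh\CO_{M,a}\to \CO_{\underline G\times M_{\{a\}}}(\underline G\times\{a\})\cong \RC^\infty(\underline G)[[x_1,\dots,x_n,y_1,\dots,y_k]].
\]
Its image is computed after choosing coordinates of $\wh\CO_{M,a}$ and using \eqref{eq:whO=series}. A tangent vector $\eta$ pairs only with linear coefficients. So the matrix coefficients of $g\mapsto d(\mu_g)_a$ are the degree-one coefficients of the images of the coordinate functions, evaluated at $g$, and these are smooth functions on $\underline G$. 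This is the main technical point: I must make sure that the structure sheaf of $\underline G\times M_{\{a\}}$ is really of this power-series form over $\underline G$, which should follow from \cite[Theorem 1.9]{CSW1} and the local models $(\R^n)^{(k)}$.

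For the differential, take $\tau\in\Lie(\underline G)$ and differentiate $t\mapsto d(\mu_{\exp t\tau})_a(\eta)$ at $t=0$. Pairing with $f\in\wh\CO_{M,a}$, the function $g\mapsto\la d(\mu_g)_a\eta,f\ra$ equals
\[
\la \delta_g\otimes\eta,\ \mu^*_{(g,a)}f\ra .
\]
The derivative of $\delta_{\exp t\tau}$ at $t=0$, as a distribution on $\underline G\subset G$ at $e$, is $\tau$ viewed in $\RT_e(\underline G)\subset\g$. This gives ${}^t\mu^*_{(e,a)}(\tau\otimes\eta)$, and complex-linear extension yields \eqref{eq:undergonT}. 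Justifying that differentiation in $g$ commutes with the pairing uses the same power-series description as in the smoothness step.

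For part (b), $G$-equivariance \eqref{eq:G-equidef} composed with $\varsigma_g\times\mathrm{id}_M$ gives $\varphi\circ\mu_g=\mu'_g\circ\varphi$. Proposition \ref{prop:GonMa}(b) shows that $\overline\varphi(a)$ is $\underline G$-fixed, since $\overline{\mu'}((g,\overline\varphi(a)))=\overline\varphi(\overline\mu((g,a)))=\overline\varphi(a)$. The chain rule \eqref{eq:chainruleofdvarphi} then gives
\[
d\varphi_a\circ d(\mu_g)_a=d(\mu'_g)_{\overline\varphi(a)}\circ d\varphi_a ,
\]
which is the required $\underline G$-equivariance.
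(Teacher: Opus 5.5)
Your proposal is correct and is essentially the paper's argument: both rest on writing the matrix coefficient $g\mapsto\la d(\mu_g)_a\eta,f\ra$ as $\Ev_g$ applied to the single function $\la\eta,\mu^*(f)\ra_2$, and on differentiating this along $\exp(t\tau)$ to obtain ${}^t\mu^*_{(e,a)}(\tau\otimes\eta)$, with (b) coming from the chain rule. The differences are minor: you pass to $\underline G\times M_{\{a\}}$, read off smoothness directly from the coefficients in $\RC^\infty(\underline G)[[x,y]]$, and spell out $\mu_{g_1g_2}=\mu_{g_1}\circ\mu_{g_2}$; the paper instead works in $\CO_G(G)\wh\otimes_\pi\CO_M(M)$, proves only continuity, and uses that continuous finite-dimensional representations of Lie groups are automatically smooth.
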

\begin{proof} Let $\eta\in \mathrm{T}_a(M)$ and  $f\in \CO_M(M)$.
Note that for any two formal manifolds $M_1$ and $M_2$,  \be\label{eq:OM1M2} \CO_{M_1\times M_2}(M_1\times M_2)=\CO_{M_1}(M_1)\wh\otimes_{\pi} \CO_{M_2}(M_2) \ee
as formal algebras (see \cite[Theorem 1.9]{CSW1}). 
   Then we have that 
\be\label{eq:Evaboutfixedpoint} \la d(\mu_{g})_{a}(\eta), f\ra=\la \eta, \mu^*_g(f)\ra=\la\Ev_g\otimes\eta,\mu^*(f)\ra=\la\Ev_g, \la\eta, \mu^*(f)\ra_2\ra \ee for each $g\in G$, 
where $\Ev_g$ denotes the composition 
$\CO_G(G)\rightarrow \wh\CO_g\xrightarrow{\delta_g}\BC $.
This, together with the fact that $\la\eta, \mu^*(f)\ra_2\in \CO_G(G)$,  implies that the map
 \[ G\rightarrow \BC,\quad g\mapsto \la d(\mu_{g})_{a}(\eta),f\ra\]
 is continuous, and hence the map  \[ G\rightarrow \mathrm{T}_a(M), \quad g\mapsto d(\mu_{g})_{a}(\eta)\] 
 is continuous as well. 
 Thus,  the map \eqref{eq:smoothGontangent} is a smooth representation of $\underline G$.

 For every  $\tau\in \Lie(\underline G)$,
it follows from \eqref{eq:Evaboutfixedpoint} that
\begin{eqnarray*}
   &&\lim_{t\rightarrow0 }\la\frac{d(\mu_{\exp(t\tau)})_a(\eta)-d(\mu_{e})_a(\eta)}{t},f\ra\\ &=& \lim_{t\rightarrow0 }
   \la \frac{\Ev_{\exp(t\tau)}-\Ev_{e}}{t} , \la\eta, \mu^*f\ra_2\ra \\&=&\la\tau,\la\eta,\mu^*(f) \ra_2 \ra=\la\tau\otimes\eta,\mu^*(f) \ra=\la {}^t\mu^*_{(e,a)}(\tau\otimes\eta), f \ra.
\end{eqnarray*} 
This implies that the differential  of \eqref{eq:smoothGontangent} is given by 
    \eqref{eq:undergonT}. 
Finally, the assertion (b) is straightforward by \eqref{eq:chainruleofdvarphi} and the assertion (a). 
\end{proof}

In Section \ref{Sec:adjoint}, we will prove that $(\g,\underline G)$ is a Lie pair.

\begin{dfn} Let $\mu:G\curvearrowright M $ be a formal action, and let $a\in M$ be a $G$-fixed point. The pair of actions \eqref{eq:dmuonT} and \eqref{eq:smoothGontangent} on $\RT_a(M)$ induced by $\mu$ is called the \textbf{isotropy representation} of $(\g,\underline G)$ at $a\in M$.
\end{dfn}

\subsection{Adjoint representations}
Let $\mu: G\curvearrowright H$ be a formal action of $G$ on a formal Lie group $H$ as automorphisms. 
By Lemma \ref{prop:asfunctors}, the point $e\in H$ is $ G$-fixed.
In view of Proposition \ref{prop:gonTwithfixed}, the formal action $\mu$ yields a Lie algebra action  \be\label{eq:dmuonh} d\mu: \g\curvearrowright \h,\quad (\tau,\eta)\mapsto {}^t\mu^*_{(e,e)}(\tau\otimes \eta). \ee 
The following result is straightforward. 
\begin{lemd}\label{prop:gonhwithfixed}
    The action \eqref{eq:dmuonh}
     acts as derivations.
\end{lemd}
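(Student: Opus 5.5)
We must show that for $\tau\in\g$ and $\eta_1,\eta_2\in\h$,
\[
d\mu(\tau)[\eta_1,\eta_2]=[d\mu(\tau)\eta_1,\eta_2]+[\eta_1,d\mu(\tau)\eta_2].
\]
The plan is to work entirely in the distribution algebras and to dualize the commutative diagram \eqref{eq:H-grp} at the point $(e,e,e)$.

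\emph{Step 1: dualize the diagram.} Localize \eqref{eq:H-grp} at $(e,e,e)\in G\times H\times H$. This is legitimate because every morphism in it sends $e$'s to $e$'s, the point $e\in H$ being $G$-fixed. Take transposes of the formal-stalk maps \eqref{eq:moronformalstalk}. By \eqref{eq:chainruleofdvarphi} and Lemma \ref{lem:T3}, this yields an identity of linear maps
\[
\mathrm{Dist}_e(G)\otimes\mathrm{Dist}_e(H)\otimes\mathrm{Dist}_e(H)\rightarrow \mathrm{Dist}_e(H).
\]
Write $\xi\cdot u:={}^t\mu^*_{(e,e)}(\xi\otimes u)$ for $\xi\in\mathrm{Dist}_e(G)$ and $u\in\mathrm{Dist}_e(H)$. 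The transpose of $\Delta_G$ at $e$ is the comultiplication $\Delta$ of $\mathrm{Dist}_e(G)$, and the transpose of $m$ is the product of $\mathrm{Dist}_e(H)$ from Proposition \ref{prop:DistasHopf}. Hence the identity reads
\[
\xi\cdot(u_1u_2)=\sum (\xi_{(1)}\cdot u_1)(\xi_{(2)}\cdot u_2)\quad\text{for all }\xi\in \mathrm{Dist}_e(G),\ u_1,u_2\in\mathrm{Dist}_e(H),
\]
in Sweedler notation. In other words, $\mathrm{Dist}_e(H)$ is a module algebra over the Hopf algebra $\mathrm{Dist}_e(G)$.

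\emph{Step 2: specialize to $\xi=\tau\in\g$.} Since $\tau$ is primitive by \eqref{eq:T=prim}, we have $\Delta\tau=\tau\otimes\delta_e+\delta_e\otimes\tau$. The diagram \eqref{eq:diamunit} gives $\delta_e\cdot u=u$. Therefore
\[
\tau\cdot(u_1u_2)=(\tau\cdot u_1)u_2+u_1(\tau\cdot u_2),
\]
so $\tau$ acts as a derivation of the associative algebra $\mathrm{Dist}_e(H)$.

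\emph{Step 3: conclude.} A derivation of an associative algebra is automatically a derivation of its commutator bracket. Take $u_i=\eta_i\in\h$ and use $[\eta_1,\eta_2]=\eta_1\eta_2-\eta_2\eta_1$ in $\mathrm{Dist}_e(H)$. By Proposition \ref{prop:gonTwithfixed}(a), $\tau\cdot\eta_i=d\mu(\tau)\eta_i$ lies in $\h$, and this gives the required identity.

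The main obstacle is the bookkeeping in Step 1. One has to confirm that the transpose of $(\mathrm{id}_G\times\nu\times\mathrm{id}_H)\circ(\Delta_G\times\mathrm{id}_H\times\mathrm{id}_H)$ at $(e,e,e)$ really becomes $\xi\otimes u_1\otimes u_2\mapsto\sum\xi_{(1)}\otimes u_1\otimes\xi_{(2)}\otimes u_2$ under the identifications of Lemma \ref{lem:T3}. This reduces to the definition of the comultiplication on $\mathrm{Dist}_e(G)$ as the transpose of multiplication, together with the fact that the diagonal morphism pulls $f_1\otimes f_2$ back to $f_1f_2$.
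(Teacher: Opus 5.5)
Your argument is correct. Dualizing \eqref{eq:H-grp} at $(e,e,e)$ makes $\mathrm{Dist}_e(H)$ a module algebra over $\mathrm{Dist}_e(G)$, and then the primitivity of $\tau$ together with $\delta_e\cdot u=u$ gives a derivation of the associative product, hence of the commutator bracket on $\h$. The paper gives no proof and calls this lemma ``straightforward''; your verification is presumably the intended routine check, since it parallels how \eqref{eq:byHmodule} is obtained from \eqref{eq:action} in the proof of Proposition \ref{prop:gonTwithfixed}.
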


For every $g\in G$, the morphism \be\label{eq:muh}  \mu_g:\ H=\mathrm{Spec}(\BC)\times H \xrightarrow{\varsigma_g\times\mathrm{id}_H}G\times H\xrightarrow{\mu} H \ee  is a formal Lie group automorphism.  By Proposition \ref{prop:Liealg}, the differential 
\[d(\mu_{g})_e:\ \h \rightarrow \h\] of $\mu_g$ is a homomorphism of Lie algebras.

The following result follows from Proposition \ref{prop:GonT} and Lemma \ref{prop:gonhwithfixed}.
\begin{prpd}\label{prop:GonTH}Let $\mu:G\curvearrowright H$ be a formal action as automorphisms. Then the map \be\label{eq:smoothGonliealg} \underline G\times \h \rightarrow \h,\quad (g,\eta)\mapsto d(\mu_{g})_e(\eta)\ee  is a smooth representation of the Lie group $\underline G$ as Lie algebra automorphisms.
    Furthermore, the differential \be\label{eq:LieGactsonh} \underline\g\curvearrowright \h, \quad (\tau, \eta)\mapsto {}^t\mu^*_{(e,e)}(\tau\otimes\eta)\ee of \eqref{eq:smoothGonliealg} is a Lie algebra action  as  derivations.    
\end{prpd}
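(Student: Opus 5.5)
The plan is to reduce everything to Proposition \ref{prop:GonT} and Lemma \ref{prop:gonhwithfixed}, applied with $M=H$ and $a=e$. Since $e\in H$ is $G$-fixed (by Lemma \ref{prop:asfunctors}), it is in particular $\underline G$-fixed (Proposition \ref{prop:GonMa}). Proposition \ref{prop:GonT}(a) then gives directly that \eqref{eq:smoothGonliealg} is a smooth representation of $\underline G$ on $\RT_e(H)=\h$. It also identifies the differential of this representation with the map \eqref{eq:LieGactsonh}, $(\tau,\eta)\mapsto {}^t\mu^*_{(e,e)}(\tau\otimes\eta)$ for $\tau\in\underline\g$. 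Here I regard $\underline\g$ as a subspace of $\g$ via the closed formal Lie subgroup $\underline G\subset G$ (Example \ref{exam:subgroup}), so that $\tau\otimes\eta$ makes sense in $\mathrm{Dist}_{(e,e)}(G\times H)$.

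Next I would show that each $d(\mu_g)_e$ is a Lie algebra automorphism. For $g\in G$, the morphism $\mu_g$ in \eqref{eq:muh} is a formal Lie group automorphism. One sees this by checking via Lemma \ref{lem:groupsactionasauto} that $(\mu_g)_N$ is a group automorphism of $H(N)$ for every $N$, with inverse $(\mu_{g^{-1}})_N$ by the action axioms \eqref{eq:action} and \eqref{eq:diamunit}. By Proposition \ref{prop:Liealg}(b), $d(\mu_g)_e$ is then a Lie algebra homomorphism. By the chain rule \eqref{eq:chainruleofdvarphi}, it has inverse $d(\mu_{g^{-1}})_e$, so it is an automorphism. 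Hence \eqref{eq:smoothGonliealg} is a representation by Lie algebra automorphisms.

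For the last assertion, the action \eqref{eq:LieGactsonh} is the restriction to $\underline\g\subset\g$ of the action \eqref{eq:dmuonh}. That action is a Lie algebra action by Proposition \ref{prop:gonTwithfixed}(a) and acts by derivations by Lemma \ref{prop:gonhwithfixed}, so the restriction has both properties. As a consistency check, this also follows abstractly: the differential of a smooth representation by automorphisms acts by derivations.

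The only mildly delicate point is compatibility of identifications. One must check that the embedding $\underline\g\hookrightarrow\g$ induced by $\underline G\to G$ makes the formula in \eqref{eq:undergonT} agree with \eqref{eq:dmuonh} restricted to $\underline\g$. This follows from the functoriality of ${}^t\mu^*$ under the inclusion $\underline G\times H\to G\times H$ together with \eqref{eq:chainruleofdvarphi}.
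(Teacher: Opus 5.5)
Your proposal is correct and follows the paper's argument. The paper obtains the proposition directly from Proposition \ref{prop:GonT} (with $M=H$, $a=e$) and Lemma \ref{prop:gonhwithfixed}, after noting just before the statement that each $\mu_g$ is a formal Lie group automorphism, so that $d(\mu_g)_e$ is a Lie algebra homomorphism by Proposition \ref{prop:Liealg}(b). Your additional checks, namely the invertibility of $d(\mu_g)_e$ and the compatibility of the embedding $\underline\g\hookrightarrow\g$ with the formula \eqref{eq:undergonT}, only make explicit what the paper leaves implicit.
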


Recall from Example \ref{exam:congudef} that $G$ acts formally on itself via the morphism
\be \label{eq:congudef}
\Phi:\ G\times G \xrightarrow{(\mathrm{id}_G\times i) \times \mathrm{id}_G} G\times G \times G
\xrightarrow{\mathrm{id}_G\times \nu} G\times G\times G\xrightarrow {m\times \mathrm{id}_G} G\times G\xrightarrow{m} G.
\ee
By Proposition \ref{prop:GonTH} and Lemma \ref{prop:gonhwithfixed}, the formal action \eqref{eq:congudef} induces a Lie group representation
\be\label{eq:Addef}
\mathrm{Ad}:\ \underline G\curvearrowright \g,\quad (g,\eta)\mapsto d(\Phi_g)_e(\eta)
\ee of $\underline G$ as Lie algebra automorphisms, as well as a Lie algebra action  \be\label{eq:dPhi} d\Phi: \g\curvearrowright \g,\quad (\tau,\eta)\mapsto {}^t\Phi^*_{(e,e)}(\tau\otimes \eta) \ee as derivations.
 Using Lemmas \ref{lem:difftimes} and \ref{lem:inverse}, the following result is easy to verify.
\begin{lemd}\label{prop:[g,g]}
The Lie algebra action \eqref{eq:dPhi} equals the adjoint action 
\be\label{eq:[g,g]} \ad:\  \g\curvearrowright \g, \quad (\tau,\eta)\mapsto [\tau,\eta].\ee
\end{lemd}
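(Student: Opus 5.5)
We need ${}^t\Phi^*_{(e,e)}(\tau\otimes\eta)=\tau\eta-\eta\tau$ for $\tau,\eta\in\g$. The plan is to factor the transpose of $\Phi^*$ at $(e,e)$ along the chain of morphisms in \eqref{eq:congudef}, using the first identity of \eqref{eq:chainruleofdvarphi} on distribution algebras. All intermediate points lie over $e$, so everything happens in $\mathrm{Dist}_{(e,\dots,e)}$, which Lemma \ref{lem:T3} identifies with tensor powers of $\mathrm{Dist}_e(G)$. On these, ${}^t m^*_{(e,e)}$ is the multiplication of the Hopf algebra $\mathrm{Dist}_e(G)$ (Proposition \ref{prop:DistasHopf}), and $\nu$ transposes to the flip of tensor factors.

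The key step is the transpose of the first arrow $(\mathrm{id}_G\times i)\times \mathrm{id}_G$ composed with the diagonal implicit in the notation \eqref{eq:varphi1times2}. The diagonal $\Delta_G$ transposes, at $e$, to the comultiplication of $\mathrm{Dist}_e(G)$. This is because ${}^t(\Delta_G)^*_e$ is dual to the multiplication of $\wh\CO_{G,e}$: the pullback along the diagonal of $f_1\otimes f_2$ is $f_1f_2$, by \eqref{eq:O3}. For $\tau\in\g$ primitive, $\tau\mapsto \tau\otimes\delta_e+\delta_e\otimes\tau$. This is exactly Lemma \ref{lem:difftimes} applied to $\mathrm{id}_G\times i$, which sends $\tau\mapsto \tau\otimes\delta_e+\delta_e\otimes di_e(\tau)=\tau\otimes\delta_e-\delta_e\otimes\tau$ by Lemma \ref{lem:inverse}(b). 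Tensoring with $\eta$ in the third slot, the image of $\tau\otimes\eta$ in $\mathrm{Dist}_{(e,e,e)}(G\times G\times G)$ is
\[
\tau\otimes\delta_e\otimes\eta-\delta_e\otimes\tau\otimes\eta.
\]

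Now we push this through the remaining arrows. $\mathrm{id}_G\times\nu$ gives $\tau\otimes\eta\otimes\delta_e-\delta_e\otimes\eta\otimes\tau$. Then $m\times\mathrm{id}_G$ gives $\tau\eta\otimes\delta_e-\eta\otimes\tau$, using that $\delta_e$ is the unit \eqref{eq:1todeltae}. Finally $m$ gives $\tau\eta-\eta\tau=[\tau,\eta]$.

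The only point needing care is that ${}^t$ of a product morphism is the tensor product of the ${}^t$'s under the identification of Lemma \ref{lem:T3}. This follows from \eqref{eq:O3}, since $(\varphi_1\times\varphi_2)^*_{(a_1,a_2)}=(\varphi_1)^*_{a_1}\wh\otimes(\varphi_2)^*_{a_2}$ on the completed tensor product by continuity and density of the algebraic tensor product. I expect this bookkeeping, rather than the computation itself, to be the main obstacle. Once it is in place, the calculation above is formal.
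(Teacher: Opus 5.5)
Your proposal is correct and takes the same approach as the paper, which simply says the result is easy to verify using Lemma~\ref{lem:difftimes} and Lemma~\ref{lem:inverse}; your computation carries this out by pushing $\tau\otimes\eta$ through the factorization \eqref{eq:congudef}, getting $\tau\otimes\delta_e\otimes\eta-\delta_e\otimes\tau\otimes\eta$ and then $\tau\eta-\eta\tau$. The compatibility of transposes with product morphisms, which you flag as the main obstacle, is exactly the routine check the paper leaves implicit.
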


\begin{dfn} The pair $(\ad,\Ad)$ (in \eqref{eq:[g,g]} and \eqref{eq:Addef}) is called the \textbf{adjoint representation} of $(\g,\underline G)$.
\end{dfn}

When $G$ is a Lie group, the formal action \eqref{eq:congudef} is precisely the smooth action of $G$ on itself by inner automorphisms. Moreover, in this case, \be \label{eq:AdofLiegroup}\text{\eqref{eq:Addef} is the usual adjoint representation of $G$ on $\g$  (see \cite[3.46]{War})}.\ee  
\subsection { Lie pairs associated to formal Lie groups}
\label{Sec:adjoint}
   Let $G$ be a formal Lie group.
   By \cite[Lemma 2.7]{CSW4} and Proposition \ref{prop:Liealg}, 
the differential $d\iota_e: \underline \g\rightarrow \g$ of the inclusion morphism $\iota: \underline G\rightarrow G$ is an injective Lie algebra homomorphism. 
\begin{prpd}\label{prop:GtoLiepair} Together with the adjoint representation  $\Ad: \underline G\curvearrowright \g $ and the  injective homomorphism  $d\iota_e: \underline\g \rightarrow \g$, the pair $( \g, \underline G)$  is a Lie pair. 
Furthermore, the assignment
\[G\longmapsto (\g,\underline G) \qaq (\varphi:G_1\rightarrow G_2)\longmapsto((d\varphi_e,\underline \varphi ): (\g_1,\underline {G_1})\rightarrow (\g_2,\underline{G_2}))\] is a functor from the category of formal Lie groups to the category of Lie pairs.
\end{prpd}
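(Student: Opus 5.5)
Checking Definition \ref{df:liepair} for $(\g,\underline G)$ requires four things. $\g$ is a finite-dimensional complex Lie algebra and $\underline G$ is a real Lie group (Example \ref{exam:subgroup}). $\Ad$ is a representation by Lie algebra automorphisms (Proposition \ref{prop:GonTH} applied to $\Phi$). $d\iota_e$ is an injective Lie algebra homomorphism. So only the two compatibility conditions remain, plus functoriality.

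For the first condition, the $\underline G$-equivariance of $\iota_*:=d\iota_e:\underline\g\to\g$, I would use that the inclusion $\iota:\underline G\to G$ is a homomorphism of formal Lie groups. Hence it intertwines the conjugation action of $\underline G$ on itself (a smooth action) with the formal action $\Phi$ restricted along $\iota\times\iota$. Concretely, for $g\in\underline G$ one has $\Phi_g\circ\iota=\iota\circ\Phi^{\underline G}_g$. This identity can be checked on $N$-points via Lemma \ref{prop:asfunctors}: $\iota_N$ is a group homomorphism and conjugation by the point $\varsigma_g$ commutes with it. Differentiating at $e$ with \eqref{eq:chainruleofdvarphi} gives $\Ad_g\circ d\iota_e=d\iota_e\circ\Ad^{\underline G}_g$. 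By \eqref{eq:AdofLiegroup}, $\Ad^{\underline G}$ is the usual adjoint representation, which is the required equivariance.

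For the second condition, I would invoke Proposition \ref{prop:GonTH}. The differential of $\Ad:\underline G\curvearrowright\g$ is $\tau\mapsto{}^t\Phi^*_{(e,e)}(\tau\otimes\eta)$, where $\tau\in\underline\g$ is viewed in $\mathrm{Dist}_e(G)$ through $d\iota_e$. By Lemma \ref{prop:[g,g]}, the same expression for $\tau\in\g$ equals $[\tau,\eta]$. The point to verify is that the formula \eqref{eq:LieGactsonh} uses exactly the image of $\tau$ under ${}^t\iota^*_e$ inside $\mathrm{Dist}_e(G)$. Then the differential of $\Ad$ is $\eta\mapsto[d\iota_e(\tau),\eta]$, as required. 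This identification is where I expect the main care to lie: the proof of Proposition \ref{prop:GonT} computes the derivative via $\Ev_{\exp(t\tau)}$ on $G$, and its limit is the distribution ${}^t\iota^*_e(\tau)$ on $\CO_G(G)$. So I would re-read that computation to confirm that it identifies $\tau$ with $d\iota_e(\tau)$, which it does since $\Ev_{\exp(t\tau)}$ factors through $\iota^*$.

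For functoriality, let $\varphi:G_1\to G_2$ be a homomorphism. Then $d\varphi_e$ is a Lie algebra homomorphism by Proposition \ref{prop:Liealg}(b), and $\underline\varphi$ is a Lie group homomorphism, being the reduction of a homomorphism (use \eqref{eq:underlinecommdiag} and \eqref{eq:idpro}).

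The two diagrams in \eqref{eq:comofhomofLiepairs} are handled separately. The right diagram follows from $\varphi\circ\iota=\iota\circ\underline\varphi$ and the chain rule. The left diagram follows from $\varphi\circ\Phi^{G_1}_g=\Phi^{G_2}_{\underline\varphi(g)}\circ\varphi$, which is checked on $N$-points via Lemma \ref{prop:asfunctors} and then differentiated at $e$.

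Preservation of identities and composition is immediate from \eqref{eq:chainruleofdvarphi} and uniqueness of reductions.
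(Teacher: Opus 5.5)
Your proposal is correct and follows essentially the same route as the paper: the bracket condition comes from Proposition \ref{prop:GonTH} together with Lemma \ref{prop:[g,g]}, and the $\underline G$-equivariance of $d\iota_e$ comes from the fact that $\iota:\underline G\to G$ intertwines conjugation on $\underline G$ with $\Phi$, differentiated at $e$ and combined with \eqref{eq:AdofLiegroup}. The only difference is cosmetic: the paper obtains the intertwining square from \eqref{eq:underlinecommdiag} applied to $\Phi$ and then cites Proposition \ref{prop:GonT}(b), whereas you check it on $N$-points and apply the chain rule directly; your functoriality sketch fills in what the paper calls straightforward.
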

\begin{proof}
 The second assertion is straightforward. For the first one, by Proposition \ref{prop:GonTH} and Lemma \ref{prop:[g,g]}, the differential $ \underline \g\curvearrowright \g$ of $\Ad: \underline  G\curvearrowright \g$ equals the action
		\[
		\underline \g \curvearrowright \g,\quad (\tau,\eta)\mapsto [\tau,\eta].
		\]
  It suffices to prove that \be \label{eq:homoG-equi} \text{the homomorphism $d\iota_e: \underline \g\rightarrow \g $ is $\underline G$-equivariant,}\ee where $\underline \g$ carries the usual adjoint representation of $\underline G$, and $\g$ carries the representation of $\underline G$ as in \eqref{eq:Addef}.   

  Using \eqref{eq:underlinecommdiag}, the diagram \[\begin{CD} \underline G \times \underline G @>\underline \Phi>> \underline G \\ @V\iota\times \iota VV @VV \iota V \\ G\times G @>\Phi>> G\end{CD}\] is commutative. This implies that the morphism $\underline G\rightarrow G$ is $\underline G$-equivariant, where $\underline G$ acts on $\underline G$ and $G$ by inner automorphisms. Then \eqref{eq:homoG-equi} follows by using Proposition \ref{prop:GonT} (b). This finishes the proof.
\end{proof}


\section{Semidirect products  and normal formal Lie subgroups}
\subsection{Semidirect products of formal Lie groups}\label{sec:semi}
Let $\mu: G\curvearrowright H$ be a formal action as automorphisms. 
\begin{lemd}\label{prop:semiLiegroups}
    Together with the multiplication morphism 
\be\label{eq:mGltimesH}\xymatrix@R-0.8pc@C+2.9pc{m:\ G\times H \times G\times H \ar[r]^{\mathrm{id}_G\times \nu \times \mathrm{id}_H}&G\times G \times H \times H \ar[d]_{\mathrm{id}_G\times(\mathrm{id}_G\times i)\times \mathrm{id}_H \times \mathrm{id}_H}\\
&G\times G\times G\times H \times H\ar[r]^-{\mathrm{id}_G\times \mathrm{id}_G\times\mu \times \mathrm{id}_H}&G\times G\times H\times H \ar[d]_{m\times m}\\
&&G\times H, }\ee 
 and the unit morphism $\varepsilon: \Spec(\BC)=\Spec (\BC)\times \Spec(\BC)\xrightarrow{\varepsilon\times \varepsilon} G\times H$, the formal manifold $G\times H$ becomes a formal Lie group.
Moreover, the inversion morphism of this formal Lie group is \be \label{eq:iGH} i:\ 
	G\times H \xrightarrow {(i\times\mathrm{id}_G)\times i }G\times G \times H \xrightarrow{\mathrm{id}_G\times \mu} G\times H.\ee
     Here,  $\nu$ is the switching isomorphism (see \eqref{eq:nudef}). 
\end{lemd}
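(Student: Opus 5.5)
We use the functor-of-points description of formal Lie groups from Lemma \ref{prop:asfunctors}(c): it suffices to show that, for every formal manifold $N$, the maps induced by \eqref{eq:mGltimesH}, by the unit morphism and by \eqref{eq:iGH} on $(G\times H)(N)=G(N)\times H(N)$ give a group structure, and that this is functorial in $N$. Functoriality in $N$ is automatic, because all these maps come from composing with fixed morphisms of formal manifolds, as in \eqref{eq:varphiM}.

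Fix $N$. By Lemma \ref{lem:groupsactionasauto}, $\mu_N$ is an action of the abstract group $G(N)$ on the abstract group $H(N)$ by group automorphisms. Write $g\cdot h:=\mu_N(g,h)$. Next we compute what \eqref{eq:mGltimesH} induces on points. Take $(g_1,h_1,g_2,h_2)\in G(N)\times H(N)\times G(N)\times H(N)$. The first arrow sends it to $(g_1,g_2,h_1,h_2)$. The second gives $(g_1,g_2,g_2^{-1},h_1,h_2)$. The third gives $(g_1,g_2,g_2^{-1}\cdot h_1,h_2)$. The last gives $(g_1g_2,\,(g_2^{-1}\cdot h_1)h_2)$.

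This is the multiplication of the semidirect product of $G(N)$ and $H(N)$ in the ``$gh$'' normal form. Concretely, the map $(g,h)\mapsto (g, g\cdot h)$ is a bijection $G(N)\times H(N)\to H(N)\rtimes G(N)$, where the latter carries the standard product $(h,g)(h',g')=(h\,(g\cdot h'),gg')$, and the bijection transports our product to this standard one. Alternatively, one checks associativity directly, using only that $g\mapsto (g\cdot -)$ is a homomorphism $G(N)\to\mathrm{Aut}(H(N))$:
\[
\big((g_1g_2)^{-1}\!\cdot h\big)\,h' = \big(g_3^{-1}\!\cdot\!(g_2^{-1}\!\cdot h)\big)\ldots,
\]
which is a routine manipulation. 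The unit $(e,e)$ is clearly two-sided, since $e\cdot h=h$ and $g\cdot e=e$. The element induced by \eqref{eq:iGH}, namely $(g,h)\mapsto (g^{-1}, g\cdot h^{-1})$, is a two-sided inverse:
\[
(g,h)(g^{-1},g\cdot h^{-1}) = (e,\,(g\cdot h)(g\cdot h^{-1}))=(e,e),
\]
\[
(g^{-1},g\cdot h^{-1})(g,h)=(e,\,(g^{-1}\!\cdot g\cdot h^{-1})h)=(e,e).
\]

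Hence $F(N):=G(N)\times H(N)$ with these operations is a group, functorial in $N$, and its underlying set functor is $h_{G\times H}$ (products exist by \cite[Theorem 1.9]{CSW1}). Lemma \ref{prop:asfunctors}(c) then gives a unique formal Lie group structure on $G\times H$ whose induced operations on points are these. By the Yoneda lemma (Lemma \ref{prop:asfunctors}(a)), its multiplication and unit morphisms coincide with \eqref{eq:mGltimesH} and $\varepsilon\times\varepsilon$, since they induce the same maps on $N$-points. Its inversion morphism is \eqref{eq:iGH}, since that morphism induces the group inverse on each $F(N)$ and inversion is unique. The only step requiring care is this pointwise computation of the composite \eqref{eq:mGltimesH}, in particular keeping track of the order of factors after $\nu$. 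Everything else reduces to abstract group theory.
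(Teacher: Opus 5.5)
Your proof is correct and follows the same route as the paper, which uses Lemma \ref{lem:groupsactionasauto} to put the abstract semidirect product structure $G(N)\ltimes H(N)$ on $(G\times H)(N)=G(N)\times H(N)$ and then concludes by Lemma \ref{prop:asfunctors}. Your explicit pointwise formulas, $(g_1,h_1)(g_2,h_2)=(g_1g_2,(g_2^{-1}\cdot h_1)h_2)$ for the product and $(g,h)\mapsto(g^{-1},g\cdot h^{-1})$ for the inverse, are right and fill in details the paper leaves implicit; the truncated ``direct associativity'' display can be dropped, since your transport-of-structure bijection already gives associativity.
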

\begin{proof}
 It follows from Lemma \ref{lem:groupsactionasauto} that for every formal manifold $N$, the group action $\mu_N:G(N)\curvearrowright H(N)$ acts as group automorphisms. Recall that the corresponding semidirect
product $G(N)\ltimes H(N)$ is a group whose underlying set is
\[
   G(N)\times H(N)=(G\times H)(N).
\]
The lemma then follows from Lemma \ref{prop:asfunctors}.
\end{proof}

\begin{dfn}
    The formal Lie group \((G\times H,m,\varepsilon)\) constructed in Lemma  \ref{prop:semiLiegroups} is called the \textbf{semidirect product} of \(G\) and \(H\) with respect to \(\mu\). 
\end{dfn}

As usual, the semidirect product of $G$ and $H$ with respect to $\mu$ is denoted by \(G\ltimes_{\mu} H\), or simply by \(G\ltimes H\) when \(\mu\) is clear from the context.
Note that \be\label{eq:underlineofsemi}\underline{G\ltimes_{\mu} H}=\underline G\ltimes_{\underline{\mu}} \underline H\ee
as Lie groups.

By using Lemma \ref{prop:asfunctors}, the injective immersions 
 \be \label{eq: imsemi}G=G\times \Spec (\BC)\xrightarrow{\mathrm{id}_G\times \varepsilon} G\ltimes H \quad\text{and}\quad  H=\Spec (\BC) \times H \xrightarrow{\varepsilon\times \mathrm{id}_H} G\ltimes H \ee are homomorphisms of formal Lie groups. Recall from Lemma \ref{lem:T3} that  \[\RT_{(e,e)}(G\times H)=(\g\otimes\delta_e)\oplus(\delta_e\otimes\h)=\g\oplus\h\]
 as vector spaces. 
 The following result is obvious. 
\begin{lemd}\label{lem:gtogh}
    The differentials
    \[\g\rightarrow \Lie_{\BC}(G\ltimes H), \quad \tau\mapsto \tau\otimes\delta_e\qaq\h\rightarrow \Lie_{\BC}(G\ltimes H),\quad \eta \mapsto \delta_e\otimes\eta\]  of \eqref{eq: imsemi} are both injective Lie algebra homomorphisms.
\end{lemd}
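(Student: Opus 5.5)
The plan is to combine the functoriality already established in Section 2 with the description of the tangent space of a product in Lemma \ref{lem:T3} and the differential formula of Lemma \ref{lem:difftimes}.

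First, Lie algebra homomorphism. By the discussion preceding the lemma, the two morphisms in \eqref{eq: imsemi} are homomorphisms of formal Lie groups; this follows from Lemma \ref{prop:asfunctors}, since on $N$-points they are the standard embeddings $G(N)\to G(N)\ltimes H(N)$ and $H(N)\to G(N)\ltimes H(N)$. Proposition \ref{prop:Liealg}(b) then shows that their differentials at $e$ are Lie algebra homomorphisms. Here $\Lie_{\BC}(G\ltimes H)=\RT_{(e,e)}(G\times H)$ as vector spaces, because the underlying formal manifold of $G\ltimes H$ is $G\times H$.

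Second, the explicit formula and injectivity. The morphism $\mathrm{id}_G\times\varepsilon: G\to G\times H$ is, in the notation of \eqref{eq:varphi1times2}, the product of $\mathrm{id}_G$ and the constant morphism $G\to \Spec(\BC)\xrightarrow{\varepsilon} H$. The differential of $\mathrm{id}_G$ at $e$ is the identity. The constant morphism factors through $\Spec(\BC)$, whose tangent space is zero, so its differential vanishes. Lemma \ref{lem:difftimes} therefore gives
\[
\tau\mapsto \tau\otimes\delta_e+\delta_e\otimes 0=\tau\otimes\delta_e .
\]
Symmetrically, the differential of $\varepsilon\times\mathrm{id}_H$ is $\eta\mapsto\delta_e\otimes\eta$. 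Both maps are injective because, by \eqref{eq:T=T+T}, $\RT_{(e,e)}(G\times H)$ is the direct sum $(\g\otimes\delta_e)\oplus(\delta_e\otimes\h)$, with each summand identified with $\g$ and $\h$ respectively.

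The only point needing care is to confirm that Lemma \ref{lem:difftimes} applies verbatim to $\mathrm{id}_G\times\varepsilon$, that is, that $G=G\times\Spec(\BC)$ followed by $\mathrm{id}_G\times\varepsilon$ agrees with the composite through the diagonal $\Delta_G$. This follows from the universal property of products. Everything else is routine.
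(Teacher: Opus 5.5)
Your proposal is correct and follows essentially the paper's route. The paper observes via Lemma \ref{prop:asfunctors} that the morphisms in \eqref{eq: imsemi} are homomorphisms of formal Lie groups, recalls the decomposition of Lemma \ref{lem:T3}, and declares the result obvious; your appeal to Proposition \ref{prop:Liealg}(b) and Lemma \ref{lem:difftimes} simply spells out those implicit steps.
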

 

 Let $\g\ltimes_{d\mu}\h$ denote the semidirect product of the Lie algebras $\g$ and $\h$ with respect to the Lie algebra action     \be\label{eq:dmuonh2} d\mu: \g\curvearrowright \h,\quad (\tau,\eta)\mapsto {}^t\mu^*_{(e,e)}(\tau\otimes \eta) \quad \text{(see \eqref{eq:dmuonh})}.\ee
  The following result is a generalization of \cite[Proposition 1.124]{Kn}.

\begin{prpd}\label{prop:Liealgofsemi} Let $\mu:G\curvearrowright H$ be a formal action as automorphisms.
Then, under the identification $\RT_{(e,e)}(G\times H)=\g\oplus\h$, we have that
    \[\Lie_{\BC}(G\ltimes H)=\g\ltimes_{d\mu}\h\] as Lie algebras.
\end{prpd}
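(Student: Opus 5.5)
By Lemma \ref{lem:gtogh}, $\g\otimes\delta_e$ and $\delta_e\otimes\h$ are Lie subalgebras of $\Lie_{\BC}(G\ltimes H)$, with brackets equal to those of $\g$ and $\h$. Since $\RT_{(e,e)}(G\times H)=(\g\otimes\delta_e)\oplus(\delta_e\otimes\h)$, bilinearity and antisymmetry reduce the claim to one identity: for $\tau\in\g$ and $\eta\in\h$,
\[
[\tau\otimes\delta_e,\ \delta_e\otimes\eta]=\delta_e\otimes {}^t\mu^*_{(e,e)}(\tau\otimes\eta)
\]
in $\mathrm{Dist}_{(e,e)}(G\ltimes H)$.

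I would compute both products $(\tau\otimes\delta_e)(\delta_e\otimes\eta)$ and $(\delta_e\otimes\eta)(\tau\otimes\delta_e)$, using the multiplication ${}^tm^*_{((e,e),(e,e))}$ of Proposition \ref{prop:DistasHopf}. The map $\mathrm{Dist}$ is functorial under composition by \eqref{eq:chainruleofdvarphi}, and it turns products into tensor products by Lemma \ref{lem:T3}. So the transpose of the composite \eqref{eq:mGltimesH} is the composite of the transposes of its four factors. It therefore suffices to trace a pure tensor $a\otimes b\otimes c\otimes d\in \mathrm{Dist}_e(G)\otimes\mathrm{Dist}_e(H)\otimes\mathrm{Dist}_e(G)\otimes\mathrm{Dist}_e(H)$ through these factors:
\begin{itemize}
\item the switch $\nu$ sends it to $a\otimes c\otimes b\otimes d$;
\item the next factor applies the comultiplication to $c$ and the antipode to one of the resulting legs;
\item then ${}^t\mu^*_{(e,e)}$ is applied;
\item finally the multiplications of $\mathrm{Dist}_e(G)$ and $\mathrm{Dist}_e(H)$ are applied.
\end{itemize}

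For $\tau\otimes\delta_e\otimes\delta_e\otimes\eta$ only $c=\delta_e$ occurs, which is group-like and fixed by the antipode. Axiom \eqref{eq:diamunit} gives ${}^t\mu^*(\delta_e\otimes\eta)=\eta$, so the product is simply $\tau\otimes\eta$.

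For $\delta_e\otimes\eta\otimes\tau\otimes\delta_e$ we have $c=\tau$, which is primitive, and ${}^ti^*_e(\tau)=-\tau$ by Lemma \ref{lem:inverse}. The comultiplication gives $\tau\otimes\delta_e+\delta_e\otimes\tau$, and after the antipode one of the two terms carries the factor $-\tau$. Applying ${}^t\mu^*$ to $\eta$ produces ${}^t\mu^*_{(e,e)}(\tau\otimes\eta)$ in one term and $\eta$ in the other, the latter again by \eqref{eq:diamunit}. After multiplying, the two terms are $\tau\otimes\eta$ and $\pm\,\delta_e\otimes{}^t\mu^*_{(e,e)}(\tau\otimes\eta)$.

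Subtracting the two products cancels $\tau\otimes\eta$ and leaves $\pm\,\delta_e\otimes d\mu(\tau)(\eta)$. This should be the asserted bracket, with the sign fixed to $+$ by the computation.

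The main obstacle is bookkeeping. The map \eqref{eq:mGltimesH} describes $(g,h)(g',h')=(gg',\ \mu(g'^{-1},h)h')$ on $N$-points, i.e. a right-action convention. One must check carefully which leg receives the inverse and the order of the factors in $m\times m$. Only then does the surviving term come out as $+\delta_e\otimes{}^t\mu^*(\tau\otimes\eta)$ under the stated identification, rather than its negative.

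To settle this I would first run the whole computation on $N$-points via Lemma \ref{prop:asfunctors}, as a sanity check of the group law. If the sign turns out negative, the conventions implicit in the identification $\RT_{(e,e)}(G\times H)=\g\oplus\h$ and in \eqref{eq:dmuonh2} would need to be rechecked. Apart from this sign, everything is the standard primitive/group-like calculation in the Hopf algebra $\mathrm{Dist}_{(e,e)}(G\ltimes H)$.
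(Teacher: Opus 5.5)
Your proposal is correct and follows the paper's proof: you reduce, via Lemma \ref{lem:gtogh} and bilinearity (the paper cites \cite[Proposition 1.22]{Kn} for this step), to the single identity $[\tau\otimes\delta_e,\delta_e\otimes\eta]=\delta_e\otimes{}^t\mu^*_{(e,e)}(\tau\otimes\eta)$, and read that identity off from \eqref{eq:mGltimesH} in the Hopf algebra of Proposition \ref{prop:DistasHopf}. The sign you left open is already fixed by your own bookkeeping: in \eqref{eq:mGltimesH} the antipode acts on the leg of $c$ that is fed into $\mu$, so $(\delta_e\otimes\eta)(\tau\otimes\delta_e)=\tau\otimes\eta-\delta_e\otimes{}^t\mu^*_{(e,e)}(\tau\otimes\eta)$, and the bracket comes out with $+$ with no convention to revisit.
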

\begin{proof} Let $\tau\in \g $ and $\eta \in \h$. 
Note that $d\mu_{(e,e)}( \delta_e\otimes \eta) = \eta$. 
It then follows from 
  \eqref{eq:mGltimesH} and Proposition \ref{prop:DistasHopf} that in the Lie algebra $ \Lie_{\BC}(G\ltimes H)$, 
\[[\tau\otimes \delta_e,\delta_e\otimes \eta ]=\delta_e\otimes {}^t\mu^*_{(e,e)} (\tau\otimes \eta).\] 
This, together with 
 \cite[Proposition 1.22]{Kn} and Lemma \ref{lem:gtogh}, implies the proposition. 
\end{proof}

 \subsection{Normal formal Lie subgroups}\label{sec:mormal}  Let $G$ be a formal Lie group.
   \begin{dfn}\label{def:normal}
   	A formal Lie subgroup
   	$H$ of $G$ is said to be  \textbf{normal} if 
    $H$ is $G$-stable with respect to  the formal action \eqref{eq:Phi} of $G$ on itself by inner automorphisms (see Definition \ref{def:stable}).
   \end{dfn}	



For every normal formal Lie subgroup $H$ of $G$, the reduction 
$\underline H$ is a normal Lie subgroup of $\underline G$.
Using Lemma \ref{prop:asfunctors}, the following result is clear.  
\begin{lemd}\label{lem:normalset}
  A formal Lie subgroup  $H$ of $G$ is normal if and only if for every formal manifold $N$, 
  the subgroup $\iota_N(H(N))$ of $G(N)$
  is normal. Here $\iota: H\rightarrow G$ is the inclusion morphism, and $\iota_N$ is as in \eqref{eq:varphiM}.
\end{lemd}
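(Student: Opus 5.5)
The statement is Lemma \ref{lem:normalset}: $H$ is normal if and only if $\iota_N(H(N))$ is a normal subgroup of $G(N)$ for every formal manifold $N$. The plan is to translate both conditions into statements about the functors $h_H$ and $h_G$ and then apply the Yoneda lemma (Lemma \ref{prop:asfunctors}), using the injectivity \eqref{inj00} throughout.

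\emph{Forward direction.} Suppose $H$ is normal, so there is a formal action $\Phi_H: G\curvearrowright H$ with $\iota\circ\Phi_H=\Phi\circ(\mathrm{id}_G\times\iota)$. Fix $N$, and take $g\in G(N)$ and $h\in H(N)$. Unwinding \eqref{eq:Phi}, one sees that $\Phi_N(g,\iota_N(h))=g\,\iota_N(h)\,g^{-1}$ in the group $G(N)$. By the equivariance identity, this equals $\iota_N((\Phi_H)_N(g,h))$, which lies in $\iota_N(H(N))$. Hence $\iota_N(H(N))$ is normal in $G(N)$.

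\emph{Converse.} Suppose $\iota_N(H(N))$ is normal in $G(N)$ for every $N$. Because $\iota_N$ is injective by \eqref{inj00}, conjugation yields a well-defined map
\[c_N:\ G(N)\times H(N)\rightarrow H(N),\qquad \iota_N(c_N(g,h))=g\,\iota_N(h)\,g^{-1}.\]
The key step is to check that $c_N$ is natural in $N$. For a morphism $\psi:N'\rightarrow N$, precomposition with $\psi$ gives group homomorphisms on both $G(\cdot)$ and $H(\cdot)$, and it commutes with $\iota$. Injectivity of $\iota_{N'}$ then forces $c_{N'}(g\circ\psi,h\circ\psi)=c_N(g,h)\circ\psi$.

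So $c$ is a natural transformation $h_{G\times H}=h_G\times h_H\rightarrow h_H$. By Lemma \ref{prop:asfunctors}(a), it is induced by a unique morphism $\Phi_H:G\times H\rightarrow H$; concretely, $\Phi_H=c_{G\times H}(\mathrm{pr}_1,\mathrm{pr}_2)$. The identity $\iota\circ\Phi_H=\Phi\circ(\mathrm{id}_G\times\iota)$ holds on $N$-points for every $N$, so by Yoneda it holds as morphisms. Finally, each $c_N$ is a group action of $G(N)$ on $H(N)$, being conjugation transported via the injective $\iota_N$. Lemma \ref{lem:groupasfunctors} then shows that $\Phi_H$ is a formal action. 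This makes $H$ $G$-stable, i.e.\ normal.

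\emph{Main obstacle.} Nothing here is deep. The only steps needing care are the naturality of $c_N$ and the identification of $\Phi_N$ with pointwise conjugation, and both follow from \eqref{inj00} and the functorial description of group objects.
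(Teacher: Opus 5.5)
Your proof is correct and follows essentially the same route as the paper: the paper only says the lemma is clear from the Yoneda lemma (Lemma \ref{prop:asfunctors}), and you have written out that argument, namely the identification $\Phi_N(g,h')=g\,h'\,g^{-1}$, naturality of the transported conjugation via \eqref{inj00}, and Lemma \ref{lem:groupasfunctors} to see that the induced morphism $G\times H\rightarrow H$ is a formal action making $\iota$ $G$-equivariant.
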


 We have the following result by using Lemma \ref{prop:[g,g]}.
 
 \begin{prpd}  Let $H$ be a normal formal Lie subgroup of $G$. 
     Then its complex   Lie algebra $\h$ is an ideal of $\g$.
 \end{prpd}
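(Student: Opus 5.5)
Since $H$ is normal, Definition \ref{def:normal} gives a formal action $\Phi': G\curvearrowright H$ for which the inclusion $\iota: H\rightarrow G$ is $G$-equivariant, where $G$ acts on itself by the conjugation action $\Phi$ of \eqref{eq:Phi}. The plan is to derive the infinitesimal version of this equivariance and then compare it with the adjoint action computed in Lemma \ref{prop:[g,g]}.

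First, $e\in G$ is $G$-fixed for $\Phi$, because $\Phi$ is an action by automorphisms. Next I check that $e\in H$ is $G$-fixed for $\Phi'$. By Lemma \ref{lem:groupasfunctors}, for every formal manifold $N$ the map $\Phi'_N$ is the restriction of conjugation of $G(N)$ to the subgroup $\iota_N(H(N))$, and \eqref{inj00} says $\iota_N$ is injective. Conjugation therefore acts on $H(N)$ by group automorphisms. By Lemma \ref{lem:groupsactionasauto}, $\Phi'$ is then a formal action as automorphisms, so $e\in H$ is $G$-fixed.

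Now apply Proposition \ref{prop:gonTwithfixed}(b) to the $G$-equivariant morphism $\iota$ and the $G$-fixed point $e\in H$. It says $d\iota_e:\h\rightarrow\g$ is $\g$-equivariant, where $\g$ acts on $\h=\RT_e(H)$ through $d\Phi'$ and on $\g=\RT_e(G)$ through $d\Phi$. Lemma \ref{prop:[g,g]} identifies $d\Phi$ with $\ad$. Hence, for $\tau\in\g$ and $\eta\in\h$,
\[
[\tau,d\iota_e(\eta)]=d\iota_e\big({}^t{\Phi'}^*_{(e,e)}(\tau\otimes\eta)\big)\in d\iota_e(\h).
\]
Under the identification of $\h$ with $d\iota_e(\h)$, this shows $[\g,\h]\subset\h$, so $\h$ is an ideal of $\g$.

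The only step that needs care is the fixed-point claim for $\Phi'$. Arguing through functors of points, as above, avoids any computation with stalks.
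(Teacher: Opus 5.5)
Your proposal is correct and follows essentially the paper's route. The paper's one-line proof only cites Lemma \ref{prop:[g,g]}, and its implicit argument is the one you spell out: normality gives an action $\Phi'\colon G\curvearrowright H$ making $\iota$ equivariant, $e\in H$ is $G$-fixed, Proposition \ref{prop:gonTwithfixed}(b) makes $d\iota_e$ $\g$-equivariant, and $d\Phi=\ad$ then yields $[\g,\h]\subset\h$. One small citation point: the fact that $\Phi'_N$ is restricted conjugation follows from the equivariance of $\iota$ together with \eqref{inj00}, not from Lemma \ref{lem:groupasfunctors}.
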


The following are some examples of normal formal Lie subgroups.

\begin{exampled}\label{exam:Genormal}
\noindent(a) By Proposition \ref{prop:GonMa} (b), the closed formal Lie subgroup $G_{\{e\}}$ of $G$ is normal. 

\noindent(b) 
For every homomorphism  $\varphi:G_1\rightarrow G_2$ between formal Lie groups, the closed formal Lie subgroup $\ker(\varphi)$ of $G_1$ is normal (see Corollary \ref{cor:kervarG}).

\noindent(c) 
For every formal action $G_1\curvearrowright G_2$ as automorphisms, the formal Lie subgroup  of $G_1\ltimes G_2$ determined by the injective immersion \[G_2=\Spec(\BC)\times G_2\xrightarrow{\varepsilon\times \mathrm{id}_{G_2}} G_1\ltimes G_2\] as in \eqref{eq:StoG} is normal.  
\end{exampled}




\section{Quotients}
\subsection{Quotients with respect to  formal actions} 
	

Let $\mu=(\overline{\mu},\mu^*): G\curvearrowright M $ 
be a (left) formal action. 
\begin{dfn}\label{def:G-inv} 
 A morphism $\varphi: M\rightarrow M'$ of formal manifolds is said to be  $G$-invariant (with respect to $\mu$) if  the diagram  \be\label{eq:G-invariant} \begin{CD}G\times M @>\mu >>M\\@V\mathrm{pr}_2VV @VV\varphi V\\M@>\varphi>> M' \end{CD} \quad \quad\text{($\mathrm{pr}_2$ is the projection morphism)} \ee commutes. 
\end{dfn} 


\begin{dfn}\label{def:quo} 
	 A  \textbf{quotient} of $M$ by $G$ (with respect to $\mu$) is a  pair $(M_0,\pi)$ consisting of a formal manifold  $M_0$ and a $G$-invariant morphism $\pi:M\rightarrow M_0$, satisfying the following universal property: for each pair $(M', \varphi)$ consisting of a formal manifold  $M'$ and a $G$-invariant morphism $\varphi:M\rightarrow M' $, there exists a unique morphism $\varphi': M_0\rightarrow M'$ such that
    the diagram 
\[\xymatrix{ M \ar[rd]^{\varphi}\ar[d]_{\pi}\\M_0\ar[r]^{\varphi'} &M' }\]
    commutes. 
\end{dfn}
It is clear that a quotient of $M$ by $G$, if it exists, is unique. Accordingly, whenever it exists, we denote it by $(G\backslash M,\pi)$ and refer to it as the quotient of $M$ by $G$. 
In this case, $\pi$ is called the quotient morphism. 

\begin{remarkd} As in Definition \ref{def:formalaction}, one also has the notion of a right formal action, to be denoted by $M\curvearrowleft G$.  For a right formal action $\mu': M\curvearrowleft G$, a $G$-invariant morphism $\varphi: M\rightarrow M'$ is similarly defined as in Definition \ref{def:G-inv}. 
As in Definition \ref{def:quo}, one also has the notion of the quotient $(M/G,\pi)$ with respect to $\mu'$.
All definitions and results in this subsection have analogous statements for right formal actions, with the evident modifications.
\end{remarkd}



\begin{dfn}
Let $U$ be an open subset of $M$ satisfying
$\overline{\mu}(G\times U)\subset U$. 
A  formal function  $f\in \CO_M(U) $ is said to be $G$-invariant if the image of $f$ under the homomorphism  
\[\mu^*_{U}:\ \CO_M(U)\rightarrow\CO_{G\times M}(G\times U)=\CO_{G}(G)\wh\otimes_{\pi}\CO_{M}(U)\quad \text{(see \eqref{eq:OM1M2})}\]
is $1\otimes f $. 
We denote by $\CO_M(U)^{G}$  the subspace of $\CO_M(U)$ consisting of all $G$-invariant formal functions.
\end{dfn}

For every $G$-invariant morphism $\varphi: M\rightarrow M'$ of formal manifolds, it follows from \eqref{eq:G-invariant} that  the image of the homomorphism 
    \[\varphi^*_{U'}: \ \CO_{M'}(U')\rightarrow \CO_M(\overline \varphi^{-1}(U')) \quad (\text{ $U'$ is an open subset  of $M'$ })
    \]
     is contained in $\CO_M(\overline \varphi^{-1}(U'))^{G}$.



\begin{lemd}\label{lem:quoofsimple}
     Let $M_0$ be a formal manifold.  
     
    \noindent(a) The quotient of $ G\times M_0$ by $G$ with respect to the left formal action
     \be\label{eq:GactsonGtimesM} m\times \mathrm{id}_{M_0}: \ G\times G\times M_0\rightarrow G\times M_0\ee
      is $(M_0, \mathrm{pr}_2:G\times M_0\rightarrow M_0)$.  

    \noindent (b) For each open subset $V$ of $M_0$,  the homomorphism \[\mathrm{pr}^*_{2,V}: \ \CO_{M_0}(V) \rightarrow \CO_{G\times M_0}(G\times V)=\CO_G(G)\wh\otimes_{\pi}\CO_{M_0}(V), \quad f\mapsto 1\otimes f  \] is a closed embedding with  image  $\CO_{G\times M_0}(G\times V)^{G}$. 
\end{lemd}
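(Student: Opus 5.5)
The plan is to prove (b) first, since (a) follows from it together with the sheaf-theoretic description of morphisms. For (b), injectivity and closedness of $\mathrm{pr}^*_{2,V}$ come from the identification $\CO_{G\times M_0}(G\times V)=\CO_G(G)\wh\otimes_\pi\CO_{M_0}(V)$ in \eqref{eq:OM1M2}. Composing $f\mapsto 1\otimes f$ with the continuous map $\Ev_e\otimes\mathrm{id}$, which is $u\mapsto\la\delta_e,u\ra_1$ in the notation \eqref{eq:ra12}, gives the identity of $\CO_{M_0}(V)$. So $\mathrm{pr}^*_{2,V}$ is a topological embedding with a continuous left inverse. Its image is then closed, because it equals the fixed set of the continuous idempotent $u\mapsto 1\otimes\la\delta_e,u\ra_1$.

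Next I show that this image equals the invariants. Clearly $1\otimes f$ is invariant, since $(m\times\mathrm{id})^*(1\otimes f)=1\otimes1\otimes f$. Conversely, suppose $u\in\CO_G(G)\wh\otimes_\pi\CO_{M_0}(V)$ satisfies
\[(m\times\mathrm{id}_{M_0})^*u=1\otimes u\in \CO_G(G)\wh\otimes_\pi\CO_G(G)\wh\otimes_\pi\CO_{M_0}(V).\]
I pair the middle factor with the point evaluation at $e$. The left-hand side becomes $(\Psi^*)u$, where $\Psi:G\times V\to G\times V$ is the composition $G\times V\to G\times G\times V$, $(g,v)\mapsto(g,e,v)$, followed by $m\times\mathrm{id}$. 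By the unit diagram \eqref{eq:diagramofidentity}, $\Psi$ is the identity, so the left-hand side is $u$. For the right-hand side I pair instead the first factor with $\delta_e$. More carefully, I apply the morphism $G\times V\to G\times G\times V$, $(g,v)\mapsto(e,g,v)$, whose composite with $m\times\mathrm{id}$ is again the identity. Pulling back the left-hand side along it gives $u$. Pulling back $1\otimes u$ along the other embedding $(g,v)\mapsto(g,e,v)$ gives $1\otimes\la\delta_e,u\ra_1$. Combining these, $u=1\otimes\la\delta_e,u\ra_1$. Hence $u$ lies in the image, with preimage $f=\la\delta_e,u\ra_1\in\CO_{M_0}(V)$, which is well defined since $\Ev_e$ is continuous.

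For (a), let $\varphi:G\times M_0\to M'$ be $G$-invariant. The candidate map is $\varphi':=\varphi\circ(\varepsilon\times\mathrm{id}_{M_0}):M_0\to M'$. To check $\varphi'\circ\mathrm{pr}_2=\varphi$, I use the functor-of-points description from Lemma \ref{prop:asfunctors}. Take a formal manifold $N$ and a point $(g,x)\in G(N)\times M_0(N)$. Invariance gives $\varphi(hg,x)=\varphi(g,x)$ for all $h\in G(N)$. Choosing $h=g^{-1}$ yields $\varphi(g,x)=\varphi(e,x)=\varphi'(\mathrm{pr}_2(g,x))$, so by Yoneda $\varphi=\varphi'\circ\mathrm{pr}_2$. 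Uniqueness follows because $\mathrm{pr}_2$ has the section $\varepsilon\times\mathrm{id}_{M_0}$, so $\mathrm{pr}_2$ is an epimorphism. Finally, $\mathrm{pr}_2$ is itself $G$-invariant by associativity of the projection.

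The step I expect to need the most care is the bookkeeping in (b): making sure that pulling back along the two embeddings of $G\times V$ into $G\times G\times V$ corresponds exactly to the slot-wise pairings $\la\delta_e,\cdot\ra$. This uses the compatibility of \eqref{eq:OM1M2} with products of morphisms, which I take from \cite[Theorem 1.9]{CSW1}. I also need to keep $V$ a genuine open subset rather than all of $M_0$, which works because everything is local over $V$.
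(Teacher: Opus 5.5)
Your proof is correct and follows the paper's route. Part (a) is the Yoneda argument via Lemma \ref{prop:asfunctors}, and (b) rests on the same identity $u=1\otimes\la\Ev_e,u\ra_1$ for invariant $u$; your continuous left inverse $\Ev_e\otimes\mathrm{id}$, together with the description of the image as the fixed set of a continuous idempotent, is a self-contained substitute for the paper's citation of \cite[Proposition 43.7]{T}. One expository slip should be fixed: the identity comes from applying the single pullback along $(g,v)\mapsto(g,e,v)$, i.e.\ $\mathrm{id}\otimes\Ev_e\otimes\mathrm{id}$, to both sides of $(m\times\mathrm{id}_{M_0})^*u=1\otimes u$. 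So delete the detour through $(g,v)\mapsto(e,g,v)$, which only yields $u=u$, and the remark about pairing the right-hand side differently, since treating the two sides by different maps would not justify equating them.
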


\begin{proof} The assertion (a) follows from Lemma \ref{prop:asfunctors}. By \cite[Proposition 43.7]{T}, we have that $\mathrm{pr}^*_{2,V}$ is a closed embedding. 
For every $f_0\in \CO_{G\times M_0}(G\times V)^G$, it is straightforward that  
\[1\otimes\la \Ev_e, f_0\ra_1=f_0.\] Then $\mathrm{pr}^*_{2,V}( \la \Ev _e, f_0\ra_1)=f_0$. This finishes the proof. \end{proof}
\subsection{Quotients of formal Lie groups}
Let $G$ be a formal Lie group. 
For every formal Lie subgroup $H$ of $G$, let \be\label{eq:G/H}(G/H,\CO_{G/H})\ee denote the ringed space over $\Spec(\BC)$ defined as follows: the underlying topological space $G/H=\{gH:g\in G\}$ is endowed with the quotient topology, and the structure sheaf $\CO_{G/H}$ is given by 
\[\CO_{G/H}:\ V \mapsto \CO_{G/H}(V):=\CO_G(\overline\pi^{-1}(V))^H\quad \textrm {($V$ is an open subset of $G/H$)}, \]
where $\overline{\pi}:G\rightarrow G/H$ is the quotient map.
We write \[\pi=(\overline\pi, \pi^*):\ (G,\CO_G)\rightarrow (G/H,\CO_{G/H})\] for the morphism between ringed spaces over $\Spec(\BC)$ such that   \[\pi^*_V:\ \CO_{G/H}(V)=\CO_G(\overline\pi^{-1}(V))^H\rightarrow \CO_G(\overline\pi^{-1}(V))\] is the inclusion map. 
By abuse of notation, we will not distinguish the ringed space \eqref{eq:G/H} 
from its underlying topological space $G/H$.

Recall that $\underline \h$ and $\underline \g$ are the complexified Lie algebras of $\underline H$ and $\underline G$, respectively. 
It follows from \eqref{eq:underlinecommdiag} that $\underline \h$ is a subset of $\underline \g$. This induces a $\BC$-linear map \be\label{eq:diotaf}  \h/\underline{\h} \rightarrow \g/\underline{\g}.\ee 
\begin{dfn}
    Let $G$ be a formal Lie group. A formal Lie subgroup  $H$  of $G$ is said to be \textbf{regular} if the $\BC$-linear map \eqref{eq:diotaf} is injective.
\end{dfn}
\begin{remarkd} Recall from \cite[Definition 3.17]{CSW4}  the definition of a regular morphism between formal manifolds. A formal Lie subgroup  $H$  of $G$  is regular if and only if the inclusion morphism $\iota: H\rightarrow G$ is regular.  When $G$ is a Lie group,  all Lie subgroups are regular.
\end{remarkd}
%


Given a morphism $\varphi=(\overline{\varphi},\varphi^*): (M',\CO_{M'})\rightarrow (M,\CO_M)$ of formal manifolds, recall from \cite[Section 3.4]{CSW4} that a local section of $\varphi$ is a morphism \[\psi=(\overline{\psi},\psi^*):\ (V,\CO_M|_V)\rightarrow (M',\CO_{M'})\quad (V\ \text{is an open subset of}\ M)\] of formal manifolds such that $\varphi\circ \psi: (V, \CO_M|_V)\rightarrow (M,\CO_M)$ is the inclusion morphism. 

We will prove the following theorem in the next subsection. 
\begin{thmd}\label{thm:quo}
Let $H$ be a closed regular formal Lie subgroup of $G$. 

\noindent(a) The ringed space $G/H$ over $\Spec(\BC)$  is a formal manifold. Together with the morphism  $\pi: G\rightarrow G/H$, it is the quotient of $G$ by $H$ with respect to the right formal action
\be\label{eq:monGH} m|_{G\times H}:\ G\times H \xrightarrow{\mathrm{id}_{G}\times\iota} G\times G\xrightarrow{m} G. \ee 

\noindent (b) 
For each open subset $V$ of $G/H$, the homomorphism \[ \pi^*_{V}: \  \CO_{G/H}(V) \rightarrow \CO_G(\overline{\pi}^{-1}(V))
\] is a closed embedding, where $\CO_{G/H}(V)$ and $\CO_G(\overline{\pi}^{-1}(V))$ are equipped with the smooth topologies (see \cite[Definition 4.1]{CSW1}). 

\noindent (c) For every $gH\in G/H$, there exists an open neighborhood $V$ of $gH$ in $G/H$ and a local section 
\[\pi': \ (V,\CO_{G/H}|_{V})\rightarrow (G,\CO_G)\] of $\pi: (G, \CO_G)\rightarrow (G/H,\CO_{G/H}) $. 
 \end{thmd}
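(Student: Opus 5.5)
The plan is to construct local charts on $G/H$ from a "local slice" transversal to $H$, reducing everything to the trivial product case handled in Lemma \ref{lem:quoofsimple}.

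First, near $e$ we build a formal submanifold $S$ of $G$ through $e$ with two properties. Its tangent space $\RT_e(S)$ is a complement of $\h$ in $\g$. Its reduction $\underline S$ is a submanifold of $\underline G$ whose tangent space complements $\underline\h$ in $\underline\g$. Regularity of $H$, i.e. injectivity of \eqref{eq:diotaf}, is exactly what lets us choose such a complement compatibly. We need a subspace $E\subset\g$ with $E\oplus\h=\g$ and $(E\cap\underline\g)\oplus\underline\h=\underline\g$. We also need the "formal directions" $E/(E\cap\underline\g)$ to complement $\h/\underline\h$ inside $\g/\underline\g$, and injectivity of $\h/\underline\h\to\g/\underline\g$ guarantees this. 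Using a chart of $G$ at $e$ of the form $(\R^n)^{(k)}$, adapted so that the chart directions are split according to $E$ and $\h$, we take $S$ to be the corresponding coordinate sub-chart.

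Next we consider the composite
\[
\theta:\ S\times H\xrightarrow{\iota\times\iota} G\times G\xrightarrow{m} G .
\]
Its differential at $(e,e)$ is $E\oplus\h\to\g$, $(\tau,\eta)\mapsto\tau+\eta$, by Lemma \ref{lem:inverse} and Lemma \ref{lem:difftimes}, so it is bijective. We want to show that $\theta$ restricts to an isomorphism from $S'\times H$ onto an open subset $W=\overline\pi^{-1}(\overline\pi(W))$ of $G$, where $S'\subset S$ is a small open neighbourhood of $e$. Here we use the following ingredients:
\begin{itemize}
\item the inverse function theorem for formal manifolds (the constant rank theorem \cite[Theorem 3.6]{CSW4}) at points $(s,e)$;
\item $H$-equivariance of $\theta$ for the right action $(s,h)h'=(s,hh')$, via the Yoneda description in Lemma \ref{prop:asfunctors}, which propagates the local isomorphism along $S'\times H$;
\item closedness of $H$ together with the classical argument for Lie groups (shrink $S'$ so that $\overline{\theta}$ is injective, using $\underline H$ closed in $\underline G$), which gives global injectivity.
\end{itemize}
The key obstacle is this step: showing $\theta$ is an open embedding at the formal (non-reduced) level. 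Pointwise bijectivity of the differentials plus injectivity on points must be upgraded to an isomorphism of structure sheaves. I would first establish it on formal stalks $\wh\CO$ via equivariance and Lemma \ref{prop:asfunctors}, and then invoke the criterion that a morphism which is a local isomorphism everywhere and bijective onto an open set is an open embedding.

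Given this, $\theta$ is $H$-equivariant, where $S'\times H$ carries the right multiplication on the second factor. By (the right-action analogue of) Lemma \ref{lem:quoofsimple}, $\CO_G(\overline\pi^{-1}(V))^H\cong\CO_{S'}(\cdot)$ for $V=\overline\pi(W)$, and this identification is topological and a closed embedding. Also, $\overline\pi|_{S'}$ is a homeomorphism onto the open set $V$, since the quotient topology and the slice agree. Hence $(V,\CO_{G/H}|_V)\cong S'$ is a formal manifold. Translating by left multiplication $m_g$, which commutes with the right $H$-action, covers $G/H$. Hausdorffness follows from closedness of $H$, and paracompactness from that of $\underline G/\underline H$. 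This gives the first part of (a) and (b), since being a closed embedding is a local property that passes through the sheaf gluing. Statement (c) is then immediate: $\pi'$ is $V\cong S'\hookrightarrow G$ (translated by $g$).

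Finally, for the universal property in (a), let $\varphi:G\to M'$ be $H$-invariant. Then $\varphi^*$ lands in $H$-invariant functions, i.e. in $\CO_{G/H}$, which defines $\varphi'$ sheaf-theoretically. Locality of the stalk maps can be checked via the local sections $\pi'$, which give $\varphi'|_V=\varphi\circ\pi'$. Uniqueness follows because $\pi^*$ is injective.
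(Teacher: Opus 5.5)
Your proposal is correct and is essentially the paper's argument. The paper likewise uses regularity to choose the complement (its version is $(\BC\otimes_{\BR}T)\oplus E\oplus\h=\g$), builds an $H$-equivariant slice morphism $W\times S_E\times H\to G$, $(w,s,h)\mapsto\exp(w)sh$, with $S_E$ the infinitesimal formal submanifold of Lemma \ref{prop:subLs} playing the role of your coordinate sub-chart $S$, and applies the formal inverse function theorem together with $H$-equivariance. That theorem needs both $d\phi$ and $d\underline\phi$ bijective, which is exactly what your choice of $\underline S$ guarantees. The paper then reduces to Lemma \ref{lem:quoofsimple}, left translations and local-to-global gluing, and the only minor differences are that it gets (c) from \cite[Proposition 3.22]{CSW4} rather than reading the section off the slice, and proves the universal property via global sections, (b) and \cite[Theorem 5.11]{CSW1} rather than sheaf-theoretically.
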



As usual, the quotient of $G$ by $H$ with respect to  \eqref{eq:monGH} is called the right quotient of $G$ by $H$.
For every closed regular formal Lie subgroup $H$ of $G$, 
\be\label{eq:underlineG/H} \text{$(\underline{G/H},\underline \pi)$ is precisely the right quotient of $\underline G$ by $\underline H$} \ee (in the category of smooth manifolds) as in \cite[Theorem 3.58]{War}. 

 \begin{remarkd} For a formal Lie subgroup $H$ of $G$, we may also define the left quotient of $G$ by $H$. When $H$ is closed and regular, the assertions in Theorem \ref{thm:quo} hold for the left quotient  $(H\backslash G,\pi)$ with the evident modifications. Furthermore, 
 there exists a unique morphism 
$i_0: G/H\rightarrow H\backslash G$ 
 such that the diagram 
\be\begin{CD}
    G @>i>> G \\ @V \pi VV @VV \pi V \\ G/H @> i_0>>H\backslash G
\end{CD} \ee
 commutes. Moreover,  the morphism $i_0$ is an isomorphism of formal manifolds. 
\end{remarkd}

\subsection{Proof of Theorem \ref{thm:quo}}

Let $H$ be a closed regular formal Lie subgroup of $G$. 
We start with  three elementary results in the theory of Lie groups (see \cite[Theorem 9.3.7 (iii) and the proof of Theorem 10.1.10]{JN}).  

\begin{lemd}
     The quotient map $\overline{\pi}: G\rightarrow G/H$  is an open map.
\end{lemd}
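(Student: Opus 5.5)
The plan is to use the standard argument for quotient maps by group actions. The underlying topological space of $G$ is that of the Lie group $\underline G$. Likewise, the underlying set of $H$ is that of $\underline H$, and the coset space $G/H=\{gH: g\in G\}$ coincides as a set with $\underline G/\underline H$. The quotient topology is defined by declaring $V\subset G/H$ open iff $\overline\pi^{-1}(V)$ is open in $G$. We must show that for every open $U\subset G$, the image $\overline\pi(U)$ is open. By definition of the quotient topology, this amounts to showing that $\overline\pi^{-1}(\overline\pi(U))$ is open in $G$.

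The key step is the identity
\[
\overline\pi^{-1}(\overline\pi(U))=\bigcup_{h\in H} U h ,
\]
where $Uh$ denotes the image of $U$ under right translation by $h$, i.e.\ under the continuous map $\overline{m}(\cdot,h): G\to G$. This identity is immediate set-theoretically, since $g\in \overline\pi^{-1}(\overline\pi(U))$ iff $gH=uH$ for some $u\in U$, i.e.\ $g=uh$ for some $h\in H$. One point needs checking here. The group law on the underlying set $G$ used to form cosets is the map $\overline m$, which is the multiplication of the Lie group $\underline G$ (Example~\ref{exam:subgroup}). Similarly, $H$ as a set is the subgroup $\underline H$ of $\underline G$, using the first diagram in \eqref{eq:diaofsubgroup} on underlying maps.

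It then remains to see that each $Uh$ is open. Right translation $r_h: g\mapsto \overline m(g,h)$ is a homeomorphism of $G$ with inverse $r_{h^{-1}}$, because $\underline G$ is a topological group. Hence $Uh=r_h(U)$ is open, the union over $h\in H$ is open, and so $\overline\pi(U)$ is open.

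No step is a real obstacle. The only care needed is the bookkeeping that the topological and group-theoretic data of $G$ and $H$ are those of the Lie groups $\underline G$ and $\underline H$, so the classical argument (as in \cite[Theorem 9.3.7 (iii)]{JN}) applies verbatim. Closedness and regularity of $H$ are not needed for this lemma.
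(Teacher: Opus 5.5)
Your proof is correct and follows the paper's approach. The paper proves this lemma only by citing the classical Lie-group fact \cite[Theorem 9.3.7 (iii)]{JN} for $\underline G$ and $\underline H$. You write out that standard argument, $\overline\pi^{-1}(\overline\pi(U))=\bigcup_{h\in H}Uh$ with right translations being homeomorphisms, and you also check that the topological and group data of $G$ and $H$ are those of $\underline G$ and $\underline H$, which is the step the citation relies on.
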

Take a $\BR$-linear subspace $T$ of $\Lie(\underline G)$ such that 
\[T\oplus \Lie(\underline H)=\Lie(\underline G)\] as $\BR$-linear spaces. 
\begin{lemd}\label{lem:begininsm}
There is a neighborhood $W$ of $0$ in $T$ such that the $\underline H$-equivariant smooth map \be\label{eq:VtimesH} W\times \underline H \rightarrow \underline G, \quad (\eta,h)\mapsto \exp(\eta)h\ee  is a diffeomorphism onto an open subset of $\underline G$, where $W\times \underline H$ carries the right action of $\underline H$ as in \eqref{eq:GactsonGtimesM}, and $\underline G$ carries the right action of $\underline H$  by right multiplication. 
\end{lemd}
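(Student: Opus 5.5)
The statement is purely about Lie groups: $\underline H$ is a closed Lie subgroup of the Lie group $\underline G$, since $H$ is closed in $G$ and its reduction is a closed formal submanifold of $\underline G$. Lemma \ref{lem:begininsm} is therefore the classical local triviality of $\underline G\to \underline G/\underline H$. I will prove it with the inverse function theorem, followed by a translation argument and an injectivity argument on a shrunken neighborhood.

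\textbf{Step 1 (local diffeomorphism at $(0,e)$).} Consider the smooth map
\[\psi:\ T\times \underline H\rightarrow \underline G,\qquad (\eta,h)\mapsto \exp(\eta)h.\]
Its differential at $(0,e)$ is $(\xi,\zeta)\mapsto \xi+\zeta$ from $T\oplus \Lie(\underline H)$ to $\Lie(\underline G)$, which is an isomorphism because $T\oplus\Lie(\underline H)=\Lie(\underline G)$. Hence $\psi$ restricts to a diffeomorphism from some $W_0\times U_0$ onto an open subset of $\underline G$, where $W_0$ and $U_0$ are neighborhoods of $0$ and $e$.

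\textbf{Step 2 (local diffeomorphism everywhere on $W\times\underline H$).} The map $\psi$ is $\underline H$-equivariant: $\psi(\eta,hh')=\psi(\eta,h)h'$. Right translation by $h'$ is a diffeomorphism on both sides. Transporting Step 1 by these translations shows that $\psi$ is a local diffeomorphism at every point of $W_0\times\underline H$, and the image $\psi(W_0\times \underline H)=\psi(W_0\times U_0)\underline H$ is open.

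\textbf{Step 3 (injectivity).} This is the main point. Choose a neighborhood $U_1$ of $e$ in $\underline H$ with $U_1\subset U_0$. Since $\underline H$ carries the subspace topology, being closed and embedded, there is a neighborhood $O$ of $e$ in $\underline G$ with $O\cap \underline H\subset U_1$. Then choose a neighborhood $W\subset W_0$ of $0$ with $\exp(-W)\exp(W)\subset O$; this is possible by continuity.

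Now suppose $\exp(\eta)h=\exp(\eta')h'$ with $\eta,\eta'\in W$. Then $\exp(-\eta')\exp(\eta)=h'h^{-1}$ lies in $O\cap\underline H\subset U_1$. This gives the equality $\psi(\eta,e)=\psi(\eta',h'h^{-1})$ with both $(\eta,e)$ and $(\eta',h'h^{-1})$ in $W_0\times U_0$. Injectivity of $\psi$ on $W_0\times U_0$ then forces $\eta=\eta'$ and $h'h^{-1}=e$, so $h=h'$.

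\textbf{Conclusion.} By Steps 2 and 3, $\psi|_{W\times\underline H}$ is an injective local diffeomorphism with open image. It is therefore a diffeomorphism onto an open subset of $\underline G$, and it is equivariant for the stated right $\underline H$-actions. The only genuinely nontrivial input is the embeddedness of $\underline H$ used in Step 3; this is where closedness of $H$ enters, and it is also the standard argument cited from \cite{JN}.
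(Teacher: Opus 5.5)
Your proof is correct. The inverse function theorem at $(0,e)$, transport by right $\underline H$-translations, and the injectivity step using that the closed subgroup $\underline H$ carries the subspace topology (which is exactly where closedness of $H$ enters) together give the lemma. The paper supplies no argument of its own and only cites \cite[Theorem 9.3.7 (iii) and the proof of Theorem 10.1.10]{JN}; what you have written is essentially that standard argument spelled out.
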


In what follows, let $W$ be a neighborhood of $0$ in $T$  as in Lemma \ref{lem:begininsm}.
\begin{lemd}
    The map 
\[W\rightarrow G/H,\quad \eta\mapsto  \overline{\pi}(\exp(\eta))\]
is an open embedding of topological spaces. 
\end{lemd}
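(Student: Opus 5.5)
The map $\phi\colon W\to G/H$, $\eta\mapsto\overline{\pi}(\exp(\eta))$, is the composite of the inclusion $W\to W\times\underline H$, $\eta\mapsto(\eta,e)$, with the map \eqref{eq:VtimesH} and the quotient map $\overline\pi$. It is therefore continuous. I will show that it is injective and open; a continuous injective open map is a homeomorphism onto an open subset. Throughout I use that the underlying topological spaces and points of $G$ and $H$ are those of $\underline G$ and $\underline H$. Consequently $G/H=\underline G/\underline H$ as topological spaces, with the quotient topology, and $\overline\pi$ is the usual projection.

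\emph{Injectivity.} Suppose $\eta_1,\eta_2\in W$ satisfy $\overline\pi(\exp(\eta_1))=\overline\pi(\exp(\eta_2))$. Then there is $h\in\underline H$ with $\exp(\eta_2)=\exp(\eta_1)h$. Under \eqref{eq:VtimesH} this says that $(\eta_2,e)$ and $(\eta_1,h)$ have the same image. Lemma \ref{lem:begininsm} says this map is a diffeomorphism onto its image, in particular injective. Hence $\eta_1=\eta_2$ and $h=e$.

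\emph{Openness.} Let $W'\subset W$ be open. By Lemma \ref{lem:begininsm}, the set $\exp(W')\underline H$ is the image of the open set $W'\times\underline H$ under an open embedding. It is therefore open in $\underline G$, hence open in $G$. The preceding lemma says $\overline\pi$ is an open map, so $\phi(W')=\overline\pi(\exp(W')\underline H)$ is open in $G/H$. Thus $\phi$ is open, and in particular $\phi(W)$ is open.

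The only point I expect to need care is the identification of the topology on $G/H$ with the classical quotient $\underline G/\underline H$. This holds because the orbits $gH$ are defined pointwise through $\underline H\subset\underline G$, and $H$ is closed, so no formal structure enters. With that settled, the argument is the classical one from the proof of \cite[Theorem 10.1.10]{JN}.
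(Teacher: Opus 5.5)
Your proof is correct and is essentially the classical argument the paper relies on here; the paper gives no separate proof and refers instead to the Lie-theoretic argument in Hilgert--Neeb. As there, injectivity comes from injectivity of \eqref{eq:VtimesH}, openness comes from openness of $\exp(W')\underline H$ in $\underline G$ together with openness of $\overline\pi$, and your identification $G/H=\underline G/\underline H$ as topological spaces is justified because $G$, $H$ and their reductions share the same underlying spaces and group laws on points.
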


Since $H$ is regular, we can choose  a $\BC$-linear  subspace $E$ of $\g$ such that \be \label{eq:oplus}(\BC\otimes_{\BR} T)\oplus E\oplus\h = \g.\ee 
Recall from Lemma \ref{prop:subLs} that 
there is an immersed formal submanifold $S_E$ of $G_{\{e\}}$
with underlying set $\{e\}$ such that $E$ equals  the image of the differential $d\iota_e: \mathrm{T}_e(S_E)\rightarrow \g$.
Then there is an $H$-equivariant morphism \be\label{eq:vartheta}\phi=(\overline{\phi},\phi^*):\  W \times S_E\times H\xrightarrow{\exp|_{W}\times\mathrm{id}_{S_E}\times \mathrm{id}_H} \underline G\times  S_E\times H\xrightarrow{\iota\times\iota\times \iota} G\times G\times G\xrightarrow m G,\ee of formal manifolds, where $ W\times S_E\times H$ carries the right formal action of $H$ as in \eqref{eq:GactsonGtimesM}, and $G$ carries  the right formal action of $H$  by right multiplication. By Lemma \ref{lem:begininsm}, $\overline \phi$ is an open embedding map. 

\begin{lemd}\label{lem:loccld}
 There is an open neighborhood  $W'$ of $0$ in $W$ such that the  $H$-equivariant morphism \be\label{eq:varthetaU} \phi|_{W'\times S_E \times H}:\ W' \times S_E\times H\rightarrow (U',\CO_G|_{U'})\quad (U':=\overline{\phi}(W'\times S_E \times H))\ee
is an isomorphism of formal manifolds. 
\end{lemd}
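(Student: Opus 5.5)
We prove that $\phi$ is an isomorphism near $W'\times S_E\times H$ by a local-diffeomorphism criterion: first show that $\phi$ is a local isomorphism at every point of $\{0\}\times\{e\}\times \underline H$, then shrink $W$.

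\emph{Step 1: the differential at the base point.} Compute the differential of $\phi$ at $(0,e,e)$. By Lemma \ref{lem:T3} we have
\[
\mathrm{T}_{(0,e,e)}(W\times S_E\times H)=(\BC\otimes_\BR T)\oplus \mathrm{T}_e(S_E)\oplus \h .
\]
Lemmas \ref{lem:difftimes} and \ref{lem:inverse}(a) show that $d\phi_{(0,e,e)}$ is the sum of the three inclusion maps into $\g$. These have images $\BC\otimes_\BR T$, $E$ and $\h$. By \eqref{eq:oplus}, $d\phi_{(0,e,e)}$ is therefore a linear isomorphism onto $\g$. Dimensions and degrees also match. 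The underlying map is a local diffeomorphism of reductions, so $\dim$ agrees, and equality of tangent-space dimensions then forces $\deg$ to agree.

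\emph{Step 2: invoke the inverse function theorem.} Apply the inverse function theorem for formal manifolds, i.e.\ the constant rank theorem \cite[Theorem 3.6]{CSW4} in the full-rank case. A morphism whose differential at a point is bijective is an isomorphism on an open neighborhood of that point onto an open subset. This yields a neighborhood $W_1\times (\text{nbhd of } e \text{ in } H)$ on which $\phi$ is an open isomorphism. Note that $S_E$ is a single point, so nothing needs shrinking in that factor.

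\emph{Step 3: propagate along $H$ by equivariance.} Since $\phi$ is $H$-equivariant for right translations, the right translations by $h\in\underline H$ identify the germ of $\phi$ at $(0,e,h)$ with the germ at $(0,e,e)$. These translations are isomorphisms $(\mathrm{id}\times\mathrm{id}\times R_h)$ on the source and $R_h$ on $G$. Hence $\phi$ is a local isomorphism at every point of $\{0\}\times\{e\}\times H$ with a uniform neighborhood $W_1$. More precisely, equivariance transports the neighborhood $W_1\times S_E\times U_e h$, so $\phi$ is a local isomorphism at every point of $W_1\times S_E\times H$.

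\emph{Step 4: conclude.} By Lemma \ref{lem:begininsm}, $\overline\phi$ is injective and open, hence a homeomorphism onto $U'$. A morphism of formal manifolds that is a homeomorphism and a local isomorphism everywhere is an isomorphism, since the sheaf maps are isomorphisms stalkwise.

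The main obstacle is Step 1, together with the claim that the inverse function theorem applies. One must check carefully that the differential of $\iota\circ \exp|_W$ at $0$ lands on $\BC\otimes T\subset \underline\g\subset\g$ via $d\iota_e$. One must also verify that the differential of the triple product composed with $m$ is the sum, which follows by iterating Lemma \ref{lem:inverse}(a) with associativity \eqref{eq:chainofG}. If the formal inverse function theorem is only available in the form "bijective differential at $b$ implies a local isomorphism near $b$", the uniformity in Step 3 must come from equivariance as described rather than from compactness.
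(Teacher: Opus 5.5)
Your proposal is correct and follows the paper's proof. You show $d\phi_{(0,e,e)}$ is bijective from \eqref{eq:oplus} and Lemma \ref{lem:inverse}, show the reduced differential $d\underline\phi_{(0,e,e)}$ is bijective from Lemma \ref{lem:begininsm}, and then apply the formal inverse function theorem \cite[Theorem 1.3]{CSW4}; your Step 3 spells out the $H$-equivariance the paper uses implicitly to pass from $(0,e,e)$ to all of $W'\times S_E\times H$.

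One correction: bijectivity of $d\phi$ alone does not give a local isomorphism of formal manifolds. For example, the morphism $(\BR^0)^{(1)}\to\BR$ that pulls a smooth function back to its Taylor series in $y_1$ has bijective differential but is not a local isomorphism. So the reduction check in your Step 1 is a genuine hypothesis of the inverse function theorem, and Step 2 should be stated with it.
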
 
\begin{proof}


The differential map $d\underline\phi_{(0,e,e)}$  is bijective by  Lemma \ref{lem:begininsm}. On the other hand, it is clear that $d\phi_{(0,e,e)}$ is 
 bijective by using \eqref{eq:oplus} and Lemma \ref{lem:inverse}. The lemma then follows from the inverse function theorem of formal manifolds (see \cite[Theorem 1.3]{CSW4}). 
\end{proof}
 For each $g\in G$,  there is an   $H$-equivariant isomorphism \[l_g: \ G=\Spec(\BC)\times G\xrightarrow{\varsigma_g\times \mathrm{id}_G} G\times G\xrightarrow{m} G,\]  where $G$ carries the right formal action of $H$ given by right multiplication, and  
 $\varsigma_g$ is as in \eqref{eq:varsigma}. 
The following lemma follows from Lemmas \ref{lem:loccld} and \ref{lem:quoofsimple}.
 \begin{lemd}\label{lem:chartofquo}Let $g\in G$ and let $W', U'$ be as in Lemma \ref{lem:loccld}. 
 Then the composition of 
 \[W'\times S_E\times H\xrightarrow{\phi|_{W'\times S_E \times
H}}(U',\CO_G|_{U'})\xrightarrow{ l_g|_{U'}} (gU',\CO_G|_{gU'})\] is an $H$-equivariant isomorphism between formal manifolds.  Consequently, the ringed space 
    $(gV', \CO_{G/H}|_{gV'})$ over $\Spec(\BC)$ is a formal manifold  which is isomorphic to $W'\times S_E$, where $V':=(\overline{\pi}\circ \exp)(W')$ is an open subset of $G/H$. Moreover,   together with the morphism
    \be \label{eq:piongU}\pi|_{gU'}:\ (gU',\CO_{G}|_{gU'})\rightarrow (gV',\CO_{G/H}|_{gV'}),\ee
    the formal manifold  $(gV', \CO_{G/H}|_{gV'})$ is a   quotient of  $(gU',\CO_{G}|_{gU'})$ by $H$ (with respect to the right formal action of $H$ on $(gU',\CO_{G}|_{gU'})$).
\end{lemd}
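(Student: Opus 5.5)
The plan is to transport everything along the isomorphism $\psi:=l_g|_{U'}\circ\phi|_{W'\times S_E\times H}$ and reduce to Lemma \ref{lem:quoofsimple}, applied with $M_0:=W'\times S_E$ and the right formal action of $H$ on $M_0\times H$ by right multiplication on the last factor.

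\emph{Step 1: $\psi$ is an $H$-equivariant isomorphism.} By Lemma \ref{lem:loccld}, $\phi|_{W'\times S_E\times H}$ is an $H$-equivariant isomorphism onto $(U',\CO_G|_{U'})$. The morphism $l_g$ is an isomorphism of $G$ with inverse $l_{g^{-1}}$; this can be checked through Lemma \ref{prop:asfunctors} by working on $N$-points. It commutes with right multiplication by $H$ by associativity \eqref{eq:chainofG}. Moreover, $\overline{l_g}(U')=gU'$, and $gU'$ is open. Composing these facts gives the first claim.

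\emph{Step 2: identify $\CO_{G/H}$ on $gV'$.} Since $U'$ is right $H$-stable, $gU'=\overline\pi^{-1}(gV')$, where $V'=(\overline\pi\circ\exp)(W')$. This uses $\overline{\phi}(W'\times\{e\}\times H)=\exp(W')\underline H$. By the open embedding lemma preceding Lemma \ref{lem:loccld}, $\overline\pi\circ\exp$ restricted to $W'$ is a homeomorphism onto the open set $V'$. Translating by $g$ (using that $\overline\pi$ is open), the map $w\mapsto g\exp(w)H$ is a homeomorphism $W'\to gV'$, and under $\overline\psi$ it corresponds to the projection $W'\times H\to W'$. Now let $V\subset gV'$ be open and put $V_0\subset W'$ for the corresponding open set. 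Then $\overline\psi$ identifies $V_0\times S_E\times H$ with $\overline\pi^{-1}(V)$. Because $\psi$ is an $H$-equivariant isomorphism, $\psi^*$ restricts to a bijection
\[
\CO_G(\overline\pi^{-1}(V))^H\;\cong\;\CO_{W'\times S_E\times H}(V_0\times S_E\times H)^H .
\]
By Lemma \ref{lem:quoofsimple}(b) (right-action version), the right-hand side is exactly $\mathrm{pr}_1^*\CO_{W'\times S_E}(V_0\times S_E)$.

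\emph{Step 3: conclude.} Steps 1 and 2 give an isomorphism of ringed spaces $(gV',\CO_{G/H}|_{gV'})\cong W'\times S_E$, compatible with restriction maps since all identifications are natural in $V$. Hence $(gV',\CO_{G/H}|_{gV'})$ is a formal manifold. Under these isomorphisms, $\pi|_{gU'}$ corresponds to $\mathrm{pr}_1:W'\times S_E\times H\to W'\times S_E$. Lemma \ref{lem:quoofsimple}(a) says this projection is the quotient by $H$, and the universal property is preserved by isomorphisms of the source (equivariant) and the target.

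The main obstacle is Step 2. There one must carefully check that the invariance condition defining $\CO_G(\overline\pi^{-1}(V))^H$ (via $\mu^*$ into $\CO_G\wh\otimes_\pi\CO_H(H)$) is transported by $\psi^*$ exactly to invariance on the product. This follows from equivariance and the identification \eqref{eq:OM1M2}. One must also check that the topological identification of $gV'$ with $W'$ matches $\overline\psi$ and $\overline\pi$.
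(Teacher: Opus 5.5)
Your proposal is correct and follows the paper's route: the paper derives this lemma directly from Lemma \ref{lem:loccld} (composed with the $H$-equivariant isomorphism $l_g$) and Lemma \ref{lem:quoofsimple}, which is exactly your transport of Lemma \ref{lem:quoofsimple} along $l_g\circ\phi$. Your Step 2 checks that $gU'=\overline\pi^{-1}(gV')$ and that $\psi^*$ preserves $H$-invariants. This is bookkeeping the paper leaves implicit.
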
	


Since $H$ is closed in $G$,  the underlying topological space of $G/H$ is Hausdorff and paracompact (see \cite[Chapter III, Propositions 2.5.13 and 4.6.13]{Bo}
).

\begin{lemd}\label{lem:GHmani}
The ringed space $G/H$ over $\Spec (\BC)$ is a formal manifold.    
\end{lemd}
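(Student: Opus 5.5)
To show that $G/H$ is a formal manifold, I will check the two conditions of Definition \ref{def:formalmanifold} together with the requirement that $(G/H,\CO_{G/H})$ be a locally ringed space over $\Spec(\BC)$. The topological conditions are already settled: since $H$ is closed in $G$, the space $G/H$ is Hausdorff and paracompact, as noted just before the statement. What remains is that $\CO_{G/H}$ is a sheaf of $\BC$-algebras which is locally isomorphic to some $(\BR^n)^{(k)}$ near every point.

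First I will verify that $V\mapsto \CO_G(\overline\pi^{-1}(V))^H$ is a sheaf of $\BC$-algebras.
\begin{itemize}
\item It is a presheaf of subalgebras of $V\mapsto \CO_G(\overline\pi^{-1}(V))$. The $H$-invariant elements form a subalgebra because $\mu^*_U$ is an algebra homomorphism and $1\otimes f_1\cdot 1\otimes f_2=1\otimes f_1f_2$.
\item Invariance is compatible with restriction, because the pullback under the right action commutes with the restriction maps of $\CO_G$ and of $\CO_{G\times H}$.
\item For gluing, take an open cover $V=\bigcup V_i$. Then $\overline\pi^{-1}(V)=\bigcup\overline\pi^{-1}(V_i)$, and these preimages are $H$-stable. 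Sections $f_i$ that agree on overlaps glue to a unique $f\in\CO_G(\overline\pi^{-1}(V))$ because $\CO_G$ is a sheaf.
\item The glued $f$ is invariant. Its image under the action pullback and $1\otimes f$ both lie in $\CO_{G\times H}(\overline\pi^{-1}(V)\times H)$, and they agree on the cover $\{\overline\pi^{-1}(V_i)\times H\}$. By the sheaf property of $\CO_{G\times H}$ they are equal.
\end{itemize}

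Next comes the local model. Fix $gH\in G/H$ and let $W',V'$ be as in Lemmas \ref{lem:loccld} and \ref{lem:chartofquo}. Then $gV'=\overline\pi(gU')$ is an open neighborhood of $gH$, since $\overline\pi$ is open and $gU'$ is open. Lemma \ref{lem:chartofquo} shows that $(gV',\CO_{G/H}|_{gV'})$ is isomorphic to $W'\times S_E$. Here $W'$ is an open subset of $T\cong\BR^{\dim T}$ and $S_E$ is an infinitesimal formal manifold, so $W'\times S_E$ is a formal manifold by \cite[Theorem 1.9]{CSW1}. Shrinking $W'$ to a ball if needed, it is isomorphic to $(\BR^n)^{(k)}$ with $n=\dim T$ and $k=\dim_\BC E$, using \eqref{eq:whO=series} for the structure of $S_E$.

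One point needs care: the restriction $\CO_{G/H}|_{gV'}$ must really be the sheaf $V\mapsto\CO_{gU'}(\overline\pi^{-1}(V))^H$ that appears in Lemma \ref{lem:chartofquo}. This holds because $\overline\pi^{-1}(gV')=gU'H=gU'$. That set is $H$-stable, since $U'=\overline\phi(W'\times S_E\times H)$ is $H$-stable by the $H$-equivariance of $\phi$ and the fact that $\overline\phi$ has image closed under right multiplication by $\underline H$.

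Finally, the stalks of $\CO_{G/H}$ are local rings, because locally the space is isomorphic as a ringed space to a formal manifold. Hence $G/H$ is a locally ringed space over $\Spec(\BC)$, and it is locally modeled on $(\BR^n)^{(k)}$, so it is a formal manifold.

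The main obstacle I expect is this identification of the restricted sheaf with the quotient sheaf of $gU'$, and in particular the saturation $\overline\pi^{-1}(gV')=gU'$, which Lemma \ref{lem:chartofquo} uses implicitly. To check it, I will compare underlying sets using Lemma \ref{lem:begininsm}: $\overline{\phi}(W'\times S_E\times H)=\exp(W')\underline H$ as sets, because $S_E$ has the single point $e$.
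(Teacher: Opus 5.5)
Your proposal is correct and takes the same route as the paper, whose proof is just that $G/H$ is Hausdorff and paracompact and is locally isomorphic to $W'\times S_E$ by Lemma \ref{lem:chartofquo}. The extra checks you add are details the paper leaves implicit, and you handle them correctly: the sheaf axiom for $V\mapsto \CO_G(\overline\pi^{-1}(V))^H$, and the saturation $\overline\pi^{-1}(gV')=gU'$, which identifies the restricted sheaf with the quotient sheaf of $gU'$.
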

\begin{proof}
    Since $G/H$ is Hausdorff and paracompact, the lemma is implied by Lemma  \ref{lem:chartofquo}.
\end{proof}

\begin{lemd}\label{lem:O(G)H}
For every open subset $V$ of $G/H$, the homomorphism \[ \pi^*_V: \  \CO_{G/H}(V) \rightarrow \CO_G(\overline{\pi}^{-1}(V))
\] is a closed embedding.  
\end{lemd}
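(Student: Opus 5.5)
The plan is to localize using the charts from Lemma \ref{lem:chartofquo} and then reduce to Lemma \ref{lem:quoofsimple}(b). Injectivity of $\pi^*_V$ is automatic, since it is an inclusion of $H$-invariant functions. The real task is to show that $\pi^*_V$ is a topological embedding with closed image, for the smooth topologies on $\CO_{G/H}(V)$ and on $\CO_G(\overline\pi^{-1}(V))$.

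First fix the local picture. By Lemma \ref{lem:chartofquo}, the open sets $gV'$ for $g\in G$ cover $G/H$. Over each $gV'$ we have an $H$-equivariant isomorphism
\[ (gU',\CO_G|_{gU'})\cong W'\times S_E\times H, \qquad gU'=\overline\pi^{-1}(gV'). \]
Under this isomorphism $\pi$ becomes the projection onto $W'\times S_E\cong gV'$. For an open $V_0\subset gV'$, Lemma \ref{lem:quoofsimple}(b) (in its right-action version, after swapping the factors by $\nu$) therefore shows that
\[ \pi^*_{V_0}:\CO_{G/H}(V_0)\rightarrow \CO_G(\overline\pi^{-1}(V_0)) \]
is a closed embedding whose image is exactly $\CO_G(\overline\pi^{-1}(V_0))^H$. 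This settles the statement for all $V$ contained in a single chart.

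Next globalize with the sheaf property of the smooth topology. By \cite[Definition 4.1]{CSW1}, the smooth topology on $\CO_M(U)$ is the projective-limit topology coming from restrictions to the members of an open cover; equivalently, for any open cover $\{U_\alpha\}$ of $U$, the restriction map
\[ \CO_M(U)\rightarrow\prod_\alpha \CO_M(U_\alpha) \]
is a closed topological embedding. I would take this as known from \cite{CSW1} (it is how sheaf-theoretic gluing interacts with the smooth topology). Cover $V$ by $V_\alpha:=V\cap g_\alpha V'$. We get a commutative square whose horizontal arrows are the restriction embeddings
\[ \CO_{G/H}(V)\hookrightarrow\prod_\alpha\CO_{G/H}(V_\alpha), \qquad \CO_G(\overline\pi^{-1}(V))\hookrightarrow\prod_\alpha\CO_G(\overline\pi^{-1}(V_\alpha)). \]
The vertical arrows are $\pi^*_V$ and $\prod_\alpha\pi^*_{V_\alpha}$; the latter is a closed embedding, being a product of closed embeddings. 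Hence the composite
\[ \CO_{G/H}(V)\rightarrow\prod_\alpha\CO_G(\overline\pi^{-1}(V_\alpha)) \]
is a closed embedding. It factors through $\pi^*_V$ followed by a continuous injection. A standard argument then shows that $\pi^*_V$ is a topological embedding, and that its image is the preimage of a closed set under the continuous restriction map, hence closed.

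For the restriction-map step, the smooth topology on $\CO_{G/H}(V)$ is the one from \cite[Definition 4.1]{CSW1}, since $G/H$ is a formal manifold by Lemma \ref{lem:GHmani}. Alternatively, one can argue by hand: a function on $\overline\pi^{-1}(V)$ is $H$-invariant if and only if its restriction to each $\overline\pi^{-1}(V_\alpha)$ is $H$-invariant, and this is a closed condition. The main obstacle is ensuring that the abstract smooth topology on $\CO_{G/H}$ agrees with the one transported through the chart isomorphism $gV'\cong W'\times S_E$, and that Lemma \ref{lem:quoofsimple}(b) applies to arbitrary open $V_0\subset gV'$. Both facts follow because isomorphisms of formal manifolds are homeomorphisms for smooth topologies, so I expect this to be routine rather than difficult.
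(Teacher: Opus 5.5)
Your proposal is correct and takes essentially the same route as the paper. The paper's one-line proof localizes via the charts of Lemma \ref{lem:chartofquo}, applies Lemma \ref{lem:quoofsimple}(b) on each chart, and glues using \cite[Lemma 4.6]{CSW1}; that lemma plays the role of the fact you invoke, that restriction to an open cover is a closed topological embedding for the smooth topology, and your diagram chase simply spells out the gluing step the paper leaves implicit.
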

\begin{proof} 
The lemma follows from  \cite[Lemma 4.6]{CSW1}, Lemmas \ref{lem:quoofsimple} and \ref{lem:chartofquo}. 
\end{proof}

\noindent\textbf{Proof of Theorem \ref{thm:quo}:}
The assertion (b) follows from Lemma \ref{lem:O(G)H}. Using Lemma \ref{lem:chartofquo}, it is clear that the quotient morphism $\pi: G\rightarrow G/H$ is a regular submersion at each point of $G$. Then the assertion (c) follows from \cite[Proposition 3.22]{CSW4}. By Lemma \ref{lem:GHmani},  it remains to prove that $(G/H,\pi)$ is the quotient of $G$ by $H$.

Let $\varphi=(\overline{\varphi},\varphi^*): G\rightarrow M$ be an $H$-invariant morphism. 
Then the  homomorphism $\varphi^*:\CO_M(M)\rightarrow \CO_G(G)$ induces a  unique  continuous homomorphism $ (\varphi')^{*}:\ \CO_M(M)\rightarrow \CO_G(G)^H $ between formal algebras such that the diagram 
     \[\xymatrix{\CO_M(M)\ar[r]^{\varphi^*}\ar[rd]_{(\varphi')^*}&\CO_G(G)\\&\CO_G(G)^H\ar[u]_{\pi^*}}\] commutes. By Lemma \ref{lem:O(G)H} 
    and  \cite[Theorem 5.11]{CSW1}, there is a unique  morphism $\varphi': G/H\rightarrow M$ between formal manifolds determined by $(\varphi')^*$ such that $\varphi'\circ \pi=\varphi$.  
Then  $(G/H, \pi)$ is the quotient of $G$ by $H$, as required. 
\qed


\subsection{Quotients by normal formal Lie subgroups}
 Let  $G$ be a formal Lie group, and let $H$ be a closed regular normal formal Lie subgroup of $G$.
 
 Using Lemma \ref{prop:asfunctors}, 
 an $H$-invariant morphism with respect to the right formal action of $H$ on $G$ given by right multiplication is also $H$-invariant with respect to the left formal action of $H$ on $G$ given by left multiplication.
 Furthermore, the right quotient $(G/H,\pi)$ coincides with the left quotient $(H\backslash G,\pi)$ of $G$ by $H$. 
 
It follows from Lemma \ref{prop:asfunctors} that there is a unique morphism 
\be \label{eq:mofGH} m:\ G/H\times G/H\rightarrow G/H\ee such that the diagram 
\be \label{eq:diaofm0}\begin{CD}
    G\times G @>m>> G \\ @V\pi \times \pi VV @VV\pi V\\ G/H\times G/H@>m>> G/H
\end{CD}\ee
commutes. By abuse of notation, we also write $\varepsilon$ for the composite morphism
\be \label{eq:unitofGH}
\Spec(\BC)\xrightarrow{\varepsilon} G \xrightarrow{\pi} G/H.
\ee

\begin{prpd}\label{prop:quoofnormal}  Let  $G$ be a formal Lie group, and let $H$ be a closed regular normal formal Lie subgroup of $G$.

\noindent(a) The triple $(G/H,m,\varepsilon)$ forms a formal Lie group, and the quotient  morphism $\pi: G\rightarrow G/H$ is a homomorphism of formal Lie groups.

\noindent(b) The 
Lie algebra homomorphism \[d\pi_e: \g\rightarrow \Lie_{\BC}(G/H)\] is surjective with  $\ker(d\pi_e)=\h$. In particular, $\Lie_{\BC}(G/H)=\g/\h$ as Lie algebras. 
\end{prpd}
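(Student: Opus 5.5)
For (a), I will use the universal property of the quotient in Theorem \ref{thm:quo} together with the functor-of-points description in Lemma \ref{prop:asfunctors}. The first step is to show that $\pi\times\pi: G\times G\rightarrow G/H\times G/H$ is the quotient of $G\times G$ by $H\times H$, acting by right multiplication on each factor. By Theorem \ref{thm:quo}(c), $\pi$ admits local sections. Locally on $G/H$, these sections trivialize $\pi$ as in Lemma \ref{lem:chartofquo}. Combining this with Lemma \ref{lem:quoofsimple}, $H\times H$-invariant morphisms out of $G\times G$ descend uniquely, first locally and then globally by gluing.

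The composite $\pi\circ m$ is $H\times H$-invariant. On $N$-points this is the identity $(g_1h_1)(g_2h_2)=g_1g_2\,(g_2^{-1}h_1g_2)h_2$, which holds because $\iota_N(H(N))$ is normal by Lemma \ref{lem:normalset}; invariance under left multiplication by $H$ also follows from normality, as noted before the statement. This yields $m$ on $G/H$ in \eqref{eq:diaofm0}. Inversion $i_0$ is obtained in the same way from $\pi\circ i$, which is $H$-invariant.

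The group axioms \eqref{eq:chainofG}, \eqref{eq:diagramofidentity} and \eqref{eq:comofi} for $G/H$ then follow from those for $G$. Their two sides agree after precomposition with $\pi\times\pi\times\pi$, $\pi$, etc., and these maps are epimorphisms by the uniqueness in the universal property; we need the triple-product analogue of the first step here. The square \eqref{eq:diaofm0} says exactly that $\pi$ is a homomorphism.

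For (b), surjectivity of $d\pi_e$ will come from the local section $\pi'$ of Theorem \ref{thm:quo}(c) near $eH$. We may translate it so that $\overline{\pi'}(eH)=e$, using $l_g$. Then $\pi\circ\pi'=\mathrm{id}$, so $d\pi_e\circ d\pi'_{eH}=\mathrm{id}$ by \eqref{eq:chainruleofdvarphi}. For the kernel, Lemma \ref{lem:chartofquo} identifies $\pi$ near $e$ with the projection $W'\times S_E\times H\rightarrow W'\times S_E$. Its differential at $(0,e,e)$ has kernel exactly the $\h$-summand, so by \eqref{eq:oplus} $\ker(d\pi_e)=\h$. Alternatively, $\h\subset\ker$ because $\pi\circ\iota$ factors through $\Spec(\BC)$, and equality follows by the dimension count $\dim\g=\dim(W'\times S_E)+\dim\h$. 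Since $d\pi_e$ is a Lie algebra homomorphism by Proposition \ref{prop:Liealg}(b), $\Lie_{\BC}(G/H)=\g/\h$.

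The main obstacle is the first step of (a): showing that the product $\pi\times\pi$ is again a categorical quotient, or equivalently that $H\times H$-invariant morphisms descend. I would first try the local-section/trivialization argument, reducing to Lemma \ref{lem:quoofsimple}(a) via the isomorphisms of Lemma \ref{lem:chartofquo} on products of the charts $gU'$. Failing that, I would apply Theorem \ref{thm:quo} directly to the closed regular subgroup $H\times H$ of the product formal Lie group $G\times G$ and identify $(G\times G)/(H\times H)$ with $G/H\times G/H$ by comparing charts.
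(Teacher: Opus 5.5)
Your proposal is correct and follows the paper's route. Part (a) rests on the universal property of $\pi$, the functor-of-points description (Lemma \ref{prop:asfunctors}) and normality (Lemma \ref{lem:normalset}); part (b) rests on the local model $W'\times S_E\times H\rightarrow W'\times S_E$ of Lemma \ref{lem:chartofquo}. The one place you go beyond the paper is in showing explicitly that $\pi\times\pi$ (resp.\ $\pi\times\pi\times\pi$) is the quotient by $H\times H$ (resp.\ $H^3$). The paper treats this as immediate from Lemma \ref{prop:asfunctors} when it produces $m$ in \eqref{eq:diaofm0}, and your local-trivialization-plus-gluing argument via Lemmas \ref{lem:chartofquo} and \ref{lem:quoofsimple} is a sound way to supply it.
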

\begin{proof}
    Using \eqref{eq:diaofm0} and Lemmas \ref{prop:asfunctors},  \ref{lem:normalset}, the assertion (a) is clear. 
    The assertion (b) follows from Lemma 
    \ref{lem:chartofquo}.
    \end{proof}

    
The following corollary is clear by using  Lemma \ref{prop:asfunctors} and Proposition \ref{prop:quoofnormal}. 
\begin{cord}\label{cor:quotoGin}  Let  $G$ be a formal Lie group, and let $H$ be a closed regular normal formal Lie subgroup of $G$. Let  $\varphi:G\rightarrow G'$ be an $H$-invariant homomorphism of formal Lie groups. Then the unique morphism $\varphi': G/H\rightarrow G'$ satisfying $\varphi=\varphi'\circ \pi $ is a homomorphism of formal Lie groups.
\end{cord}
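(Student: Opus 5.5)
We must show that the unique morphism $\varphi'$ with $\varphi=\varphi'\circ\pi$ makes the diagram \eqref{commor} commute for $G/H$ and $G'$. First, $\varphi'$ exists and is unique because $\varphi$ is $H$-invariant and $(G/H,\pi)$ is the quotient of $G$ by $H$ (Theorem \ref{thm:quo}(a)). The plan is to reduce the homomorphism property to the universal property of $\pi\times\pi: G\times G\rightarrow G/H\times G/H$, as a quotient of $G\times G$ by $H\times H$. Rather than prove this directly, it is cleaner to argue at the level of functors of points and use Lemma \ref{prop:asfunctors}.

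We want $m'\circ(\varphi'\times\varphi')=\varphi'\circ m$ as morphisms $G/H\times G/H\rightarrow G'$, where $m'$ is the multiplication of $G'$ and $m$ is the multiplication \eqref{eq:mofGH}. Precomposing with $\pi\times\pi$ and using \eqref{eq:diaofm0}, the right-hand side becomes $\varphi'\circ\pi\circ m=\varphi\circ m$. The left-hand side becomes $m'\circ(\varphi\times\varphi)$. These agree because $\varphi$ is a homomorphism. It therefore suffices to know that $\pi\times\pi$ is an epimorphism in the category of formal manifolds, that is, that two morphisms out of $G/H\times G/H$ agreeing after composing with $\pi\times\pi$ are equal.

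The key step is this epimorphism property. I would prove it locally, using Theorem \ref{thm:quo}(c). Around every point of $G/H$ there is an open $V$ and a local section $\pi'$ with $\pi\circ\pi'=$ the inclusion $V\rightarrow G/H$. Hence $\pi'\times\pi'$ is a local section of $\pi\times\pi$ over $V_1\times V_2$. If $\alpha\circ(\pi\times\pi)=\beta\circ(\pi\times\pi)$, then composing with $\pi'\times\pi'$ shows that $\alpha$ and $\beta$ agree on $V_1\times V_2$. These products cover $G/H\times G/H$, and morphisms of formal manifolds glue (they are morphisms of locally ringed spaces, determined by their restrictions to an open cover). Hence $\alpha=\beta$.

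I expect the only delicate point to be this gluing and covering argument, which is routine. The unit compatibility is automatic by the earlier remark that homomorphisms commute with units. Alternatively, it follows from $\varphi'\circ\varepsilon=\varphi'\circ\pi\circ\varepsilon=\varphi\circ\varepsilon$, which is the unit of $G'$.
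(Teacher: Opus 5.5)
Your argument is correct and is essentially the paper's. The paper only says the corollary is clear from Lemma \ref{prop:asfunctors} and Proposition \ref{prop:quoofnormal}, i.e.\ one checks the homomorphism identity after precomposing with $\pi\times\pi$ (using \eqref{eq:diaofm0} and that $\varphi$ is a homomorphism) and then cancels $\pi\times\pi$. What you add is an explicit proof of that cancellation: $\pi\times\pi$ is an epimorphism, by the local sections of Theorem \ref{thm:quo}(c) and gluing of morphisms. The paper leaves this step implicit (it also underlies the uniqueness in \eqref{eq:mofGH}), and it does not follow from the functor-of-points formalism alone, since $\pi_N\colon G(N)\to (G/H)(N)$ need not be surjective.
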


\section{Formal Lie groups associated to Lie pairs}

\subsection{Formal Lie groups associated to Lie pairs} Let $(\q,L,\mathrm{Ad},\iota)$ be a Lie pair. 
Recall from Examples \ref{exam:Lq} and \ref{exam:LieofLq} that there is an infinitesimal formal Lie group  $\mathcal{G}_{\q}$  with $\Lie_{\BC}(\mathcal{G}_{\q})=\q$ as Lie algebras and $\CO_{\mathcal{G}_{\q}}(\{e\})=(\RU(\q))'$ as Hopf formal algebras. 
\begin{prpd}\label{prop:Lltimesq}
   There exists a unique formal action \be\label{eq:LactsonLq}\mu:\  L\curvearrowright \mathcal{G}_{\q}\ee as automorphisms 
     such that the smooth representation $L\curvearrowright \q$ induced by \eqref{eq:LactsonLq}  (as in  Proposition \ref{prop:GonTH}) coincides with  $\Ad: L\curvearrowright \q$.
     
\end{prpd}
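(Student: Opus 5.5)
The plan is to write down the action explicitly on global sections. The uniqueness then follows because a formal function on $L\times\mathcal{G}_{\q}$ is determined by its values at the points of $L$.

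\textbf{Setting up the dictionary.} By \eqref{eq:OM1M2},
\[
\CO_{L\times \mathcal{G}_{\q}}(L\times\{e\})=\RC^\infty(L)\wh\otimes_\pi (\RU(\q))'.
\]
Write $\RU(\q)=\bigcup_n \RU_n(\q)$ for the standard filtration. Then $(\RU(\q))'=\varprojlim_n (\RU_n(\q))'$, and this is a projective limit of finite-dimensional spaces. Hence $\RC^\infty(L)\wh\otimes_\pi(\RU(\q))'$ is identified with the space $\RC^\infty(L;(\RU(\q))')$ of smooth maps into this Fréchet space. Concretely, its elements are compatible families of smooth maps $L\to(\RU_n(\q))'$. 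Since $L\times\{e\}$ and $\mathcal{G}_{\q}$ are formal manifolds, I will use \cite[Theorem 5.11]{CSW1}: a morphism $L\times\mathcal{G}_{\q}\to\mathcal{G}_{\q}$ is the same thing as a continuous algebra homomorphism $(\RU(\q))'\to \RC^\infty(L;(\RU(\q))')$. The same holds for the triple products appearing in \eqref{eq:action}, \eqref{eq:diamunit} and \eqref{eq:H-grp}. This reduces every diagram to an identity between continuous algebra homomorphisms on global sections.

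\textbf{Existence.} The representation $\Ad$ extends uniquely to an action of $L$ on $\RU(\q)$ by algebra automorphisms. These automorphisms are automatically Hopf automorphisms, because they preserve $\q=\mathrm{Prim}$. Each $\RU_n(\q)$ is $L$-stable and finite-dimensional, and $L$ acts smoothly on it, because $\Ad$ is a smooth representation on $\q$ and $\RU_n(\q)$ is a quotient of $\bigoplus_{k\le n}\q^{\otimes k}$. I then define
\[
\mu^*(f):=\big(g\mapsto f\circ \Ad_g\big)\in \RC^\infty(L;(\RU(\q))'), \qquad f\in(\RU(\q))'.
\]
This map has the following properties.
\begin{itemize}
\item It is continuous, componentwise in $n$.
\item It is an algebra homomorphism, because each $\Ad_g$ is a coalgebra map of $\RU(\q)$ and the product on $(\RU(\q))'$ is dual to the coproduct.
\item It is local, because $f\circ\Ad_g(1)=f(1)$.
\end{itemize}
The action axioms \eqref{eq:action} and \eqref{eq:diamunit} translate into the identities $\Ad_{g_1g_2}=\Ad_{g_1}\Ad_{g_2}$ and $\Ad_e=\mathrm{id}$, checked after evaluating at points $(g_1,g_2)$. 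Evaluation at points is legitimate because elements of $\RC^\infty(L\times L;\cdot)$ are functions. The automorphism condition \eqref{eq:H-grp} is dual to the multiplicativity of $\Ad_g$ on $\RU(\q)$. Finally, for $g\in L$ the morphism $\mu_g$ has $\mu_g^*=$ the transpose of $\Ad_g$. Hence ${}^t(\mu_g)^*_e=\Ad_g$ on $\mathrm{Dist}_e(\mathcal{G}_{\q})=\RU(\q)$, and restricting to $\q=\RT_e$ gives $d(\mu_g)_e=\Ad_g$, as required.

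\textbf{Uniqueness.} Suppose $\mu'$ is another formal action as automorphisms that induces $\Ad$. For each $g$, $\mu'_g$ is a formal Lie group automorphism of $\mathcal{G}_{\q}$. By Proposition \ref{prop:DistasHopf}, ${}^t(\mu'_g)^*_e$ is a Hopf algebra automorphism of $\RU(\q)$ (using Proposition \ref{prop:Liealg}). Its restriction to $\q$ is $\Ad_g$, and $\q$ generates $\RU(\q)$, so ${}^t(\mu'_g)^*_e$ equals the extension of $\Ad_g$. Therefore $(\mu'_g)^*=\mu_g^*$. Now $(\mu_g)^*f=(\Ev_g\otimes\mathrm{id})(\mu^*f)$, and an element of $\RC^\infty(L;(\RU(\q))')$ is determined by its values at all $g\in L$. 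Hence $\mu'^*=\mu^*$, and so $\mu'=\mu$.

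\textbf{Main obstacle.} The step I expect to need the most care is the topological identification $\RC^\infty(L)\wh\otimes_\pi(\RU(\q))'\cong\RC^\infty(L;(\RU(\q))')$, together with its triple-product analogues. Both the continuity of $\mu^*$ and the pointwise-evaluation arguments rest on it. I would try to derive it from the reduction to the finite-dimensional quotients $(\RU_n(\q))'$, using the commutation of $\wh\otimes_\pi$ with countable projective limits of Fréchet spaces \cite{T}.
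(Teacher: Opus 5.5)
Your proposal is correct and is essentially the paper's proof. The paper also takes the orbit map $f\mapsto(g\mapsto g.f)$ from $(\RU(\q))'$ into $\RC^\infty(L;(\RU(\q))')=\CO_L(L)\wh\otimes_\pi(\RU(\q))'$, turns it into a morphism via \cite[Theorem 5.11]{CSW1}, calls the action and automorphism checks routine, and says uniqueness is obvious. Your version supplies those omitted details (including the explicit uniqueness argument through Hopf automorphisms of $\RU(\q)$ determined on $\q$). Your normalization $g.f=f\circ\Ad_g$ is the right one: pulling back by $\Ad_g$ rather than $\Ad_{g^{-1}}$ is what makes $\mu$ a left action with $d(\mu_g)_e=\Ad_g$.
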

\begin{proof}


The representation $\Ad: L\curvearrowright \q$ induces a  representation $ L\curvearrowright \RU(\q)$ which further induces a representation 
\[
L\curvearrowright (\RU(\q))', \quad (g,f)\mapsto g.f.
\]
It is easily checked that the orbit map
\[
\begin{array}{rcl}
  \CO_{\mathcal{G}_{\q}}(\{e\})= (\RU(\q))'  &\rightarrow & \RC^{\infty}(L;(\RU(\q))')= \CO_{L}(L)\wh\otimes_{\pi} (\RU(\q))'=\CO_{L\times \mathcal{G}_{\q}}(L\times \{e\})\\
   f&\mapsto&(g\mapsto g.f)
\end{array}
\]
is a well-defined continuous $\BC$-algebra homomorphism. 
In view of \cite[Theorem 5.11]{CSW1}, the above orbit map corresponds to a morphism  
$\mu: L\times \mathcal{G}_{\q}\rightarrow \mathcal{G}_{\q} $ of formal manifolds.  It is routine to check that $\mu$  is a formal action as automorphisms, and that the action $L\curvearrowright \q$ induced by $\mu$ coincides with $\Ad: L\curvearrowright \q$. This proves the existence of \eqref{eq:LactsonLq},
while the uniqueness is obvious.

\end{proof}

Let $\mu$ be as in Proposition  \ref{prop:Lltimesq}, and we denote by 
\[L\ltimes \mathcal{G}_{\q}\]
 the semidirect product of $L$ and $\mathcal{G}_{\q}$ with respect to $\mu$.
By applying  Proposition \ref{prop:Liealgofsemi} and Lemma \ref{prop:[g,g]},   we have that \be\label{eq:LieofLLq} \Lie_{\BC}(L\ltimes \mathcal{G}_{\q})=\l\ltimes_{\ad}\q\ee as Lie algebras.


\begin{prpd}\label{prop:underlineofLLq}
There are identifications \[\underline {L\ltimes \mathcal{G}_{\q}}=L\times \Spec(\BC)=L\] of  Lie groups. Furthermore,  the morphism \be\label{eq:iotaLto} L=L\times \Spec(\BC)  \xrightarrow{\mathrm{id}_L\times \varepsilon}L\ltimes \mathcal{G}_{\q}\ee  coincides with the morphism $L=\underline {L{\ltimes} \mathcal{G}_{\q}}\xrightarrow{\iota} L\ltimes \mathcal{G}_{\q}$.
\end{prpd}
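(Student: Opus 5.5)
The plan is to reduce everything to the two identifications already recorded: \eqref{eq:underlineofsemi}, i.e. $\underline{G\ltimes_\mu H}=\underline G\ltimes_{\underline\mu}\underline H$, and \eqref{eq:idpro}, i.e. $\underline{M_1\times M_2}=\underline{M_1}\times\underline{M_2}$. We apply them with $G=L$ and $H=\mathcal{G}_\q$.

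First we compute the reduction $\underline{\mathcal{G}_\q}$. Its underlying space is the single point $\{e\}$. Its structure sheaf is $(\RU(\q))'$ modulo its maximal ideal, which is $\BC$. So $\underline{\mathcal{G}_\q}=\Spec(\BC)$, and it is the trivial Lie group.

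Since $L$ is a Lie group, $\CO_L$ has no nilpotent directions and $\underline L=L$. By \eqref{eq:idpro}, the underlying formal manifold of $\underline{L\ltimes\mathcal{G}_\q}$ is therefore $L\times\Spec(\BC)=L$.

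The group structure comes from \eqref{eq:underlineofsemi}: it is $L\ltimes_{\underline\mu}\{e\}$. Here $\underline\mu$ is necessarily the trivial action on a point. A semidirect product with the trivial group is just $L$, with multiplication $m_L\times\mathrm{id}_{\Spec(\BC)}$. Concretely, in \eqref{eq:mGltimesH} all factors involving $\underline H$ collapse to $\Spec(\BC)$, and what remains is the multiplication of $L$. This gives the identification $\underline{L\ltimes\mathcal{G}_\q}=L$ as Lie groups.

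For the second assertion, we use the naturality of reductions \eqref{eq:underlinecommdiag} applied to the product morphism $\mathrm{id}_L\times\varepsilon\colon L\times\Spec(\BC)\to L\times\mathcal{G}_\q$. The inclusion of the reduction of a product is the product of the inclusions of the reductions. So the morphism $\iota\colon\underline{L\times\mathcal{G}_\q}=\underline L\times\underline{\mathcal{G}_\q}\to L\times\mathcal{G}_\q$ equals $\iota_L\times\iota_{\mathcal{G}_\q}$.

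We then identify the two factors:
\begin{itemize}
\item $\iota_L=\mathrm{id}_L$, because $L$ is already reduced;
\item $\iota_{\mathcal{G}_\q}\colon\Spec(\BC)\to\mathcal{G}_\q$ is the unique morphism to the one-point formal manifold given by evaluation at $e$, which is $\varepsilon$.
\end{itemize}
Hence $\iota=\mathrm{id}_L\times\varepsilon$, which is exactly \eqref{eq:iotaLto}.

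The one point needing care is that the identification \eqref{eq:idpro} is compatible with the inclusion morphisms $\iota$, i.e. that $\iota_{M_1\times M_2}$ corresponds to $\iota_{M_1}\times\iota_{M_2}$. I expect this to be the main, though mild, obstacle. I would verify it through the universal property of products. Both morphisms are determined by their compositions with the projections $\mathrm{pr}_1,\mathrm{pr}_2$. Each of these compositions equals the composite of a reduction inclusion with the reduced projection, by the uniqueness in \eqref{eq:underlinecommdiag} applied to the projection morphisms.
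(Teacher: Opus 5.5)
Your proof is correct. The first assertion is handled as in the paper: \eqref{eq:underlineofsemi}, together with $\underline L=L$ and $\underline{\mathcal{G}_\q}=\Spec(\BC)$, gives $\underline{L\ltimes\mathcal{G}_\q}=L$ as Lie groups.

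For the second assertion your route differs somewhat. The paper only cites \eqref{eq:defmo}, i.e.\ a direct computation of the ideal $\m_{\CO_{L\times\mathcal{G}_\q}}$ on the product itself. Over an open $U\subset L$ the structure sheaf is $\RC^\infty(U)[[y_1,\dots,y_k]]$ with $k=\dim\q$. The ideal $\m_{\CO_{L\times\mathcal{G}_\q}}(U)$ consists of the series whose constant coefficient vanishes identically. Hence the reduction map $\iota^*$ is ``set all $y_j=0$'', which is exactly the pullback along $\mathrm{id}_L\times\varepsilon$.

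You instead apply \eqref{eq:defmo} only to the two factors, to get $\underline L=L$ and $\underline{\mathcal{G}_\q}=\Spec(\BC)$. You then glue the factors using the universal property of products and the naturality \eqref{eq:underlinecommdiag}. Your identification $\iota_{\mathcal{G}_\q}=\varepsilon$ is sound, since a morphism $\Spec(\BC)\to N$ hitting a given point is unique.

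Your argument avoids any computation inside $\CO_L\wh\otimes_\pi(\RU(\q))'$ and shows, for arbitrary formal manifolds, that the reduction inclusion of a product is the product of the reduction inclusions. Its cost is the compatibility of \eqref{eq:idpro} with the inclusion morphisms. As you indicate, this is tautological once \eqref{eq:idpro} is read as the identification induced by $\underline{\mathrm{pr}_1}$ and $\underline{\mathrm{pr}_2}$, which is the natural reading.

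The paper's route is more concrete and needs no such bookkeeping, but it relies on the explicit local form of the product's structure sheaf.
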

\begin{proof}
The first assertion follows from  \eqref{eq:underlineofsemi}, while the second assertion is straightforward by using \eqref{eq:defmo}.  
\end{proof}

Note that  $\mathcal{G}_{\l}$ is a formal Lie subgroup of $\mathcal{G}_{\q}$, and $\mathcal{G}_{\l}=L_{\{e\}}$ is also  a normal formal Lie subgroup of $L$  (see Example \ref{exam:Genormal} and Remark \ref{rmk:Ge=Lg}). 
\begin{prpd}\label{prop:LLq/Llasgroup}
\noindent (a) The composition 
\be\label{eq:idtimesi}  \vartheta: \ \mathcal{G}_{\l}\xrightarrow{\mathrm{id}_{\mathcal{G}_{\l}}\times i}\mathcal{G}_{\l} \times \mathcal{G}_{\l}\xrightarrow{\iota\times \iota} L\ltimes \mathcal{G}_{\q} \ee 
is a homomorphism of formal Lie groups which is   an injective immersion as a morphism of  formal manifolds. Furthermore,
the formal Lie subgroup $H_{\l}$ of  $L\ltimes \mathcal{G}_{\q}$ determined by $(\mathcal{G}_{\l},\vartheta)$ (as in \eqref{eq:StoG}) is closed, regular and normal. 
 
\noindent (b) 
The  quotient $((L\ltimes \mathcal{G}_{\q})/\mathcal{G}_{\l},\pi)$ of $L\ltimes \mathcal{G}_{\q}$ by $\mathcal{G}_{\l}$ with respect to the right formal action \[(L\ltimes \mathcal{G}_{\q})\times \mathcal{G}_{\l} \xrightarrow{\mathrm{id}_{L\ltimes \mathcal{G}_{\q}}\times \vartheta} (L\ltimes \mathcal{G}_{\q}) \times (L\ltimes \mathcal{G}_{\q}) \xrightarrow {m} L\ltimes \mathcal{G}_{\q}\]exists. Together with the multiplication morphism as in \eqref{eq:mofGH} and the unit morphism as in \eqref{eq:unitofGH}, $(L\ltimes \mathcal{G}_{\q})/\mathcal{G}_{\l}$ becomes a formal Lie group. Moreover,  
$\Lie_{\BC}((L\ltimes \mathcal{G}_{\q})/\mathcal{G}_{\l})=\q$ as Lie algebras, the quotient morphism 
$\pi: L\ltimes \mathcal{G}_{\q}\to (L\ltimes \mathcal{G}_{\q})/\mathcal{G}_{\l}$ is a homomorphism of formal Lie groups, 
and its differential
\[
d\pi_e:\ \l\ltimes_{\ad}\q \longrightarrow \q
\]
is given by
\be\label{eq:dpil} \tau\otimes\delta_e+\delta_e\otimes\eta\mapsto \iota(\tau)+ \eta \quad\text{($\tau\in \l$ and $\eta\in \q$)}.\ee



 \noindent (c) There is an  identification $\underline{(L\ltimes \mathcal{G}_{\q})/\mathcal{G}_{\l}}=L $ of Lie groups. Moreover,
 the composition \be \label{eq:Ltol} L=L\times \Spec(\BC)\xrightarrow{\mathrm{id}_L\times \varepsilon} L\ltimes \mathcal{G}_{\q}\xrightarrow{\pi} (L\ltimes \mathcal{G}_{\q})/\mathcal{G}_{\l} \ee coincides with the morphism $L=\underline{(L\ltimes \mathcal{G}_{\q})/\mathcal{G}_{\l}}\xrightarrow{\iota} (L\ltimes \mathcal{G}_{\q})/\mathcal{G}_{\l}$.  

\noindent (d)  The morphism
\be\label{eq:underpi=id}  L= \underline{L\ltimes \mathcal{G}_{\q}}\xlongrightarrow{\underline \pi} \underline{(L\ltimes \mathcal{G}_{\q})/\mathcal{G}_{\l}}=L\ee coincides with the identity morphism $\mathrm{id}_{L}$.  
\end{prpd}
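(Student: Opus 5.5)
The plan is to argue almost entirely through the functor of points, using Lemma \ref{prop:asfunctors}, and to reduce the needed identities between morphisms into the infinitesimal group $\mathcal{G}_{\q}$ to identities of Hopf algebras via Theorem \ref{thm:eqFPpre}.

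\emph{Multiplication law.} Unwinding \eqref{eq:mGltimesH}, for a formal manifold $N$ the group $(L\ltimes\mathcal{G}_{\q})(N)$ is $L(N)\times\mathcal{G}_{\q}(N)$ with
\[
(g,h)(g',h')=(gg',\ \mu_{g'^{-1}}(h)\,h').
\]

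\emph{Two compatibilities.} I would first establish the following.
\begin{enumerate}
\item[(i)] The restriction of $\mu$ to $\mathcal{G}_{\l}\times\mathcal{G}_{\q}$, with $\mathcal{G}_{\l}=L_{\{e\}}\subset L$, coincides with conjugation in $\mathcal{G}_{\q}$ through the embedding $\mathcal{G}_{\l}\to\mathcal{G}_{\q}$ induced by $\iota$.
\item[(ii)] The composite $L\times\mathcal{G}_{\l}\to\mathcal{G}_{\q}$, given by $\mu$ followed by inclusion, agrees with conjugation in $L$ followed by $\mathcal{G}_{\l}\to\mathcal{G}_{\q}$.
\end{enumerate}
For (ii), both morphisms correspond under \cite[Theorem 5.11]{CSW1} to continuous maps $(\RU(\q))'\to \RC^\infty(L)\wh\otimes_\pi(\RU(\l))'$. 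These are the transposes of the $L$-action on $\RU(\l)$ through $\Ad$, respectively through the adjoint action composed with $\iota$. They agree by the $L$-equivariance of $\iota$ in Definition \ref{df:liepair}.

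For (i), both sides are morphisms between infinitesimal formal manifolds, so they are determined by the transposed maps $\RU(\l)\otimes\RU(\q)\to\RU(\q)$. Each of these makes $\RU(\q)$ a $\RU(\l)$-module algebra, hence is determined by the action of $\l$ on the generators $\q$. For $\mu$ this action is the differential of $\Ad$, by Proposition \ref{prop:GonTH}. For conjugation it is $[\iota(\tau),\cdot]$, by Lemma \ref{prop:[g,g]}. The second axiom of a Lie pair says these coincide. I expect (i) to be the main obstacle: one must verify carefully that the transposed map for $\mu$ restricted to $L_{\{e\}}$ really is the $\RU(\l)$-module structure integrating the differential of $\Ad$.

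\emph{Part (a).} Granting (i), the computation
\[
(x,x^{-1})(y,y^{-1})=(xy,\ \mu_{y^{-1}}(x^{-1})\,y^{-1})=(xy,\ y^{-1}x^{-1})
\]
shows that $\vartheta_N$ is a group homomorphism for every $N$, so $\vartheta$ is a homomorphism of formal Lie groups. By Lemmas \ref{lem:difftimes}, \ref{lem:inverse} and \ref{lem:gtogh}, its differential is $\tau\mapsto \tau\otimes\delta_e-\delta_e\otimes\iota(\tau)$, which is injective. Since the underlying map is the inclusion of the point $e$, $\vartheta$ is an injective immersion. The subgroup $H_{\l}$ has one-point underlying space, so it is embedded and hence closed. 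Regularity holds because $\underline{\h_{\l}}=0$ and the image of $\tau\neq0$ is not in $\l\otimes\delta_e$, as $\iota$ is injective. Normality is checked on $N$-points via Lemma \ref{lem:normalset}: conjugating $(x,x^{-1})$ by elements $(g,1)$ uses (ii), and conjugating by elements $(1,h)$ uses (i).

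\emph{Parts (b)--(d).} For (b), Theorem \ref{thm:quo} and Proposition \ref{prop:quoofnormal} give the quotient and its group structure, with Lie algebra $(\l\ltimes_{\ad}\q)/\{\tau\otimes\delta_e-\delta_e\otimes\iota(\tau)\}$. The map $\tau\otimes\delta_e+\delta_e\otimes\eta\mapsto\iota(\tau)+\eta$ kills this ideal and induces a bijection onto $\q$. Using $[\tau,\eta]=[\iota(\tau),\eta]$ in the semidirect product, one checks that it is a Lie algebra homomorphism. This identifies the Lie algebra with $\q$ and gives \eqref{eq:dpil}. For (c) and (d), $\underline{H_{\l}}$ is trivial. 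So \eqref{eq:underlineG/H}, together with Proposition \ref{prop:underlineofLLq}, identifies $\underline{(L\ltimes\mathcal{G}_{\q})/\mathcal{G}_{\l}}$ with $L$ and $\underline\pi$ with $\mathrm{id}_L$. The statement about \eqref{eq:Ltol} then follows from the commutative diagram \eqref{eq:underlinecommdiag} applied to $\pi$, combined with Proposition \ref{prop:underlineofLLq}.
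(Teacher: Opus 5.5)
Your proposal is correct and follows the paper's proof essentially step for step. Your compatibilities (i) and (ii) are the paper's Lemmas \ref{lem:LlonLq} and \ref{lem:LequiLl}, and the rest is handled as in the paper: the homomorphism property, the differential, closedness and regularity, normality via $N$-points, and parts (b)--(d) via Proposition \ref{prop:quoofnormal}, Proposition \ref{prop:underlineofLLq}, \eqref{eq:underlinecommdiag} and \eqref{eq:underlineG/H}. The only local difference is in (i): the paper compares the Lie algebras of the two semidirect products $\mathcal{G}_{\l}\ltimes\mathcal{G}_{\q}$ and invokes the formal Lie theory theorem, whereas you argue directly that a $\RU(\l)$-module algebra structure on $\RU(\q)$ is determined by the derivation action of $\l$ on $\q$. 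Your version is equally valid, and it even sidesteps the small extra argument needed to see that it is the identity morphism, and not merely some isomorphism, that is a homomorphism.
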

In the remainder of this subsection, we prove Proposition \ref{prop:LLq/Llasgroup}. Keep the notation of Proposition \ref{prop:LLq/Llasgroup}. Recall from \eqref{eq:Phi} that $\Phi$ indicates the action by inner automorphisms. 

\begin{lemd}\label{lem:LlonLq}
    The two formal actions \[\mu|_{\mathcal{G}_{\l}\times \mathcal{G}_{\q}}:\  \mathcal{G}_{\l} \times \mathcal{G}_{\q}\xrightarrow{\iota \times \mathrm{id}_{\mathcal{G}_{\q}}} L\times \mathcal{G}_{\q}\xrightarrow{\mu} \mathcal{G}_{\q} \qaq \Phi|_{\mathcal{G}_{\l}\times \mathcal{G}_{\q}}: 
    \ \mathcal{G}_{\l} \times \mathcal{G}_{\q} \xrightarrow{\iota \times \mathrm{id}_{\mathcal{G}_{\q}}} \mathcal{G}_{\q}\times \mathcal{G}_{\q}\xrightarrow{\Phi} \mathcal{G}_{\q}\] of $\mathcal{G}_{\l}$ on $\mathcal{G}_{\q}$ coincide.
\end{lemd}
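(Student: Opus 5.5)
Both morphisms are morphisms $\mathcal{G}_{\l}\times\mathcal{G}_{\q}\to\mathcal{G}_{\q}$ between infinitesimal formal manifolds. By the correspondence between morphisms and continuous algebra homomorphisms on global sections \cite[Theorem 5.11]{CSW1}, it suffices to show that their pullbacks
\[
(\RU(\q))'\longrightarrow \CO_{\mathcal{G}_{\l}\times\mathcal{G}_{\q}}(\{(e,e)\})=(\RU(\l))'\wh\otimes_\pi(\RU(\q))'
\]
agree. Dually, via \eqref{eq:DistDist'}, Lemma \ref{lem:T3} and Proposition \ref{prop:Liealg}, this amounts to showing that the transposed maps
\[
\RU(\l)\otimes\RU(\q)\longrightarrow \RU(\q)
\]
coincide. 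Here we identify $\mathrm{Dist}_e(\mathcal{G}_{\q})=\RU(\q)$ and $\mathrm{Dist}_e(\mathcal{G}_{\l})=\RU(\l)$ as Hopf algebras. These transposes are continuous for the finest locally convex topologies, so equality of the abstract linear maps is enough.

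\emph{Transpose of $\Phi|_{\mathcal{G}_{\l}\times\mathcal{G}_{\q}}$.} Unwinding \eqref{eq:Phi} and using Proposition \ref{prop:DistasHopf}, the transpose of $\Phi$ on distributions at $(e,e)$ sends $u\otimes v$ to $\sum u_{(1)}\,v\,S(u_{(2)})$. This is the usual Hopf adjoint action of $\RU(\q)$ on itself, with $\Delta u=\sum u_{(1)}\otimes u_{(2)}$ and $S$ the antipode. Precomposing with $\iota\times\mathrm{id}$, the transpose of $\Phi|_{\mathcal{G}_{\l}\times\mathcal{G}_{\q}}$ is $u\otimes v\mapsto \sum \iota(u_{(1)})\,v\,\iota(S(u_{(2)}))$ for $u\in\RU(\l)$. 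For $u=\tau\in\l$ this is $[\iota(\tau),v]$, where $[\iota(\tau),\cdot]$ is the derivation of $\RU(\q)$ extending $\ad\iota(\tau)$. Since it is the transpose of a formal action, it is an algebra action of $\RU(\l)$. It is therefore determined by $\l$: it is the $\RU(\l)$-module structure on $\RU(\q)$ obtained by letting $\tau$ act through the derivation extending $[\iota(\tau),\cdot]$ on $\q$.

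\emph{Transpose of $\mu|_{\mathcal{G}_{\l}\times\mathcal{G}_{\q}}$.} This is the main step. Recall from Proposition \ref{prop:Lltimesq} that $\mu^*$ is the orbit map $f\mapsto(g\mapsto g.f)$. I will compute the pairing of $\tau\otimes v$ with $\mu^*(f)$ for $\tau\in\l\subset\mathrm{Dist}_e(L)$, $v\in\RU(\q)$ and $f\in(\RU(\q))'$. Evaluating in the $L$-variable and using $\tau=\frac{d}{dt}\big|_{t=0}\Ev_{\exp(t\tau)}$ (extended $\BC$-linearly, as in the proof of Proposition \ref{prop:GonT}), the pairing equals $\frac{d}{dt}\big|_{t=0}\langle f,\Ad_{\exp(-t\tau)}v\rangle$ or its sign-reversed version. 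The sign depends on the convention for the contragredient action, and should come out so that the result is consistent with the algebra-action property. In any case this is the differentiated action of $L$ on $\RU(\q)$. Because $L$ acts on $\RU(\q)$ by algebra automorphisms extending $\Ad$, its differential acts by derivations extending the differential of $\Ad:L\curvearrowright\q$. By the second axiom of Definition \ref{df:liepair}, that differential is $[\iota(\tau),\cdot]$. Finally, the transpose of $\mu|_{\mathcal{G}_{\l}\times\mathcal{G}_{\q}}$ is also an algebra action of $\RU(\l)=\mathrm{Dist}_e(L)$, by the action diagram \eqref{eq:action} and as in \eqref{eq:byHmodule}. Hence it too is determined by the action of $\l$, and it agrees with the action computed in the previous paragraph.

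The obstacle I anticipate is the careful bookkeeping in this last step: making the identification of $\mathrm{Dist}_e(L)$ with $\RU(\l)$ compatible with differentiating along $\exp(t\tau)$, and keeping track of the sign and duality conventions in $g.f$. Once both transposes are shown to be algebra actions of $\RU(\l)$ that agree on generators $\tau\in\l$ and on all of $\RU(\q)$, they agree everywhere. Taking transposes back then gives equality of the two pullback homomorphisms, and hence of the morphisms.
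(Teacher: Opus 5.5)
Your argument is correct, but it takes a different route from the paper's. The paper does not dualize the actions. It forms the two semidirect products $\mathcal{G}_{\l}\ltimes_{\mu|}\mathcal{G}_{\q}$ and $\mathcal{G}_{\l}\ltimes_{\Phi|}\mathcal{G}_{\q}$. By Propositions \ref{prop:Liealgofsemi}, \ref{prop:GonTH}, \ref{prop:Lltimesq} and Lemma \ref{prop:[g,g]}, both have complex Lie algebra $\l\ltimes_{\ad}\q$. The formal Lie theory theorem then gives that $\mathrm{id}_{\mathcal{G}_{\l}}\times\mathrm{id}_{\mathcal{G}_{\q}}$ is an isomorphism of infinitesimal formal Lie groups, since its differential is the identity. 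The paper then reads off equality of the actions from the conjugation formula via Lemma \ref{prop:asfunctors}.

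You instead compute the transposes on $\RU(\l)\otimes\RU(\q)$ and compare two $\RU(\l)$-module structures on $\RU(\q)$ on the generators $\l$. Your route is more explicit and elementary. It needs only that $\RU(\l)$ is generated by $\l$, that transposes of formal actions are module actions, and that derivations of $\RU(\q)$ are determined by their values on $\q$. The paper's route is shorter given its semidirect-product machinery, but it compresses one step: that the isomorphism with identity differential is literally $\mathrm{id}\times\mathrm{id}$, which uses that each semidirect product is generated by its two factors. Both proofs rest on the same infinitesimal input: $\tau\in\l$ acts on $\q$ by $[\iota(\tau),\cdot]$ in both cases, via the Lie pair axiom on one side and Lemma \ref{prop:[g,g]} on the other.

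There is one loose end. The sign in your $\mu$-computation is not fixed by the algebra-action property. When $\l$ is abelian, both $\tau\mapsto\pm[\iota(\tau),\cdot]$ extend to $\RU(\l)$-actions on $\RU(\q)$. Fix the sign using the normalization in Proposition \ref{prop:Lltimesq}:
\begin{itemize}
\item By Proposition \ref{prop:GonTH}, ${}^t\mu^*_{(e,e)}(\tau\otimes\eta)$ is the differential of $g\mapsto d(\mu_g)_e=\Ad_g$.
\item Hence ${}^t\mu^*_{(e,e)}(\tau\otimes\eta)=[\iota(\tau),\eta]$ for $\eta\in\q$, by the second axiom of Definition \ref{df:liepair}.
\end{itemize}
The extension to all $v\in\RU(\q)$ then follows because primitive elements of $\RU(\l)$ act on $\RU(\q)$ by derivations, a consequence of \eqref{eq:H-grp}. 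Equivalently, it follows from your orbit-map description once the convention is fixed so that $\delta_g\otimes v$ goes to the image of $v$ under the automorphism of $\RU(\q)$ induced by $\Ad_g$.
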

\begin{proof} 
By using Propositions \ref{prop:Lltimesq}, \ref{prop:GonTH}, \eqref{eq:LieGactsonh} and Lemma \ref{prop:[g,g]},  we have that 
\[\Lie_{\BC}(\mathcal{G}_{\l}\ltimes_{\mu|_{\mathcal{G}_{\l}\times \mathcal{G}_{\q}}} \mathcal{G}_{\q})=\l\ltimes_{\ad}\q=\Lie_{\BC}(\mathcal{G}_{\l}\ltimes_{\Phi|_{\mathcal{G}_{\l}\times \mathcal{G}_{\q}}} \mathcal{G}_{\q}).\] 
It is clear that the differential of the morphism \be\label{eq:idtimesid} \mathrm{id}_{\mathcal{G}_{\l}}\times \mathrm{id}_{\mathcal{G}_{\q}}:\ \mathcal{G}_{\l}\ltimes_{\mu|_{\mathcal{G}_{\l}\times \mathcal{G}_{\q}}} \mathcal{G}_{\q}\rightarrow \mathcal{G}_{\l}\ltimes_{\Phi|_{\mathcal{G}_{\l}\times \mathcal{G}_{\q}}} \mathcal{G}_{\q} \ee  at $(e,e)$ coincides with $\mathrm{id}_{\l\ltimes_{\ad}\q}$. Hence \eqref{eq:idtimesid} is an isomorphism of infinitesimal formal Lie groups. Then by using Lemma \ref{prop:asfunctors}, the lemma follows. 
\end{proof} 

The following lemma is immediate by Lemmas \ref{prop:asfunctors}, \ref{lem:LlonLq} and \ref{prop:semiLiegroups}.
\begin{lemd}\label{lem:LlonLqhom}
     The morphism $\vartheta$ is a homomorphism between formal Lie groups. 
\end{lemd}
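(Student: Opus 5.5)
The plan is to use the functor-of-points description. By Lemma \ref{prop:asfunctors}, a morphism of formal Lie groups is a homomorphism as soon as, for every formal manifold $N$, the induced map on $N$-points is a group homomorphism. Here the maps in question are
\[
\vartheta_N:\ \mathcal{G}_{\l}(N)\rightarrow (L\ltimes \mathcal{G}_{\q})(N)=L(N)\times \mathcal{G}_{\q}(N).
\]
So I will fix $N$, compute $\vartheta_N$ explicitly, and check multiplicativity by a group-theoretic computation.

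\textbf{Step 1: the $N$-points of $\vartheta$.} Write $x\mapsto \bar x$ for both maps $\iota_N:\mathcal{G}_{\l}(N)\rightarrow L(N)$ and $\iota_N:\mathcal{G}_{\l}(N)\rightarrow \mathcal{G}_{\q}(N)$. These are group homomorphisms, since $\mathcal{G}_{\l}=L_{\{e\}}$ is a formal Lie subgroup of $L$ and $\mathcal{G}_{\l}$ is a formal Lie subgroup of $\mathcal{G}_{\q}$. Then, by \eqref{eq:idtimesi},
\[
\vartheta_N(x)=(\bar x,\bar x^{-1}),
\]
where the inverse is taken in the group $\mathcal{G}_{\q}(N)$ via $i_N$.

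\textbf{Step 2: the group law of $(L\ltimes\mathcal{G}_{\q})(N)$.} Unwinding \eqref{eq:mGltimesH} on $N$-points gives
\[
(g_1,h_1)(g_2,h_2)=\big(g_1g_2,\ \mu_N(g_2^{-1},h_1)\,h_2\big).
\]
Unwinding \eqref{eq:Phi} in the same way gives $\Phi_N(g,h)=ghg^{-1}$.

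\textbf{Step 3: the key input.} Lemma \ref{lem:LlonLq}, read on $N$-points, says that for $y\in\mathcal{G}_{\l}(N)$ and $h\in\mathcal{G}_{\q}(N)$,
\[
\mu_N(\bar y,h)=\bar y\,h\,\bar y^{-1}\quad\text{in }\mathcal{G}_{\q}(N).
\]

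\textbf{Step 4: the computation.} For $x,y\in\mathcal{G}_{\l}(N)$,
\[
\vartheta_N(x)\vartheta_N(y)=\big(\bar x\bar y,\ \mu_N(\bar y^{-1},\bar x^{-1})\,\bar y^{-1}\big)=\big(\bar x\bar y,\ \bar y^{-1}\bar x^{-1}\bar y\,\bar y^{-1}\big)=\big(\overline{xy},\ \overline{xy}^{-1}\big)=\vartheta_N(xy).
\]
The middle equality uses Step 3 with $y^{-1}$ in place of $y$. Hence $\vartheta_N$ is a group homomorphism for every $N$. It is also natural in $N$, being induced by composition with the morphism $\vartheta$. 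Lemma \ref{prop:asfunctors}(b) (fully faithful) then gives that $\vartheta$ is a homomorphism of formal Lie groups.

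The only step needing care is Step 2: reading off the semidirect product law and the conjugation action correctly from \eqref{eq:mGltimesH} and \eqref{eq:Phi} on $N$-points, including the order of factors. Once that is done, Lemma \ref{lem:LlonLq} carries all the content and the rest is formal.
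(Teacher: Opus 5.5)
Your proof is correct and follows the same route as the paper, which obtains the lemma directly from the functor-of-points description (Lemma \ref{prop:asfunctors}), the semidirect product group law (Lemma \ref{prop:semiLiegroups}), and the coincidence of the two actions (Lemma \ref{lem:LlonLq}). Your explicit computation $\vartheta_N(x)\vartheta_N(y)=\big(\overline{xy},\overline{xy}^{-1}\big)$ is exactly the verification the paper leaves implicit, and your reading of the law $(g_1,h_1)(g_2,h_2)=\big(g_1g_2,\mu_N(g_2^{-1},h_1)h_2\big)$ from \eqref{eq:mGltimesH}, including the order of factors, is correct.
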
 

The following lemma follows immediately from Lemmas \ref{lem:inverse} and \ref{lem:difftimes}.
\begin{lemd}\label{lem:dvartheta} 
The differential $d\vartheta_e$ of $ \vartheta: \mathcal{G}_{\l}\rightarrow L\ltimes \mathcal{G}_{\q}$ is given by \[ d\vartheta_e: \ \l\rightarrow \Lie_{\BC}(L{\ltimes} \mathcal{G}_{\q})=\l\ltimes_{\ad}\q, \quad \tau \mapsto \tau\otimes\delta_e-\delta_e\otimes\iota(\tau).\] 
Moreover, the homomorphism $\vartheta$ is an injective immersion of formal manifolds. 
\end{lemd}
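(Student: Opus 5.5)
The plan is to compute the differential directly from the definition of $\vartheta$, using the chain rule \eqref{eq:chainruleofdvarphi}, and then deduce injectivity of both the underlying map and the differential.

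\textbf{The differential.} Write $\vartheta$ as the composition of $\mathrm{id}_{\mathcal{G}_{\l}}\times i$ (in the sense of \eqref{eq:varphi1times2}, i.e. $(\mathrm{id}\times i)\circ\Delta$) with $\iota\times\iota:\mathcal{G}_{\l}\times\mathcal{G}_{\l}\to L\ltimes\mathcal{G}_{\q}$. Here the first factor $\iota$ is the inclusion $\mathcal{G}_{\l}=L_{\{e\}}\to L$, and the second is the inclusion $\mathcal{G}_{\l}\to\mathcal{G}_{\q}$ induced by $\iota:\l\to\q$.

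First, Lemma \ref{lem:difftimes} together with Lemma \ref{lem:inverse}(b) gives
\[
d(\mathrm{id}\times i)_e(\tau)=\tau\otimes\delta_e+\delta_e\otimes(-\tau)
\]
in $\RT_{(e,e)}(\mathcal{G}_{\l}\times\mathcal{G}_{\l})$.

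Second, by Lemma \ref{lem:T3} the differential of the product morphism $\iota\times\iota$ at $(e,e)$ acts componentwise on the decomposition \eqref{eq:T=T+T}. The first component is the identity $\l\to\l=\Lie_\BC(L)$, since $L_{\{e\}}\to L$ induces the identity on tangent spaces at $e$. The second component is $\iota:\l\to\q$; this holds by construction of $\mathcal{G}_{\l}\to\mathcal{G}_{\q}$ via Example \ref{exam:LieofLq} and the functoriality in Theorem \ref{thm:eqFPpre}.

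Composing these, and using the identification $\RT_{(e,e)}(L\times\mathcal{G}_{\q})=\l\oplus\q$ together with \eqref{eq:LieofLLq}, yields
\[
d\vartheta_e(\tau)=\tau\otimes\delta_e-\delta_e\otimes\iota(\tau).
\]

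\textbf{Injective immersion.} The underlying topological space of $\mathcal{G}_{\l}$ is a single point, so $\overline\vartheta$ is trivially injective. For the immersion property, it suffices to check injectivity of $d\vartheta$ at the unique point $e$. Injectivity there is immediate from the formula, because the $\l$-component of $d\vartheta_e(\tau)$ is $\tau$ itself. (Injectivity of $\iota$ would also give it, but it is not even needed.)

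\textbf{Main obstacle.} The only delicate point is justifying that the differentials of the two inclusion morphisms are the expected maps $\mathrm{id}_\l$ and $\iota$. I would handle this via Remark \ref{rmk:Ge=Lg} and Proposition \ref{prop:Liealg}(b): the inclusions are homomorphisms of infinitesimal formal Lie groups, and under the formal Lie theory equivalence they correspond to the given Lie algebra maps.
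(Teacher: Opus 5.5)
Your proposal is correct and follows the paper's argument. The paper derives the formula for $d\vartheta_e$ directly from Lemmas \ref{lem:inverse} and \ref{lem:difftimes} via the chain rule, as you do. The injective-immersion claim follows, as you note, because the underlying space of $\mathcal{G}_{\l}$ is a point and the $\l$-component of $d\vartheta_e(\tau)$ is $\tau$.
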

In view of Lemmas \ref{lem:LlonLqhom} and \ref{lem:dvartheta}, the  pair $(\mathcal{G}_{\l},\vartheta)$ determines a formal Lie subgroup $H_{\l}$ of $L\ltimes \mathcal{G}_{\q}$ (as in \eqref{eq:StoG}). 
\begin{lemd}\label{lem:Llregu}
The formal Lie subgroup  $H_{\l}$ of $L\ltimes \mathcal{G}_{\q}$ is closed and regular. 
\end{lemd}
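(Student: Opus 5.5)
Both assertions concern the underlying data of $H_{\l}$. Its underlying topological space is the image of $\overline{\vartheta}$, namely the single point $(e,e)$ of $L\ltimes\mathcal{G}_{\q}$, whose underlying space is $L\times\{e\}=L$. So I will treat closedness and regularity separately, reducing each to a statement about the one-point space $\{e\}$ and about the Lie algebras involved.

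\emph{Closedness.} The underlying set of $H_{\l}$ is $\{(e,e)\}$, which is closed because $L$ is Hausdorff. On a one-point set the topology carried by $H_{\l}$ coincides with the subspace topology, so $H_{\l}$ is an embedded formal submanifold with closed underlying set. Hence $H_{\l}$ is a closed formal submanifold, and therefore a closed formal Lie subgroup. Alternatively, I can use the remark after Definition \ref{def:subgroup}: an embedded formal Lie subgroup is automatically closed by \cite[Theorem 3.21]{War}.

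\emph{Regularity.} I need to show that the map $\h_{\l}/\underline{\h_{\l}}\rightarrow \g/\underline{\g}$ in \eqref{eq:diotaf} is injective, where $\g=\Lie_{\BC}(L\ltimes\mathcal{G}_{\q})=\l\ltimes_{\ad}\q$.
\begin{itemize}
\item Since $\underline{H_{\l}}$ is the trivial group, $\underline{\h_{\l}}=0$.
\item By Lemma \ref{lem:dvartheta}, $\h_{\l}$ is identified with $d\vartheta_e(\l)=\{\tau\otimes\delta_e-\delta_e\otimes\iota(\tau):\tau\in\l\}$.
\item By Proposition \ref{prop:underlineofLLq}, $\underline{L\ltimes\mathcal{G}_{\q}}=L$ and the inclusion $L\rightarrow L\ltimes\mathcal{G}_{\q}$ is $\mathrm{id}_L\times\varepsilon$. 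Lemma \ref{lem:gtogh} then gives $\underline{\g}=\l\otimes\delta_e$.
\end{itemize}
It therefore suffices to show that $d\vartheta_e(\l)\cap(\l\otimes\delta_e)=0$. Suppose $\tau\otimes\delta_e-\delta_e\otimes\iota(\tau)\in\l\otimes\delta_e$. The decomposition $\g=(\l\otimes\delta_e)\oplus(\delta_e\otimes\q)$ is direct, so $\iota(\tau)=0$. Since $\iota$ is injective, $\tau=0$. This proves injectivity, so $H_{\l}$ is regular.

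The only step needing care is identifying $\underline{\g}$ with $\l\otimes\delta_e$ inside $\l\oplus\q$. For this I use the second assertion of Proposition \ref{prop:underlineofLLq} together with the formula for the differential of $\mathrm{id}_L\times\varepsilon$ from Lemma \ref{lem:difftimes}. The remaining steps are routine.
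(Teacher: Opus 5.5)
Your proposal is correct and follows the paper's proof essentially step for step. The paper also calls closedness clear; you supply the reason, namely that the underlying set is the single point $(e,e)$ of the Hausdorff space $L$. For regularity, the paper likewise uses $\Lie_{\BC}(\underline{\mathcal{G}_{\l}})=0$, gets $\Lie_{\BC}(\underline{L\ltimes\mathcal{G}_{\q}})=\l\otimes\delta_e$ from Proposition \ref{prop:underlineofLLq}, and applies Lemma \ref{lem:dvartheta}, so that the induced map becomes $\tau\mapsto-\iota(\tau)$, which is injective because $\iota$ is.
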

\begin{proof} 
It is clear that  $H_{\l}$ is a closed formal Lie subgroup of  $L\ltimes \mathcal{G}_{\q}$. It suffices to prove the map \[d\vartheta^{\mathrm{f}}_e:\ \l/\Lie_{\BC}(\underline{\mathcal{G}_{\l}})\rightarrow \Lie_{\BC}(L{\ltimes} \mathcal{G}_{\q})/ \Lie_{\BC}(\underline {L{\ltimes} \mathcal{G}_{\q}}) \] is injective (\cf \eqref{eq:diotaf} or see  \cite[p.~12]{CSW4}). 
Note that $\Lie_{\BC}(\underline{\mathcal{G}_{\l}})=\{0\}$.
Proposition \ref{prop:underlineofLLq} implies that $\Lie_{\BC}(\underline {L{\ltimes} \mathcal{G}_{\q}})=\l\otimes\delta_e$. 
Then by Lemma \ref{lem:dvartheta},   the map $d\vartheta_e^{\mathrm f}$ coincides with  the injective linear map
\[\l \rightarrow \q, \quad\tau\mapsto -\iota(\tau). \] This finishes the proof.
\end{proof}

Recall that there is a unique formal action $\Phi|_{L\times \mathcal{G}_{\l}}:L\curvearrowright \mathcal{G}_{\l}$
such that the inclusion morphism $\iota: \mathcal{G}_{\l}=L_{\{e\}}\rightarrow L$ is $L$-equivariant, where $L$ carries the formal action of $L$ given by inner automorphisms. 
\begin{lemd}\label{lem:LequiLl} 
    The inclusion  morphism $\iota: \mathcal{G}_{\l}\rightarrow \mathcal{G}_{\q}$ is $L$-equivariant, where $\mathcal{G}_{\l}$ carries the formal action $\Phi|_{L\times \mathcal{G}_{\l}}:L\curvearrowright \mathcal{G}_{\l}$, and $\mathcal{G}_{\q}$ carries the formal action $\mu: L\curvearrowright \mathcal{G}_{\q}$ (see \eqref{eq:LactsonLq}).
\end{lemd}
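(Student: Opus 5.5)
We must show the square
\[
\begin{CD}
L\times \mathcal{G}_{\l} @>\Phi|_{L\times \mathcal{G}_{\l}}>> \mathcal{G}_{\l}\\
@V\mathrm{id}_L\times\iota VV @VV\iota V\\
L\times \mathcal{G}_{\q} @>\mu>> \mathcal{G}_{\q}
\end{CD}
\]
commutes. The plan is to reduce to a statement about topological algebras of global sections and then to a statement about $\RU(\l)\to\RU(\q)$. The source $L\times\mathcal{G}_{\l}$ is a formal manifold with underlying space $L$, and the target $\mathcal{G}_{\q}$ is infinitesimal. By \cite[Theorem 5.11]{CSW1}, morphisms into $\mathcal{G}_{\q}$ are determined by continuous algebra homomorphisms $(\RU(\q))'\to \CO_{L\times \mathcal{G}_{\l}}(L\times\{e\}) = \RC^\infty(L;(\RU(\l))')$. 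Via the identification \eqref{eq:OM1M2}, we therefore compare, for $f\in(\RU(\q))'$, the two functions on $L$ with values in $(\RU(\l))'$ obtained from the two paths around the square.

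First, make $\mu$ explicit. By the construction in the proof of Proposition \ref{prop:Lltimesq}, $\mu^*(f)$ is the function $g\mapsto g.f$, where $L$ acts on $\RU(\q)$ by the extension of $\Ad$. The pullback along $\iota:\mathcal{G}_{\l}\to\mathcal{G}_{\q}$ is the transpose of $\RU(\iota):\RU(\l)\to\RU(\q)$ (Remark \ref{rmk:Ge=Lg}, Example \ref{exam:Lq}). So the lower path sends $f$ to $g\mapsto (g.f)\circ \RU(\iota)$.

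Next, make $\Phi|_{L\times\mathcal{G}_{\l}}$ explicit in the same terms. I expect this to be the main obstacle. For each $g\in L$, the restriction of conjugation $\Phi_g$ to $L_{\{e\}}=\mathcal{G}_{\l}$ is the automorphism $(\Phi_g)_e$. On $\mathrm{Dist}_e(L)=\RU(\l)$ (Proposition \ref{prop:Liealg}) its transpose is a Hopf automorphism whose restriction to $\l$ is $d(\Phi_g)_e=\Ad_g$ by \eqref{eq:AdofLiegroup}. Since $\RU(\l)$ is generated by $\l$, this automorphism equals $\RU(\Ad_g)$. Hence the top path sends $f$ to $g\mapsto f\circ\RU(\iota)\circ \RU(\Ad_g)$.

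To justify that the global-section map is really the pointwise one $g\mapsto$ (value at $g$), I would use the pairing formula \eqref{eq:Evaboutfixedpoint} with $\Ev_g\otimes u$ for $u\in\RU(\l)$. Elements of $\RC^\infty(L;(\RU(\l))')$ are determined by these pairings, so pointwise agreement suffices.

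Finally, the $L$-equivariance of $\iota:\l\to\q$ in Definition \ref{df:liepair} gives $\RU(\iota)\circ\RU(\Ad_g)=\RU(\Ad_g)\circ\RU(\iota)$ on $\RU(\l)$. Therefore the two maps agree pointwise, hence as elements of $\RC^\infty(L;(\RU(\l))')$, and the square commutes by \cite[Theorem 5.11]{CSW1}.

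Alternatively, one could argue functorially with Lemma \ref{prop:asfunctors}, but the algebraic route above seems most direct.
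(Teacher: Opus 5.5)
Your proposal is correct and is essentially the paper's argument. The paper identifies $\Phi|_{L\times \mathcal{G}_{\l}}$ with the formal action of Proposition \ref{prop:Lltimesq} for the Lie pair $(\l,L)$, via the uniqueness there together with \eqref{eq:AdofLiegroup}, and then concludes from the $L$-equivariance of $\iota:\l\rightarrow\q$; your pointwise identification of ${}^t(\Phi_g)^*_e$ with $\RU(\Ad_g)$, using that $\RU(\l)$ is generated by $\l$, is that uniqueness argument made explicit (the convention $g.f=f\circ\RU(\Ad_g)$ in your lower path is forced by the same Hopf-automorphism argument applied to $\mu_g$).
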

\begin{proof} 
 By Proposition \ref{prop:Lltimesq} and \eqref{eq:AdofLiegroup}, $\Phi|_{L\times \mathcal{G}_{\l}}$ is the unique formal action of $L$ on $\mathcal{G}_{\l}$  such that the induced smooth representation $L\curvearrowright \l$ is the usual adjoint representation.  
Then the lemma follows from the fact that $\iota: \l \rightarrow \q$ is $L$-equivariant.
\end{proof}




The following lemma follows immediately from Lemmas  \ref{prop:asfunctors}, \ref{lem:LequiLl} and \ref{lem:LlonLq}.
\begin{lemd}\label{lem:Llnor}
 The formal Lie subgroup  $H_{\l}$ of $L\ltimes \mathcal{G}_{\q}$ is normal. 
\end{lemd}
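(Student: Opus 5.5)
By Lemma \ref{lem:normalset}, it suffices to show that for every formal manifold $N$ the subgroup $\vartheta_N(\mathcal{G}_{\l}(N))$ of $(L\ltimes \mathcal{G}_{\q})(N)=L(N)\ltimes \mathcal{G}_{\q}(N)$ is normal. We do this with a purely group-theoretic computation in the functor-of-points picture. Write elements of $(L\ltimes\mathcal{G}_{\q})(N)$ as pairs $(g,x)$ with $g\in L(N)$ and $x\in\mathcal{G}_{\q}(N)$. The multiplication is $(g_1,x_1)(g_2,x_2)=(g_1g_2,\ \mu_N(g_2^{-1},x_1)\,x_2)$, following \eqref{eq:mGltimesH}. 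By \eqref{eq:idtimesi}, $\vartheta_N(h)=(\iota_N(h),\iota_N(h)^{-1})$ for $h\in\mathcal{G}_{\l}(N)$, where we use the same letter $\iota$ for both inclusions $\mathcal{G}_{\l}\to L$ and $\mathcal{G}_{\l}\to\mathcal{G}_{\q}$.

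The first step is to record the two pieces of compatibility data in functorial form.

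\begin{itemize}
\item By Lemma \ref{lem:LlonLq}, for $h\in\mathcal{G}_{\l}(N)$ and $x\in\mathcal{G}_{\q}(N)$ we have $\mu_N(\iota_N(h),x)=\iota_N(h)\,x\,\iota_N(h)^{-1}$, computed in $\mathcal{G}_{\q}(N)$.
\item By Lemma \ref{lem:LequiLl}, for $g\in L(N)$ we have $\mu_N(g,\iota_N(h))=\iota_N(ghg^{-1})$, where $ghg^{-1}$ is conjugation in $L(N)$ and lands in $\mathcal{G}_{\l}(N)$ because $\mathcal{G}_{\l}=L_{\{e\}}$ is normal in $L$.
\end{itemize}

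Both identities follow by applying $h_{(-)}$ and Lemmas \ref{prop:asfunctors} and \ref{lem:groupsactionasauto}.

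The second step is to check that the subgroup is stable under the two kinds of generators of $L(N)\ltimes\mathcal{G}_{\q}(N)$, namely elements $(g,1)$ and $(1,x)$.

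For $(g,1)$, we compute $(g,1)(\iota h,\iota h^{-1})(g,1)^{-1}$. Using the multiplication formula together with the identity $\mu_N(g,\iota h)=\iota(ghg^{-1})$ and the fact that $\mu_N$ acts by automorphisms, this reduces to $(\iota(ghg^{-1}),\iota(ghg^{-1})^{-1})=\vartheta_N(ghg^{-1})$.

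For $(1,x)$, we compute $(1,x)(\iota h,\iota h^{-1})(1,x^{-1})$. Its first component is $\iota h$. Its second component is
\[
\mu_N(\iota h^{-1},x)\,\iota h^{-1}\,x^{-1}=\iota h^{-1}\,x\,\iota h\,\iota h^{-1}\,x^{-1}=\iota h^{-1},
\]
using the first identity from Lemma \ref{lem:LlonLq}. So the conjugate is again $\vartheta_N(h)$.

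Since these two families generate the group, normality follows. The main care needed is getting the semidirect product formula and the conventions in \eqref{eq:mGltimesH} exactly right, including the order and the inverse appearing in the action. If the convention turns out to be $(g_1,x_1)(g_2,x_2)=(g_1g_2,\mu(g_2^{-1},x_1)x_2)$ rather than the other natural choice, the same two identities still close up the computation, just with the inverses placed differently.
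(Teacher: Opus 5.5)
Your proposal is correct and follows the same route as the paper. The paper deduces the lemma directly from Lemmas \ref{prop:asfunctors}, \ref{lem:LequiLl} and \ref{lem:LlonLq} via the functor-of-points picture, and your conjugation computations by $(g,1)$ and $(1,x)$ spell out what it leaves implicit; the rule $(g_1,x_1)(g_2,x_2)=(g_1g_2,\mu_N(g_2^{-1},x_1)x_2)$ you used is exactly the one encoded in \eqref{eq:mGltimesH}, so the hedge in your last paragraph is unnecessary.
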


\noindent\textbf{Proof of Proposition \ref{prop:LLq/Llasgroup}:} 
The assertion (a) follows from Lemmas \ref{lem:LlonLqhom}, \ref{lem:dvartheta}, \ref{lem:Llregu} and \ref{lem:Llnor}. 
The assertion (b) then follows from Proposition \ref{prop:quoofnormal} and Lemma \ref{lem:dvartheta}. The assertions (c) and (d) are easy to verify by using Proposition \ref{prop:underlineofLLq}, \eqref{eq:underlinecommdiag} and \eqref{eq:underlineG/H}.
This finishes the proof.\qed

\subsection{Formal Lie groups as formal manifolds}
Let $G$ be a formal Lie group. 
Recall from  \eqref{eq:degaM} the  degree $\deg_e G$ of $G$ at $e$. 
Then we obtain a  formal manifold $\underline{G}^{(\deg_e G)}$ as in \eqref{eq:N(k)}.

\begin{prpd}\label{prop:GtounderGdegG} 
    As formal manifolds, $G$ is isomorphic to 
    $\underline{G}^{(\deg_e G)}$. 
\end{prpd}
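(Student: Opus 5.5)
We will build an explicit isomorphism by trivializing $G$ along its reduction using the multiplication morphism, in the spirit of Lemma \ref{lem:loccld}. Set $k:=\deg_e G$. Since $\underline{G}$ is the reduction, $\dim_e G=\dim \underline G$, and by \eqref{eq:whO=series} together with Lemma \ref{lem:T3} we have $\dim\g=\dim\underline{\g}+k$, where $\underline{\g}\subset \g$ via $d\iota_e$. Choose a $\BC$-linear complement $E$ of $\underline{\g}$ in $\g$, so $\dim E=k$. By Lemma \ref{prop:subLs}, applied to the infinitesimal formal manifold $G_{\{e\}}$, there is an immersed formal submanifold $S_E$ of $G_{\{e\}}$ with underlying set $\{e\}$ and $d\iota_e(\RT_e(S_E))=E$. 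We may take $S_E$ isomorphic to $(\BR^0)^{(k)}=(\{0\},\BC[[y_1,\dots,y_k]])$.

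Next we form the morphism
\[
\phi:\ \underline G\times S_E\xrightarrow{\iota\times\iota} G\times G\xrightarrow{m} G .
\]
Its underlying continuous map is the identity $\underline G\to G$. By Lemmas \ref{lem:difftimes} and \ref{lem:inverse}, the differential $d\phi_{(e,e)}$ is the map $\underline{\g}\oplus E\to\g$, $(\tau,\eta)\mapsto\tau+\eta$, which is bijective. The morphism $\phi$ is $\underline G$-equivariant for left multiplication: $\underline G$ acts on $\underline G\times S_E$ through the first factor and on $G$ by left multiplication via $\iota:\underline G\to G$. This follows from associativity \eqref{eq:chainofG}, checked through Lemma \ref{prop:asfunctors}. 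Consequently, for each $g\in\underline G$ we get $\phi\circ(l_g\times\mathrm{id})=l_g\circ\phi$, where $l_g$ denotes left translation, an isomorphism on both sides. Hence $d\phi_{(g,e)}$ is bijective at every point. The source $\underline G\times S_E$ has dimension $\dim\underline G$ and degree $k$ everywhere. The target $G$ has the same invariants at every point, since left translations are isomorphisms.

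By the inverse function theorem for formal manifolds (\cite[Theorem 1.3]{CSW4}), $\phi$ is then a local isomorphism near every point. Since $\overline\phi$ is a homeomorphism (the identity of $G$), a morphism that is a local isomorphism everywhere and bijective on points is an isomorphism of formal manifolds. The final step is to identify $\underline G\times S_E\cong \underline G\times(\BR^0)^{(k)}$ with $\underline G^{(k)}$. Locally on a chart $U\cong\BR^n$ this is the statement $\RC^\infty(U)\wh\otimes_\pi\BC[[y_1,\dots,y_k]]=\RC^\infty(U)[[y_1,\dots,y_k]]$, which follows from the product construction in \cite[Theorem 1.9]{CSW1}; these local identifications glue.

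The main obstacle I expect is applying the inverse function theorem in this setting. We need the precise hypothesis of \cite[Theorem 1.3]{CSW4}, presumably bijectivity of the differential plus matching dimension and degree at the point, and we must make sure the equivariance argument transports bijectivity from $(e,e)$ to every $(g,e)$. The latter uses the chain rule \eqref{eq:chainruleofdvarphi}. If the theorem instead requires information on the formal stalks, we would use the criterion via the reductions: $\underline\phi$ is the identity of $\underline G$, and $d\phi_{(g,e)}$ is bijective, which together should suffice as in the proof of Lemma \ref{lem:loccld}.
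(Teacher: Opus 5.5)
Your proposal is correct and follows the paper's proof step by step. It uses the same complement $E$ of $\underline{\g}$ in $\g$, the same morphism $m\circ(\iota\times\iota)$ on $\underline G\times S_E$, and bijectivity of its differential at $(e,e)$ via Lemma \ref{lem:inverse}, together with the fact that its reduction is $\mathrm{id}_{\underline G}$. It then concludes, as the paper does, by the inverse function theorem spread over all points by $\underline G$-equivariance; you additionally spell out the identification $\underline G\times S_E\cong\underline G^{(k)}$, which the paper leaves implicit.
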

\begin{proof}
     Let $E$ be a $\BC$-linear subspace of $\g$ such that \be \label{summand}\g=E \oplus \underline\g.\ee 
Recall the formal submanifold $S_E$ of $G$ from Lemma  \ref{prop:subLs}. Then 
the morphism 	\be \label{eq:thetaonLs}\theta :\ \underline G \times S_{E}\xrightarrow {\iota\times\iota} G\times G \xrightarrow{m} G\ee is $\underline G $-equivariant, where $\underline G\times S_E$ carries the formal action of $\underline G$ as in \eqref{eq:GactsonGtimesM}, and $G$ carries the formal action of $\underline G$ 
given by left multiplication. 
It suffices to prove that the morphism \eqref{eq:thetaonLs}
	is an isomorphism of formal manifolds. 
    
    By Lemmas \ref{lem:inverse} and \ref{prop:subLs},  the differential $d\theta_{(e,e)} $ is bijective. 
Since $\underline{\theta} = \mathrm{id}_{\underline{G}}$, it follows that $d\underline{\theta}_{(e,e)}$ is also bijective.
By using the inverse function theorem (see \cite[Theorem 1.3]{CSW4}) and the $\underline G$-equivariance of $\theta$, it is straightforward to see that \eqref{eq:thetaonLs} is an isomorphism of formal manifolds, as desired. 
\end{proof}

\section{Proof of Theorem \ref{thm:eqFP}}\label{sec:proofofmain}
In this section, we prove Theorem \ref{thm:eqFP} by Proposition \ref{prop:equitoHopf} (identifying formal Lie groups with Hopf formal algebras and with Hopf formal coalgebras) and Proposition \ref{prop:pairtogroup} (identifying formal Lie groups with Lie pairs).


\subsection{Hopf formal algebras and Hopf formal coalgebras}

Recall from Section \ref{sec:tophopf} that for each formal manifold $M$, the LCS $\CO_M(M)$ is a formal algebra, and the LCS $\RD^{-\infty}_c(M;\CO_M)$ is a formal coalgebra. 

For each morphism \[\varphi=(\underline \varphi, \varphi^*):\ (M_1,\CO_{M_1})\rightarrow (M_2,\CO_{M_2})\]
of formal manifolds, the homomorphism $\varphi^*: \CO_{M_2}(M_2)\rightarrow \CO_{M_1}(M_1)$ is a continuous homomorphism between formal algebras (see \cite[Theorem 1.8]{CSW1}), and the transpose \be \label{eq:tvarphi*}{}^t\varphi^*: \ \RD_c^{-\infty}(M_1;\CO_{M_1})\rightarrow \RD_c^{-\infty}(M_2;\CO_{M_2})\ee of $\varphi^*$ is a continuous homomorphism between formal coalgebras. It is obvious that 
\[(\varphi_1\circ \varphi)^*=\varphi^*\circ\varphi_1^* \qaq {}^t(\varphi_1\circ \varphi)^*={}^t\varphi_1^*\circ{}^t\varphi^*\]
where $\varphi_1: M_2\rightarrow M_3$ is another morphism of formal manifolds.

Let $G$ be a formal Lie group. Then we have the continuous homomorphisms 
 \be\label{eq:formalhopfalg}
\begin{aligned}
		&m^*:\CO_G(G)\rightarrow \CO_{G\times G}(G\times G)=\CO_G(G) \wh{\otimes}_{\pi}\CO_G(G)\quad \text{(see \eqref{eq:OM1M2})},\\ 
 	&\varepsilon ^*: \CO_G(G)\rightarrow \BC, \qaq   i^*: \CO_G(G)\rightarrow \CO_G(G)
\end{aligned}
\ee
between formal algebras, 
and the continuous homomorphisms 
\[{}^tm^*: \RD^{-\infty}_c(G;\CO_G)\wh\otimes_{\mathrm i}\RD^{-\infty}_c(G;\CO_G)\rightarrow \RD^{-\infty}_c(G;\CO_G) \quad \text{(see \eqref{eq:DMM=DMDM})},\]
   \[ {}^t\varepsilon^*: \BC\rightarrow \RD^{-\infty}_c(G;\CO_G), \qaq {}^ti^*: \RD^{-\infty}_c(G;\CO_G)\rightarrow \RD^{-\infty}_c(G;\CO_G)
\]
between formal coalgebras.  
\begin{prpd}\label{prop:formalliegrouptohopfalg} Let $G$ be a formal Lie group.

\noindent(a) Together with the comultiplication $m^*$, the counit $\varepsilon^*$ and 
the antipode $i^*$, the formal algebra $\CO_G(G)$ becomes a Hopf formal algebra. 

\noindent (b) Together with the multiplication ${}^tm^*$, the unit ${}^t\varepsilon^*$ and 
the antipode ${}^ti^*$, the formal coalgebra  $\RD^{-\infty}_c(G;\CO_G)$ becomes a Hopf formal coalgebra. 
\end{prpd}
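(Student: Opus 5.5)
The plan is to transport the group-object axioms of Definition \ref{df:Lirgroup} through the contravariant functor $M\mapsto \CO_M(M)$ and then dualize. For (a), apply global sections to the diagrams \eqref{eq:chainofG}, \eqref{eq:diagramofidentity} and \eqref{eq:comofi}. We use the identification \eqref{eq:OM1M2} iteratively:
\[
\CO_{G\times G\times G}(G\times G\times G)=\CO_G(G)\wh\otimes_\pi\CO_G(G)\wh\otimes_\pi\CO_G(G)
\]
and $\CO_{\Spec(\BC)}(\Spec(\BC))=\BC$. For this to work, we first check that the pullback of a product morphism corresponds to the completed tensor product of the pullbacks. In particular $(m\times\mathrm{id}_G)^*=m^*\wh\otimes\mathrm{id}$ and $(\varepsilon\times\mathrm{id}_G)^*=\varepsilon^*\wh\otimes \mathrm{id}$. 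Both hold by the universal property of products in \cite[Theorem 1.9]{CSW1}, since the two sides agree on elementary tensors $f_1\otimes f_2$ and are continuous. Reversing arrows then gives the following:
\begin{itemize}
\item \eqref{eq:chainofG} yields coassociativity of $m^*$;
\item \eqref{eq:diagramofidentity} yields the counit axioms for $\varepsilon^*$;
\item \eqref{eq:comofi} yields the antipode identities.
\end{itemize}
For the antipode we need one more identification. The pullback of the diagonal $\Delta_G$ is the multiplication \eqref{eq:mulofOM} $\CO_G(G)\wh\otimes_\pi\CO_G(G)\to\CO_G(G)$. Also, the pullback of $G\to\Spec(\BC)$ is the unit \eqref{eq:unitofOM}. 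With these, the left-hand diagram of \eqref{eq:comofi} becomes $\mathrm{mult}\circ(\mathrm{id}\wh\otimes i^*)\circ m^*=\mathrm{unit}\circ\varepsilon^*$, and symmetrically on the right.

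It remains to note that $m^*,\varepsilon^*,i^*$ are continuous algebra homomorphisms, as recorded before the statement via \cite[Theorem 1.8]{CSW1}. Since $\CO_G(G)$ is already a formal algebra, it is therefore a Hopf formal algebra in the sense of Definition \ref{def:formalHopfalg}.

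For (b), transpose everything. By \eqref{eq:OM'} and \eqref{eq:DMM=DMDM} we have $\RD_c^{-\infty}(G;\CO_G)=(\CO_G(G))'$ and
\[
(\CO_G(G)\wh\otimes_\pi\CO_G(G))'=\RD_c^{-\infty}(G;\CO_G)\wh\otimes_{\mathrm i}\RD_c^{-\infty}(G;\CO_G).
\]
We also need the triple version, which follows by iterating \eqref{eq:DMM=DMDM} with $G\times G$ in place of $G$, using \eqref{eq:OM1M2} again. Under these identifications the transpose of $A\wh\otimes B$ is ${}^tA\wh\otimes{}^tB$, since both agree on elementary tensors of functionals. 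That set is total, so continuity forces equality. Transposing the Hopf identities from (a) gives associativity of ${}^tm^*$, the unit axioms for ${}^t\varepsilon^*$, and the antipode axioms for ${}^ti^*$. The coalgebra structure in these axioms is \eqref{eq:D_ccoal}. Continuity of the transposes holds because transposes of continuous maps are continuous for the strong topologies, and the relevant spaces are identified as LCS. Since $\RD^{-\infty}_c(G;\CO_G)$ is a formal coalgebra by \eqref{eq:D_ccoal}, it is a Hopf formal coalgebra.

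The main obstacle is the bookkeeping of topological identifications. One must check that the isomorphism \eqref{eq:OM1M2} is natural in morphisms of both factors and compatible with associativity of the triple products. Dually, \eqref{eq:DMM=DMDM} must intertwine transposes with inductive tensor products. I would handle both by reducing to density of elementary tensors, in $\CO_{M_1}(M_1)\otimes\CO_{M_2}(M_2)$ and in the algebraic tensor product of distributions respectively, plus continuity. This is exactly what the cited results of \cite{CSW1,CSW2} supply.
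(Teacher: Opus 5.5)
Your proposal is correct and follows the paper's route. The paper's proof is only the remark that the assertions follow from the commutative diagrams in Definition \ref{df:Lirgroup}; you make explicit what that remark relies on, namely the naturality of \eqref{eq:OM1M2} (pullbacks of product morphisms, of $\Delta_G$, and of $G\to\Spec(\BC)$) and the transposition through \eqref{eq:OM'} and \eqref{eq:DMM=DMDM}. One loose point: the triple dual identification needs a version of \eqref{eq:DMM=DMDM} with two different factors, $G\times G$ and $G$, not \eqref{eq:DMM=DMDM} with $G\times G$ in place of $G$. Alternatively, you can bypass it by checking the transposed identities on algebraic tensors and using continuity; either way this is routine bookkeeping.
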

\begin{proof}
The assertions follow from the commutative diagrams in Definition \ref{df:Lirgroup}.
\end{proof}


\begin{prpd}\label{prop:equitoHopf}
\noindent(a) The assignment \be\label{eq:GtoO(G)}  G\mapsto \CO_G(G), \quad (\varphi: G_1\rightarrow G_2)\mapsto (\varphi^*: \CO_{G_2}(G_2)\rightarrow \CO_{G_1}(G_1))\ee is a functor from the category of formal Lie groups to the opposite category of the category of Hopf formal algebras. Furthermore, the functor is an equivalence between categories. 

 \noindent(b)  The assignment \be \label{eq:GtoD(G)} G\mapsto \mathrm{D}^{-\infty}_c(G;\CO_G), \quad (\varphi: G_1\rightarrow G_2)\mapsto ({}^t\varphi^*: \mathrm{D}^{-\infty}_c(G_1;\CO_{G_1})\rightarrow \mathrm{D}^{-\infty}_c(G_2;\CO_{G_2}))\ee is a functor from the category of formal Lie groups to the category of Hopf formal coalgebras. Furthermore, the functor is an equivalence between categories.
\end{prpd}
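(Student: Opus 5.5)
The plan is to reduce both assertions to the anti-equivalence between formal manifolds and formal algebras (resp. formal coalgebras) established in \cite{CSW1, CSW2}, and then transport the group-object structure across it.

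For (a), functoriality is immediate from Proposition \ref{prop:formalliegrouptohopfalg}. If $\varphi$ is a homomorphism of formal Lie groups, applying $(\cdot)^*$ to \eqref{commor} and using \eqref{eq:OM1M2} shows that $\varphi^*$ intertwines the comultiplications $m^*$. Compatibility with the counits and antipodes follows because homomorphisms commute with $\varepsilon$ and $i$.

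The substance is the equivalence. I would invoke \cite[Theorem 5.11]{CSW1}, which says that $M\mapsto \CO_M(M)$ is fully faithful from formal manifolds to the opposite of $\widehat\otimes_\pi$-algebras. By Definition \ref{def:foralg}(a), its essential image is exactly the formal algebras. By \cite[Theorem 1.9]{CSW1} and \eqref{eq:OM1M2}, this functor carries finite products of formal manifolds to completed projective tensor products, and $\Spec(\BC)$ to $\BC$. Hence it is an anti-equivalence of monoidal categories: formal manifolds with $\times$, and formal algebras with $\widehat\otimes_\pi$.

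Group objects correspond to cogroup objects under such an anti-equivalence, so formal Lie groups correspond exactly to Hopf formal algebras. Concretely, given a Hopf formal algebra $A$, choose $M$ with $\CO_M(M)\cong A$. Realize the comultiplication $A\to A\widehat\otimes_\pi A=\CO_{M\times M}(M\times M)$ as a morphism $m:M\times M\to M$, the counit as $\varepsilon:\Spec(\BC)\to M$, and the antipode as $i$. The diagrams \eqref{eq:chainofG}, \eqref{eq:diagramofidentity} and \eqref{eq:comofi} then follow from the Hopf axioms by faithfulness. Fullness on morphisms is the same argument applied to a Hopf algebra homomorphism $A_2\to A_1$: it comes from a unique morphism $\varphi$, and \eqref{commor} holds by faithfulness.

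For (b), I would compose (a) with the continuous dual. By \eqref{eq:OM'} and \eqref{eq:DMM=DMDM}, $A\mapsto A'$ sends $\CO_M(M)$ to $\RD^{-\infty}_c(M;\CO_M)$ and $\widehat\otimes_\pi$ to $\widehat\otimes_{\mathrm i}$. The main obstacle is to show that this duality is an anti-equivalence between formal algebras and formal coalgebras. Two things are needed.

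First, reflexivity: $\CO_M(M)\cong (\RD^{-\infty}_c(M;\CO_M))'$ canonically. I expect this to follow from \cite{CSW2}, since $\CO_M(M)$ should be reflexive, being a product of nuclear Fréchet-type spaces.

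Second, transposition must be bijective between continuous algebra homomorphisms $\CO_{M_2}(M_2)\to\CO_{M_1}(M_1)$ and continuous coalgebra homomorphisms of the distribution spaces. Any continuous linear map $\RD^{-\infty}_c(M_1)\to\RD^{-\infty}_c(M_2)$ has a transpose between the biduals, and reflexivity identifies these biduals with $\CO_{M_2}(M_2)$ and $\CO_{M_1}(M_1)$. That transpose is continuous, and it is multiplicative precisely when the original map is comultiplicative, via \eqref{eq:DMM=DMDM}.

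Granting this, a Hopf formal coalgebra dualizes to a Hopf formal algebra and conversely. The functor \eqref{eq:GtoD(G)} is then the composite of \eqref{eq:GtoO(G)} with this duality, and is therefore an equivalence.
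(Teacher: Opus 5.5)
Your proposal is correct and follows essentially the same route as the paper: (a) is deduced from the anti-equivalence $M\mapsto \CO_M(M)$ of \cite[Theorem 5.11]{CSW1}, transported to group and Hopf structures. The only detail left unstated is that the antipode of the commutative algebra $\CO_M(M)$ is automatically an algebra homomorphism, which is what lets it come from a morphism $i$. Part (b) is then obtained by composing with strong duality, and the reflexivity you anticipated is exactly \cite[Proposition 4.8]{CSW1}, which the paper cites to make duality an anti-equivalence between formal algebras and formal coalgebras.
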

\begin{proof}   
   The first assertions in (a) and (b) are obvious. It follows from \cite[Theorem 5.11]{CSW1} that the functor \eqref{eq:GtoO(G)} is an equivalence between categories. 
Note that all formal algebras and formal  coalgebras are reflexive as LCS (see \cite[Proposition 4.8]{CSW1}). Thus, by taking strong duals, there is an equivalence between 
    the opposite category of the category of formal algebras and the category of formal coalgebras.  This implies that the functor \eqref{eq:GtoD(G)} is also an equivalence of categories.    
\end{proof}
\subsection{Lie pairs}
Let $(d\varphi,\underline \varphi):   (\q_1,L_1)\rightarrow (\q_2,L_2) $ be a homomorphism of Lie pairs (see Definition \ref{df:morLiepair}). By  the formal Lie theory theorem (see \cite[(14.2.3)]{Ha} and \cite[Theorem 5.18]{MM} or Theorem \ref{thm:eqFPpre}), 
there is a unique homomorphism \[ \wh{d\varphi}:\ \mathcal{G}_{\q_1}\rightarrow \mathcal{G}_{\q_2}\] between infinitesimal formal Lie groups such that the differential of $\wh{d\varphi}$ at $e$ is $d\varphi$.
By using Lemma \ref{prop:asfunctors}, one verifies that 
the morphism \[\underline \varphi\times \wh{d\varphi}: \ L_1\ltimes \mathcal{G}_{\q_1}\rightarrow L_{2}\ltimes \mathcal{G}_{\q_2}\]  
is a homomorphism between formal Lie groups.  Recall from Proposition \ref{prop:LLq/Llasgroup} that $ (L_{1}\ltimes \mathcal{G}_{\q_1})/ \mathcal{G}_{\l_1}$ and $(L_{2}\ltimes \mathcal{G}_{\q_2})/ \mathcal{G}_{\l_2}$ are formal Lie groups. The following result is clear by using Lemma \ref{prop:asfunctors}.
\begin{lemd}
\label{prpd:L/Lto}
  There is a unique morphism \be \label{eq:L/Lto}\varphi:\ (L_1\ltimes \mathcal{G}_{\q_1})/\mathcal{G}_{\l_1} \rightarrow (L_2\ltimes \mathcal{G}_{\q_2})/\mathcal{G}_{\l_2} \ee 
  of formal manifolds such that the diagram \[\begin{CD}
      L_{1}\ltimes \mathcal{G}_{\q_1} @>\underline \varphi \times \wh{d\varphi} >>L_{2}\ltimes \mathcal{G}_{\q_2}\\ @V\pi VV @VV\pi V \\ (L_{1}\ltimes \mathcal{G}_{\q_1})/ \mathcal{G}_{\l_1} @>\varphi>> (L_{2}\ltimes \mathcal{G}_{\q_2})/ \mathcal{G}_{\l_2}
  \end{CD}\] commutes.  Furthermore, the morphism \eqref{eq:L/Lto} is a homomorphism of formal Lie groups. 
\end{lemd}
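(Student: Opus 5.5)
The plan is to use the universal property of the quotient in Proposition \ref{prop:LLq/Llasgroup}(b), together with Corollary \ref{cor:quotoGin}. Write $\psi:=\underline\varphi\times\wh{d\varphi}: L_1\ltimes\mathcal{G}_{\q_1}\rightarrow L_2\ltimes\mathcal{G}_{\q_2}$ and $\pi_1,\pi_2$ for the two quotient morphisms. The composition $\pi_2\circ\psi$ is a homomorphism of formal Lie groups, because $\psi$ is one (as noted before the statement) and $\pi_2$ is one by Proposition \ref{prop:LLq/Llasgroup}(b).

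The first step is to show that $\pi_2\circ\psi$ is $\mathcal{G}_{\l_1}$-invariant for the right formal action through $\vartheta_1$. By Lemma \ref{prop:asfunctors}, this reduces to the identity of homomorphisms
\[
\psi\circ\vartheta_1=\vartheta_2\circ\wh{d\underline\varphi}:\ \mathcal{G}_{\l_1}\rightarrow L_2\ltimes\mathcal{G}_{\q_2},
\]
where $\wh{d\underline\varphi}:\mathcal{G}_{\l_1}=L_{1,\{e\}}\rightarrow\mathcal{G}_{\l_2}=L_{2,\{e\}}$ is the restriction $\underline\varphi_e$ from \eqref{eq:underlinecommdiag}. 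Once this identity holds, $\pi_2\circ\psi\circ\vartheta_1$ factors through $\pi_2\circ\vartheta_2$. That composite is trivial, since $H_{\l_2}$ lies in the kernel of $\pi_2$ by the construction of the quotient. Hence, on $N$-points, $\pi_2\circ\psi$ is invariant under right multiplication by $\vartheta_1(\mathcal{G}_{\l_1}(N))$, which gives invariance by Lemma \ref{prop:asfunctors}.

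To prove the identity, note that both sides are homomorphisms between formal Lie groups whose source $\mathcal{G}_{\l_1}$ is infinitesimal. Such a homomorphism factors through the infinitesimal closed subgroup $(L_2\ltimes\mathcal{G}_{\q_2})_{\{e\}}$, via \eqref{eq:underlinecommdiag}. By the formal Lie theory theorem (Theorem \ref{thm:eqFPpre}), homomorphisms between infinitesimal formal Lie groups are determined by their differentials. So it suffices to compare differentials using Lemma \ref{lem:dvartheta}. The left side sends
\[
\tau\mapsto d\underline\varphi(\tau)\otimes\delta_e-\delta_e\otimes d\varphi(\iota(\tau)),
\]
and the right side sends
\[
\tau\mapsto d\underline\varphi(\tau)\otimes\delta_e-\delta_e\otimes\iota(d\underline\varphi(\tau)).
\]
These agree by the second commutative square in \eqref{eq:comofhomofLiepairs}.

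Given invariance, the universal property in Definition \ref{def:quo} yields a unique morphism $\varphi$ of formal manifolds with $\varphi\circ\pi_1=\pi_2\circ\psi$. Here $\mathcal{G}_{\l_1}$ is a closed regular normal subgroup by Proposition \ref{prop:LLq/Llasgroup}(a). Corollary \ref{cor:quotoGin} then shows that $\varphi$ is a homomorphism of formal Lie groups.

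The main obstacle I expect is the bookkeeping in the invariance step. Specifically, one must justify that the differential of $\psi$ at $e$ is $d\underline\varphi\oplus d\varphi$ in the decomposition $\l\ltimes_{\ad}\q$; this follows from Lemma \ref{lem:difftimes}. One must also check that homomorphisms out of infinitesimal groups are rigid, which uses the factorization through $(L_2\ltimes\mathcal{G}_{\q_2})_{\{e\}}$ together with Remark \ref{rmk:Ge=Lg}.
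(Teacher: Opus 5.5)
Your proposal is correct and takes essentially the same route as the paper, which declares the lemma clear from Lemma \ref{prop:asfunctors}. That route is: check $\mathcal{G}_{\l_1}$-invariance of $\pi_2\circ(\underline\varphi\times\wh{d\varphi})$ on $N$-points, then apply the universal property of the quotient and Corollary \ref{cor:quotoGin}. Your extra step, deducing $(\underline\varphi\times\wh{d\varphi})\circ\vartheta_1=\vartheta_2\circ\underline\varphi_e$ from the rigidity of homomorphisms out of the infinitesimal group $\mathcal{G}_{\l_1}$ together with the second square of \eqref{eq:comofhomofLiepairs}, is exactly the compatibility the paper leaves implicit.
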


Recall from Proposition \ref{prop:GtoLiepair} that the assignment
\be\label{eq:functorGtopair} G\mapsto (\g,\underline G), \qaq (\varphi:G_1\rightarrow G_2)\mapsto((d\varphi_e,\underline \varphi ): (\g_1,\underline {G_1})\rightarrow (\g_2,\underline{G_2}))\ee is a functor from the category of formal Lie groups to that of Lie pairs.

\begin{prpd}\label{prop:pairtogroup}
\noindent(a) The assignment
\be\label{eq:funcpairtogroup}
\begin{aligned}
		(\q,L)&\mapsto (L\ltimes \mathcal{G}_{\q})/\mathcal{G}_{\l}, \\ 
 	((d\varphi,\underline \varphi): (\q_1,L_1)\rightarrow (\q_2,L_2))&\mapsto (\varphi: (L_1\ltimes \mathcal{G}_{\q_1})/\mathcal{G}_{\l_1}\rightarrow (L_2\ltimes \mathcal{G}_{\q_2})/\mathcal{G}_{\l_2})
\end{aligned}
\ee
forms a functor from the category of Lie pairs to that of formal Lie groups, where $\varphi$ is as in Lemma \ref{prpd:L/Lto}.      

\noindent  (b) The functors \eqref{eq:functorGtopair} and  \eqref{eq:funcpairtogroup} are quasi-inverse of each other.
\end{prpd}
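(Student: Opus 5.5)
Part (a) is formal. Lemma \ref{prpd:L/Lto} already shows that each homomorphism of Lie pairs gives a homomorphism of formal Lie groups. Functoriality (identities and composition) then follows from two uniqueness statements. First, $\wh{d\varphi}$ is unique by the formal Lie theory theorem, so $\wh{d(\varphi'\circ\varphi)}=\wh{d\varphi'}\circ\wh{d\varphi}$. Second, the induced morphism on quotients is unique by the universal property of $\pi$ in Theorem \ref{thm:quo}, since $\pi$ is an epimorphism.

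For (b) we must produce two natural isomorphisms.

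\emph{Lie pair side.} Set $G_{(\q,L)}:=(L\ltimes\mathcal{G}_{\q})/\mathcal{G}_{\l}$. By Proposition \ref{prop:LLq/Llasgroup}(b),(c) we have $\Lie_{\BC}(G_{(\q,L)})=\q$, with the identification induced by $d\pi_e$ restricted to $\delta_e\otimes\q$, and $\underline{G_{(\q,L)}}=L$. It remains to check that these identifications respect $\Ad$ and $\iota$. For $\iota$: by \eqref{eq:dpil} and Proposition \ref{prop:LLq/Llasgroup}(c), the embedding $\l\to\q$ of the reduction is $\tau\mapsto\iota(\tau)$. For $\Ad$: the morphism \eqref{eq:Ltol} is a homomorphism, and $\delta_e\otimes\q\subset \Lie_{\BC}(L\ltimes\mathcal{G}_{\q})$ carries the conjugation action of $L$, which by the semidirect product structure and Proposition \ref{prop:Lltimesq} is the given $\Ad$. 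Since $\pi$ is an $L$-equivariant homomorphism, Proposition \ref{prop:GonT}(b) transfers this to the quotient. Naturality in $(d\varphi,\underline\varphi)$ is read off from the commutative square of Lemma \ref{prpd:L/Lto}.

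\emph{Formal Lie group side.} For a formal Lie group $G$ with Lie pair $(\g,\underline G)$, we construct a natural isomorphism $\Psi_G:(\underline G\ltimes\mathcal{G}_{\g})/\mathcal{G}_{\underline\g}\to G$ in three steps.
\begin{enumerate}
\item By Remark \ref{rmk:Ge=Lg}, $\mathcal{G}_{\g}=G_{\{e\}}$. The action of Proposition \ref{prop:Lltimesq} is the restriction of $\Phi$ to $\underline G\times G_{\{e\}}$; both are actions by automorphisms inducing $\Ad$, so they agree by the uniqueness in that proposition. Here $G_{\{e\}}$ is $G$-stable by Example \ref{exam:Genormal}(a).
\item Define $\underline G\ltimes G_{\{e\}}\to G$ by $(g,x)\mapsto \iota(g)\iota(x)$, that is, $m\circ(\iota\times\iota)$. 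It is a homomorphism: on $N$-points it is the multiplication map $\underline G(N)\ltimes G_{\{e\}}(N)\to G(N)$ with the conjugation action, so Lemma \ref{prop:asfunctors} applies.
\item This homomorphism kills $H_{\underline\g}$. The composite with $\vartheta$ is $x\mapsto x\cdot x^{-1}=e$ on $N$-points. Hence it is $H_{\underline\g}$-invariant, and Corollary \ref{cor:quotoGin} yields a homomorphism $\Psi_G$.
\end{enumerate}

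\emph{$\Psi_G$ is an isomorphism.} The differential of $\Psi_G$ at $e$ is $\tau\otimes\delta_e+\delta_e\otimes\eta\mapsto \tau+\eta$, modulo $d\vartheta_e(\underline\g)$, so it is bijective by Proposition \ref{prop:LLq/Llasgroup}(b). The reduction $\underline{\Psi_G}$ is $\mathrm{id}_{\underline G}$ by Proposition \ref{prop:LLq/Llasgroup}(c),(d), and in particular is bijective. This is the main obstacle: upgrading these facts to a global isomorphism of formal manifolds. The plan is to use the inverse function theorem \cite[Theorem 1.3]{CSW4}. Since the differential at $e$ and the reduced differential are both bijective, $\Psi_G$ is a local isomorphism near $e$. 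Left translations are isomorphisms and $\Psi_G$ is a homomorphism, so it is a local isomorphism at every point, mirroring the proof of Proposition \ref{prop:GtounderGdegG}. Being also bijective on underlying spaces, it is an isomorphism. Alternatively, one can compare both sides with $\underline G^{(\deg_e G)}$ via Proposition \ref{prop:GtounderGdegG}.

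\emph{Naturality of $\Psi$.} For a homomorphism $\varphi:G_1\to G_2$, the morphism $\wh{d\varphi_e}$ equals $\varphi_e:G_{1,\{e\}}\to G_{2,\{e\}}$, since both are homomorphisms with the same differential. The square formed by $\Psi_{G_1}$, $\Psi_{G_2}$ and $\varphi$ then commutes after precomposing with the epimorphism $\pi$, which is checked on $N$-points.
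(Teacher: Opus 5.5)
Your proposal is correct and follows essentially the same route as the paper. On the Lie-pair side you use Proposition \ref{prop:LLq/Llasgroup}(b),(c) together with the $L$-equivariance of $\q\to\l\ltimes_{\ad}\q\to\q$; on the group side you descend $m\circ(\iota\times\iota)$ through Corollary \ref{cor:quotoGin} and show it is an isomorphism from $\underline{\Psi_G}=\mathrm{id}_{\underline G}$, $d(\Psi_G)_e=\mathrm{id}_{\g}$, the inverse function theorem and translation-equivariance. Beyond the paper, you make explicit two points it leaves implicit: that the $\underline G$-action on $\mathcal{G}_{\g}=G_{\{e\}}$ from Proposition \ref{prop:Lltimesq} agrees with conjugation (by its uniqueness clause), and that both identifications are natural.
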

In the remainder of this subsection, we prove Proposition \ref{prop:pairtogroup}. The assertion (a) is straightforward, and it remains to prove the assertion (b). 

 
By Propositions \ref{prop:LLq/Llasgroup} and \ref{prop:GtoLiepair}, for every Lie pair $(\q, L)$, the pair \[(\Lie_{\BC}((L\ltimes \mathcal{G}_{\q})/\mathcal{G}_{\l}),\underline{(L\ltimes \mathcal{G}_{\q})/\mathcal{G}_{\l}})\] is a Lie pair with
$\Lie_{\BC}((L\ltimes \mathcal{G}_{\q})/\mathcal{G}_{\l})=\q $ as Lie algebras and $\underline{(L\ltimes \mathcal{G}_{\q})/\mathcal{G}_{\l}}=L $ as Lie groups. 
\begin{lemd}\label{lem:qLidento}
    There is an identification \[(\q, L) = (\Lie_{\BC}((L\ltimes \mathcal{G}_{\q})/\mathcal{G}_{\l}), \underline{(L\ltimes \mathcal{G}_{\q})/\mathcal{G}_{\l}})\] of Lie pairs.
\end{lemd}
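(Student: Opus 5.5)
We need to show that under the identifications $\Lie_{\BC}(G_0)=\q$ and $\underline{G_0}=L$ from Proposition \ref{prop:LLq/Llasgroup}, where $G_0:=(L\ltimes \mathcal{G}_{\q})/\mathcal{G}_{\l}$, the two pieces of Lie pair data agree. Specifically, we must check two things:
\begin{itemize}
\item the embedding $d\iota_e:\l\to \Lie_{\BC}(G_0)$ of Proposition \ref{prop:GtoLiepair} corresponds to $\iota:\l\to\q$;
\item the adjoint representation $\Ad:\underline{G_0}\curvearrowright \Lie_{\BC}(G_0)$ of \eqref{eq:Addef} corresponds to the given $\Ad:L\curvearrowright\q$.
\end{itemize}
The Lie algebra structures already agree by Proposition \ref{prop:LLq/Llasgroup}(b), so only these two compatibilities remain.

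\emph{The embedding.} By Proposition \ref{prop:LLq/Llasgroup}(c), the inclusion $L=\underline{G_0}\to G_0$ equals the composition \eqref{eq:Ltol}, namely $\pi\circ(\mathrm{id}_L\times\varepsilon)$. By the chain rule \eqref{eq:chainruleofdvarphi}, its differential at $e$ is the composition of two maps:
\begin{itemize}
\item $\tau\mapsto \tau\otimes\delta_e$ (Lemma \ref{lem:gtogh}), and
\item $d\pi_e$, given by \eqref{eq:dpil}.
\end{itemize}
Hence $\tau\mapsto \iota(\tau)+0=\iota(\tau)$, as required.

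\emph{The adjoint representation.} Let $\Phi$ and $\Phi_0$ denote the inner-automorphism actions of $L\ltimes\mathcal{G}_{\q}$ and $G_0$ respectively. Since $\pi$ is a homomorphism of formal Lie groups (Proposition \ref{prop:LLq/Llasgroup}(b)), Lemma \ref{prop:asfunctors} shows that $\pi\circ\Phi=\Phi_0\circ(\pi\times\pi)$. Restricting the acting factor to $\underline{L\ltimes\mathcal{G}_{\q}}=L$ and using Proposition \ref{prop:LLq/Llasgroup}(d) (that $\underline\pi=\mathrm{id}_L$), we get that $\pi$ is $L$-equivariant, where:
\begin{itemize}
\item $L$ acts on $L\ltimes\mathcal{G}_{\q}$ by conjugation via $\mathrm{id}_L\times\varepsilon$, and
\item $L$ acts on $G_0$ by conjugation via $\underline{G_0}=L$.
\end{itemize}
Proposition \ref{prop:GonT}(b) then says that $d\pi_e:\l\ltimes_{\ad}\q\to\q$ intertwines the two induced $L$-representations. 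Since $d\pi_e$ is surjective, it suffices to compute the $L$-action on the summand $\delta_e\otimes\q$. There $d\pi_e$ is the identity onto $\q$, so we need the conjugation action of $(g,e)$ on $\delta_e\otimes\eta$ to be $\delta_e\otimes \Ad_g\eta$.

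The main obstacle is this last computation. I would do it functorially. By the definition \eqref{eq:mGltimesH} of the semidirect product, in each group $L(N)\ltimes\mathcal{G}_{\q}(N)$ we have $(g,1)(1,h)(g,1)^{-1}=(1,\mu_N(g,h))$. So conjugation by $L$ restricted to $\mathcal{G}_{\q}$ equals $\mu$, by Lemma \ref{prop:asfunctors}. Taking differentials at $e$ through the immersion $\varepsilon\times\mathrm{id}$ (again Proposition \ref{prop:GonT}(b)) gives $d(\mu_g)_e(\eta)$. By Proposition \ref{prop:Lltimesq}, this is exactly $\Ad_g\eta$.

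Combining the two steps, the identifications of Proposition \ref{prop:LLq/Llasgroup} respect both the embedding and the adjoint action, hence give the required identification of Lie pairs.
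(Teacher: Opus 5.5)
Your proposal is correct and follows the paper's proof. The embedding is handled exactly as in the paper, via \eqref{eq:Ltol} and \eqref{eq:dpil}. For the adjoint representation you, like the paper, obtain $L$-equivariance of the composite $\pi\circ(\varepsilon\times\mathrm{id}_{\mathcal{G}_{\q}})$ and then apply Proposition \ref{prop:GonT}(b) together with Proposition \ref{prop:Lltimesq}; you only split that equivariance into two steps and make explicit the semidirect-product conjugation identity $(g,1)(1,h)(g,1)^{-1}=(1,\mu_N(g,h))$, which the paper leaves to Lemma \ref{prop:asfunctors}.
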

\begin{proof} In view of Proposition \ref{prop:GtoLiepair},
it suffices to verify that the differential  
\be \label{eq:diffunderLltimesquo} \l=
\Lie_{\BC}(\underline{(L\ltimes \mathcal{G}_{\q})/\mathcal{G}_{\l}})\longrightarrow \Lie_{\BC}((L\ltimes \mathcal{G}_{\q})/\mathcal{G}_{\l})=\q
\ee of the inclusion morphism $\underline{(L\ltimes \mathcal{G}_{\q})/\mathcal{G}_{\l}}\rightarrow {(L\ltimes \mathcal{G}_{\q})/\mathcal{G}_{\l}}$
coincides with $\iota:\l\to\q$, and that the adjoint representation
\be \label{eq:Adcomplex}
L= \underline{(L\ltimes \mathcal{G}_{\q})/\mathcal{G}_{\l}}\curvearrowright \Lie_{\BC}((L\ltimes \mathcal{G}_{\q})/\mathcal{G}_{\l})=\q
\ee as in \eqref{eq:Addef}
coincides with $\Ad:L\curvearrowright\q$.
For the first assertion, by Proposition \ref{prop:LLq/Llasgroup} (b) and (c),  
the map \eqref{eq:diffunderLltimesquo} 
 coincides with the differential \[\l\rightarrow \l\ltimes_{\ad}\q\rightarrow \q,\quad \tau\mapsto \tau\otimes\delta_e\mapsto\iota(\tau)\] of \eqref{eq:Ltol}.

For the second one, 
note that $\underline{L\ltimes \mathcal{G}_{\q}}=L$ (see Proposition \ref{prop:underlineofLLq}). Then, using Proposition \ref{prop:underlineofLLq},  Proposition \ref{prop:LLq/Llasgroup} (c) and Lemma \ref{prop:asfunctors}, the homomorphism \be \label{eq:Lto/L}\mathcal{G}_{\q}=\Spec(\BC)\times \mathcal{G}_{\q}\xrightarrow{\varepsilon \times \mathrm{id}_{\mathcal{G}_{\q}}} L\ltimes \mathcal{G}_{\q}\rightarrow (L\ltimes \mathcal{G}_{\q})/\mathcal{G}_{\l} \ee
is $L$-equivariant, where $L$ acts on $\mathcal{G}_{\q}$ formally by \eqref{eq:LactsonLq},  $L\ltimes \mathcal{G}_{\q}$ and $(L\ltimes \mathcal{G}_{\q})/\mathcal{G}_{\l}$
carry the formal action of $L$ by \[L\times (L\ltimes \mathcal{G}_{\q})\xrightarrow{\eqref{eq:iotaLto}\times \mathrm{id}_{L\ltimes \mathcal{G}_{\q}}} ({L\ltimes \mathcal{G}_{\q}})\times (L\ltimes \mathcal{G}_{\q})\xrightarrow{\Phi} L\ltimes \mathcal{G}_{\q}\]and \[L\times((L\ltimes \mathcal{G}_{\q})/\mathcal{G}_{\l})\xrightarrow{\eqref{eq:Ltol}\times \mathrm{id}_{(L\ltimes \mathcal{G}_{\q})/\mathcal{G}_{\l}}} ((L\ltimes \mathcal{G}_{\q})/\mathcal{G}_{\l})\times ((L\ltimes \mathcal{G}_{\q})/\mathcal{G}_{\l})\xrightarrow{\Phi} (L\ltimes \mathcal{G}_{\q})/\mathcal{G}_{\l}, \] respectively. 
Therefore, by Proposition \ref{prop:GonT}, the differential  \[\q \rightarrow \l\ltimes_{\ad}\q \rightarrow \q , \quad \eta \mapsto \delta_e\otimes \eta \mapsto \eta\] of \eqref{eq:Lto/L} is $L$-equivariant, where the first $\q$ carries the representation $\Ad: L\curvearrowright \q$ by Proposition \ref{prop:Lltimesq}, and  the second $\q$ carries the representation \eqref{eq:Adcomplex}. This completes the proof.  
\end{proof}

By Proposition 
\ref{prop:LLq/Llasgroup}, for every formal Lie group $G$,
the quotient $((\underline G\ltimes \mathcal{G}_{\g})/\mathcal{G}_{\underline \g},\pi) $
of $\underline G\ltimes \mathcal{G}_{\g}$ by $\mathcal{G}_{\underline \g}$ with respect to the right  formal action 
\be\label{eq:actionofLunderg} (\underline G\ltimes \mathcal{G}_{\g})\times \mathcal{G}_{\underline \g}\xrightarrow{\mathrm{id}_{\underline G\ltimes \mathcal{G}_{\g}}\times \vartheta} (\underline G\ltimes \mathcal{G}_{\g})\times (\underline G\ltimes \mathcal{G}_{\g})\xrightarrow{m} \underline G\ltimes \mathcal{G}_{\g}\ee
exists. In addition, $(\underline G\ltimes \mathcal{G}_{\g})/\mathcal{G}_{\underline \g}$ is a formal Lie group and  the quotient morphism $\pi$ is a homomorphism between formal Lie groups.


Using Lemma \ref{prop:asfunctors}, for every formal Lie group $G$, 
the composition \be  \label{eq:homooftoG}\psi:\ \underline G\ltimes \mathcal{G}_{\g}\xrightarrow{\iota\times\iota} G\times G \xrightarrow m G\ee  is a homomorphism of formal Lie groups.  The following lemma follows immediately from Lemmas \ref{prop:asfunctors} and \ref{lem:LlonLq}.  
\begin{lemd}\label{lem:Gequalto}
 	The homomorphism \eqref{eq:homooftoG} is $\mathcal{G}_{\underline \g}$-invariant (with respect to the right formal action \eqref{eq:actionofLunderg}). 
 \end{lemd}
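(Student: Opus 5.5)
The plan is to reduce the invariance statement to the assertion that the composite $\psi\circ\vartheta:\mathcal{G}_{\underline\g}\rightarrow G$ is the trivial morphism. Then I would check that assertion on $N$-points via Lemma \ref{prop:asfunctors}. Throughout I use the identification $\mathcal{G}_{\g}=G_{\{e\}}$ of Remark \ref{rmk:Ge=Lg}, which the definition of $\psi$ already implicitly uses.

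\emph{Step 1 (reduction).} $\mathcal{G}_{\underline\g}$-invariance of $\psi$ means that
\[
\psi\circ m\circ(\mathrm{id}_{\underline G\ltimes\mathcal{G}_{\g}}\times\vartheta)=\psi\circ \mathrm{pr}_1
\]
as morphisms $(\underline G\ltimes\mathcal{G}_{\g})\times\mathcal{G}_{\underline\g}\rightarrow G$. By Lemma \ref{prop:asfunctors}(a) it suffices to check this on $N$-points for every formal manifold $N$. Since $\psi$ is a homomorphism of formal Lie groups, $\psi_N$ is a group homomorphism $(\underline G\ltimes \mathcal{G}_{\g})(N)\rightarrow G(N)$. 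Hence for $(a,x)\in (\underline G\ltimes\mathcal{G}_{\g})(N)\times \mathcal{G}_{\underline\g}(N)$ we get
\[
\psi_N\bigl((a)\cdot\vartheta_N(x)\bigr)=\psi_N(a)\,\psi_N(\vartheta_N(x)).
\]
So it is enough to show that $\psi\circ\vartheta$ factors through $\varepsilon:\Spec(\BC)\rightarrow G$, i.e.\ that $\psi_N(\vartheta_N(x))=1$ for all $x$.

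\emph{Step 2 (compute $\psi\circ\vartheta$).} By \eqref{eq:idtimesi}, $\vartheta_N(x)=(\iota_1\circ x,\ \iota_2\circ i\circ x)$. Here $\iota_1:\mathcal{G}_{\underline\g}=\underline G_{\{e\}}\rightarrow\underline G$ and $\iota_2:\mathcal{G}_{\underline\g}\rightarrow\mathcal{G}_{\g}$ are the two inclusion homomorphisms. Applying \eqref{eq:homooftoG} gives
\[
\psi_N(\vartheta_N(x))=(j_1\circ x)\cdot(j_2\circ i\circ x)
\]
in the group $G(N)$, where
\[
j_1:\mathcal{G}_{\underline\g}\rightarrow\underline G\xrightarrow{\iota}G
\qaq
j_2:\mathcal{G}_{\underline\g}\rightarrow\mathcal{G}_{\g}=G_{\{e\}}\xrightarrow{\iota}G.
\]
Both $j_1$ and $j_2$ are homomorphisms of formal Lie groups, so $j_2\circ i=i\circ j_2$.

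\emph{Step 3 (the key point: $j_1=j_2$).} This step is where the real content lies. Both $j_1$ and $j_2$ land in the closed formal submanifold $G_{\{e\}}$, because their source is infinitesimal (see \eqref{eq:underlinecommdiag}). Therefore they factor through homomorphisms $\mathcal{G}_{\underline\g}\rightarrow G_{\{e\}}=\mathcal{G}_{\g}$ of infinitesimal formal Lie groups. Their differentials at $e$ agree: both equal the canonical embedding $d\iota_e:\underline\g\rightarrow\g$ from Proposition \ref{prop:GtoLiepair}, by the chain rule \eqref{eq:chainruleofdvarphi} and the definition of the inclusion $\mathcal{G}_{\underline\g}\rightarrow\mathcal{G}_{\g}$ as the one induced by $\underline\g\subset\g$. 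By the formal Lie theory theorem (Theorem \ref{thm:eqFPpre}), a homomorphism of infinitesimal formal Lie groups is determined by its differential. Hence the two factored homomorphisms coincide, and by \eqref{inj00} so do $j_1$ and $j_2$.

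Given $j_1=j_2$, we obtain $\psi_N(\vartheta_N(x))=y\,y^{-1}=1$ with $y=j_1\circ x$, which completes Step 1. The main obstacle is bookkeeping: making sure the identification $\mathcal{G}_{\underline\g}=\underline G_{\{e\}}$ used in $\vartheta$ (via Remark \ref{rmk:Ge=Lg}) is compatible with the one used in defining $\psi$. I would handle this exactly as in Step 3, by comparing differentials at $e$ and invoking uniqueness in Theorem \ref{thm:eqFPpre}. Lemma \ref{lem:LlonLq} guarantees that the semidirect product structure used for $\psi$ is consistent with conjugation in $G$.
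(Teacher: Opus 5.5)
Your proof is correct and is essentially the paper's argument: the paper deduces the lemma "immediately" from the functor-of-points formalism (Lemma \ref{prop:asfunctors}) together with Lemma \ref{lem:LlonLq}. You instead invoke the already-established homomorphism property of $\psi$ to reduce to $\psi\circ\vartheta$ being trivial, and you make explicit, via Theorem \ref{thm:eqFPpre}, the compatibility $j_1=j_2$ of the two embeddings of $\mathcal{G}_{\underline\g}$ into $G$, which the paper leaves implicit.
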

By Corollary \ref{cor:quotoGin} and Lemma \ref{lem:Gequalto}, the morphism \be\label{eq:1homotoG} \psi'=(\underline{\psi'}, {\psi'}^*):\ (\underline G\ltimes \mathcal{G}_{\g})/\mathcal{G}_{\underline{\g}}\rightarrow G \ee induced by \eqref{eq:homooftoG}  is a homomorphism between formal Lie groups.
\begin{lemd}\label{lem:G}
	The homomorphism \eqref{eq:1homotoG}
    is an isomorphism between formal Lie groups.
\end{lemd}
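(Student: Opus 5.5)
We will show that $\psi'$ is a bijective local isomorphism, and reduce everything to the two ingredients already available: the inverse function theorem \cite[Theorem 1.3]{CSW4} and Proposition \ref{prop:GtounderGdegG}. Set $Q:=(\underline G\ltimes \mathcal{G}_{\g})/\mathcal{G}_{\underline\g}$. By Lemma \ref{lem:qLidento} (applied to the Lie pair $(\g,\underline G)$), we have $\Lie_{\BC}(Q)=\g$ and $\underline Q=\underline G$.

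\emph{Step 1: the reduction.} We first identify $\underline{\psi'}$. By Proposition \ref{prop:LLq/Llasgroup} (c) and (d), the reduction of $\pi$ is $\mathrm{id}_{\underline G}$. The reduction of $\psi$ is $\underline G\times\{e\}\to\underline G$, $(g,e)\mapsto g$, using \eqref{eq:underlinecommdiag} and the fact that $\underline{\mathcal{G}_{\g}}$ is a point. Hence $\underline{\psi'}=\mathrm{id}_{\underline G}$. In particular, $\overline{\psi'}$ is a homeomorphism, and $d\underline{\psi'}_e$ is bijective.

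\emph{Step 2: the differential at $e$.} Next we show that $d\psi'_e:\g\to\g$ is the identity. By \eqref{eq:dpil}, $d\pi_e(\tau\otimes\delta_e+\delta_e\otimes\eta)=\tau+\eta$, where $\underline\g$ is viewed inside $\g$ via $d\iota_e$. By Lemmas \ref{lem:inverse} and \ref{lem:difftimes}, $d\psi_e(\tau\otimes\delta_e+\delta_e\otimes\eta)=d\iota_e(\tau)+\eta$, since $\mathcal{G}_{\g}=G_{\{e\}}$ by Remark \ref{rmk:Ge=Lg}. As $\psi=\psi'\circ\pi$ and $d\pi_e$ is surjective, it follows that $d\psi'_e=\mathrm{id}_\g$.

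\emph{Step 3: local isomorphism at $e$, then everywhere.} Both $Q$ and $G$ have complex tangent space $\g$ and reduction $\underline G$ at $e$, and $\psi'$ is bijective on both the formal and the reduced tangent spaces there. The inverse function theorem \cite[Theorem 1.3]{CSW4} then gives an open neighborhood $V$ of $e$ such that $\psi'|_V$ is an isomorphism onto an open subset of $G$. Since $\psi'$ is a homomorphism, it intertwines left translations: $\psi'\circ l_g=l_{\psi'(g)}\circ\psi'$, where $l_g$ is the translation isomorphism defined before Lemma \ref{lem:chartofquo}, and $g$ ranges over $\underline Q=\underline G$. Translating the chart $V$ by each $g$ shows that $\psi'$ is a local isomorphism near every point.

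\emph{Step 4: conclusion.} A morphism of formal manifolds that is a homeomorphism on underlying spaces and a local isomorphism everywhere is an isomorphism, because the sheaf maps glue. Its inverse is automatically a homomorphism by Lemma \ref{prop:asfunctors}.

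The main obstacle is Step 3, namely checking the precise hypotheses of the inverse function theorem \cite[Theorem 1.3]{CSW4} in the form used in Lemma \ref{lem:loccld}: bijectivity of both $d\psi'_e$ and $d\underline{\psi'}_e$. This requires tracking carefully the identifications of $\Lie_{\BC}(Q)$ and $\underline Q$ from Proposition \ref{prop:LLq/Llasgroup}, so that the differential computed in Step 2 is really the map in question.
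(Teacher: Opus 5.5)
Your proposal is correct and follows essentially the same route as the paper. You factor $\psi=\psi'\circ\pi$ to get $\underline{\psi'}=\mathrm{id}_{\underline G}$ and $d\psi'_e=\mathrm{id}_{\g}$, then combine the inverse function theorem with left translation ($\underline G$-equivariance); your Steps 3--4 just spell out the translation-and-gluing that the paper compresses into one sentence, and Proposition \ref{prop:GtounderGdegG}, which you announce at the start, is never actually needed.
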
\begin{proof}
Recall from Proposition \ref{prop:LLq/Llasgroup} that 
\[
\underline {(\underline G\ltimes \mathcal{G}_{\g})/\mathcal{G}_{\underline{\g}}}=\underline G\]
as Lie groups, and 
\[
\Lie_{\BC}((\underline G\ltimes \mathcal{G}_{\g})/\mathcal{G}_{\underline{\g}})=\g \]
as Lie algebras.
We claim that  $\underline {\psi'}=\mathrm{id}_{\underline G}$ and  $d\psi'_e=\mathrm{id}_{\g}$. Then, the lemma follows from the inverse function theorem (see \cite[Theorem 1.3]{CSW4}) and the fact that $\psi'$ is $\underline G$-equivariant,
where $(\underline G\ltimes \mathcal{G}_{\g})/\mathcal{G}_{\underline{\g}}$ and $G$ carry the formal actions of $\underline G$ given by left multiplication.
 It remains to prove the claim.


By \eqref{eq:underlinecommdiag} and \eqref{eq:chainruleofdvarphi},  the diagrams 
 \be\label{eq:Gquol2} \xymatrix{ \underline G=\underline{\underline G\ltimes \mathcal{G}_{\g}}\ar[d]_{\underline\pi}\ar[rrd]^{\underline \psi}\\ \underline G=\underline{(\underline G\ltimes \mathcal{G}_{\g})/\mathcal{G}_{\underline \g}} \ar[rr]^{\underline{\psi'}}&& \underline G } \qaq \ \ \xymatrix
	{\underline{\g}\ltimes_{\ad}\g \ar[drr]^{d\psi_{(e,e)}}\ar[d]_{d\pi_{(e,e)}} \\ \g\ar[rr]_{d\psi'_{(e,e)}} &&\g }
 \ee
commute. By Proposition \ref{prop:LLq/Llasgroup} (d) and Proposition  \ref{prop:underlineofLLq}, it is clear that 
$\underline \pi=\mathrm{id}_{\underline G}=\underline \psi$. 
Hence, $\underline{\psi'}=\mathrm{id}_{\underline G}$.
On the other hand, by Lemma \ref{lem:inverse}, $d\psi_{(e,e)}$ is given by \[d\psi_{(e,e)}:\ \underline{\g}\ltimes_{\ad}\g\rightarrow \g,\quad \tau\otimes\delta_e+\delta_e\otimes\eta\mapsto \tau+\eta.\]  This, together with \eqref{eq:dpil}, implies that $d\psi'_{(e,e)}=\mathrm{id}_{\g}$. Then the claim follows. 
\end{proof}

By Lemmas \ref{lem:qLidento} and \ref{lem:G}, the assertion (b) in Proposition \ref{prop:pairtogroup} follows. This finishes the proof of Proposition \ref{prop:pairtogroup}.




\appendix
\section{Topological Hopf Algebras}

In this appendix, we collect some basic definitions of topological Hopf $\mathbb{C}$-algebras that are used throughout the paper.

\subsection{Topological Tensor Products}
Throughout this paper,  by an LCS, we mean a 
locally convex topological vector space over $\C$, which may or may not be Hausdorff. 

For two LCS $E$ and $F$, the algebraic tensor product $E \otimes F$ admits two natural locally convex topologies: the inductive tensor product $E \otimes_{\mathrm{i}} F$ and the projective tensor product $E \otimes_{\pi} F$. Explicit definitions can be found in \cite{Gr}.
We denote by
\[
E\widehat{\otimes}_\Ri F \quad \text{and} \quad E\widehat{\otimes}_\pi F
\]
the completions of  (the maximal Hausdorff quotients of)   $E\otimes_\Ri F$ and $E\otimes_\pi F$, respectively.


Given two continuous linear maps $\phi_1:E_1\to F_1$ and $\phi_2:E_2\to F_2$ between LCS, we denote by $\phi_1\otimes\phi_2$ the continuous linear map on the various topological tensor products (or their completions) obtained by the tensor product of $\phi_1$ and $\phi_2$. For example, we have the maps
\[
\phi_1\otimes\phi_2:\ E_1\otimes_\pi E_2 \longrightarrow F_1\otimes_\pi F_2,\qaq
\phi_1\otimes\phi_2:\ E_1\widehat{\otimes}_\Ri E_2 \longrightarrow F_1\widehat{\otimes}_\Ri F_2.
\]

\subsection{Topological Hopf Algebras}\label{sec:topHopfalg}

In this subsection, we use the subscript $*$ to denote either $\Ri$ or $\pi$ so that  $\widehat{\otimes}_{*}$ denotes either $\widehat{\otimes}_\Ri$ or $\widehat{\otimes}_\pi$.

\begin{dfn}\noindent (a) A \textbf{$\widehat{\otimes}_{*}$-algebra} is a complete LCS $A$ together with:
\begin{itemize}
    \item a continuous associative multiplication $m: A\widehat{\otimes}_{*}A \to A$; and
    \item a unit $\eta: \mathbb{C} \to A$.
\end{itemize}

\noindent (b) A \textbf{$\widehat{\otimes}_{*}$-coalgebra} is a complete LCS $C$ together with:
\begin{itemize}
    \item a continuous coassociative comultiplication $\Delta: C \to C\widehat{\otimes}_{*}C$;
    \item a continuous counit $\varepsilon: C \to \mathbb{C}$.
\end{itemize}
\end{dfn}
Here ``associative multiplication" (resp. \,``coassociative comultiplication'') means that $m$ (resp.\,$\Delta$) is linear and the diagram 
\[
  \begin{CD}
  A\widehat{\otimes}_{*}A\widehat{\otimes}_{*}A &@>m\otimes \mathrm{id}_A >> & A\widehat{\otimes}_{*}A\\
  @V\mathrm{id}_A \otimes m VV && @VV m V\\
  A\widehat{\otimes}_{*}A &@> m >>  &A \end{CD} \quad \bigg(\text{resp.}\quad  \begin{CD}
  C\widehat{\otimes}_{*} C\widehat{\otimes}_{*}C &@ <\Delta\otimes \mathrm{id}_C <<& C\widehat{\otimes}_{*}C\\
  @A\mathrm{id}_C \otimes \Delta AA && @AA\Delta  A \\
 C\widehat{\otimes}_{*}C &@<\Delta << &C
  \end{CD}\quad \bigg)
\]
commutes ($\mathrm{id}$ indicates  the identity map); and ``unit" (resp.\,``counit'') means that  $\eta$ (resp.\,$\varepsilon$) is linear and the diagram

\[
\xymatrix{
\mathbb C\otimes A \ar[r]^{\eta\otimes \mathrm{id}_A} \ar[rd]_{=} & A\widehat{\otimes}_{*}A  \ar[d]^{m} &A \otimes \mathbb C \ar[l]_{\mathrm{id}_A\otimes \eta }\ar[ld]^{=} \\
&A  
}\quad \bigg(\text{resp.}\quad \xymatrix{
\mathbb C\otimes C & C\widehat{\otimes}_{*}C  \ar[l]_{\varepsilon\otimes\mathrm{id}_C }\ar[r]\ar[r]^{\mathrm{id}_C \otimes \varepsilon } & C\otimes \mathbb C \\
&C\ar[ul]^{=}\ar[u]_{\Delta} \ar[ur]_{=}
}\quad
\bigg)
\]
commutes. 
\begin{dfn}
A \textbf{Hopf $\widehat{\otimes}_{*}$-algebra} is a complete LCS $\mathcal{H}$ together with continuous linear maps:
\begin{itemize}
    \item multiplication $m: \mathcal{H}\widehat{\otimes}_{*}\mathcal{H} \to \mathcal{H}$;
    \item unit $\eta: \mathbb{C} \to \mathcal{H}$;
    \item comultiplication $\Delta: \mathcal{H} \to \mathcal{H}\widehat{\otimes}_{*}\mathcal{H}$;
    \item counit $\varepsilon: \mathcal{H} \to \mathbb{C}$;
    \item antipode $S: \mathcal{H} \to \mathcal{H}$,
\end{itemize}
satisfying the usual axioms for a Hopf algebra (\cf \cite[Section 3.1]{CP}).
\end{dfn}

\section*{Acknowledgements}
Fulin Chen was supported by the National Natural Science Foundation of China (Grant Nos. 12131018 and 12471029). Binyong Sun was supported in part by National Key R \& D Program of China (Grant No. 2022YFA1005300) and New Cornerstone
Science Foundation. 

\end{document}